\numberwithin{figure}{section}
\numberwithin{equation}{section}
\numberwithin{table}{section}
\numberwithin{mycounter}{section}
\NewDocumentCommand{\declthm}{m m O{plain}}{
  \declaretheorem[
  name = #2,
  sibling = mycounter,
  style = #3
  ]
  {#1}
}
\declaretheorem[name = Theorem]{theoremintro}
\declaretheorem[name = Corollary, sibling = theoremintro]{corollaryintro}
\declaretheorem[name = Remark, sibling = theoremintro]{remarkintro}
\newcommand{\K}{\Bbbk}
\newcommand{\Q}{\mathbb{Q}}
\newcommand{\R}{\mathbb{R}}
\newcommand{\Z}{\mathbb{Z}}
\newcommand{\N}{\mathbb{N}}
\DeclareMathOperator{\id}{id}
\newcommand{\vol}{\mathrm{vol}}
\newcommand{\fiber}{\mathrm{fiber}}
\newcommand{\triv}{\mathrm{triv}}
\newcommand{\tree}{\mathrm{tree}}
\newcommand{\PP}{\mathsf{P}}
\newcommand{\QQ}{\mathsf{Q}}
\newcommand{\MM}{\mathsf{M}}
\newcommand{\CC}{\mathsf{C}}
\newcommand{\DD}{\mathsf{D}}
\newcommand{\NN}{\mathsf{N}}
\DeclareMathOperator{\Tw}{Tw}
\DeclareMathOperator{\Def}{Def}
\DeclareMathOperator{\im}{im}
\DeclareMathOperator{\cone}{cone}
\DeclareMathOperator{\colim}{colim}
\DeclareMathOperator{\holim}{holim}
\DeclareMathOperator{\hoker}{hoker}
\DeclareMathOperator{\TotCof}{TotCof}
\DeclareMathOperator{\Fr}{Fr}
\DeclareMathOperator{\Harr}{Harr}
\DeclareMathOperator{\gr}{gr}
\DeclareMathOperator{\HS}{HS}
\DeclareMathOperator{\MC}{MC}
\newcommand{\op}{\mathsf}
\newcommand{\alg}[1]{\mathfrak{#1}} 
\newcommand{\mF}{\mathcal{F}}       
\newcommand{\p}{\partial}
\newcommand{\Conf}{\mathrm{Conf}}
\newcommand{\OmPA}{\Omega_{\mathrm{PA}}}
\newcommand{\SO}{\mathrm{SO}}
\newcommand{\Ha}{\mathsf{H}}
\newcommand{\enV}[1][n]{{\mathsf{e}_{#1}^{\vee}}}
\newcommand{\Lie}{\mathsf{Lie}}
\newcommand{\hoLie}{\mathsf{hoLie}}
\newcommand{\Com}{\mathsf{Com}}
\newcommand{\hCom}{\Com_{\infty}}
\newcommand{\FM}{\mathsf{FM}}
\newcommand{\SFM}{\mathsf{SFM}}
\newcommand{\aFM}{\mathsf{aFM}}
\newcommand{\mFM}{\mathsf{mFM}}
\newcommand{\EE}{\mathsf{E}}    
\newcommand{\SC}{\mathsf{SC}}
\newcommand{\GG}[1]{\mathsf{G}_{#1}}
\newcommand{\GGt}[1]{\tilde{\mathsf{G}}_{#1}}
\newcommand{\Gra}{\mathsf{Gra}}
\newcommand{\Graphs}{\mathsf{Graphs}}
\newcommand{\SGra}{\mathsf{SGra}}
\newcommand{\SGraphs}{\mathsf{SGraphs}}
\newcommand{\aGraphs}{\mathsf{aGraphs}}
\newcommand{\mGraphs}{\mathsf{mGraphs}}
\newcommand{\GC}{\mathrm{GC}}
\newcommand{\fGC}{\mathrm{fGC}}
\newcommand{\SGC}{\mathrm{SGC}}
\newcommand{\fSGC}{\mathrm{fSGC}}
\newcommand{\aGC}{\mathrm{aGC}}
\newcommand{\mGC}{\mathrm{mGC}}
\newcommand{\KGC}{\mathrm{KGC}}
\newcommand{\dGC}{\mathrm{dGC}}
\newcommand{\zz}[1]{\mathrm{z}_{#1}}
\newcommand{\ze}{\zz{\varepsilon}}
\newcommand{\col}[1]{\mathfrak{#1}}
\newcommand{\OO}{\mathcal{O}}
\newcommand{\BF}{\mathcal{BF}}
\newcommand{\hotimes}{\mathbin{\hat{\otimes}}}
\newcommand{\lotimes}{\otimes^\mathbb{L}}
\NewDocumentCommand{\qiso}{s}{
  \IfBooleanTF #1
  {\xleftarrow{\sim}}
  {\xrightarrow{\sim}}%
}
\tikzset{
  ext/.style={circle, draw, fill=white, inner sep=2pt},
  int/.style={circle,draw,fill,inner sep=2pt},nil/.style={inner sep=2pt},
  unkv/.style={circle, fill = gray, draw, inner sep = 2pt}
}
\newcommand{\beq}[1]{\begin{equation}
\label{#1}
}
\newcommand{\eeq}{\end{equation}}
\newcommand{\tadpole}{
\begin{tikzpicture}
\node[int](v) at (0,0){};
\draw (v) edge[loop] (v);
\end{tikzpicture}
}
\newcommand{\notadp}
{{
		\begin{tikzpicture}[baseline=-.55ex,scale=.2, every loop/.style={}]
		\node[circle,draw,fill,inner sep=.5pt] (a) at (0,0) {};
		\draw (a) edge[loop] (a);
		\draw (-.2,-.2) -- (.2,.5);
		\end{tikzpicture}}}
\newcommand{\ntstG}{\Graphs^{\notadp}}
\author{Ricardo Campos\thanks{Institut de Mathématiques de Toulouse, UMR5219, Université de Toulouse, CNRS, UPS, F-31062 Toulouse Cedex 9, France. \href{mailto:ricardo.campos@math.univ-toulouse.fr}{ricardo.campos@math.univ-toulouse.fr}} \and Najib Idrissi\thanks{Université de Paris and Sorbonne Université, CNRS, IMJ-PRG, F-75013 Paris, France. \href{mailto:najib.idrissi-kaitouni@u-paris.fr}{najib.idrissi-kaitouni@u-paris.fr}} \and Pascal Lambrechts\thanks{Université catholique de Louvain, I.R.M.P., 2 Chemin du cyclotron, 1348 Louvain-la-Neuve, Belgium. \href{mailto:pascal.lambrechts@uclouvain.be}{pascal.lambrechts@uclouvain.be}} \and Thomas Willwacher\thanks{Department of Mathematics, ETH Zurich, R\"amistrasse 101, 8092 Zurich, Switzerland. \href{mailto:thomas.willwacher@math.ethz.ch}{thomas.willwacher@math.ethz.ch}}}
\title{Configuration Spaces of Manifolds with Boundary}
\begin{document}

\maketitle

\begin{abstract}
  We study ordered configuration spaces of compact manifolds with boundary.
  We show that for a large class of such manifolds, the real homotopy type of the configuration spaces only depends on the real homotopy type of the pair consisting of the manifold and its boundary.
  We moreover describe explicit real models of these configuration spaces using three different approaches.
  We do this by adapting previous constructions for configuration spaces of closed manifolds which relied on Kontsevich's proof of the formality of the little disks operads.
  We also prove that our models are compatible with the richer structure of configuration spaces, respectively a module over the Swiss-Cheese operad, a module over the associative algebra of configurations in a collar around the boundary of the manifold, and a module over the little disks operad.
\end{abstract}


\tableofcontents

\section*{Introduction}

Configuration spaces of points on manifolds are classical and yet intriguing objects in topology.
The ordered configuration space of $k$ points on a space $X$ is given by
\[ \Conf_k(X) = \{(x_1,\dots,x_k) \in X^k \mid x_i\neq x_j, \text{ for $i\neq j$} \}. \]
Despite the apparent simplicity of the definition, understanding their homotopy type, or even their rational homotopy type, has been a long-standing endeavor.

The first results in this direction were obtained by Arnold~\cite{Arnold1969} (in 2 dimensions) and Cohen \cite{Cohen1976}, who computed the cohomology of the configuration spaces of points in $\R^n$.
The real homotopy type of configuration spaces of points on smooth projective varieties were independently computed by Kriz~\cite{Kriz1994} and Totaro~\cite{Totaro1996}.

For closed simply connected manifolds, a way of computing the Betti numbers of these configuration space has been described by Lambrechts and  Stanley~\cite{LambrechtsStanley2008a}.
It has been a long-established conjecture that for such manifolds, the rational homotopy type of the configuration space depends only on the rational homotopy type of the manifold~\cite[Problem 8, p. 518]{FHT2001}.
For non-simply connected manifolds, this conjecture has a negative answer, as shown in~\cite{LoSa2005}.
Recently, three of the authors~\cite{CamposWillwacher2016,Idrissi2018b} proved that the conjecture is true when restricted to \emph{real} homotopy types.

Studying configuration spaces of manifolds with boundary is in some aspects harder, as the their homotopy type should a priori depend on the homotopy types of the manifold, its boundary, and the inclusion between the two.
Some results for computing the Betti number of configuration spaces of manifolds with boundary are known~\cite{Petersen2017}.
However, due to these difficulties, there has not been such a thorough study of the homotopy theory of configuration spaces on manifolds with boundary and in particular, the question of determining whether configuration spaces of compact manifold with boundary is a homotopy invariant remains open.
In this work, we prove that, for compact, simply connected manifolds $M$ with simply connected boundary satisfying $\dim M \ge 4$, the real homotopy type of $\Conf_{k}(M)$ only depends on the real homotopy type of the pair $\partial M \subset M$.

In mathematical physics the study of configuration spaces on manifolds with boundary is also very relevant.
For instance, in the  BV-BFV formalism~\cite{CattaneoMnevReshetikhin2018,CattaneoMnevWernli2017}, in order to perturbatively quantize gauge theory in the presence of a boundary, one needs a good understanding of the real homotopy type of configuration spaces on manifolds with boundary.
The construction of propagators and the computation of integrals given by Feynman rules admit parallels in these contexts.

The aforementioned results mostly focus on the algebro-topological properties of the configuration spaces on their own.
It has long been known, however, that configuration spaces carry rich algebraic structures using ``gluing''.

More concretely, we consider the operad of little $n$-disks, initially introduced by Boardman--Vogt~\cite{BoardmanVogt1968}, and which consists of configuration of disjoint $n$-disks (instead of points) inside the unit $n$-disk.
By considering the centers of each disk, we obtain a homotopy equivalence between this configuration space and the configuration space of points in $\R^{n}$.
However, there is a new algebraic structure on configuration of little disks, namely that of an operad.
This operadic structure is given by ``composition products'', obtained by plugging a configuration of $k$ disks inside one of the disks in a configuration of $l$ disks, to obtain a configuration of $k+l-1$ disks.

Here, for technical reason, we actually use a different model of the little $n$-disks operad.
We consider the Fulton-MacPherson compactification $\FM_n(k)$ of $\Conf_{k}(\R^{n})$~\cite{AxelrodSinger1994,FultonMacPherson1994}, obtained by allowing points to become infinitesimally close.
The collection $\FM_{n} = \{ \FM_{n}(k) \}_{k \ge 0}$ can be made into a topological operad, which is equivalent in homotopy to the little $n$-disks operad.

For a closed parallelized manifold $M$, there exists a similar compactification $\FM_M$ of the configuration space of points on $M$.
This collection $\FM_{M}$ carries the structure of an operadic right $\FM_n$-module, again using insertion of configuration.
More generally, for any $M$, one can define an operad $\FM_n^M$ in topological spaces over $M$, built from fiberwise configuration spaces.
Even if $M$ is not parallelizable, the collection $\FM_{M}$ is endowed by the structure of an operadic right module over $\FM_{n}^{M}$.

These various operadic structures on configuration spaces have received growing interest over the decades.
The configuration spaces of points on manifolds with their operadic module structure have recently seen a surge in interest, due to their central appearance in the Goodwillie--Weiss embedding calculus~\cite{BoavidaWeiss2013,Turchin2013} and factorization homology~\cite{AF2015}.
These applications require understanding of the homotopy type of the configuration spaces together with their natural operadic structures.

The first result in this direction was the rational formality of the little disks operads, shown by Tamarkin~\cite{Tamarkin2003} (for $n=2$, over $\Q$), Kontsevich~\cite{Kontsevich1999} (for all $n$, over $\R$), with further contributions over the years~\cite{GNPR2005,LambrechtsVolic2014,Petersen2014,FresseWillwacher2015}.

For closed connected orientable $M$, the real homotopy types of the configuration spaces $\FM_M$ together with the operadic structure have recently computed by three of the authors~\cite{CamposWillwacher2016, Idrissi2018b}, where ``workable'' combinatorial models were given.
This paper is a follow-up to these works, extending the methods and generalizing the results to compact orientable manifolds with boundary.

\subsection*{Summary of results}

Let $M$ be a compact orientable manifold with non-empty boundary $\p M$.
We study configuration spaces of $r$ points in the interior and $s$ points on the boundary:
\[
  \Conf_{r,s}(M) = \{(x_1,\dots,x_r, y_1,\dots ,y_s) \times (\p M)^r \times \mathring{M}^s \mid x_i\neq x_j, y_i\neq y_j \text{ for $i\neq j$} \}.
\]

There are essentially two approaches to defining algebraic structures on those spaces: one that has to do with the action of the Swiss-Cheese operad, and one that has to do with how configuration spaces behave when one glues manifolds along their boundaries.
We will describe ``graphical'' models for both approaches.
We will also define ``small'' models for configurations on the interior of $M$ together with its action of the little disks operad.

\subsubsection*{Graphical models: Swiss-Cheese action}

We can compactify $\Conf_{r,s}(M)$ in the spirit of Axelrod--Singer~\cite{AxelrodSinger1994} to obtain a compact space $\SFM_M(r,s)$, cf.\ Section~\ref{sec:configspaces} below.
These compactified spaces come with a natural operadic right action of the fiberwise Swiss-Cheese operad $\SC_n^M$ (and the $\mathsf{S}$ in $\SFM$ stands for ``Swiss-Cheese'').

More concretely, in the second color, the fiberwise little disks operad $\FM_n^M$ acts on $\SFM_M$ by insertion of configurations of points ``infinitesimally close to'' a given point in the configuration.
In the first color, we have a similar operation of insertion of configurations of points on the upper half-space, fiberwise over $\p M$.
The operations are depicted in the following illustration:
\[
  \begin{tikzpicture}[yshift=-.5cm,baseline=-.65ex]
    \draw (0,0)
    .. controls (-.7,0) and (-1.3,-.7) .. (-2,-.7)
    .. controls (-4,-.7) and (-4,1.7) .. (-2,1.7)
    .. controls (-1.3,1.7) and (-.7,1) .. (0,1);
    \begin{scope}[xshift=-2cm, yshift=.6cm, scale=1.2]
      \draw (-.5,0) .. controls (-.2,-.2) and (.2,-.2) .. (.5,0);
      \begin{scope}[yshift=-.07cm]
        \draw (-.35,0) .. controls (-.1,.1) and (.1,.1) .. (.35,0);
      \end{scope}
    \end{scope}
    \draw (0,.5) ellipse (.1cm and .5cm);
    \node[int,label=$x$] (v) at (-1.5,-.5) {};
  \end{tikzpicture}
  \, \circ \,
  \begin{tikzpicture}[baseline=-.65ex]
    \draw (-1,-.7)--(1,-.7)--(1.5,.7)--(-.5,.7)--cycle;
    \node[int] (w1) at (-.3,0){};
    \node[int] (w2) at (.3,.3){};
    \node at (.5,-.4) {$\scriptstyle T_x M$};
  \end{tikzpicture}
  \mapsto
  \begin{tikzpicture}[yshift=-.5cm,baseline=-.65ex]
    \draw (0,0)
    .. controls (-.7,0) and (-1.3,-.7) .. (-2,-.7)
    .. controls (-4,-.7) and (-4,1.7) .. (-2,1.7)
    .. controls (-1.3,1.7) and (-.7,1) .. (0,1);
    \begin{scope}[xshift=-2cm, yshift=.6cm, scale=1.2]
      \draw (-.5,0) .. controls (-.2,-.2) and (.2,-.2) .. (.5,0);
      \begin{scope}[yshift=-.07cm]
        \draw (-.35,0) .. controls (-.1,.1) and (.1,.1) .. (.35,0);
      \end{scope}
    \end{scope}
    \draw (0,.5) ellipse (.1cm and .5cm);
    \node[int] (v) at (-1.5,-.5) {};
    \begin{scope}[scale=.5,xshift=-.5cm,yshift=-1cm]
      \draw (v) --(-1,-.7) (v)--(1,-.7) (v)--(-.5,.7) (v)--(1.5,.7);
      \draw[fill=white] (-1,-.7)--(1,-.7)--(1.5,.7)--(-.5,.7)--cycle;
      \node[int] (w1) at (-.3,0){};
      \node[int] (w2) at (.3,.3){};
    \end{scope}
  \end{tikzpicture}
\]
\[
  \begin{tikzpicture}[yshift=-.5cm,baseline=-.65ex]
    \draw (0,0)
    .. controls (-.7,0) and (-1.3,-.7) .. (-2,-.7)
    .. controls (-4,-.7) and (-4,1.7) .. (-2,1.7)
    .. controls (-1.3,1.7) and (-.7,1) .. (0,1);
    \begin{scope}[xshift=-2cm, yshift=.6cm, scale=1.2]
      \draw (-.5,0) .. controls (-.2,-.2) and (.2,-.2) .. (.5,0);
      \begin{scope}[yshift=-.07cm]
        \draw (-.35,0) .. controls (-.1,.1) and (.1,.1) .. (.35,0);
      \end{scope}
    \end{scope}
    \draw (0,.5) ellipse (.1cm and .5cm);
    \node[int,label=180:{$y$}] (v) at (-.1,.5) {};
  \end{tikzpicture}
  \, \circ \,
  \begin{tikzpicture}[baseline=-.65ex]
    \draw (-1,-.7)--(1,-.7)--(1.5,.7)--(-.5,.7)--cycle;
    \draw (0,-.7) --(.5,.7);
    \node[int] (w1) at (-.3,0){};
    \node[int] (w2) at (.35,.3){};
    \node at (.5,-.4) {$\scriptstyle T_y M$};
  \end{tikzpicture}
  \mapsto
  \begin{tikzpicture}[yshift=-.5cm,baseline=-.65ex]
    \draw (0,0)
    .. controls (-.7,0) and (-1.3,-.7) .. (-2,-.7)
    .. controls (-4,-.7) and (-4,1.7) .. (-2,1.7)
    .. controls (-1.3,1.7) and (-.7,1) .. (0,1);
    \begin{scope}[xshift=-2cm, yshift=.6cm, scale=1.2]
      \draw (-.5,0) .. controls (-.2,-.2) and (.2,-.2) .. (.5,0);
      \begin{scope}[yshift=-.07cm]
        \draw (-.35,0) .. controls (-.1,.1) and (.1,.1) .. (.35,0);
      \end{scope}
    \end{scope}
    \draw (0,.5) ellipse (.1cm and .5cm);
    \node[int] (v) at (-.1,.5) {};
    \begin{scope}[scale=.5,xshift=1.5cm,yshift=1cm]
      \draw (v) --(-1,-.7) (v)--(1,-.7) (v)--(-.5,.7) (v)--(1.5,.7);
      \draw[fill=white] (-1,-.7)--(1,-.7)--(1.5,.7)--(-.5,.7)--cycle;
      \draw (0,-.7) --(.5,.7);
      \node[int] (w1) at (-.3,0){};
      \node[int] (w2) at (.35,.3){};
    \end{scope}
  \end{tikzpicture}
\]

Our first main result is the construction of a CDGA model $\SGraphs_{A,A_{\partial}}$ for the right $\SFM_n^M$-module $\SFM_M$, where $(A,A_{\partial})$ is a CDGA model for $(M, \partial M)$ (see Section~\ref{sec.model-conf-space-closed} for what is precisely expected of $(A,A_{\partial})$).
The proof mostly follows analogous results and constructions of Kontsevich, a strategy already used in previous works~\cite{CamposWillwacher2016,Idrissi2018b}.

Let us briefly describe this model.
Elements of $\SGraphs_{A,A_{\partial}}(r,s)$ are directed graphs with vertices of 4 kinds: aerial external vertices, numbered from $1,\dots, s$, representing the $s$ points in the interior of $M$; terrestrial external vertices, numbered $1,\dots, r$, representing the $r$ points on the boundary; and internal ``unidentifiable'' vertices, either aerial or terrestrial.
In addition, aerial vertices may be decorated by elements of $A$, and terrestrial vertices are decorated by elements of $A_{\partial}$.
Note also that edges may not start at terrestrial vertices.
For detailed construction and some more technical conditions we refer the reader to Section~\ref{sec.model-right-hopf}.

\[
  \begin{tikzpicture}[baseline=.5cm]
    \draw(-1,0) -- (1,0);
    \node[ext,label=0:{$\scriptstyle a_2$}] (v1) at (0.5,1) {$\scriptstyle 2$};
    \node[ext,label=180:{$\scriptstyle a_1$}] (v2) at (-.5,1) {$\scriptstyle 1$};
    \node[int] (v3) at (0,.5) {};
    \node[ext, fill=white] (w1) at (-0.7,0) {$\scriptstyle 1$};
    \node[ext, fill=white,label=-90:{$\scriptstyle b_2$}] (w2) at (0.7,0) {$\scriptstyle 2$};
    \node[int,label=-90:{$\scriptstyle b_1$}] (w3) at (0,0) {};
    \draw[-latex] (v1) edge (v2) edge (w2) edge (w3) (v3) edge (v1) edge (v2) edge (w3) (v2) edge (w1);
  \end{tikzpicture}
  \text{ with } a_i \in A, b_j \in A_{\partial}.
\]

We can define similarly a graphical model $\SGraphs_n^A$ of the fiberwise Swiss-Cheese operad, and there is a coaction of $\SGraphs_n^A$ on $\Graphs_{A,A_{\partial}}$ by explicit combinatorial formulas on graphs.
Furthermore, all graphical models have a natural dg commutative algebra structure, given by gluing diagrams at external vertices.

Our first main result is then that these graphical objects are indeed models for the topological configuration spaces, in the sense that they are quasi-isomorphic to the CDGAs of differential forms on those spaces:
\begin{theoremintro}[See Theorem~\ref{thm.main-qiso}]
  \label{thm.intro.sgraphs-qiso}
  Let $M$ be an oriented compact manifold with boundary $\p M\neq \emptyset$.
  Then there is zigzag of quasi-isomorphisms relating the pairs
  \[
    (\SGraphs_{A,A_{\partial}},\SGraphs_n^A,\Graphs_{n}^{A}) \simeq (\OmPA(\SFM_M), \OmPA(\SFM_n^M), \OmPA(\FM_{n}^{M}))
  \]
  compatible with all structures, i.e., with the dg commutative algebra structure and the operadic action of the second member of the pairs on the first.
\end{theoremintro}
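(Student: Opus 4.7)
The strategy follows the Kontsevich-style formality proof for the little disks operad, adapted to the manifold-with-boundary setting along the lines of the authors' earlier work \cite{CamposWillwacher2016,Idrissi2016}. The plan is to construct an explicit integration map from each graph complex into the PA forms on the corresponding compactified configuration space, show that it is a morphism of CDGAs respecting the operadic (co)action, and finally show that it is a quasi-isomorphism via a spectral sequence.

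First I would construct the propagator forms. On $\SFM_n^M$ I would choose a global angle form or ``propagator'' $\varphi$, a PA form of degree $n-1$ on the compactified fiberwise configuration space of two points, satisfying symmetry, closedness modulo the diagonal boundary strata, and the normalization condition that its integral along a fiber of the blowup of the diagonal yields $1$. On $\SFM_M$ the analogous propagator has to be built relative to the boundary: it must satisfy, in addition, appropriate boundary conditions near the stratum where a point approaches $\p M$, so that integrals against it behave correctly on the terrestrial boundary faces. Such a propagator is produced by averaging Kontsevich's propagator on the Fulton--MacPherson-type compactification, modified by forms pulled back from the CDGA models $A$ and $A_\partial$ so that the differential of $\varphi$ matches the diagonal class of the pair $(M,\partial M)$.

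Next I would define integration maps $\SZphi \colon \SGraphs_{A,A_\partial} \to \OmPA(\SFM_M)$, and similarly $\ZZ{\varphi}$ for $\SGraphs_n^A$ and $\Graphs_n^A$, by the familiar recipe: decorations $a\in A$ or $b\in A_\partial$ at a vertex become pullbacks of forms along the evaluation maps, each edge contributes the pullback of the appropriate propagator along the corresponding two-point projection, and internal vertices are integrated out via the PA pushforward along the forgetful map that forgets those vertices. The fact that this is a chain map is a Stokes' theorem argument: the boundary of the integration domain decomposes into strata (collapses of subsets of internal vertices, or subsets touching $\p M$), and for each stratum one must check either that it matches the combinatorial differential on graphs or that the corresponding integral vanishes. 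The vanishing of ``hidden faces'' (more than two vertices collapsing) follows from the Kontsevich vanishing lemma; the new ingredients are the vanishing statements for faces where internal vertices approach $\p M$, which are handled using the boundary conditions imposed on the propagator together with Kontsevich-style degree/symmetry arguments specific to the terrestrial strata. Compatibility with the multiplication by gluing at external vertices is automatic since both sides are computed by the same pullbacks and pushforwards.

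The compatibility with the operadic coaction of $\SGraphs_n^A$ on $\SGraphs_{A,A_\partial}$ (and of $\Graphs_n^A$ on $\SGraphs_n^A$) is a direct check: on the topological side, the action of $\SFM_n^M$ on $\SFM_M$ corresponds to insertion of infinitesimal configurations, and on the graph side the coaction is defined to mirror exactly this insertion combinatorially. One has to compare the integrals before and after insertion; the two agree because the propagator on $\SFM_M$ restricts on the boundary stratum of an insertion to the propagator on $\SFM_n^M$, by construction. Finally, to prove that these morphisms are quasi-isomorphisms, I would filter both sides by the arity $r+s$ and then by the number of internal vertices. On each associated graded the comparison reduces, via the usual Kontsevich argument, to the formality morphism for the Swiss-Cheese operad together with the known model for the pair $(M,\p M)$; here one invokes the structural results for $(A,A_\partial)$ from Section~\ref{sec.model-conf-space-closed} and the Poincaré--Lefschetz duality ensuring that the diagonal class is correctly represented.

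The hardest step will be the vanishing of the boundary face integrals at the terrestrial stratum: unlike in the closed case, the propagator does not simply vanish there, and one must carefully exploit the two-colored structure of the Swiss-Cheese boundary, the constraints that edges cannot start at terrestrial vertices, and the involutive pairing coming from the model of $(M,\partial M)$ to show that all potentially anomalous contributions cancel in pairs or vanish by degree. Once this is established, the remaining arguments are essentially formal adaptations of the closed-case proofs.
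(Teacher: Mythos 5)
Your overall architecture (a propagator with prescribed boundary behaviour, a Feynman-rule integration map, Stokes' formula for the chain-map property, and compatibility of operadic insertion with restriction of the propagator to boundary strata) is the same as the paper's. However, there are two genuine gaps.

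First, the boundary faces where internal vertices collapse onto $\p M$ do \emph{not} vanish, and the proof does not try to make them vanish. These faces produce the Kontsevich-type coefficients $c$, the fiberwise Maurer--Cartan element $z^{\partial}\in A_{\partial}\hotimes\SGC_{n}^{\vee}$, and the global partition function $Z$, all of which enter the \emph{definition} of the differential on $\SGraphs_{A,A_{\partial}}$ (contraction of subgraphs into terrestrial vertices weighted by $z^{\partial}$, and evaluation of internal components by $Z$). Your plan to show that ``all potentially anomalous contributions cancel in pairs or vanish by degree'' aims at the wrong target: already the tree-level term $z_{0}^{\partial}$ is nonzero, and the tree part of $Z$ encodes the real homotopy type of the inclusion $\p M\to M$, so it cannot vanish in general. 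If you insist on vanishing, your combinatorial differential is missing terms and $\omega$ fails to be a chain map; the correct move is to absorb these faces into the twist and the quotient by the partition function, and only afterwards control the Maurer--Cartan elements up to gauge equivalence.

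Second, your concluding step --- filter by arity and by the number of internal vertices, and reduce the associated graded to Swiss-Cheese formality plus the model of the pair $(M,\p M)$ --- does not actually compute anything: the content of the theorem is precisely the identification of $H^{*}(\SGraphs_{A,A_{\partial}}(r,s))$ with $H^{*}(\Conf_{r,s}(M))$, which is not known in advance and does not follow from local formality. The paper instead argues by induction on $(r,s)$ using the fibrations $\SFM_{M}(r,s+1)\to\SFM_{M}(r,s)$ and $\SFM_{M}(r+1,0)\to\SFM_{M}(r,0)$, whose fibers are $M$ (resp.\ $\p M$) with points removed. One shows that the induced map on ``algebraic fibers'' $\SGraphs_{A,A_{\partial}}(r,s+1)\otimes_{\SGraphs_{A,A_{\partial}}(r,s)}\K\to\OmPA(\mathrm{fiber})$ is a quasi-isomorphism by an explicit splitting of the graph complex in the style of Lambrechts--Voli\'c, and then concludes by a Serre-spectral-sequence comparison (Proposition~\ref{prop:fiberqiso}). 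Some version of this fibration argument, or an equivalent direct computation of the cohomology of the global graph complex, is needed to close your proof.
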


We note that the graded object $\SGraphs_{A,A_{\partial}}$ depends on $M$ only through the homotopy type of $M$, while that is certainly not true for the real homotopy type of the configuration spaces.
The dependence on $M$ in $\Graphs_M$ comes from the differential.
More concretely, the differential, and hence all dependence on $M$, is neatly encoded by a Maurer--Cartan element $Z_M$ in a certain graph complex.
Physically, this Maurer--Cartan element corresponds to the partition function in the underlying topological field theory, taking values in the complex of vacuum Feynman diagrams.
In this paper we will hence call this special MC element $Z_M$ that governs the real homotopy type of our configuration spaces the ``partition function'', although we will not discuss any connections to physics.

The partition function can actually be evaluated under good conditions.
In particular, if $M$ and $\p M$ are simply connected, and $\dim(M)\geq 5$, then the partition function only depends on the real homotopy type of $M$:
\begin{corollaryintro}[Corollary~\ref{cor.inv-htp1}]\label{cor:mainSC}
  If $M$ and $\p M$ are simply connected, and $\dim(M)\geq 5$ then the real homotopy type of $\SFM_M$ (as space, and as right module under the fiberwise Swiss-Cheese operad) only depends on the real homotopy type of the map $\p M \to M$.
\end{corollaryintro}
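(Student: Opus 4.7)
My plan is to reduce the corollary to a statement about the partition function $Z_M$ via Theorem~\ref{thm.intro.sgraphs-qiso}. By that theorem, the real homotopy type of $\SFM_M$ as a right $\SFM_n^M$-module is encoded by the graphical model $\SGraphs_{A,A_\partial}$ acted on by $\SGraphs_n^A$. As graded objects these depend on $M$ only through the CDGA pair $(A,A_\partial)$, so all residual dependence on $M$ is concentrated in the differential, and the differential is itself determined by the Maurer--Cartan element $Z_M$ in the relevant graph complex. Therefore it suffices to verify two independent statements: (i) that under the hypotheses $(A,A_\partial)$ is determined up to quasi-isomorphism of pairs by the real homotopy type of $\partial M \hookrightarrow M$, and (ii) that $Z_M$ is determined up to gauge equivalence by this same data.

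For (i) I would invoke Sullivan's theory: since $M$ and $\partial M$ are simply connected, real CDGA models of the pair are functorial invariants of the real homotopy type of the inclusion, so any two such choices $(A,A_\partial)$ and $(A',A'_\partial)$ are connected by a zigzag of CDGA-pair quasi-isomorphisms. I then need to show the construction $(A,A_\partial)\mapsto \SGraphs_{A,A_\partial}$ together with its action is homotopy invariant in the input; this is a formal check once the partition function is canonical, because everything else about $\SGraphs_{A,A_\partial}$ is natural in the input pair.

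The core of the proof, and the main obstacle, is (ii): showing that $Z_M$ is canonical up to gauge. The strategy I would follow is the one used for closed manifolds in~\cite{Idrissi2016,CamposWillwacher2016}: realize $Z_M$ as a Maurer--Cartan element in a certain graph complex controlling deformations of the module structure, and prove that the obstruction cohomology vanishes in the relevant degrees. The conditions $\dim M\geq 5$ and simply-connectedness of $M$ and $\partial M$ should enter precisely here: edges contribute cohomological degree $n-1\geq 4$ (so that internal components of graphs have sufficient connectivity), and simple connectivity of both the bulk and the boundary lets us apply the connectivity estimates for Kontsevich-type graph complexes in both the aerial and terrestrial parts. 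Together these force the space of gauge classes of candidate partition functions, with prescribed boundary data coming from $(A,A_\partial)$, to be a point.

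Once (i) and (ii) are established, applying Theorem~\ref{thm.intro.sgraphs-qiso} in both directions produces a zigzag of CDGA quasi-isomorphisms compatible with the Swiss-Cheese module structure between $\OmPA(\SFM_M)$ and $\OmPA(\SFM_{M'})$ whenever $(M,\partial M)$ and $(M',\partial M')$ share the real homotopy type of the inclusion, which is the conclusion of the corollary. The bulk of the technical work will go into the vanishing statement used in step (ii); by contrast, step (i) and the final assembly are essentially formal given the machinery of the paper.
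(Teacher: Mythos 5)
Your overall reduction --- everything comes down to the gauge class of the Maurer--Cartan element $Z$, and the hypotheses $\dim M\geq 5$, $H^1(M)=H^1(\p M)=0$ enter through degree bounds on graph complex cohomology --- matches the paper's proof in Section~\ref{sec.global-maurer-cartan}, and the vanishing input you point to is exactly Lemma~\ref{lem:GCdegbounds} and Proposition~\ref{prop:HKGCM}. Note, though, that the paper sidesteps your step~(i) entirely: it uses the ``canonical combinatorial'' diagonal data $A=S(\tilde H(M)\oplus H(M,\p M))$, $A_\p=S(\tilde H(\p M))$ of Example~\ref{exa.diag-data}, which as a graded object depends only on cohomology, so no Sullivan-theoretic functoriality of the CDGA pair is needed; the dependence on the choice of MC element is handled by the general functoriality statements of Section~\ref{sec.functoriality-models}.

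The gap is in step~(ii), specifically the claim that the space of gauge classes of candidate partition functions ``is a point.'' It is not, and it should not be: in the paper's setup the loop-order-zero (tree) part of $Z$ is precisely where the real homotopy type of $\p M\to M$ lives, so the moduli space of tree-level MC elements is as large as the set of such homotopy types with the given cohomology. What the paper actually proves is a two-part statement: (a) Proposition~\ref{prop:kgc-harr-qiso} identifies the twisted tree-level complex $\KGC_M^{\tree,Z}$ with a Harrison-type complex governing $\hCom$-structures on $H(M)$ and $H(\p M)$ together with an $\infty$-map between them, whence Corollary~\ref{cor:ZtreedM}: $Z^{\tree}$ encodes exactly the real homotopy type of the inclusion; and (b) Proposition~\ref{prop:HKGCM} shows that $\mF^1\KGC_M$ has no cohomology in the relevant non-negative degrees, so $Z$ is gauge-determined by $Z^{\tree}$. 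Your vanishing argument only supplies (b). If you instead feed in a genuine Sullivan model of the pair (your step~(i)) in the hope that the tree-level deformations become trivial relative to that model, you still have to prove that triviality, which amounts to re-proving (a) in disguise; without some version of (a), the argument does not close.
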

More concretely, the real homotopy type is precisely encoded in the tree piece of the partition function.
The above result hence means that there are no ``quantum corrections''.

Note that for $\dim M \le 4$, using the (generalized) Poincaré conjecture (see Freedman~\cite{Freedman1982} for $n=4$ and Perelman~\cite{Perelman2002,Perelman2003} for $n = 3$), if $M$ and its boundary are simply connected then $M$ is homeomorphic to $D^{3}$ or $D^{4}$ (or, depending on convention, $D^{0}$ if $\varnothing$ is considered simply connected.
Therefore, in dimension $\le 4$, if $(M, \partial M)$ and $(M', \partial M')$ are homotopy equivalent, then they are homeomorphic.
Since homeomorphic spaces have homeomorphic configuration spaces, Corollary~\ref{cor:mainSC} thus holds vacuously for the real homotopy type of the \emph{spaces} $\SFM_{M}$, as there is at most one possible manifold in each dimension to consider.
However, the right action of the fiberwise Swiss-Cheese operad depends on tangent spaces, and thus the diffeomorphism types of the manifold.
In dimension $\le 3$, $M$ must also be diffeomorphic to $D^{3}$, so Corollary~\ref{cor:mainSC} holds in full.
The existence of exotic $\R^{4}$ prevents us from concluding in dimension $4$ and we do not know if there exists a counterexample to real homotopy invariance in dimension $4$.

\subsubsection*{Graphical models: gluing at the boundary}

There is an alternative viewpoint on the configuration spaces of points on manifolds with boundary, that both gives rise to simpler models, and to algebraic structure which is not easily extracted from (although contained in) the Swiss-Cheese action above.
Let us now only consider configuration spaces
\[
  \Conf_{r}(M) \coloneqq \Conf_{0,r}(M)
\]
of points in the interior, with no points on the boundary.
Also consider the configuration space of points on $\p M\times I$, i.e., $\Conf_{r'}(\p M \times I)$, where $I = (0,+\infty)$.
The collection of the latter spaces (for various $r'$) naturally forms an algebra object (more precisely, an $E_1$-algebra object), the product being the gluing of the intervals
\[
  \begin{tikzpicture}[scale=.5, baseline=-.65ex]
    \begin{scope}
      \clip (0,-2.1) rectangle (-1,2.1);
      \draw (0,0) ellipse (.7cm and 2 cm);
    \end{scope}
    \draw (3,0) ellipse (.7cm and 2 cm);
    \draw (0,2) -- (3,2) (0,-2)--(3,-2);
    \node[int] at (0,1) {};
    \node[int] at (1,-1.3) {};
  \end{tikzpicture}
  \begin{tikzpicture}[scale=.5, baseline=-.65ex]
    \begin{scope}
      \clip (0,-2.1) rectangle (-1,2.1);
      \draw (0,0) ellipse (.7cm and 2 cm);
    \end{scope}
    \draw (3,0) ellipse (.7cm and 2 cm);
    \draw (0,2) -- (3,2) (0,-2)--(3,-2);
    \node[int] at (2,-.3) {};
    \node[int] at (1,-1) {};
    \node[int] at (0,.1) {};
  \end{tikzpicture}
  \mapsto
  \begin{tikzpicture}[scale=.5, baseline=-.65ex]
    \begin{scope}
      \clip (0,-2.1) rectangle (-1,2.1);
      \draw (0,0) ellipse (.7cm and 2 cm);
    \end{scope}
    \draw[dotted] (3,0) ellipse (.7cm and 2 cm);
    \draw (6,0) ellipse (.7cm and 2 cm);
    \draw (0,2) -- (6,2) (0,-2)--(6,-2);
    \node[int] at (0,1) {};
    \node[int] at (1,-1.3) {};
    \node[int] at (5,-.3) {};
    \node[int] at (4,-1) {};
    \node[int] at (3,.1) {};
  \end{tikzpicture}
\]
This $E_{1}$-algebra naturally acts on $\Conf_M(r)$ by gluing at the boundary:
\tikzset{
  show curve controls/.style={
      postaction={
          decoration={
              show path construction,
              curveto code={
                  \draw [blue]
                  (\tikzinputsegmentfirst) -- (\tikzinputsegmentsupporta)
                  (\tikzinputsegmentlast) -- (\tikzinputsegmentsupportb);
                  \fill [red, opacity=0.5]
                  (\tikzinputsegmentsupporta) circle [radius=.5ex]
                  (\tikzinputsegmentsupportb) circle [radius=.5ex];
                }
            },
          decorate
        }}}
\[
  \begin{tikzpicture}[yshift=-.5cm,baseline=-.65ex]
    \draw (0,0)
    .. controls (-.7,0) and (-1.3,-.7) .. (-2,-.7)
    .. controls (-4,-.7) and (-4,1.7) .. (-2,1.7)
    .. controls (-1.3,1.7) and (-.7,1) .. (0,1);
    \begin{scope}[xshift=-2cm, yshift=.6cm, scale=1.2]
      \draw (-.5,0) .. controls (-.2,-.2) and (.2,-.2) .. (.5,0);
      \begin{scope}[yshift=-.07cm]
        \draw (-.35,0) .. controls (-.1,.1) and (.1,.1) .. (.35,0);
      \end{scope}
    \end{scope}
    \draw (0,.5) ellipse (.1cm and .5cm);
  \end{tikzpicture}
  \begin{tikzpicture}[yshift=-.5cm, baseline=-.65ex]
    \begin{scope}
      \clip (0,-.1) rectangle (-.5,1.1);
      \draw (0,.5) ellipse (.1cm and .5cm);
    \end{scope}
    \draw (.75,.5) ellipse (.1cm and .5cm);
    \draw (0,0)--(.75,0) (0,1)--(.75,1);
    \node[int] at (.375,.75) {};
    \node[int] at (.54,.175) {};
  \end{tikzpicture}
  \mapsto
  \begin{tikzpicture}[yshift=-.5cm,baseline=-.65ex]
    \draw (0,0)
    .. controls (-.7,0) and (-1.3,-.7) .. (-2,-.7)
    .. controls (-4,-.7) and (-4,1.7) .. (-2,1.7)
    .. controls (-1.3,1.7) and (-.7,1) .. (0,1);
    \begin{scope}[xshift=-2cm, yshift=.6cm, scale=1.2]
      \draw (-.5,0) .. controls (-.2,-.2) and (.2,-.2) .. (.5,0);
      \begin{scope}[yshift=-.07cm]
        \draw (-.35,0) .. controls (-.1,.1) and (.1,.1) .. (.35,0);
      \end{scope}
    \end{scope}
    \draw[dotted] (0,.5) ellipse (.1cm and .5cm);
    \draw (.75,.5) ellipse (.1cm and .5cm);
    \draw (0,0)--(.75,0) (0,1)--(.75,1);
    \node[int] at (.375,.75) {};
    \node[int] at (.54,.175) {};
  \end{tikzpicture}
\]
Understanding these gluing operations is of high importance, because they allow the configuration space of points on a glued manifold $X=M\sqcup_{\p M} N$ to be expressed through the configuration spaces of the pieces, as a ``derived tensor product''
\[
  \Conf(X) \simeq \Conf(M) \lotimes_{\Conf(\p M\times I)} \Conf(N) .
\]
We refer the reader to~\cite{AF2015} for more explanations.

The algebraic operations above (of algebra and module, by gluing at the boundary) are encoded in our models in the Swiss-Cheese action, but not in an accessible form.
We may however describe simpler models in this case that capture those gluing operations more nicely.
To this end it is also suitable to define slightly different compactifications $\mFM_M$ and $\aFM_{\p M}$ (where ``a'' stands for ``algebra'' and ``m'' for module, see Section~\ref{sec:typeIIcompactification}).
The algebra and module structures described above are defined on the nose for these models of the configuration spaces.

In this setting we can construct significantly simpler combinatorial models for our configuration spaces $\mGraphs_A$ and $\aGraphs_{A_{\partial}}$.
Concretely, elements of $\mGraphs_A(r)$ are directed graphs with only two types of vertices, external vertices numbered $1\dots,r$, and internal vertices.
All vertices are decorated by an element on $A$.
\[
  \begin{tikzpicture}[baseline=.65ex, yshift=-.5cm]
    \node[ext,label={$\scriptstyle c_1c_2$}] (v1) at (0,0) {$\scriptstyle 1$};
    \node[ext] (v2) at (0,1) {$\scriptstyle 2$};
    \node[ext] (v3) at (1.5,0) {$\scriptstyle 3$};
    \node[ext] (v4) at (1.5,1) {$\scriptstyle 4$};
    \node[int,label={$\scriptstyle c_3$}] (v) at (.75,.5) {};
    \draw (v) edge (v1) edge (v2) edge (v3) edge (v4) (v4) edge (v3) edge (v2);
  \end{tikzpicture}
  \qquad
  \text{with }
  c_j\in A
\]
The construction of $\aGraphs_{A_{\partial}}$ is similar.

All the algebraic operations may then be encoded combinatorially on these diagrams.
Our second main result is then:
\begin{theoremintro}[See Section~\ref{sec:secondmodel}]\label{thm:B}
  For $M$ a compact oriented manifold with boundary $\p M$, we have a zig-zag of quasi-isomorphisms
  \[
    (\mGraphs_A, \aGraphs_{A_{\partial}}, \Graphs_{n}^A) \simeq (\OmPA(\mFM_M), \OmPA(\aFM_{\p M}), \OmPA(\FM_n^M))
  \]
  respecting all algebraic structures, i.e., the CDGA structures, the operadic right actions, and the $E_{1}$-algebra and module structure obtained by gluing at the boundary.
\end{theoremintro}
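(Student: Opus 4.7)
The overall strategy parallels the proof of Theorem~\ref{thm.intro.sgraphs-qiso} but exploits the fact that the configuration spaces $\mFM_M$ carry no boundary-marked external points, so simpler propagators suffice. My plan is to construct Kontsevich-style integration maps from each graphical CDGA to the corresponding CDGA of PA-forms, verify by a Stokes-type computation that these maps intertwine the differentials, and then check compatibility in turn with the commutative product, with the right operadic $\Graphs_n^A$-action, and with the $E_1$-algebra/module structure obtained by gluing along a collar of $\partial M$.

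First I would fix two propagators: a form $\varphi_M \in \OmPA(\mFM_M(2))$ representing the Poincar\'e dual of the compactified diagonal in $M \times M$, and a cylinder propagator $\varphi_\partial \in \OmPA(\aFM_{\p M}(2))$ for $\p M \times I$. The crucial requirement is a matching condition: on the boundary face of $\mFM_M(2)$ corresponding to two points escaping toward $\p M$, the form $\varphi_M$ should restrict to the pullback of $\varphi_\partial$ along a fixed tubular-neighbourhood retraction. Existence is secured by the same averaging procedure used in the closed case and in Theorem~\ref{thm.intro.sgraphs-qiso}. With these propagators in hand the integration map
\[
  \omega_\Gamma \longmapsto \int_{\mFM_M(r+k) \to \mFM_M(r)} \prod_{e \in E(\Gamma)} p_e^* \varphi_M \cdot \prod_{v \in V(\Gamma)} p_v^* a_v
\]
on $\mGraphs_A(r)$ (and the analogue on $\aGraphs_{A_\partial}$) is defined by fibre integration along the projection forgetting internal vertices. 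Compatibility with differentials is the usual PA-Stokes computation: edge-collapse faces reproduce the combinatorial differential, while faces at $\p M$ reproduce the partition-function contribution, matched by construction via $\varphi_\partial$. The CDGA structure is automatic because both the product on graphs and integration against the diagonal are realised by gluing at external vertices.

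The main obstacle, in my view, is the compatibility with the $E_1$-action of $\aFM_{\p M}$. Geometrically this action attaches an extra collar to $\p M$, so on the form side one must show that integrating over the glued configuration space matches the product of the separate integrals. My plan is to use the matching condition above: because $\varphi_M$ agrees with $\varphi_\partial$ on the collar, the fibre integral splits cleanly along the cut and the gluing identity holds on the nose, or at worst up to an exact form controlled by a homotopy of propagators. The compatibility with the right $\Graphs_n^A$-action is entirely analogous and is dictated by the behaviour of $\varphi_M$ near the small diagonal inside $\mathring{M}$, exactly as in the closed case treated in \cite{CamposWillwacher2016, Idrissi2016}.

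Finally, to promote these structure-preserving maps to quasi-isomorphisms I would filter both sides by the number of internal vertices. On the associated graded the differential loses its partition-function part, and the graphical complexes become tensor products of a Harrison-type complex on $A$ (respectively $A_\partial$) with polynomials in propagators, whose cohomology is identified with $H^*(\mFM_M)$ (respectively $H^*(\aFM_{\p M})$) by a direct computation modelled on the closed-manifold argument. A spectral sequence comparison then lifts this to a quasi-isomorphism before taking the associated graded, and by naturality of all the constructions involved the resulting zig-zag respects the CDGA, operadic, and $E_1$-algebra/module structures, yielding Theorem~\ref{thm:B}.
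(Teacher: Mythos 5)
Your construction of the maps is essentially the paper's: the propagators on $\mFM_M(2)$ and $\aFM_{\p M}(2)$ with the matching condition on the collar stratum (stratum V in the notation of Section~\ref{sec:FM2discussion}) are exactly what Sections~\ref{sec.propagator-afm} and~\ref{sec.propagator-mfm} provide, the Feynman-rule integration maps and the Stokes argument for the chain-map property are the same, and the compatibility with the $E_1$-gluing and the $\Graphs_n^A$-coaction is indeed forced by the boundary behaviour of the propagators, as in the paper.

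The gap is in the final step. Filtering by the number of internal vertices does not produce an associated graded whose cohomology is $H^*(\Conf_r(M))$: the associated graded differential retains the edge-splitting part (it preserves the number of internal vertices), and graphs with many internal vertices survive in large numbers on the $E_1$ page, so there is no ``direct computation'' identifying it with the target. More fundamentally, the ``closed-manifold argument'' you invoke is itself not a one-shot filtration comparison — it is an induction on the arity $r$ using the Fadell--Neuwirth fibrations. The paper's proof (Section~\ref{sec:afm-mfm-main-thm}) runs exactly this induction: one applies Proposition~\ref{prop:fiberqiso} to the square relating arities $r$ and $r+1$, identifies $\OmPA(\mFM_M(r+1)) \lotimes_{\OmPA(\mFM_M(r))} \K$ with the cohomology of the fiber $M$ minus $r$ points (computed by Mayer--Vietoris), and compares it with $\mGraphs_A(r+1)\otimes_{\mGraphs_A(r)}\K$, whose cohomology is extracted via the Lambrechts--Voli\'c splitting by the valence of the distinguished external vertex. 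Without this arity induction and fiberwise comparison, the quasi-isomorphism claim is not established; you should replace the internal-vertex filtration step by this argument (or supply an equivalent identification of the $E_1$ page, which would amount to reproving the Lambrechts--Stanley-type computation of $H^*(\Conf_r(M))$ from scratch).
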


Again, the objects here depend on a certain Maurer--Cartan element in a graph complex called the partition function.
This partition function can can be evaluated under good conditions, just like for $\SFM_{M}$.
The conditions here are weaker than those of Corollary \ref{cor:mainSC} (see page~\pageref{proof:cor-d}), which can heuristically be explained because the graph complex for $\SFM_{M}$ depends on a model for $\Conf(\partial M)$, which requires $\dim \partial M \ge 4 \iff \dim M \ge 5$.

\begin{corollaryintro}\label{cor:mainoGC}
  If $M$ is simply connected, and $\dim(M)\geq4$ then the real homotopy type of $\mFM_M$ (as space, and as right module over $\aFM_{N}$) only depends on the real homotopy type of the map $\p M \to M$.
\end{corollaryintro}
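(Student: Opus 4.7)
The strategy is to apply Theorem~\ref{thm:B} to reduce the corollary to a statement about partition functions in a graph complex. Explicitly, $\mFM_M$, together with its right $\aFM_{\p M}$- and $\FM_n^M$-module structures, is modeled by $\mGraphs_A$, whose underlying graded objects and operations depend only on the CDGA pair $(A, A_{\partial})$, while the differential is twisted by a Maurer--Cartan element $Z_M$ (the partition function) in an auxiliary dg Lie algebra $\mGC$ of vacuum diagrams built from $(A, A_{\partial})$. It therefore suffices to establish: (i) a CDGA pair modeling $(M, \p M)$ can be chosen functorially in the real homotopy type of the inclusion $\p M \hookrightarrow M$; and (ii) any two partition functions compatible with the zig-zag of quasi-isomorphisms between such models are gauge equivalent. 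Under the hypotheses $M$ simply connected and $\dim(M)\geq 4$, the existence of Poincar\'e--Lefschetz duality models (in the spirit of Lambrechts--Stanley and Cordova-Bulens--Lambrechts--Stanley) handles~(i); the real work is~(ii).

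To prove~(ii), decompose $Z_M = Z_M^{\tree} + Z_M^{\mathrm{loop}}$ according to the first Betti number of the underlying graphs. The tree part is determined algorithmically by the chosen duality model of $(M, \p M)$, hence transports correctly under quasi-isomorphisms of CDGA pairs. It remains to show that the loop part is gauge equivalent to zero, which proceeds by a standard obstruction-theoretic induction along the loop filtration of the dg Lie algebra $\mGC$: the Maurer--Cartan equation at each loop order produces an obstruction class in the cohomology of the loop-positive part of $\mGC$, and one must verify that these obstruction groups vanish in the relevant internal degrees.

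The main obstacle, and the technical heart of the argument, is precisely this cohomological vanishing, and it is the place where the hypotheses of the corollary enter. A degree count for a connected vacuum graph with $k \geq 1$ loops, $v$ internal vertices all at least trivalent, and aerial decorations by elements of $A$ of degree $\geq 2$ (here we replace $A$ by a Sullivan model with no generators in degree~$1$, using simple connectivity of $M$) shows that such a graph has cohomological degree bounded below by an expression of the form $(n-3)k + c$ with $c > 0$, which is strictly positive as soon as $n \geq 4$ and $k \geq 1$. Since $Z_M^{\mathrm{loop}}$ has total degree zero, every obstruction automatically vanishes, and an inductive gauge transformation removes the loop part entirely, leaving only the tree piece prescribed by the duality model. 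The same estimate governs the auxiliary Maurer--Cartan data encoding the $\aFM_{\p M}$-action: terrestrial vertices carry $A_{\partial}$-decorations, but the loop order is measured by the bulk vertices decorated by $A$, so simple connectivity of $\p M$ is not needed — this is precisely the source of the improved hypotheses over Corollary~\ref{cor:mainSC}. Combining~(i) and~(ii) with Theorem~\ref{thm:B} yields the claim.
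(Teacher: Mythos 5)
Your overall strategy coincides with the paper's: reduce via Theorem~\ref{thm:B} to the gauge class of the Maurer--Cartan element $w+W \in \aGC^{\vee}_{A_{\partial}} \ltimes \mGC^{\vee}_{A}$, observe that its tree part encodes precisely the real homotopy type of $\p M \to M$, and kill the positive-loop-order part by a degree count. However, the step you present as "the technical heart" is exactly where your write-up has a gap: your degree count presupposes that all internal vertices are at least trivalent, and this is not automatic. The full complex $\aGC^{\vee}_{A_{\partial}} \ltimes \mGC^{\vee}_{A}$ does contain nontrivial classes of positive loop order built from bivalent vertices (see the cycle graphs $L_{k}$ of Lemma~\ref{lem:oGCMloops}), so the obstruction groups do not vanish there. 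One must first prove that the inclusion of the at-least-trivalent subcomplex $\aGC_{A_{\partial}}^{\vee,\geq 3} \ltimes \mGC_{A}^{\vee,\geq 3}$ is a quasi-isomorphism in the relevant degrees (Proposition~\ref{prop:incl-agc-mgc}, which requires an "antenna"-type spectral sequence argument), gauge the MC element into that subcomplex, and only then run the count of Lemmas~\ref{lem:oGCdegcounting} and~\ref{lem:oGCdegcounting2} (Corollaries~\ref{cor:vanish-agc} and~\ref{cor:vanish-mgc}). Note also that the paper's estimate is an \emph{upper} bound, $\deg \leq -(l-1)(n-3)+1$ on $\mF^{1}\mGC_{A}^{\vee,\geq 3}$, with the borderline loop-order-one, degree-one graphs excluded because all their decorations lie in $H^{1}(M)=0$; your claimed lower bound of the form $(n-3)k+c>0$ points in the opposite direction and should be reconciled with your degree conventions before the obstruction-vanishing conclusion can be drawn. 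Your explanation of why no hypothesis on $\p M$ is needed is also off: loop order in $\aGC^{\vee}_{A_{\partial}}$ is the ordinary first Betti number of the boundary graph (there are no bulk vertices there); the real reason is that the boundary bound $\deg \leq -(l-1)(n-3)$ lacks the "$+1$" of the bulk bound, so no degree-one elements survive in positive loop order regardless of $H^{1}(\p M)$.

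Separately, your step~(i) invokes Poincaré--Lefschetz duality models, which the paper only constructs for $\dim M \geq 7$ (or for manifolds admitting surjective pretty models); they are not available under the hypotheses of the corollary, so this step as written fails. It is also unnecessary: $\mGraphs_{A}$ accepts arbitrary diagonal data mapping into $M$ --- in particular the combinatorial choice $A = S(\tilde H(M) \oplus H(M,\p M))$, $A_{\partial} = S(\tilde H(\p M))$ --- and the functoriality and gauge-invariance statements of Section~\ref{sec.functoriality-models} replace any appeal to duality models.
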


In Section~\ref{sec.conn-betw-sgraphs}, we connect the two models $\SGraphs_{A,A_{\partial}}(0,-)$ (all the external vertices are in the interior) and $\mGraphs_{A}$ as comodules over $\Graphs_{n}^{A}$.
The only possible manifold in dimension $\le 3$ is $D^{3}$ by the Poincaré conjecture, so the corollary vacuously in low dimensions.

\subsubsection*{Small models and coaction of the cohomology}

Under some hypotheses about the connectivity and the dimension of $M$, we will also find some ``small'' models for $\Conf_{k}(M)$, inspired by the Lambrechts--Stanley models for configuration spaces of closed manifolds (see~\cite{Idrissi2018b} and~\cite[Appendix~A]{CamposWillwacher2016}).

Suppose both $M$ and $\partial M $ are simply connected and that $\dim M \geq 7$, so that the pair $(M, \partial M)$ admits a Poincaré--Lefschetz duality model, a notion we define in Section~\ref{sec.pretty-nice}, and let $P$ be the resulting model of $M$.
We may then use the same construction as in~\cite{Idrissi2018b} to get a CDGA $\GG{P}(k)$.
If $\partial M = \varnothing$, then $P$ is a Poincaré duality model of $M$ and we recover the Lambrechts--Stanley model of $\Conf_{k}(M)$.
We show in Theorem~\ref{thm.ga-same-cohom} that there is an isomorphism of graded vector spaces between $H^{*}(\GG{P}(k))$ and $H^{*}(\Conf_{k}(M))$ over $\Q$, which generalizes the result of~\cite{LambrechtsStanley2008a}.

However, $\GG{P}(k)$ is not a CDGA model of $\Conf_{k}(M)$ in general.
Instead we consider a ``perturbed'' version $\GGt{P}(k)$, which is isomorphic to $\GG{P}(k)$ as a dg-module but not as an algebra.
We show that $\GGt{P}(k)$ is a CDGA model of $\Conf_{k}(M)$.
Moreover, we prove that $\GGt{P}$ is a right Hopf $\enV$-comodule if $\partial M \neq \varnothing$, and if $M$ is framed then we prove that the quasi-isomorphism $\GGt{P} \simeq \OmPA^{*}(\SFM_{M}(\varnothing, -))$ is compatible with the comodule structures (over $\enV$ and $\OmPA^{*}(\FM_{n})$, respectively).

\begin{theoremintro}[See Theorems~\ref{thm.model-ga}--\ref{thm.ext-formal}]
  \label{thm.A}
  Let $M$ be a smooth, simply connected connected compact $n$-manifold with simply connected boundary of dimension at least $5$.
  Assume that either $M$ admits a surjective pretty model, or that $n \geq 7$ so that $M$ admits a Poincaré--Lefschetz duality model.
  Let $P$ be the model built either out of the surjective pretty model or the Poincaré--Lefschetz duality model.

  Then for all $k \geq 0$, the CDGA $\GGt{P}(k)$ is weakly equivalent to $\OmPA^{*}(\SFM_{M}(0,k))$, and the equivalence is compatible with the action of the symmetric group $\Sigma_{k}$; in particular, it is a model of $\Conf_{k}(M)$.
  Moreover, if $M$ is framed, then the right Hopf comodule $(\GGt{P}, \enV)$ is weakly equivalent to $(\OmPA^{*}(\SFM_{M}(0,-)), \OmPA^{*}(\FM_{n}))$.

  The same result holds with $P = H^{*}(M)$ for simply connected manifolds with simply connected boundary satisfying $\dim M \in \{4,5,6\}$.
\end{theoremintro}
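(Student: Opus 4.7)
The plan is to use Theorem~\ref{thm.main-qiso} (Theorem~\ref{thm.intro.sgraphs-qiso}) as a stepping stone: having already established that the rich combinatorial object $\SGraphs_{A,A_{\partial}}(0,k)$ models $\OmPA^{*}(\SFM_{M}(0,k))$, it suffices to exhibit a zig-zag of quasi-isomorphisms of CDGAs (respectively right Hopf $\enV$-comodules) between $\GGt{P}(k)$ and $\SGraphs_{A,A_{\partial}}(0,k)$ under the given hypotheses. The overall strategy parallels the closed-manifold case treated in \cite{Idrissi2016, CamposWillwacher2016}, but adapted to the Poincaré--Lefschetz duality / surjective pretty model setting.

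First, from the Poincaré--Lefschetz duality model (or the surjective pretty model) $P$ of the pair $(M,\p M)$, I build a CDGA map $\GG{P}(k)\to \SGraphs_{A,A_{\partial}}(0,k)$ by sending each generator of the Lambrechts--Stanley-type algebra to an explicit decorated graph: a class $a \in P$ sitting at the $i$-th external aerial vertex becomes a single external vertex decorated by $a$, and the Arnold generator $\omega_{ij}$ becomes a single edge from vertex $i$ to vertex $j$. The relations (symmetry, Arnold, and crucially the Poincaré--Lefschetz duality relation $p_{i}^{*}(\Delta_{P}) = \omega_{ij}(\mathrm{id}-p_{j}^{*})(\Delta_{P})$) should be verified graph-by-graph exactly as in the closed case, with the boundary piece handled through the terrestrial half of $\SGraphs_{A,A_{\partial}}$. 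This defines a candidate CDGA map, but it is not yet chain-compatible on the nose: the differential in $\SGraphs_{A,A_{\partial}}$ is twisted by the partition function $Z_M$, contributing both ``tree'' terms (the Poincaré--Lefschetz duality term) and potentially ``loop'' terms.

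This is where the perturbation comes in: I pass from $\GG{P}(k)$ to $\GGt{P}(k)$, whose differential is defined so as to absorb precisely the contribution of the non-tree part of $Z_M$ under our map. The central task is then to show that under the connectivity and dimension hypotheses, the loop piece of $Z_M$ is cohomologically controlled: namely, using a weight/degree argument in the relevant graph complex analogous to Corollary~\ref{cor:mainSC}, the loop-order contributions to $Z_M$ vanish modulo exact terms, so that the perturbation is consistent and $\GGt{P}(k)$ remains isomorphic as a dg-module to $\GG{P}(k)$ (hence of the expected cohomological size by Theorem~\ref{thm.ga-same-cohom}). A map of CDGAs between complexes with the same Euler characteristic and concrete cohomology comparison (again Theorem~\ref{thm.ga-same-cohom}) then upgrades surjectivity on $H^{*}$ into a quasi-isomorphism. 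The $\Sigma_{k}$-equivariance is immediate from the construction because external vertices are labelled and the map is symmetric in them.

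For the Hopf $\enV$-comodule structure when $M$ is framed, I extend the construction over the parameter $k$ and verify compatibility with the operadic insertion maps of $\FM_n$. Graphically, insertion corresponds to contracting a subset of external vertices into a single one and attaching the inserted graph; on the $\GGt{P}$ side this matches the Lambrechts--Stanley comodule structure induced by the diagonal of $P$. Here framedness of $M$ is used to define the insertion globally (trivializing the fiberwise operad as $M \times \FM_n$).  Finally, the low-dimensional case $\dim M \in \{4,5,6\}$ with $P = H^{*}(M)$ is handled by observing that formality of $(M,\p M)$ in this range of dimensions (following from simple connectivity of $M$ and $\p M$) makes $H^{*}(M)$ itself play the role of $P$, and the same argument applies verbatim.

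The main obstacle will be the control of $Z_M$: proving that only its tree part contributes, i.e., that the ``quantum corrections'' are exact in the relevant graph complex under our hypotheses. This amounts to a careful loop-order/connectivity bookkeeping in the dg Lie algebra of internally-connected graphs with boundary decorations, and it is precisely where the hypotheses on $\dim M$ and simple connectivity of $\p M$ enter. All remaining verifications—functoriality in $k$, the operadic compatibility, and the dg-module comparison of $\GGt{P}$ with $\GG{P}$—reduce to explicit combinatorial checks analogous to those in \cite{Idrissi2016}.
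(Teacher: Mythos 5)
Your high-level architecture (lean on the graphical models already proved to compute $\OmPA^{*}(\SFM_{M})$, then compare them to the small model) is the right one, but two of your key steps would fail as stated. First, your comparison map goes in the wrong direction. You propose a CDGA map $\GG{P}(k) \to \SGraphs_{A,A_{\partial}}(0,k)$ sending $\omega_{ij}$ to a single edge and claim the symmetry and Arnold relations "should be verified graph-by-graph as in the closed case". They cannot be: in the graph comodules these relations do not hold on the nose — they are only coboundaries, witnessed by the differential of a graph with exactly one internal vertex (indeed this is precisely how the \emph{perturbed} relations defining $\GGt{P}$ are manufactured in Section~\ref{sec.perturbed-model}). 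For the same reason the closed-manifold map in \cite{Idrissi2016} also goes the other way. The paper's argument runs $\SGraphs_{A,A_{\partial}}(\varnothing,-) \to \mGraphs_{A} \simeq \mGraphs_{B} \to \mGraphs_{P} \to \GGt{P}$ (Sections~\ref{sec.conn-betw-sgraphs}--\ref{sec.comp-with-pert}), with the last map \emph{killing} all graphs with internal vertices and sending an edge to $\tilde\omega_{vv'}$; the quasi-isomorphism in Proposition~\ref{qiso-graphs-ga} is then obtained by filtering on $\#\text{edges}-\#\text{vertices}$, not by the Euler-characteristic comparison you sketch (you also never establish the surjectivity on cohomology that your "upgrade" step presupposes).

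Second, you misidentify what the perturbation in $\GGt{P}$ absorbs. The corrective terms $\sum \pm\, \varepsilon_{\partial}\bigl(\rho(b)\prod_{v}\sigma''_{i_{v}}\bigr)\prod_{v}\iota_{v}(\sigma'_{i_{v}})$ are \emph{tree-level boundary} contributions coming from the section $\sigma$ (an internal vertex escaping to the boundary stratum), not from the loop part of the partition function. The vanishing of the loop ("quantum") part of the Maurer--Cartan element up to gauge is a genuinely needed, but separate, ingredient (Section~\ref{sec:oGCMC} and Appendix~\ref{sec:appxA}); it reduces the twist to $w_{0}+W_{0}$ and is what makes the filtration argument work, whereas the passage $\GG{P}\rightsquigarrow\GGt{P}$ corrects for the boundary of $M$, which is present already at tree level. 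Finally, the case $\dim M\in\{4,5,6\}$ is not "verbatim": although $M$ and $\partial M$ are each formal, the inclusion $\partial M\to M$ is generally not, and $H^{*}(M)\to H^{*}(\partial M)$ is not surjective, so $H^{*}(M)$ cannot serve directly as diagonal data; the paper instead defines $\GGt{H^{*}(M)}$ as an explicit quotient of $\mGraphs_{A}$ for a cofibrant $A$ and checks that the ideal being killed is acyclic (Theorem~\ref{thm.ext-formal}).
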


The advantage of this small model is that it can be used to do some computations, e.g., of factorization homology (see~\cite[Section~5]{Idrissi2018b}) or embedding calculus.
Note that despite the notation, $\GGt{P}$ depends not just on the model $P$ of $M$ but on the full Poincaré--Lefschetz duality model of the pair $(M, \partial M)$.

\begin{remarkintro}
  All of our models are compatible with the symmetric group actions.
  Therefore, we obtain models of the unordered configuration spaces $B_{k}(M) = \Conf_{k}(M) / \Sigma_{k}$ by considering the sub-CDGA of elements invariant under the symmetric group action.
  Note however that the unordered configuration spaces are not simply connected even if $M$ is (if $\dim M \geq 3$ and $M$ is simply connected then $\pi_{1}(B_{k}(M)) = \Sigma_{k}$) so this may give less information than expected.
  This is still sufficient to compute the cohomology, for example.
\end{remarkintro}

\subsection*{Outline}

\newcommand{\na}{{\tiny n/a} & {\tiny n/a}}
\begin{center}
  \footnotesize
  \begin{tabular}{crllll}
    \toprule
     &
     & Closed mfd
     & Swiss-Cheese
     & $E_{1}$-algebra
     & $E_{1}$-module
    \\ \midrule
    Compactif.
     & local
     & $\FM_{n}$ §\ref{sec:axelr-sing-fult}
     & $\SFM_{n}$ §\ref{sec:swiss-cheese-operad}
     & \na
    \\
     & fibered
     & $\FM_{n}^{M}$ §\ref{sec:fiberwiseLD}
     & $\SFM_{n}^{M}$ §\ref{sec:swiss-cheese-operad}
     & \na
    \\
     & global
     & $\FM_{M}$ §\ref{sec:comp-conf-space-closed}
     & $\SFM_{M}$ §\ref{sec.constr-sfmm}
     & $\aFM_{\partial M}$
     & \& $\mFM_{M}$ §\ref{sec:typeIIcompactification}
    \\ \midrule
    Propagator
     & local
     & \cite{Kontsevich1999}
     & \cite{Willwacher2015}
     & \na
    \\
     & fibered
     & \cite{CDW17}
     & §\ref{sec:locMC2}
     & \na
    \\
     & global
     & §\ref{sec:closed propagator}
     & §\ref{sec.propagator-sfm}
     & §\ref{sec.propagator-afm}
     & §\ref{sec.propagator-mfm}
    \\ \midrule
    Model
     & local
     & $\Graphs_{n}$ §\ref{sec.extens-swiss-cheese}
     & $\SGraphs_{n}$ §\ref{sec.extens-swiss-cheese}
     & \na
    \\
     & fibered
     & $\Graphs_{n}^{A}$ §\ref{sec:model-fiberwiseLD}
     & $\SGraphs_{n}^{A}$ §\ref{sec:locMC2}
     & \na
    \\
     & global
     & $\Graphs_{A}$ §\ref{sec.model-conf-space-closed}
     & $\SGraphs_{A,A_{\partial}}$ §\ref{sec.model-right-hopf}
     & $\aGraphs_{A_{\partial}}$
     & \& $\mGraphs_{A}$ §\ref{sec:agraphs-mgraphs}
    \\ \midrule
    MC elements
     & local
     & $\mu \in \GC^{\vee}_{n}$ \eqref{eq.mu}
     & $c \in \SGC_{n}^{\vee}$ \eqref{eq.konts-coeff}
     & \na
    \\
     & fibered
     & $z \in A \hotimes \GC_{n}^{\vee}$ §\ref{sec:localMC1}
     & $z^{\partial} \in A_{\partial} \hotimes \SGC_{n}^{\vee}$ §\ref{sec:locMC2}
     & \na
    \\
     & global
     & $Z \in \GC_{A}^{\vee}$
     & $Z \in \SGC_{A,A_{\partial}}^{\vee}$ §\ref{sec.global-maurer-cartan}
     & $w \in \aGC_{A_{\partial}}^{\vee}$
     & $W \in \mGC_{A}^{\vee}$ §\ref{sec:oGCMC}
    \\ \bottomrule
  \end{tabular}
\end{center}

\begin{description}
  \item[Section~\ref{sec.backgr-recoll}]
    We recall some background on cooperads and comodules over them, operads over spaces, the cohomology of compact manifolds with boundary, and pretty models.
  \item[Section~\ref{sec.pretty-nice}]
    We define Poincaré--Lefschetz duality models, a generalization of surjective pretty models, and we prove that any simply connected manifold with simply connected boundary of dimension at least $7$ admits a Poincaré--Lefschetz duality model.
  \item[Section~\ref{sec:configspaces}]
    We recall and define various compactifications for configuration spaces of Euclidean (half-)spaces and manifolds with and without boundary, inspired by the Axelrod--Singer--Fulton--MacPherson compactifications.
  \item[Section~\ref{sec.propagator}]
    We explain how to construct the ``propagators'' which will be used to define integrals on these compactified configuration spaces, using the usual Feynman rules.
  \item[Section~\ref{sec.model-closed}]
    We recall the construction of models for configuration spaces of closed manifolds~\cite{CamposWillwacher2016,Idrissi2018b} that we will generalized for compact manifolds with boundary.
    We also explain in what sense the graphical models we build are ``functorial'', which will be used in the rest of the paper.
  \item[Section~\ref{sec.model-right-hopf}]
    We build our first graphical model $\SGraphs_{A,A_{\partial}}$, and we prove that it is a model of $\Conf_{\bullet, \bullet}(M)$ as an operadic module over the Swiss-Cheese operad.
  \item[Section~\ref{sec:secondmodel}]
    We build our second graphical model, $\mGraphs_{A}$, and we prove that it is a model of $\Conf_{0,\bullet}(M)$ as a module over the $E_{1}$-algebra $\Conf_{\partial M \times \R_{>0}}$.
  \item[Section~\ref{sec.model-conf-spac}]
    We build a first small dg-module $\GG{P}(k)$, and we prove that under some hypotheses, it computes the Betti numbers of $\Conf_{k}(M)$.
    We then prove that a ``perturbed'' variant $\GGt{P}(k)$ is a CDGA model for $\Conf_{k}(M)$ as a module over the cohomology $\enV$ of the little $n$-disks operad.
    We also make precise the connection between $\SGraphs_{A,A_{\partial}}$ and $\mGraphs_{A}$.
  \item[Appendix~\ref{sec:appxA}]
    We compute the cohomology of several graph complexes that appear throughout the paper.
\end{description}

\subsection*{Acknowledgments}

R.C.\ thanks useful discussions with Konstantin Wernli and acknowledges support by the Swiss National Science Foundation Early Postdoc.Mobility grant number \texttt{P2EZP2\_174718}.
N.I.\ thanks his (now former) advisor Benoit Fresse for guidance during the completion of his PhD thesis, part of which is contained in the present work; Hisatoshi Kodani for useful remarks; acknowledges support by the European Research Council (ERC StG 678156--GRAPHCPX); and contributes to the IdEx University of Paris ANR-18-IDEX-0001.
T.W.\ acknowledges partial support by the European Research Council (ERC StG 678156--GRAPHCPX) and the NCCR SwissMap funded by the Swiss National Science Foundation.

\section{Background and recollections}
\label{sec.backgr-recoll}

We will generally work in the category of cohomologically graded cochain complexes, with a suspension defined by $(V[k])^{n} = V^{k+n}$ and appropriate signs.
A ``CDGA'' will then be a (unital) commutative differential graded algebra in this category.

We will typically aim to prove that two $\N$-graded CDGAs are quasi-isomorphic, i.e., connected by a zig-zag of CDGA morphism which induce isomorphisms on cohomology.
The intermediary CDGAs will sometimes be $\Z$-graded; recall that if two \emph{connected} $\mathbb{N}$-graded CDGAs are quasi-isomorphic in the category of $\Z$-graded CDGAs, then they are too in the category of $\mathbb{N}$-graded CDGAs (see~\cite[Proposition 4]{Idrissi2018b}).
The same remark will also apply to Hopf cooperads below (see the discussion in~\cite[Section~1.2]{Idrissi2018b}).

\subsection{Colored {(co)}operads and {(co)}modules}
\label{sec.cooperads-comodules}²

We work with (symmetric) operads, which we will generally index by finite sets, rather than by numbers.
Thus, for us an operad\index{operad} $\PP$ is a symmetric collection\index{symmetric collection}, i.e., a functor from the category of finite sets and bijections to some symmetric monoidal category (e.g., topological spaces, chain complexes\dots), equipped with a unit $\eta : 1 \to \PP(\{*\})$ as well as composition maps, for all pairs $(U, W \subset U)$:
\begin{equation}
  \circ_{W} : \PP(U/W) \otimes \PP(W) \to \PP(U),
\end{equation}
where $U/W = (U - W) \sqcup \{*\}$ is the quotient, satisfying the usual associativity, equivariance, and unitality axioms.
If we let $\underline{k} = \{ 1, \dots, k \}$ then we recover the classical notion by setting $\PP(k) \coloneqq \PP(\underline{k})$ and the composition maps $\circ_{i} : \PP(k) \otimes \PP(l) \to \PP(k+l-1)$ are given by $\circ_{\{i, \dots, i+l-1\}}$.

A right module\index{operadic right module} over an operad $\PP$ is a symmetric collection $\MM$ equipped with composition maps
\begin{equation}
  \circ_{W} : \MM(U/W) \otimes \PP(W) \to \MM(U)
\end{equation}
satisfying the usual axioms.
Cooperads and comodules are defined dually.
We will add the adjective ``Hopf''\index{Hopf operad, module\dots} to these dual structures when they are considered in the base category of CDGAs.

We also deal with special types of two-colored operads, called relative operads~\cite{Voronov1999},\index{relative operad} or Swiss-Cheese type operads.

\begin{definition}\label{def:rel-operad}
  Given an operad $\PP$, a \textbf{relative operad over} $\PP$ is an operad in the category of right $\PP$-modules.
\end{definition}

Let us now give a concrete description of relative operad.
We can encode a relative operad over $\PP$ as a bisymmetric collection\index{bisymmetric collection} $\QQ$, i.e., as a functor from the category of pairs of sets and pairs of bijections to dg-modules.
The first set in the pair corresponds to open inputs, and the second to closed inputs.
There is an identity $\eta_{\mathfrak{o}} \in \QQ(\{*\}, \varnothing)$, and the operadic composition structure maps are given by:
\begin{align}
  \circ_{T} : \QQ(U, V/T) \otimes \PP(T)
   & \to \QQ(U,V)
   & T \subset V;           \\
  \circ_{W,T} : \QQ(U/W, V) \otimes \QQ(W,T)
   & \to \QQ(U, V \sqcup T)
   & W \subset U.
\end{align}

As mentioned in the definition, we can equivalently view $\QQ$ as an operad in the category of right $\PP$-modules.
The $\PP$-module in arity $U$ is given by $\QQ(U, -)$, and one checks that the operad structure maps $\circ_{W, -} : \QQ(U/W, -) \otimes \QQ(W, -) \to \QQ(U, -)$ are morphisms of right $\PP$-modules.

\begin{remark}\label{rmk:rel-col-operad}
  The data of a relative operad over $\PP$ is also equivalent to the data of an operad with two colors (traditionally called the ``closed'' color $\col{c}$ and the ``open'' color $\col{o}$) such that operations with a closed output may only have closed inputs and are given by $\PP$.
  Indeed, given a relative operad $\QQ$ over $\PP$, we can define a colored operad $\QQ'$ by:
  \begin{align*}
    \QQ'(\underbrace{\col{o}, \dots, \col{o}}_{U}, \underbrace{\col{c}, \dots, \col{c}}_{V}; \col{c})
     & =
    \begin{cases}
      \PP(V), & \text{if } U = \varnothing, \\
      0,      & \text{otherwise}.
    \end{cases}
    \\
    \QQ'(\underbrace{\col{o}, \dots, \col{o}}_{U}, \underbrace{\col{c}, \dots, \col{c}}_{V}; \col{o})
     & = \QQ(U,V).
  \end{align*}
  and conversely.
\end{remark}

Given a (one-colored) cooperad $\CC$, a relative cooperad over $\CC$ is defined dually as a bisymmetric collection $\DD$ equipped with structure maps (which are morphisms of CDGAs):
\begin{align}
  \circ^{\vee}_{T} : \DD(U,V)
   & \to \DD(U, V/T) \otimes \CC(W)
   & T \subset V;                      \\
  \circ^{\vee}_{W,T} : \DD(U, V \sqcup T)
   & \to \DD(U/W, V) \otimes \DD(W, T)
   & W \subset U.
\end{align}

Finally, a comodule over a relative $\CC$-cooperad $\DD$ is given by a bisymmetric collection $\NN$ equipped with structure maps (which are morphisms of CDGAs):
\begin{align}
  \circ^{\vee}_{T} : \NN(U,V)
   & \to \NN(U, V/T) \otimes \CC(T)
   & T \subset V;                      \\
  \circ^{\vee}_{W,T} : \NN(U, V \sqcup T)
   & \to \NN(U/W, V) \otimes \DD(W, T)
   & W \subset U.
\end{align}

We can also define relative Hopf cooperads as relative cooperads in the category of CDGAs.

\subsection{Operads over spaces and right multimodules}\label{sec:operads_multimodules}

Let $X$ be a topological space.
We will frequently consider operads, operadic right modules and relative operads in the category $\mathrm{Top}/X$ of spaces over $X$, equipped with the symmetric monoidal product the categorical product $\times_X$.
Dually, for some CDGA $A$, one may consider the under-category $A/\mathrm{CDGA}$ of CDGAs under $A$ equipped with the symmetric monoidal product $\otimes_{A}$, and then consider (relative) Hopf cooperads in that category.

Next, let us recall the notion of operadic right multimodule\index{operadic right multimodule} from~\cite{CDW17}.
To this end fix an operad $\PP$ in spaces over $X$.
A right operadic $\PP$-multimodule is a collection of spaces $\op M$ equipped with maps
\beq{equ:MMtoX}
\op M(U) \to X^U,
\eeq
and operadic right actions
\begin{align}
  \circ_{T} : \op M(U/T) \times_X \PP(T)
   & \to \op M(U)
   & T \subset U,
\end{align}
where the fiber product is taken relative to the map $\op M(U/T)\to X$ corresponding to $T$.
The maps $\circ_{T}$ are required to satisfy natural compatibility relations, which are obvious generalizations of the the axioms required from an operadic right action.
Operadic right $\PP$-modules are trivial special cases of operadic right $\PP$-multimodules, for which the map \eqref{equ:MMtoX} takes values in the diagonal $X\subset X^U$.

Next consider a relative operad $\QQ$ over $\PP$, such that the pair $(\PP,\QQ)$ generates a two-colored operad in the category of spaces over $X$.
Then we may naturally extend the notion of right operadic multimodule to the colored setting as follows.
We say that a ($\PP$-relative) right operadic $\QQ$-multimodule $\op M$ is a right operadic $\QQ$-multimodule in operadic right $\PP$-multimodules.
More explicitly, such data consists of a bisymmetric collection $\op M$, equipped with maps
\[
  \op M(U,V) \to X^U\times X^V,
\]
and with right actions
\begin{align}
  \circ_{T} : \op M(U, V/T) \times_X \PP(T)
   & \to \op M(U,V)
   & T \subset V;             \\
  \circ_{W,T} : \op M(U/W, V) \times_X \QQ(W,T)
   & \to \op M(U, V \sqcup T)
   & W \subset U.
\end{align}
Here the fiber products $\times_X$ are taking relative to the map $\op M(U, V/T)\to X$ corresponding to $T$, or respectively to the map $\op M(U/W, V) \to X$ corresponding to $W$.
The composition morphisms again are required to satisfy natural compatibility relations.

To avoid too clumsy notation, we will call $\op M$ as above just a right $(\PP,\QQ)$-multimodule for short.

\begin{example}
  For the purposes of this paper, the relevant examples of the notions above are as follows.
  The fiberwise little disks operad $\FM_n^M$ defined in Section~\ref{sec:fiberwiseLD} is an operad in spaces over the smooth manifold $X=M$ of dimension $n$. If $M$ has a non-empty boundary, then the half-space-configuration spaces of points in the tangent spaces of boundary points may be made into a relative operad $\SFM_n^M$ for $\FM_n^M$.
  (Note here that the maps $\SFM_n^M(U)\to M$ take values in $\p M\subset M$.)
  Finally the configuration spaces of points on $M$ can be compactified into a right $(\FM_n^M,\SFM_n^M)$-multimodule.
\end{example}

The notions above have evident dualizations.
Hence, given a CDGA $A$, a cooperad $\CC$ in CDGAs under $A$, and a relative cooperad $\DD$ over $\CC$, we will use the notion of right $(\PP,\QQ)$-multicomodules.
Such a $(\PP,\QQ)$-multicomodule $\NN$ is concretely a bisymmetric collection of CDGAs equipped with maps
\[
  A^{\otimes U}\otimes A^{\otimes V}
  \to
  \NN(U,V),
\]
and furthermore coactions
\begin{align}
  \circ^{\vee}_{T} : \NN(U,V)
   & \to \NN(U, V/T) \otimes_A \CC(T)
   & T \subset V;                        \\
  \circ^{\vee}_{W,T} : \NN(U, V \sqcup T)
   & \to \NN(U/W, V) \otimes_A \DD(W, T)
   & W \subset U
\end{align}
satisfying natural compatibility relations.

\subsection{A note on the cohomology of compact orientable manifolds with boundary}
\label{sec:note-cohom-comp}

\begin{lemma}\label{lem:abc}
  Let $M$ be a compact orientable manifold with boundary $\p M\neq \emptyset$.
  Then there exist collections of forms $\{\alpha_i\}_{i=1,\dots,k}$, $\{\beta_i\}_{i=1,\dots,k}$, $\{\gamma_i\}_{i=1,\dots,l}$ on $M$ with the following properties.
  \begin{itemize}
    \item The $\alpha_i$ and $\gamma_j$ together form (i.e., represent) a basis of $H(M)$, and in particular $d\alpha_i=d\gamma_j=0$.
    \item The (boundary values of) $\alpha_i$ and $\beta_j$ together form a basis of $H(\partial M)$, in particular $d\beta_j\mid_{\p M}=0$.
    \item The $\gamma_i$ and $d\beta_j$ together form a basis of $H(M,\p M)$.
    \item We have $\gamma_j\mid_{\p M}=0$. Together with the other properties above this means that the maps in the long exact sequence
          \[
            \cdots \to H(M,\p M) \xrightarrow{f} H(M) \xrightarrow{g} H(\p M) \xrightarrow{h} H(M,\p M)[-1] \cdots
          \]
          may be expressed in our basis as
          \begin{align}
            f(\gamma_i) & = \gamma_i & f(d\beta_j) & =[d\beta_j]=0 \\
            g(\alpha_i) & = \alpha_i & g(\gamma_j) & = 0           \\
            h(\beta_i)  & = d\beta_i & h(\alpha_j) & = 0 \, .
          \end{align}
    \item The choices above can be made such that the Poincaré duality pairing on $H(\p M)$ and the Poincaré--Lefschetz duality pairing between $H(M)$ and $H(M,\p M)$ satisfy the following.
          \begin{align}
            \label{equ:abc1}
            \int_{\p M} \beta_j \alpha_i & = \int_M d\beta_j \alpha_i = \delta_{ij}
            \\
            \label{equ:abc2}
            \int_{\p M} \alpha_i\alpha_j & =  \int_{\p M} \beta_i\beta_j  = 0
            \\
            \label{equ:abc3}
            \int_M \alpha_i \alpha_j     & = \int_M \alpha_i \gamma_j =  \int_M d\beta_i \gamma_j = 0.
          \end{align}
  \end{itemize}
\end{lemma}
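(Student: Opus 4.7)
My approach combines the long exact sequence of the pair $(M,\p M)$ with Poincaré--Lefschetz duality: first build bases in cohomology, then pick form representatives with the stated vanishing properties, then iteratively adjust the representatives to obtain the integral identities.

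First I would set up the cohomology bases. Choose a complement $V \subset H(M)$ to $\ker g$ with basis $\{[\alpha_i]\}$ and a basis $\{[\gamma_j]\}$ of $\ker g$. By exactness, $\{g[\alpha_i]\}$ is a basis of $\ker h = \im g$; complete it to a basis of $H(\p M)$ with $\{[\tilde\beta_j]\}$. Then $\{h[\tilde\beta_j]\}$ together with $\{f[\gamma_j]\}$ is a basis of $H(M,\p M)$, and the stated identities for $f,g,h$ hold tautologically. As form representatives: for $\gamma_j$, start with any closed representative, whose restriction to $\p M$ is exact, and subtract an exact form on $M$ obtained by extending a primitive from $\p M$, producing a representative with $\gamma_j|_{\p M}=0$; for $\alpha_i$, pick any closed representative on $M$; for $\beta_j$, extend a closed representative of $[\tilde\beta_j]$ on $\p M$ to a form on $M$, so that $\beta_j|_{\p M}$ is closed and $d\beta_j$ represents $h[\tilde\beta_j]\in H(M,\p M)$.

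For the integral relations, the key observation is that Stokes immediately yields $\int_M \gamma_j\, d\beta_l = \pm \int_{\p M} \gamma_j|_{\p M}\,\beta_l=0$, so the matrix of the Poincaré--Lefschetz pairing $H^\ast(M)\otimes H^{n-\ast}(M,\p M)\to\R$ in our bases $(\alpha,\gamma)$ vs.\ $(d\beta,\gamma)$ is block-triangular, forcing both diagonal blocks $(\int_M \alpha_i\, d\beta_l)$ and $(\int_M \gamma_j\gamma_m)$ to be non-degenerate. I can then (a) rescale the $\beta_l$'s to achieve $\int_M \alpha_i\, d\beta_j = \delta_{ij}$, which by Stokes gives $\int_{\p M}\beta_j\alpha_i=\delta_{ij}$; (b) modify $\alpha_i\mapsto \alpha_i + \sum_j c_{ij}\gamma_j$ (preserving the normalization since $\int_M \gamma_j\, d\beta_l=0$) to kill $\int_M \alpha_i \gamma_m$. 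The identities $\int_{\p M}\alpha_i\alpha_j = \int_M d(\alpha_i\alpha_j)=0$ and $\int_M d\beta_i\gamma_j = \pm\int_{\p M}\beta_i\gamma_j|_{\p M}=0$ then hold automatically by Stokes. To achieve $\int_M \alpha_i\alpha_j=0$, modify $\alpha_i\mapsto \alpha_i + d\theta_i$ with $\theta_i|_{\p M}$ chosen to be closed; Stokes then produces a purely linear shift $\int_{\p M}\theta_i|_{\p M}\alpha_j|_{\p M} \pm \int_{\p M}\alpha_i|_{\p M}\theta_j|_{\p M}$, and by Poincaré duality on the closed manifold $\p M$ (using that the $\alpha_j|_{\p M}$'s are linearly independent in $H(\p M)$) we can solve for the $[\theta_i|_{\p M}]$ realizing the required shift $-\int_M\alpha_i\alpha_j$. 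All previously established pairings are untouched because every shift involves a factor of $\gamma_m|_{\p M}$, $d\beta_l|_{\p M}$, $d\alpha_j$, or $d(\theta_i|_{\p M})$, each of which vanishes. Finally, modify $\beta_j\mapsto \beta_j + \sum_k c_{jk}\alpha_k$ with closed $\alpha_k$: this leaves $d\beta_j$ unchanged (preserving all pairings on $M$) and leaves $\int_{\p M}\beta_j\alpha_i$ unchanged (since $\int_{\p M}\alpha_k\alpha_i=0$), while suitable choice of the $c_{jk}$'s kills $\int_{\p M}\beta_i\beta_j$.

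The main technical subtlety lies in the $\int_M\alpha_i\alpha_j$ step: a naive modification $\alpha_i\mapsto \alpha_i+d\theta_i$ without constraint on $\theta_i|_{\p M}$ produces a quadratic cross-term $\int_M d\theta_i\, d\theta_j = \int_{\p M}\theta_i|_{\p M}\, d(\theta_j|_{\p M})$, obstructing a direct linear solution. Requiring $\theta_j|_{\p M}$ to be a closed representative of its class (always possible) annihilates this cross-term, reducing the task to a linear system that is solvable by Poincaré duality on the closed manifold $\p M$.
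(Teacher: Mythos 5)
Your proof is correct and follows the same overall architecture as the paper's: extract the bases from the long exact sequence of the pair, obtain $\int_{\p M}\alpha_i\alpha_j=0$ and $\int_M d\beta_i\gamma_j=0$ for free from Stokes, deduce the nondegeneracy of the blocks $(\int_M\alpha_i\,d\beta_j)$ and $(\int_M\gamma_i\gamma_j)$ from Poincaré--Lefschetz duality, and then successively correct the representatives. The one place you genuinely diverge is the normalization $\int_M\alpha_i\alpha_j=0$: the paper achieves it by adding linear combinations of the $d\beta_j$ to the $\alpha_i$, which works because $\int_M d\beta_j\,\alpha_i=\delta_{ij}$ has already been arranged and the quadratic cross terms $\int_M d\beta_k\,d\beta_l=\pm\int_{\p M}\beta_k\,d\beta_l|_{\p M}$ vanish automatically; you instead add exact forms $d\theta_i$ whose boundary restrictions are closed and solve the resulting linear system by Poincaré duality on $\p M$. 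Both are valid: the paper's correction stays inside the span of forms already in hand and needs no new input, while yours requires (and you correctly supply) the observation that imposing $d(\theta_j|_{\p M})=0$ kills the cross term $\int_M d\theta_i\,d\theta_j$ and linearizes the problem. Similarly, your explicit modification $\beta_j\mapsto\beta_j+\sum_k c_{jk}\alpha_k$ to kill $\int_{\p M}\beta_i\beta_j$ (harmless for $d\beta_j$ and for \eqref{equ:abc1} since $\int_{\p M}\alpha_k\alpha_i=0$) just spells out what the paper compresses into ``pick the $\beta_j$ to form a dual basis to the $\alpha_i$.'' One cosmetic point: in your step (a), ``rescale'' should read ``replace by suitable linear combinations,'' since the pairing matrix need not be diagonal.
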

\begin{proof}
  We choose the $\gamma_i$ to be representatives of a basis of the image of $f$ above.
  We choose (closed extensions of representatives of) a basis $\alpha_i$ of the image of $g$.
  Irrespective of the choice of such representatives we have by Stokes theorem
  \[
    \int_{\p M}\alpha_i \alpha_j = \int_{M}d(\alpha_i \alpha_j )=0.
  \]
  We may extend the set $\{\alpha_i\}_{i=1,\dots,k}$ to a basis of $H(\p M)$ by adding elements represented by some forms $\{\beta_j\}_{j=1,\dots,k'}$.
  We understand these forms as boundary values of forms $\beta_j$ defined on $M$, by picking some (possibly non-closed) continuation to $M$. By construction of the cohomology long exact sequence above the $d\beta_j$ then span the image of $h$.
  We have thus constructed bases of $H(M)$, $H(M,\p M)$ and $H(\p M)$ as asserted above.
  The non-degeneracy of the pairing between $H(M)$ and $H(M,\p M)$ implies that the two spaces must be of equal dimension in complementary degrees, and hence $k'=k$. Retrospectively, we may hence have picked the $\beta_j$ so as to form a dual basis to the $\alpha_i$, i.e., such that \eqref{equ:abc2} and the second equality of \eqref{equ:abc1}
  hold.
  By Stokes' Theorem we also have that
  \beq{equ:abcx}
  \int_M d\beta_i \gamma_j = 0.
  \eeq
  None of the above properties are altered if we add to the $\alpha_i$ arbitrary linear combinations of the $\gamma_j$'s and $d\beta_j$'s.
  By \eqref{equ:abc1} we may add suitable linear combinations of the $\beta_j$ to ensure that
  \[
    \int_M \alpha_i \alpha_j = 0.
  \]
  (Here we denote the modified $\alpha_i$ by $\alpha_i$ again, i.e., we assume that suitable choices had already been taken from the start.)
  Finally we note that by Poincaré--Lefschetz duality and \eqref{equ:abcx} the matrix $(\int_M\gamma_i\wedge \gamma_j)_{i,j}$ must be non-degenerate and hence we may add suitable linear combinations of $\gamma_j$'s to the $\alpha_i$'s to ensure that \eqref{equ:abc3} holds.
\end{proof}

\section{Poincaré--Lefschetz duality models}
\label{sec.pretty-nice}

\subsection{Motivation}
\label{sec.motivation}

The Lambrechts--Stanley model from~\cite{Idrissi2018b} relied on a particular feature of closed orientable manifolds: Poincaré duality, i.e., the fact that for all $k \in \Z$, the pairing $H^{k}(M) \otimes H^{n-k}(M) \to \R$ given by $x \otimes y \mapsto \int_{M} x \wedge y$ is non-degenerate.
On the model-theoretic level, a Poincaré duality CDGA is a CDGA $P$ equipped with an orientation, i.e., a chain map $\varepsilon_{P} : P \to \R[-n]$ (in other words a linear map $P^{n} \to \R$ vanishing on coboundaries) such that the induced pairing $P^{k} \otimes P^{n-k} \to \R, \, x \otimes y \mapsto \varepsilon_{P}(xy)$ is non-degenerate for all $k \in \Z$.
Lambrechts and Stanley~\cite{LambrechtsStanley2008} proved that any simply connected manifold admits a Poincaré duality model.

The cohomology of a compact orientable manifold with boundary does not satisfy Poincaré duality.
Instead, it satisfies Poincaré--Lefschetz duality: for all $k \in \Z$, there is a non-degenerate pairing $H^{k}(M) \otimes H^{n-k}(M,\partial M) \to \R$ given by $x \otimes y \mapsto \int_{M} x \wedge y$.
Cordova Bulens, Lambrechts, and Stanley~\cite{CordovaBulensLambrechtsStanley2015} defined ``surjective pretty models'' to encode such a duality, roughly speaking when $M$ is obtained as the complement of a high-codimensional sub-polyhedron in a closed manifold.
In this section, we define a more flexible notion of pairs of CDGAs satisfying an appropriate version of Poincaré--Lefschetz, and which we call ``Poincaré--Lefschetz duality pairs''.

The rough idea is the following.
Suppose we have an orientable compact manifold with boundary $(M,\partial M)$.
Then we first want a model of the pair, i.e., a model of the inclusion, which should be given by a morphism of CDGAs $\lambda : B \to B_{\partial}$ where $B$ is a model of $M$ and $B_{\partial}$ is a model of $\partial M$.
In Poincaré--Lefschetz duality, we want to say something about the relative cohomology $H^{*}(M, \partial M)$.
On the level of models, relative cohomology would be represented by the homotopy kernel of $\lambda$.
The homotopy kernel is too big for duality, in general; however, if $\lambda$ is surjective, then the homotopy kernel is the kernel $K = \ker(\lambda)$.
We thus ask for $\lambda$ to be surjective.
If $\dim B_{\partial} \neq 0$ (i.e., if $\partial M \neq \varnothing$), then $\dim K < \dim B$, and there is no way of producing a non-degenerate pairing between $K$ and $B$.
As $K$ is a sub-CDGA of $B$, we should morally be considering a quotient $P = B / I$, where we kill enough elements to compensate what we have removed from $B$ to get $K$.
Poincaré--Lefschetz duality is then the statement that we can find an appropriate $I$ such that the quotient map $B \to P$ is a quasi-isomorphism and such that the pairing between $B$ and $K$ descends to a non-degenerate pairing between $P$ and $K$.
The final picture to keep in mind is the following:
\begin{equation}
  \label{eq.zigzag2}
  \begin{tikzcd}
    {} &
    K \coloneqq \ker \lambda
    \ar[d, hook]
    \ar[dl, bend right, dashed, leftrightarrow, "\text{\tiny non degen.}" {sloped,above}, "\text{\tiny pairing}" {sloped,below}]
    \\
    P \coloneqq B/I
    &
    B \ar[d, "\lambda", two heads] \ar[l, "\pi", "\sim" swap, two heads]
    \\
    {}
    & B_{\partial}
  \end{tikzcd}
\end{equation}
In the next section, we formally define Poincaré--Lefschetz duality models and pairs, and we prove their existence under good conditions.

\subsection{Definition and existence}
\label{sec.definition}

For convenience, we first define cones:

\begin{definition}
  The mapping cone of a chain map $f : X \to Y$, denoted either by $\cone(f)$ or $Y \oplus_{f} X[1]$, is given as a graded vector space by $Y \oplus X[1]$, and the differential is given by $d(y,x) = (d_{Y}y + f(x), d_{X}x)$.
\end{definition}

The data of a chain map $\varepsilon : \cone(f) \to \R$ is equivalent to the data of two linear maps $\varepsilon : Y^{0} \to \R$ and $\varepsilon_{\partial} : X^{-1} \to \R$ such that $\varepsilon_{\partial} \circ d = 0$ and $\varepsilon \circ d = \varepsilon_{\partial} \circ f$, i.e., the ``Stokes formula''.
We will use the two points of view in the sequel.

\begin{definition}
  A Poincaré--Lefschetz duality pair\index{Poincaré--Lefschetz duality pair} (or ``PLD pair'' for short) of formal dimension $n$ is a CDGA morphism $\lambda : B \to B_{\partial}$ between two connected CDGAs, equipped with a chain map $\varepsilon : \cone(\lambda) \to \R[1-n]$, and such that:
  \begin{itemize}
    \item The pair $(B_{\partial}, \varepsilon_{B_{\partial}})$ is a Poincaré duality CDGA of formal dimension $n-1$;
    \item Let $K \coloneqq \ker \lambda$, and let $\theta_{B} : B \to K^{\vee}[-n]$ be defined by $\theta_{B}(b)(k) = \varepsilon(bk)$; then we require $\theta_{B}$ to be surjective and to be a quasi-isomorphism.
  \end{itemize}
\end{definition}

The following proposition motivates the definition:

\begin{proposition}
  Let $(B \xrightarrow{\lambda} B_{\partial}, \varepsilon)$ be a PLD pair.
  Then the quotient map $B \to P = B / \ker \theta_{B}$ is a quasi-isomorphism.
  Moreover, the pairing $P^{i} \otimes K^{n-i} \to \R$, given by $x \otimes k \mapsto \varepsilon_{B}(ak)$, is well-defined and non-degenerate for all $i \in \mathbb{Z}$; equivalently, the map $\theta : P \to K^{\vee}[-n]$ induced by $\theta_{B}$ is an isomorphism of $B^{\otimes 2}$-modules.
\end{proposition}

\begin{proof}
  Since $\theta_{B}$ is a quasi-isomorphism, its kernel $\ker \theta_{B}$ is acyclic, therefore $B \to P$ is indeed a quasi-isomorphism.
  The map $\theta : P = B / \ker \theta_{B} \to K^{\vee}[-n]$ is clearly well-defined.
  It is surjective because $\theta_{B}$ is surjective, and it is injective because we modded out by the kernel.
  Hence it is an isomorphism.
\end{proof}

\begin{definition}
  A Poincaré--Lefschetz duality model\index{Poincaré--Lefschetz duality model} of $(M, \partial M)$ is a PLD pair $\lambda : B \to B_{\partial}$ which is a model of the inclusion $\partial M \subset M$, in the sense that we can fill out the following diagram:
  \[ \begin{tikzcd}
      B \ar[d, "\lambda"] & R \ar[d, "\rho"] \ar[l, dashed, "\sim" swap] \ar[r, "\sim", "f" swap] & \OmPA^{*}(M) \ar[d, "\text{res}"] \\
      B_{\partial} & R_{\partial} \ar[l, dashed, "\sim" swap] \ar[r, "\sim", "f_\partial" swap] & \OmPA^{*}(\partial M)
    \end{tikzcd}
    . \]
\end{definition}

\begin{remark}
  We can also assume that the maps $f$, $f_{\partial}$ factor through the sub-CDGAs of trivial forms in order to integrate along the fibers of the canonical projections, see~\cite[Appendix~C]{CamposWillwacher2016}.
  Indeed, this sub-CDGA is quasi-isomorphic to the full CDGA $\OmPA^{*}(-)$.
\end{remark}

\begin{remark}
  Given a PLD model as in the definition, we then have $H^{*}(P) \cong H^{*}(M)$, $H^{*}(K) \cong H^{*}(M, \partial M)$, and the isomorphism $\theta$ induces Poincaré--Lefschetz duality $H^{*}(M) \cong H_{n-*}(M, \partial M)$.
\end{remark}

\begin{example}
  \label{exa.model-dn}
  Consider the manifold $M = D^{n}$ with boundary $\partial M = S^{n-1}$.
  Intuitively, we can see $M$ as a sphere with a (thick) point removed.
  We thus get a PLD model of $(M, \partial M)$ as follows.
  The Poincaré duality model of $\partial M = S^{n-1}$ is merely $B_{\partial} = H^{*}(S^{n-1}) = \R 1 \oplus \R v$, where $\deg v = n-1$ and $v^{2} = 0$.
  The model of $M = D^{n}$ is given by $B = \R 1 \oplus \R v \oplus \R w$, with $\deg w = n$, $d v = w$, and all nontrivial products vanish.
  The surjective model of the restriction, $\lambda : B \to B_{\partial}$, is given by $\lambda(1) = 1$, $\lambda(v) = v$, and $\lambda(w) = 0$.
  Finally the orientations are given by $\varepsilon_{\partial}(v) = 1$ and $\varepsilon(w) = 1$.

  The kernel $K = \ker(\lambda)$ is then one-dimensional, spanned by $w$.
  The map $\theta_{B} : B \to K^{\vee}[-n]$ is given by $\theta_{B}(1) = w^{\vee}$, $\theta_{B}(v) = \theta_{B}(w) = 0$.
  It follows that the quotient $P = B / \ker(\theta_{B})$ is also one-dimensional, spanned by the unit $1$; in other words, $P = H^{*}(D^{n})$.
  The induced non-degenerate pairing $P^{0} \otimes K^{n} \to \R$ is the obvious one.
\end{example}

\begin{example}
  We can generalize the previous example.
  Suppose that $M$ is obtained from a closed manifold $N$ by removing an open disk.
  Suppose that $N$ has a Poincaré duality model $Q$.
  Then $\partial M = S^{n-1}$ and we can again take $B_{\partial} = H^{*}(S^{n-1}) = \R 1 \oplus \R v$.
  The model $B$ is given by $B = (Q \oplus \R v, dv = \vol_{Q})$ where $\vol_{Q}$ is the volume form of $Q$.
  The map $\lambda$ is the sum of the augmentation of $Q$ and of $\lambda(v) = v$.
  The kernel $K$ is thus the augmentation ideal $\bar{Q}$, while the quotient $P$ is $Q / \vol_{Q}$, and the non-degenerate pairing between the two is induced from the non-degenerate pairing of $Q$ with itself.
\end{example}

\begin{proposition}
  \label{prop.more-general-model}
  Let $M$ be a simply connected $n$-manifold with simply connected boundary, and assume that $n \geq 7$.
  Then $(M, \partial M)$ admits a PLD model.
\end{proposition}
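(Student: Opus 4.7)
The plan is to adapt Lambrechts--Stanley's construction of Poincar\'e duality models of closed manifolds~\cite{LambrechtsStanley2008} to the relative setting. Since $\partial M$ is simply connected of dimension $n-1\geq 6$, the Lambrechts--Stanley theorem yields a Poincar\'e duality CDGA $(B_{\partial},\varepsilon_{B_{\partial}})$ modeling $\partial M$. A standard cofibrant replacement argument in CDGAs produces a surjective morphism $\rho\colon R\twoheadrightarrow B_{\partial}$ together with a quasi-isomorphism $R\xrightarrow{\sim}\OmPA^{*}(M)$ fitting into a commutative square with the restriction map; by truncating we may further assume $R^{>n}=0$. Pulling back the integral over $M$ through this quasi-isomorphism provides a linear form $\varepsilon_{R}\colon R^{n}\to\R$ such that $(\varepsilon_{R},\varepsilon_{B_{\partial}})$ is a chain map $\cone(\rho)\to\R[1-n]$, encoding Stokes's identity $\int_{M}d\omega=\int_{\partial M}\omega$.

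Setting $K_{R}=\ker\rho$, the formula $\theta_{R}(x)(k)=\varepsilon_{R}(xk)$ defines a chain map $\theta_{R}\colon R\to K_{R}^{\vee}[-n]$ (using that $K_R$ is a dg ideal and the Stokes relation on $\varepsilon$), and classical Poincar\'e--Lefschetz duality for $(M,\partial M)$ exhibits $\theta_{R}$ as a quasi-isomorphism at the cohomology level. The only remaining condition for a PLD pair is that $\theta_{R}$ actually be surjective: once this is achieved, we set $B\coloneqq R$, $\lambda\coloneqq\rho$ with trace $\varepsilon=(\varepsilon_R,\varepsilon_{B_\partial})$, and the result is a PLD pair modeling $(M,\partial M)$ essentially by construction.

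Surjectivity of $\theta_{R}$ is arranged by enlarging $R$ inductively through acyclic Hirsch extensions, following the template of~\cite{LambrechtsStanley2008} in the closed case. Whenever an element $\phi\in K_{R}^{\vee}[-n]$ falls outside $\operatorname{im}\theta_{R}$, we pick $x\in R$ representing the preimage of $[\phi]$ in cohomology; then $\theta_{R}(x)-\phi=d\psi$ for some $\psi\in K_{R}^{\vee}[-n]$. We adjoin a free acyclic Hirsch pair $(y,z)$ with $dz=y$ in the appropriate degrees, extend $\rho$ by $0$ on the new generators, and extend $\varepsilon_{R}$ so that the enlarged $\theta$ maps $z$ to $\psi$ (so that $\theta(y)=d\psi=\theta(x)-\phi$, and hence $\theta(x-y)=\phi$). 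This preserves the quasi-isomorphism class of $R$ and the Stokes relation; because $M$ and $\partial M$ are simply connected and of finite type, the relevant degrees are bounded and only finitely many such adjunctions are needed in each total degree.

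The principal technical obstacle is verifying the internal consistency of the enlargement step: the new generators must be compatible with the CDGA structure, with the fixed surjection $\rho\colon R\twoheadrightarrow B_{\partial}$, and with the Stokes-type relation between $\varepsilon_{R}$ and $\varepsilon_{B_{\partial}}$; moreover, enlarging $R$ also enlarges $K_{R}$, and one must check that the new components of $\theta$ remain quasi-isomorphisms so that PLD duality persists at every stage. This compatibility is controlled by an obstruction-theoretic argument closely mirroring the proof of the Lambrechts--Stanley theorem, in which the simple connectedness of $M$ and $\partial M$ bounds the obstruction groups and guarantees convergence of the inductive construction.
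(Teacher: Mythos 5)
Your setup (obtaining $B_{\partial}$ from Lambrechts--Stanley, a surjective $\rho\colon R\to B_{\partial}$, the trace $\varepsilon$ on $\cone(\rho)$, and the maps $\theta_{R}$, $\theta_{R}^{\vee}$) matches the paper, but the core of your argument has a gap that I do not think can be repaired as stated. You propose to make $\theta_{R}\colon R\to K_{R}^{\vee}[-n]$ \emph{surjective} by repeatedly adjoining free acyclic Hirsch pairs and then to take $B\coloneqq R$. This cannot succeed: a free extension $R\otimes S(y,z)$ with $\rho$ extended by zero enlarges $K_{R}$ by the whole of $R\otimes S^{+}(y,z)$, and in particular creates elements of $\ker\rho$ in degrees $>n$ (powers of the even-degree generator, products of the new generators with $R$, etc.). Any such element $k$ satisfies $\varepsilon(xk)=0$ for every $x$ simply because $R^{<0}=0$, so it is an ``orphan'' and the dual map $\theta^{\vee}$ is never injective — equivalently, $\theta$ is never surjective — on a free extension. (Your opening remark ``by truncating we may assume $R^{>n}=0$'' does not help, since truncation is destroyed by the very extensions you perform, and naive truncation is not a quasi-isomorphism of CDGAs in any case.) The missing idea is that surjectivity must be achieved by a \emph{quotient}: one sets $\OO\coloneqq\ker\theta_{R}^{\vee}$ (the ideal of orphans) and takes $B\coloneqq\hat R/\hat\OO$, which kills all orphans by construction. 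The acyclic Hirsch extensions then serve a different purpose from the one you assign them: they do not add preimages for missing functionals, but rather add primitives $\gamma_i$ for orphan cycles $\alpha_i$, chosen so that $\varepsilon(\gamma_i y)=0$ for all cycles $y$, in order to make the ideal $\OO$ \emph{acyclic} so that the quotient map $\hat R\to \hat R/\hat\OO$ is a quasi-isomorphism. Without this, killing $\OO$ changes the quasi-isomorphism type and $\lambda$ is no longer a model of $\partial M\hookrightarrow M$.

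A secondary but real issue is that your induction never uses the hypothesis $n\geq 7$, which is not decorative: it is needed to extend $\varepsilon_{R}$ consistently to the enlarged algebra (the new generators sit in degrees roughly $k-1\geq n/2-1$ and $k-2$, and one needs products of two or more of them to land in degrees $>n$ so that no compatibility constraints on $\varepsilon$ arise), and the borderline case $k=n/2+1$ for $n$ even requires a separate argument comparing the long exact sequences of $\OO\to K_{R}\to K_{R}/\OO$ and $\OO\to R\to R/\OO$. Relatedly, your surjectivity step only produces preimages of \emph{cocycles} $\phi$ (you need $[\phi]$ to make sense), whereas surjectivity of a chain map is a statement about all elements; this is another symptom of the fact that the right target of the induction is acyclicity of $\OO$ (a cohomological statement, amenable to degreewise induction from the middle dimension upward) rather than levelwise surjectivity of $\theta_{R}$.
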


The proof is heavily inspired from the proof of the main theorem of~\cite{LambrechtsStanley2008}.

\begin{proof}
  We start with some surjective model $\rho : R \to R_{\partial}$ and we will build a PLD model out of it:
  \begin{equation}
    \begin{tikzcd}
      R \ar[d, two heads, "\rho"] \ar[r, "g", "\sim" swap] & \OmPA^{*}(M) \ar[d, "\operatorname{res}"] \\
      R_{\partial} \ar[r, "g_{\partial}", "\sim" swap] & \OmPA^{*}(\partial M)
    \end{tikzcd}
  \end{equation}

  We keep the terminology of~\cite{LambrechtsStanley2008}.
  We can find a surjective quasi-isomorphism $g_{\partial} : R_{\partial} \to B_{\partial}$, where $B_{\partial}$ is a Poincaré duality CDGA, by~\cite{LambrechtsStanley2008}.
  We let $K_{R}$ be the kernel of $\rho' \coloneqq g_{\partial} \circ \rho$ and we define the chain map $\varepsilon_{R} : \cone(\rho') \to \R[-n+1]$ as in the previous section.
  Let $\OO \coloneqq \ker \theta^{\vee}_{R} \subset K_{R}$ be the ideal of ``orphans'', i.e.\
  \begin{equation}
    \OO \coloneqq \ker \theta^{\vee}_{R} = \{ y \in K_{R} \mid \forall x \in R, \; \theta^{\vee}_{R}(y)(x) = \varepsilon_{R}(xy) = 0 \}.
  \end{equation}
  We could consider (for a moment) the new short exact sequence:
  \begin{equation}
    0 \to (K \coloneqq K_{R}/\OO) \to (B \coloneqq R/\OO) \xrightarrow{\lambda} B_{\partial} \to 0.
  \end{equation}
  There is an induced chain map $\varepsilon_{B} : \cone(\lambda) \to \R[-n+1]$, and $\theta^{\vee}_{B} : K \to B^{\vee}[-n]$ is injective because we killed all the orphans.
  Thus we do obtain an isomorphism $B / \ker \theta_{B} \cong K^{\vee}[-n]$ induced by $\theta_{B}$.

  The problem is that the ideal $\OO$ is not necessarily acyclic, thus $\lambda$ is not necessarily a model of $(M, \partial M)$ anymore.
  Indeed, by Poincaré--Lefschetz duality, all we know is that a cycle $o \in \OO$ is always the boundary of some element $z \in K$; but one may not always choose $z \in \OO$.
  The idea, just like in~\cite{LambrechtsStanley2008}, is to extend the CDGA $R$ by acyclic cofibrations (over $B_{\partial}$) in order to make the ideal of orphans acyclic.

  Thanks to our connectivity assumptions on the manifold and its boundary, we can assume that the model $\rho : R \to R_{\partial}$ of $(M, \partial M)$ and the chain map $\varepsilon_{R} : \cone(\rho') \to \R[-n+1]$ satisfy:
  \begin{itemize}
    \item $\rho$ is surjective, hence so is $\rho' \coloneqq g_{\partial} \circ \rho$, and we let $K_{R} = \ker(\rho')$;
    \item both $R$ and $R_{\partial}$ are of finite type;
    \item both $R$ and $R_{\partial}$ are simply connected, i.e., $R^{0} = R_{\partial}^{0} = \R$ and $R^{1} = R_{\partial}^{1} = 0$;
    \item we have $R_{\partial}^{2} \subset \ker d$ and $K^{2} \subset \ker d$;
    \item the morphisms $\theta_{R}$, $\theta^{\vee}_{R}$ induced by $\varepsilon_{R}$ are quasi-isomorphisms.
  \end{itemize}

  Note that since we require $\rho$ to be surjective, it is possible that not all elements of $R^{2}$ are cycles, because some of the classes from $H^{2}(\partial M)$ may need to be removed (see Example~\ref{exa.model-dn} for $M = D^{3}$).

  We say that the pair $(\rho,\varepsilon_{R})$ is a ``good'' pair if it satisfies all these assumptions.
  Let us say that its orphans are $k$-half-acyclic if $H^{i}(\OO) = 0$ for $n/2 +1 \leq i \leq k$.
  This condition is void when $k = n/2$, and if $k = n+1$ then Poincaré--Lefschetz duality implies that $\OO$ is acyclic (see~\cite[Proposition~3.6]{LambrechtsStanley2008}).

  We will now work by induction (starting at $k = n/2$) and we assume that we are given a good pair $(\rho,\varepsilon_{R})$ whose orphans are $(k-1)$-half-acyclic.
  We will build an extension:
  \begin{equation}
    \begin{tikzcd}
      0 \rar
      & K_{R} \rar \dar[hookrightarrow]{\sim}
      & R \rar{\rho} \dar[hookrightarrow]{\sim}
      & R_{\partial} \rar \dar{=}
      & 0
      \\
      0 \rar
      & \hat{K}_{R} \rar
      & \hat{R} \rar{\hat{\rho}}
      & R_{\partial} \rar
      & 0
    \end{tikzcd}
  \end{equation}
  and an extension $\hat{\varepsilon}_{R} : \cone(\hat{\rho}') \to \R[-n+1]$ of $\varepsilon_{R}$ such that $(\hat{\rho},\hat{\varepsilon})$ is good and its orphans are $k$-half-acyclic.

  We follow closely~\cite[Sections~4 and~5]{LambrechtsStanley2008}, adapting the proof where needed.
  Let $l = \dim(\OO^{k} \cap \ker d) - \dim(d(\OO^{k-1}))$ and choose $l$ linearly independent cycles $\alpha_{1}, \dots, \alpha_{l} \in \OO^{k}$ such that
  \begin{equation}
    \OO^{k} \cap \ker d = d(\OO^{k-1}) \oplus \langle \alpha_{1}, \dots, \alpha_{l} \rangle.
  \end{equation}
  These are the obstructions to $\OO$ being $k$-half acyclic.
  Because $\theta^{\vee}_{R}$ is a quasi-isomorphism and $\theta^{\vee}_{R}(\alpha_{i}) = 0$, there exists $\gamma'_{i} \in K^{k-1}$ such that $d\gamma'_{i} = \alpha_{i}$.

  Let $m \coloneqq \dim H^{*}(R) = \dim H^{*}(K_{R})$, and choose cycles $h_{1}, \dots, h_{m} \in R$ such that $([h_{1}], \dots, [h_{m}])$ is a basis of $H^{*}(R)$.
  By Poincaré--Lefschetz duality there exists cycles $h'_{1}, \dots, h'_{m}$ in $K_{R}$ such that $\varepsilon(h_{i} h'_{j}) = \delta_{ij}$ (and these form a basis for $H^{*}(K)$).
  Let
  \begin{equation}
    \gamma_{i} \coloneqq \gamma'_{i} - \sum_{j} \varepsilon(\gamma'_{i} h_{j}) h'_{j} \in K^{k-1},
  \end{equation}
  and let $\Gamma$ be the subspace of $K^{k-1}$ generated by the $\gamma_{i}$.
  Then a proof similar to the proof of~\cite[Lemma~4.1]{LambrechtsStanley2008} shows that $d\gamma_{i} = \alpha_{i}$, and if $y \in R$ is a cycle, then $\varepsilon(\gamma_{i} y) = \theta^{\vee}(\gamma_{i})(y) = 0$.

  Now let
  \begin{equation}
    \hat{R} \coloneqq \bigl( R \otimes S(c_{1}, \dots, c_{l}, w_{1}, \dots, w_{l}), dc_{i} = \alpha_{i}, dw_{i} = c_{i} - \gamma_{i} \bigr),
  \end{equation}
  where the $c_{i}$ and the $w_{i}$ are new variables of degrees $k-1$ and $k-2$.

  Extend $\rho$ to $\hat{\rho} : \hat{R} \to R_{\partial}$ by declaring that $\hat{\rho}(c_{i}) = \hat{\rho}(w_{i}) = 0$.
  It follows that
  \begin{equation}
    \hat{K}_{R} \coloneqq \ker \hat{\rho} = (K_{R} \otimes S(c_{i}, w_{j})_{1 \leq i, j \leq l}, d).
  \end{equation}
  It is not hard to see that $R \hookrightarrow \hat{R}$ is an acyclic cofibration (compare with~\cite[Lemma~4.2]{LambrechtsStanley2008}), and so is $K \hookrightarrow \hat{K}$.
  Finally, since all the $\gamma_{i}$ and $\alpha_{i}$ are in $\ker \hat{\rho}$, one can extend $\varepsilon_{R}$ to $\hat{\varepsilon}_{R} : \cone(\hat{\rho}) \to \R[-n+1]$ by formulas identical to~\cite[Equation~4.5]{LambrechtsStanley2008}, which works because $n \geq 7$.
  It is clear that $(\hat{\rho}, \hat{\varepsilon}_{R})$ is still a good pair.
  We let $\hat{\theta}_{R}$, $\hat{\theta}^{\vee}_{R}$ be the quasi-isomorphisms induced by the pairing, and we finally let $\hat{\OO} \coloneqq \ker \hat{\theta}^{\vee}_{R}$.

  It remains to check that $\hat{\OO}$ is $k$-half-acyclic, knowing that $\OO$ is $(k-1)$-half-acyclic.
  First, we can reuse the proofs of~\cite[Lemmas~5.2 and~5.3]{LambrechtsStanley2008} to check that if $i > n-k+2$ then $\OO^{i} \subseteq \hat{\OO}^{i}$, and if $i \in \{ k-2, k-1 \}$ then $\hat{\OO}^{i} \cap \ker d \subset \OO^{i} \cap \ker d$.
  The only difference is that instead of the hypothesis $d(R^{2}) = 0$, we instead have $d(K_{R}^{2}) = 0$.

  Now, just like~\cite{LambrechtsStanley2008}, we have several cases to check.
  If we have $k \geq n/2 + 2$, or if we have $n$ odd and $k = (n+1)/2 + 1$, then the proof goes through unchanged up to slight changes of notation.
  However, if $n$ is even and $k = n/2 + 1$, more significant adaptations are needed, even if the idea is the same.
  We'd like to check that $\hat{\OO}^{k} \cap \ker d \subset d(\hat{\OO}^{k-1})$.
  We already know that
  \begin{equation}
    \hat{\OO}^{k} \cap \ker d \subset \OO^{k} \cap \ker d = d(\OO^{k-1}) \oplus \langle \alpha_{i} \rangle.
  \end{equation}
  Since $\OO^{k-1} \subset \hat{\OO}^{k-1}$ (the proof is identical to~\cite[Lemma~5.7]{LambrechtsStanley2008}), it is sufficient to check that $\hat{\OO}^{k-1} \cap \langle \alpha_{i} \rangle = 0$.
  Let us write $j : K_{R} \to R$ for the inclusion, $\bar{R} \coloneqq R / \OO$, $\bar{K}_{R} \coloneqq K_{R} / \OO$, $n = 2m$ and $k = n/2 + 1 = m + 1$.
  Then we have long exact sequences in cohomology and morphisms between them:
  \begin{equation}
    \begin{tikzcd}
      0 \rar
      & H^{m}(R) \rar{\pi}
      & H^{m}(\bar{R}) \rar{\delta}
      & H^{m+1}(\OO) \rar
      & 0
      \\
      0 \rar
      & H^{m}(K) \rar{\pi} \uar{j}
      & H^{m}(\bar{K}_{R}) \rar{\delta} \uar{\bar{\jmath}}
      & H^{m+1}(\OO) \rar \uar{=}
      & 0
    \end{tikzcd}
  \end{equation}

  The space $H^{m+1}(\OO)$ is generated by the classes of the $\alpha_{i}$.
  We obtain a Section~$\sigma : H^{m+1}(\OO) \to H^{m}(\bar{K}_{R})$ of $\delta$ by letting $\sigma([\alpha_{i}]) \coloneqq [\gamma_{i}]$.

  Suppose that we have some nonzero element $\alpha \in \langle \alpha_{i} \rangle$; we'd like to show that it is not in $\hat{\OO}^{k}$, i.e., that it is not an orphan in $\hat{K}_{R}$.
  The pairing $H^{m}(\bar{K}_{R}) \otimes H^{m}(\bar{R}) \to \R$ induced by $\bar{\varepsilon}_{R}$ is non-degenerate, and $\sigma(\alpha) \neq 0$, hence there exists some $\beta \in H^{m}(\bar{R})$ such that $\varepsilon(\sigma(\alpha)\beta) \neq 0$.
  But as we saw before, for any $[h] = \sum x_{j} [h_{j}] \in H^{m}(R)$, $\varepsilon_{R}(\sigma(\alpha) h) = 0$, because $\varepsilon_{R}(\gamma_{i} h_{j}) = 0$.
  It follows that $\varepsilon_{R}(\sigma(\alpha) \sigma(\delta(\beta))) = \varepsilon_{R}(\sigma(\alpha) \beta) \neq 0$.
  If we write $\delta(\beta) = \sum_{j} \beta_{j} \gamma_{j} \in H^{m+1}(\OO)$, we can let $w = \sum_{j} \beta_{j} w_{j} \in \hat{R}$, and then by definition $\hat{\varepsilon}_{R}(\alpha w) = \varepsilon_{R}(\sigma(\alpha) \sigma(\delta(\beta))) \neq 0$, thus $\alpha$ is not an orphan.
  This completes the proof that $\hat{\OO}^{k-1} \cap \langle \alpha_{i} \rangle = 0$, and thus that $\hat{\OO}^{k} \cap \ker d \subset d(\hat{\OO}^{k-1})$.
  We have thus covered all the cases to prove that if $\OO$ is $(k-1)$-half-acyclic, then $\hat{\OO}$ is $k$-half-acyclic.

  By induction, we obtain a good pair $(\hat{\rho} : \hat{R} \to R_{\partial}, \hat{\varepsilon}_{R})$ whose ideal of orphans is $(n+1)$-half acyclic (and hence actually acyclic), obtained by a sequence of acyclic cofibrations from $R$.
  It then remains to define $B \coloneqq \hat{R} / \hat{\OO}$ and $\varepsilon_{B}$ to be the map induced by $\hat{\varepsilon}_{R}$ on the quotient to prove Proposition~\ref{prop.more-general-model}.
\end{proof}

Given a PLD model of $(M, \partial M)$, we obtain a diagram (similar to Equation~\eqref{eq.zigzag2}):
\begin{equation}
  \label{eq.nice-diagram}
  \begin{tikzcd}
    {} &
    K \coloneqq \ker \lambda
    \ar[d, hook]
    \ar[dl, bend right, dashed, leftrightarrow, "\text{\tiny non degen.}" {sloped,above}, "\text{\tiny pairing}" {sloped,below}]
    &
    \\
    P \coloneqq B / \ker \theta_{B} &
    B \ar[l, "\sim" swap, "\pi", two heads] \ar[d, "\lambda", two heads] &
    R \ar[l,"f", "\sim" swap] \ar[d, "\rho", two heads] \ar[r, "g" swap, "\sim"] &
    \OmPA^{*}(M) \ar[d, "\operatorname{res}"]
    \\
    {} &
    B_{\partial} &
    R_{\partial} \ar[l, "f_{\partial}", "\sim" swap] \ar[r, "g_{\partial}" swap, "\sim"] &
    \OmPA^{*}(\partial M)
  \end{tikzcd}
\end{equation}

\subsection{Diagonal classes}
\label{sec.diagonal-classes}

We now want to define diagonal classes in PLD pairs.
Let us first quickly recall the well-known case of closed manifolds.
First, let us fix a Poincaré duality CDGA $Q$ of formal dimension $n$.

\begin{definition}
  The diagonal cocycle $\Delta_{Q} \in Q^{\otimes 2}$ of degree $n$ is defined as follows.
  If $\{x_{i}\}$ is a graded basis of $Q$ and $\{x_{i}^{\vee}\}$ is the dual basis (satisfying $\varepsilon(x_{i} x_{j}^{\vee}) = \delta_{ij}$), then
  \begin{equation}
    \Delta_{Q} \coloneqq \sum_{i} (-1)^{|x_{i}|} x_{i} \otimes x_{i}^{\vee}.
  \end{equation}
  This definition is independent of the chosen basis.
\end{definition}

\begin{proposition}[See e.g.~{\cite[Section~8.1.4]{FelixOpreaTanre2008}}]
  The diagonal cocycle satisfies $d \Delta_{Q} = 0$, $(x \otimes 1) \Delta_{Q} = (1 \otimes x)\Delta_{Q}$ for all $x \in Q$, and its image under the switch map:
  \begin{equation}
    (-)^{21} : Q \otimes Q \to Q \otimes Q, \, x \otimes y \mapsto (-1)^{(\deg x)(\deg y)} y \otimes x,
  \end{equation}
  satisfies $\Delta_{Q}^{21} = (-1)^{n} \Delta_{Q}$.
  Moreover, if $\vol_{Q} \in Q^{n}$ is the only element satisfying $\varepsilon_{Q}(\vol_{Q}) = 1$, then the image of $\Delta_{Q}$ under the product $\mu_{Q} : Q^{\otimes 2} \to Q$ is given by $\chi(Q) \cdot \vol_{Q}$, where $\chi(Q)$ is the Euler characteristic.
\end{proposition}

Let us now fix until the end of the section a Poincaré--Lefschetz duality pair $(B \xrightarrow{\lambda} B_{\partial}, \varepsilon)$, with $K = \ker \lambda$.
Recall that we write $\theta \colon P \to K^{\vee}[-n]$ for the isomorphism of $B^{\otimes 2}$-modules induced by the surjective $\theta_{B} : B \to K^{\vee}[-n]$, where $P = B / \ker \theta_{B}$.
We want to generalize the diagonal classes defined for surjective pretty models.

\begin{definition}
  The volume form $\vol_{B} \in K \subset B$ is the unique element satisfying $\varepsilon_{B}(\vol_{B}) = 1$.
\end{definition}

\begin{proposition}
  Let $\vol_{B_{\partial}}$ is any lift of the volume form of $B_{\partial}$ to $B$.
  Then we have $d\vol_{B_{\partial}} = \vol_{B}$.
\end{proposition}
\begin{proof}
  This follows from the Stokes formula for PLD pairs and the uniqueness of volume forms when the orientations are fixed.
\end{proof}

The multiplication $\mu_{K} : K^{\otimes 2} \to K$ can be dualized, using $\theta$, into a morphism of $B^{\otimes 2}$-modules
\begin{equation}
  \delta : P \to P^{\otimes 2}[-n].
\end{equation}

\begin{definition}
  We define a representative of the diagonal class:\index{DeltaP@$\Delta_{P}$}
  \begin{equation}
    \Delta_{P} \coloneqq \delta(1_{P}) \in (P \otimes P)^{n}.
  \end{equation}
\end{definition}

\begin{proposition}
  The diagonal class satisfies $\Delta_{P}^{(21)} = (-1)^{n} \Delta_{P}$ and:
  \begin{equation}
    \forall x \in P, \; \delta(x) = (x \otimes 1) \Delta_{P} = (1 \otimes x). \Delta_{P}
  \end{equation}
\end{proposition}
\begin{proof}
  This follows respectively from the graded commutativity of $\mu_{K}$ and the fact that $\delta$ is a morphism of $B^{\otimes 2}$-modules.
\end{proof}

\begin{definition}
  Let $\{ x_{i} \}$ be some basis of $K$, and let $\{x_{i}^{\vee}\}$ be the dual basis of $P$, satisfying $\varepsilon_{B}(x_{i} x_{i}^{\vee}) = \delta_{ij}$.
  Then we define:\index{DeltaKP@$\Delta_{KP}$}
  \begin{equation}
    \label{def.delta-ak}
    \Delta_{KP} \coloneqq \sum (-1)^{|x_i|} x_{i} \otimes x_{i}^\vee \in (K \otimes P)^{n}.
  \end{equation}
\end{definition}

It is clear that $\Delta_{KP} \in K \otimes P$ is dual to the non-degenerate pairing $K \otimes P \to \R$.
The next proposition also follows directly from the definitions:

\begin{proposition}
  The cocycle $\Delta_{P}$ is the image of $\Delta_{KP}$ under the composite map $\pi \circ \iota \otimes \id : K \otimes P \to B \otimes P \to P \otimes P$.
  Moreover:
  \begin{equation}
    \forall x \in P, \sum_{(\Delta_{KP})} \varepsilon_{B}(x \Delta_{KP}') \Delta_{KP}'' = x.
  \end{equation}
  Denoting by $\mu_{P} \colon P \otimes P \to P$ the multiplication map, $\chi(P)$ the Euler characteristic, and $\vol_P$ the volume form, we have:
  \begin{equation}
    \mu_{P}(\Delta_{P}) = \chi(P) \pi(\vol_{B})
  \end{equation}
\end{proposition}

\begin{corollary}
  If $\partial M \neq \varnothing$, we have:
  \begin{equation}
    \label{eq.mu-delta-zero}
    \mu_{P}(\Delta_{P}) = 0.
  \end{equation}
\end{corollary}
\begin{proof}
  By degree reasons, $\vol_{B}$ is in the kernel of $\theta_{B} : B \to K^{\vee}[-n]$ when $\partial M \neq \varnothing$.
  Hence $\pi(\vol_{B}) = 0$ by definition.
\end{proof}

\section{Configuration spaces, their compactifications, and algebraic structures}
\label{sec:configspaces}
\label{sec.comp-conf-spac}

\subsection{Semi-algebraic realization}

Let $M$ be a compact manifold with boundary.
In order to apply the theory of PA forms of~\cite{HardtLambrechtsTurchinVolic2011} to $M$, we must endow it with a semi-algebraic structure, i.e., a smooth embedding of $M$ into $\R^{N}$ for some (big) $N$ such that $M$ is defined by finite unions of finite intersections of loci of polynomial (in)equalities.

To get such a diffeomorphic semi-algebraic realization of $M$, we first apply the Nash--Tognoli theorem to the double $\tilde M = M \cup_{\partial M} \bar{M}$, and we thus obtain some (semi-)algebraic realization of $\tilde{M}$ in $\R^N$~\cite{Nash1952,Tognoli1973}.
We have that $M$ is singled out by the equation $f>0$, where $f$ is some smooth function on a neighborhood of $\tilde M$ inside $\R^N$, regular at $\p M\subset \tilde M\subset \R^N$.
We then use Stone--Weierstrass to approximate $f$ and its first derivatives ``closely enough'' by an algebraic function $g$.
Our semi-algebraic realization of $M$ is then given by the defining algebraic equations for our realization of $\tilde M$, together with $g>0$.

Alternatively, a $C^1$ realization as a semi-algebraic set may be defined as follows.
First $M$ can be triangulated and is $C^{1}$-diffeomorphic to some simplicial complex in some $\R^{N}$.
Each simplex is a semi-algebraic set, and there is a finite number of them (because $M$ is compact), hence $M$ itself is semi-algebraic.

\subsection{(Axelrod--Singer--)Fulton--MacPherson operad \texorpdfstring{$\FM_n$}{FM\_n}}
\label{sec:axelr-sing-fult}

The Axelrod--Singer--Fulton--MacPherson operad $\FM_n$ introduced by Getzler and Jones~\cite{GJ} is an $E_n$ operad defined by compactifying the configuration spaces of points in $\R^n$.
More concretely, one compactifies the quotient space by the group action of scalings and translations\index{FMn@$\FM_{n}$}
\begin{equation}
  \FM_n(r) = \overline{\Conf_{\R^n}(r)/\R_{>0} \ltimes \R^n }.
\end{equation}
The compactification is such that $\FM_n$ comes with a natural stratification, the strata being indexed by rooted trees with leaf set $\{1,\dots ,r\}$.
Each tree here corresponds to a piece of the boundary in an iterated bordification, see Figure~\ref{fig:strata-fm-n} for an example.

\begin{figure}[htbp]
  \centering
  \begin{tikzpicture}[baseline=-1.5cm, scale=.6]
    \node  {}
    child {
        child { node {1} }
        child { node {2} }
        child {
            child {node {3} }
            child {
                child { node {4} }
                child { node {5} }
              }
          }
      };
  \end{tikzpicture}
  $\longleftrightarrow$
  \begin{tikzpicture}[baseline=1cm]
    \draw (0,0)--(3,0)--(4,2)--(1,2)--cycle;
    \node [int, label={left}:{$\scriptstyle 1$}] (v1) at (1,.5) {};
    \node [int, label={$\scriptstyle 2$}] (v2) at (1,1) {};
    \node [int] (va) at (2,.5) {};
    \begin{scope}[scale = .7, xshift=4cm,yshift=1.5cm]
      \draw (va) -- (0,0) (va) -- (3,0) (va)-- (4,2) (va) -- (1,2);
      \draw[fill=white] (0,0)--(3,0)--(4,2)--(1,2)--cycle;
      \node [int, label={$\scriptstyle 3$}] (v3) at (1,1) {};
      \node [int] (vb) at (2,.5) {};
      \begin{scope}[scale = .7, xshift=2cm,yshift=-4cm]
        \draw (vb) -- (0,0) (vb) -- (3,0) (vb)-- (4,2) (vb) -- (1,2);
        \draw[fill=white] (0,0)--(3,0)--(4,2)--(1,2)--cycle;
        \node [int, label={$\scriptstyle 4$}] (v4) at (1,1) {};
        \node [int, label={$\scriptstyle 5$}] (v5) at (2,.5) {};
      \end{scope}
    \end{scope}
  \end{tikzpicture}
  \caption{Strata for $\FM_n(r)$}
  \label{fig:strata-fm-n}
\end{figure}

For the original definition and more details we refer the reader to~\cite{GJ}.
More details and a gentler presentation can be found in~\cite{Sinha2004} or~\cite[Section~1.2]{Idrissi2018b}.

\subsection{Fiberwise little \texorpdfstring{$n$}{n}-disks operad}\label{sec:fiberwiseLD}
There is a natural $\SO(n)$-action on the operad $\FM_n$.
Given any principal $\SO(n)$-bundle $P\to B$, we may consider the fiberwise little $n$-disks operad
\begin{equation}
  \FM_n^P \coloneqq P \times_{\SO(n)} \FM_n.
\end{equation}
This is an operad in the category of spaces over $B$.
For example, if $B = \{*\}$ is a singleton (thus $P = \SO(n)$), then we obtain $\FM_{n}^{\{*\}} = \FM_{n}$.

A particular instance of this situation that will often be considered in this text is the case where $P$ is obtained as the oriented frame bundle $\Fr Y$ of a rank $n$ vector bundle $\mathbb R^n \hookrightarrow Y\to B$.
In that case, we use the notation $\FM_n^Y \coloneqq\FM_n^{\Fr Y}$.

In particular, given an oriented manifold $M$ denote by $F_M \coloneqq \Fr T_M\to  M$ the oriented frame bundle, and define\index{FMnM@$\FM_{n}^{M}$}
\begin{equation}
  \FM_n^M \coloneqq \FM_n^{F_M}.
\end{equation}
This is an operad in spaces over $M$, and can be understood as a version of the fiberwise little disks operad in the tangent space of $M$.
In~\cite{CDW17}, this operad is an important object to understand in order to model the action of the framed little discs operad on the configuration space of framed points of a closed manifold.

\begin{remark}
  Strictly speaking, we assume the vector bundles above to be endowed with a metric so that we can consider the $\SO(n)$ principal frame bundle instead of the $\mathrm{GL}(n)$ principal frame bundle.

  Using such a metric, one can consider the sphere bundle $S_Y\subset Y$ consisting of vectors of norm $1$.
  One observes that this bundle is canonically isomorphic to the space $\FM_n^Y(2)$ constructed above.
\end{remark}

\begin{remark}
  A this stage it is not essential to require the manifold $M$ to be oriented. In the unoriented case one just considers the full frame bundle $F_M^{unor}$ and the associated operad $\FM_n^M \coloneqq F_M^{unor}\times_{O(n)}\FM_n$, using the $O(n)$ action on $\FM_n$.
\end{remark}

\subsection{Compactified configuration space for closed manifolds}
\label{sec:comp-conf-space-closed}

Let $M$ be a closed manifold.
We denote by $\FM_M$\index{FMM@$\FM_{M}$} the Axelrod--Singer--Fulton--MacPherson type bordification of the configuration space of points on $M$ defined in~\cite{AxelrodSinger1994}.
The space $\FM_M(r)$ is defined by an iterated blowup (or rather, bordification) at the fat diagonal of $M^r$.
If we choose an algebraic realization of $M$ then $\FM_M(r)$ is again a semi-algebraic manifold.

The space $\FM_{M}(r)$ comes with a natural stratification.
Strata correspond to rooted trees with leaves labelled $1,\dots,r$.
The stratum corresponding to a given tree $T$ is given by configuration satisfying the following properties: points $i$ and $j$ are in a subtree if and only if they are infinitesimally close compared to the points outside this subtree.
See Figure~\ref{fig.strata-fmm} for an example of correspondence.

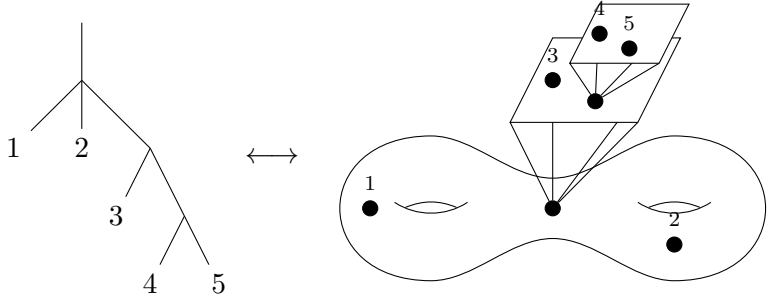
\begin{figure}[htbp]
  \centering
  \begin{tikzpicture}[baseline=-2cm, scale=.6]
    \node  {}
    child {
        child { node {1} }
        child { node {2} }
        child {
            child {node {3} }
            child {
                child { node {4} }
                child { node {5} }
              }
          }
      };
  \end{tikzpicture}
  $\longleftrightarrow$
  \begin{tikzpicture}[baseline=1cm, scale=.8]
    \draw (0,0)
    .. controls (-.7,0) and (-1.3,-.7) .. (-2,-.7)
    .. controls (-4,-.7) and (-4,1.7) .. (-2,1.7)
    .. controls (-1.3,1.7) and (-.7,1) .. (0,1)
    .. controls  (.7,1) and (1.3,1.7) .. (2,1.7)
    .. controls (4,1.7) and (4,-.7) .. (2,-.7)
    .. controls (1.3,-.7) and (.7,0)  .. (0,0);
    \begin{scope}[xshift=-2cm, yshift=.6cm, scale=1.2]
      \draw (-.5,0) .. controls (-.2,-.2) and (.2,-.2) .. (.5,0);
      \begin{scope}[yshift=-.07cm]
        \draw (-.35,0) .. controls (-.1,.1) and (.1,.1) .. (.35,0);
      \end{scope}
    \end{scope}
    \begin{scope}[xscale=-1, xshift=-2cm, yshift=.6cm, scale=1.2]
      \draw (-.5,0) .. controls (-.2,-.2) and (.2,-.2) .. (.5,0);
      \begin{scope}[yshift=-.07cm]
        \draw (-.35,0) .. controls (-.1,.1) and (.1,.1) .. (.35,0);
      \end{scope}
    \end{scope}
    \node [int, label={$\scriptstyle 1$}] (v1) at (-3,.5) {};
    \node [int, label={$\scriptstyle 2$}] (v2) at (2,-.1) {};
    \node [int] (va) at (0,.5) {};
    \begin{scope}[scale = .7, xshift=-1cm,yshift=2.75cm]
      \draw (va) -- (0,0) (va) -- (3,0) (va)-- (4,2) (va) -- (1,2);
      \draw[fill=white] (0,0)--(3,0)--(4,2)--(1,2)--cycle;
      \node [int, label={$\scriptstyle 3$}] (v3) at (1,1) {};
      \node [int] (vb) at (2,.5) {};
      \begin{scope}[scale = .7, xshift=2cm,yshift=2cm]
        \draw (vb) -- (0,0) (vb) -- (3,0) (vb)-- (4,2) (vb) -- (1,2);
        \draw[fill=white] (0,0)--(3,0)--(4,2)--(1,2)--cycle;
        \node [int, label={$\scriptstyle 4$}] (v4) at (1,1) {};
        \node [int, label={$\scriptstyle 5$}] (v5) at (2,.5) {};
      \end{scope}
    \end{scope}
  \end{tikzpicture}
  \caption{Strata for $\FM_M(r)$}
  \label{fig.strata-fmm}
\end{figure}

Recall from~\cite{CDW17} that a right operadic multimodule $\op X$ for an operad $\op P$ in spaces over some base $B$ is a collection of spaces over powers of the base
\begin{equation}
  \op X(r) \to B^r
\end{equation}
with a right $\Sigma_r$-action and operations
\begin{equation}
  \op X(r) \times_{B^r} \op P(s_1)\times \cdots \op P(s_r) \to \op X(s_1+\cdots +s_r)
\end{equation}
satisfying the usual equivariance, commutativity and associativity conditions for an operadic right action.
The notion of operadic right multimodule should be considered as ``the correct generalization'' of the notion of a right module for an operad over a base $B$. (The alternative extension as an operadic right module in the category of spaces over $B$ is unnatural, at least in the present context.)

Note that the spaces $\FM_M(r)$ naturally assemble into a right operadic multimodule over the operad $\FM_n^M$.
If the manifold $M$ is parallelized, then the fiberwise operad $\FM_n^M$ can be identified with $M\times \FM_n$, and the right action gives rise to a right $\FM_n$ action on $\FM_M$.

\subsection{Swiss Cheese operad and fiberwise Swiss Cheese operad}
\label{sec:swiss-cheese-operad}

Given a manifold with boundary $(M, \partial M)$ and finite sets $U, V$, define the colored configuration spaces:\index{ConfUV@$\Conf_{U,V}(M)$}
\begin{equation}
  \Conf_{U,V}(M) \coloneqq \{ c \in \Conf_{U \sqcup V}(M) \mid c(U) \subset \partial M, \; c(V) \subset \mathring{M} \}.
\end{equation}
In particular if $M = \Ha_{n} \subset \R^{n}$ is the upper half-space, then $\Conf_{U,V}(M)$, after modding it out by the group of translations preserving $\Ha_{n}$ and positive dilatations, can be compactified into a space $\SFM_{n}(U,V)$~\cite{Voronov1999}.\index{SFMn@$\SFM_{n}$}
It is a stratified manifold of dimension $n \cdot \# V + (n-1) \cdot \# U - n$ as soon as $\#U + 2 \#V \ge 2$, and it is reduced to a point otherwise.
One has homeomorphisms $\SFM_{n}(U, \varnothing) \cong \FM_{n-1}(U)$ and $\SFM_{n}(\underline{1}, \underline{1}) \cong D^{n-1}$, and homotopy equivalences $\SFM_{n}(\varnothing, V) \simeq \FM_{n}(V)$.

In this way, one obtains a relative operad $\SFM_{n}$ over $\FM_{n}$, weakly equivalent to the Swiss-Cheese operad~\cite{Voronov1999} where operations with no open inputs are allowed (see~\cite[Section~3]{HoefelLivernet2012} or~\cite[Remark~2.2]{Idrissi2017} for an explanation of the difference).
Also note that $\SFM_{n}$ comes equipped with a natural action of the orthogonal group $O(n-1)$, compatible with all structures.

Now for a general oriented manifold with boundary $(M, \partial M)$ of dimension $n$ we may define a relative version of the Swiss Cheese operad, akin to Section~\ref{sec:fiberwiseLD}.
First, as above, we define the operad $\FM_n^M=\Fr_M \ltimes_{\SO(n)} \FM_n$ in the category of spaces over $M$.
Then we consider the collection\index{SFMnM@$\SFM_{n}^{M}$}
\begin{equation}
  \SFM_n^M \coloneqq \Fr_{\p M} \ltimes_{\SO(n-1)} \SFM_n.
\end{equation}
It is naturally equipped with an operadic right action of $\FM_n^M$, i.e., with maps
\begin{equation}
  \SFM_n^M(u,r) \times_{\p M} \FM_n^M(s) \to \SFM_n^M(u,r+s-1)
\end{equation}
satisfying natural compatibility relations.
Furthermore, it carries a compatible structure of an operad in spaces over $M$, so that $\FM_n^M$ and $\SFM_n^M$ together assemble into a two colored operad in spaces over $M$.
(Here the maps $\SFM_n^M(r)\to M^r$ factor through $(\p M)^r$.)
This will be our model for the fiberwise Swiss Cheese operad.

Finally, note that for the present discussion we may go without an orientation on $M$ by just replacing the oriented frame bundle by its unoriented version and replacing $\SO(n-1)$ by $O(n-1)$ above.

\subsection{Compactified configuration space for manifolds with boundary}
\label{sec.constr-sfmm}

Let $M$ now be a compact manifold with boundary $\p M$.
One may define an Axelrod--Singer--Fulton--MacPherson type compactification $\SFM_M(U,V)$ of the configuration space $\Conf_{U,V}(M)$.
This is a stratified manifold of dimension $(n-1) \cdot \# U + n \cdot \# V$.

The quickest way to define it is the following:
We consider the double $\tilde M=M\sqcup_{\p M}M^{op}$. The double comes with a natural orientation reversing involution
\begin{equation}
  I : \tilde M \to \tilde M
\end{equation}
interchanging $M$ and $M^{op}$. The operation $I$ induces a similar operation $I:\FM_{\tilde M}\to \FM_{\tilde M}$ on the compactified configuration spaces of points.

We now define the compactification $\SFM_M(r,s)\subset \FM_{\tilde M}(2r+s)$\index{SFMM@$\SFM_{M}$} to be the subset of configurations $c=(x_1,\dots,x_r,x_1',\dots,x_r',y_1,\dots,y_s)$ that satisfy:
\begin{itemize}
  \item $I c=\operatorname{swap}_r(c) \coloneqq (x_1',\dots,x_r',x_1,\dots,x_r,y_1,\dots,y_s)$
  \item $\pi_j c\in M\subset \tilde M$, where $\pi_j:\FM_{\tilde M}\to \tilde M$ is the projection onto the position of the $j$-th point for $1 \le j \le r$.
\end{itemize}

One can see that space $\SFM_M$ comes again with a natural stratification.
The strata are labelled by rooted trees with two kinds of edges, say ``bulk edges'' (drawn in full) and ``boundary edges'' (dashed), such that every descendant of a bulk edge is again a bulk edge.
Once again, two points are in a subtree iff they are infinitesimally close compared to all the other points.
Figure~\ref{fig.strata-sfmm} provides an example of the correspondence of trees and strata.

\begin{figure}[htbp]
  \centering
  \begin{tikzpicture}[baseline=-3cm]
    \node  {}
    child {
        child {
            child { node {1} }
            child { node {2} }
          }
        child { node {3} }
        child[dashed] {
            child[solid] {node {4} }
            child {
                child[solid] { node {5} }
                child[solid] { node {6} }
                child[dashed] { node {$\bar 1$} }
              }
          }
      };
  \end{tikzpicture}
  $\longleftrightarrow$
  \begin{tikzpicture}[baseline=1cm]
    \draw (0,0)
    .. controls (-.7,0) and (-1.3,-.7) .. (-2,-.7)
    .. controls (-4,-.7) and (-4,1.7) .. (-2,1.7)
    .. controls (-1.3,1.7) and (-.7,1) .. (0,1);
    \begin{scope}[xshift=-2cm, yshift=.6cm, scale=1.2]
      \draw (-.5,0) .. controls (-.2,-.2) and (.2,-.2) .. (.5,0);
      \begin{scope}[yshift=-.07cm]
        \draw (-.35,0) .. controls (-.1,.1) and (.1,.1) .. (.35,0);
      \end{scope}
    \end{scope}
    \draw (0,.5) ellipse (.1cm and .5cm);
    \node [int, label={$\scriptstyle 3$}] (v1) at (-3,.5) {};
    \node [int] (va) at (-0.1,.5) {};
    \node [int] (vc) at (-2,1.4) {};
    \begin{scope}[scale = .7, xshift=-1cm,yshift=2.75cm]
      \draw (va) -- (0,0) (va) -- (3,0) (va)-- (4,2) (va) -- (1,2);
      \draw[fill=white] (0,0)--(3,0)--(4,2)--(1,2)--cycle;
      \node [int, label={$\scriptstyle 4$}] (v3) at (1,1) {};
      \draw (1.5,0)--(2.5,2);
      \node [int] (vb) at (1.8,.6) {};
      \begin{scope}[scale = .7, xshift=2cm,yshift=2cm]
        \draw (vb) -- (0,0) (vb) -- (3,0) (vb)-- (4,2) (vb) -- (1,2);
        \draw[fill=white] (0,0)--(3,0)--(4,2)--(1,2)--cycle;
        \draw (1.5,0)--(2.5,2);
        \node [int, label={$\scriptstyle 5$}] (v4) at (1,.8) {};
        \node [int, label={$\scriptstyle 6$}] (v5) at (1.4,1.4) {};
        \node [int, label={$\scriptstyle \bar 1$}] (v5) at (2,1) {};
      \end{scope}
    \end{scope}
    \begin{scope}[scale = .5, xshift=-6cm,yshift=4cm]
      \draw (vc) -- (0,0) (vc) -- (3,0) (vc)-- (4,2) (vc) -- (1,2);
      \draw[fill=white] (0,0)--(3,0)--(4,2)--(1,2)--cycle;
      \node [int, label={$\scriptstyle 1$}] (v4) at (1,1) {};
      \node [int, label={$\scriptstyle 2$}] (v5) at (2,.5) {};
    \end{scope}
  \end{tikzpicture}
  \caption{A tree indexing a stratum of $\SFM_M(1,6)$ and a point in that stratum}
  \label{fig.strata-sfmm}
\end{figure}

The spaces $\SFM_M$ come equipped with a natural operadic right action of the relative Swiss Cheese operad using insertion of configurations.
\begin{align}
  \circ_{T} : \SFM_{M}(U, V/T) \times_M \FM_{n}^M(T)
   & \to \SFM_{M}(U,V)
   & T \subset V;                \\
  \circ_{W,T} : \SFM_{M}(U/W, V) \times_{\p M} \SFM_{n}^M(W,T)
   & \to \SFM_{M}(U, V \sqcup T)
   & W \subset U.
\end{align}
If $M$ is framed, then this relative Swiss Cheese action gives rise to a to a right $\SFM_{n}$-action:

If we assume that we have chosen an algebraic realization of $\tilde M$ such that $I$ is (semi-)algebraic, then $\SFM_M$ is also a semi-algebraic manifold.
We can thus apply the theory of PA forms (see~\cite{HardtLambrechtsTurchinVolic2011} or~\cite[Section~1.2]{Idrissi2018b}) and consider the collections of CDGAs $\OmPA^{*}(\SFM^{M}_{n})$ and $\OmPA^{*}(\SFM_{M})$.

\begin{remark}\label{rmk:almost}
  Even though the functor $\OmPA^{*}$ is contravariant, $\OmPA^{*}(\SFM_{n})$ does not form a Hopf cooperad, and $\OmPA^{*}(\SFM_{M})$ does not form a Hopf right comodule, because the Künneth quasi-isomorphism goes in the wrong direction.
  Cocomposition structure maps are replaced by a zigzag of maps such as:
  \begin{multline}
    \OmPA^{*}(\SFM^{M}_{n}(U, V)) \xrightarrow{\circ_{T}} \OmPA^{*}(\SFM^{M}_{n}(U,V/T) \times_{M} \FM^{M}_{n}(T)) \\ \xleftarrow{\sim} \OmPA^{*}(\SFM_{n}^{M}(U,V/T)) \otimes_{\OmPA^{*}(M)} \OmPA^{*}(\FM_{n}^{M}(T)),
  \end{multline}
  and similarly for the other structure maps.
  We can call such a collection of CDGAs with such a structure a homotopy Hopf cooperad/comodule.
  Nevertheless, according to~\cite[Section~3]{LambrechtsVolic2014}, we may define a CDGA model of $\SFM_{n}^{M}$ (or $\SFM_{M}$) as a collection of CDGAs $A(U,V)$ equipped with maps $A(U,V) \to \OmPA^{*}(\SFM_{n}^{M}(U,V))$ (or $\SFM_{M}(U,V)$) making the appropriate diagrams commute.
  We will call such a collection of maps an ``almost morphism'' of (homotopy) Hopf cooperads.
\end{remark}

\subsection{Compactified configuration spaces and actions}\label{sec:typeIIcompactification}

\subsubsection{Compactification of $\Conf_r(\p M\times I)$}
We consider $\Conf_r(N \times \R_{>0})$. This space comes with a natural $\R_{>0}$-action by multiplication on the second factor(s).
We define a compactification (recall that the ``a'' stands for ``algebra''):\index{aFMN@$\aFM_{N}$}
\begin{equation}
  \aFM_{N}(r) = \overline{ \Conf_r(N \times \R_{>0})/\R_{>0} }
\end{equation}
of the quotient as follows.
Suppose we are given an embedding of $N$ as a semi-algebraic submanifold of $\R^{D}$ for some big $D$.
Let us define an embedding of $\Conf_r(N \times \R_{>0})/\R_{>0}$ into
\begin{equation}\label{eq:SA embedding of algebra}
  \bigl( S^D \bigr)^{\binom r 2}\times ( \mathbb{RP}^2 )^{\binom{r}{3}} \times N^r,
\end{equation}
where the space $S^D$ is the $D$-dimensional sphere viewed as $\bigl( (\R^D \times \R) \setminus \{\vec{0}\} \bigr) / \R_{>0}$.
Moreover, $\mathbb{RP}^2 = S^D / (\Z/2\Z)$ is the projective plane.
Given some configuration $(x_{1}, h_{1}, \dots, x_{r}, h_{r}) \in \Conf_{r}(N \times \R_{> 0})$, its image in the space above is given by:
\begin{equation}
  \prod_{i<j}
  \frac{\alpha_{ij}}{\|\alpha_{ij}\|}
  \times
  \prod_{i<j<k} \bigl\lbrack \alpha_{ij} : \alpha_{jk} : \alpha_{ik} \bigr\rbrack
  \times \prod x_i,
\end{equation}
where we define vectors $\alpha_{ij} = (h_j x_j - h_i x_i) \oplus (h_j - h_i)$ as elements of $\R^D \oplus \R \cong \R^{D+1}$, which we then project down to the sphere (and similarly for the other indices).
The space $\aFM_{N \times I}(r)$ is then defined to be the closure of the image of this map.

The space $\aFM_{N}(r)$ is a compact SA manifold.
Just like the other compactifications, it can be decomposed into strata.
These strata correspond to rooted trees with leaves labeled by $\{1, \dots, r, *\}$, where $*$ is an extra index representing, roughly speaking, the component $N \times \infty$.
As usual, two leaves belong to a subtree if they are infinitesimally close compared to all the points outside this subtree.
In particular, if leaf $i$ is in a subtree with $*$, it means that $h_{i} / h_{j} = +\infty$ for all $j$ outside this subtree, in other words that the $i$th point is ``infinitesimally close'' to the component $N \times \infty$ compared to the $j$th point.

We have natural gluing maps
\begin{equation}\label{eq.alg-ofm-n-i}
  \aFM_{N}(r)\times \aFM_{N}(s)
  \to
  \aFM_{N}(r+s).
\end{equation}
obtained by gluing the boundary component $N \times \infty$ of $\aFM_{N}(r)$ with the boundary component $N \times 0$ of $\aFM_{N}(s)$ (see the Introduction for a figure).
This gluing is naturally associative and gives the collection $\aFM$ the structure of an algebra object in collections, with the unit given by the unique point of $\aFM_{N}(0)=*$.
On strata, the multiplication map corresponds to grafting the second tree on the leaf labeled by $*$ of the first tree.

Furthermore, we have a right operadic (multimodule-)action of the operad $\FM_n^{TN\times \R}$ in spaces over $N$, compatible with the algebra structure.
Hence, overall the collection $\aFM_{N}$ becomes an algebra object in the category of right $\FM_n^{TN\times \R}$-multimodules.

\subsubsection{Compactifications of $\Conf_{r,0}(M)$ }
\label{sec.ofmm}

Similarly we define compactifications $\mFM_M$\index{mFMM@$\mFM_{M}$} of $\Conf_{r,0}(M)$ (recall that the ``m'' stands for ``module'', i.e., module over the algebra $\aFM_{A_{\partial}}$).
To this end we fix a collar of the boundary $N = \partial M$.
We think of the collar as an ``infinitely extruded'' tube of shape $N\times \R_{>0}$.
\begin{equation}
  \begin{tikzpicture}[yshift=-.5cm,baseline=-.65ex]
    \draw (7,0.4)
    .. controls (-.7,0.4) and (-1.3,-.7) .. (-2,-.7)
    .. controls (-4,-.7) and (-4,1.7) .. (-2,1.7)
    .. controls (-1.3,1.7) and (-.7,.6) .. (7,.6);
    \begin{scope}[xshift=-2cm, yshift=.6cm, scale=1.2]
      \draw (-.5,0) .. controls (-.2,-.2) and (.2,-.2) .. (.5,0);
      \begin{scope}[yshift=-.07cm]
        \draw (-.35,0) .. controls (-.1,.1) and (.1,.1) .. (.35,0);
      \end{scope}
    \end{scope}
    \draw (7,.5) ellipse (.03cm and .1cm);
  \end{tikzpicture}
\end{equation}
We then define the compactification $\mFM_M(r)$ of $\Conf_{r,0}(M)$ by adding the usual boundary pieces for strata corresponding to points bunching together in the interior of $M$, and we compactify in a manner similar to $\aFM_{\p M}$ on the boundary.
Formally, this is constructed by considering a collar $\p M$ and identifying it with  $\p M\times I$ and identifying its complement with $M^\circ$ the bulk of $M$. This gives us a continuous bijection  $$f\colon \bigsqcup_{k=0}^{r} \FM_{M^\circ}(k)\times \aFM_{\p M}(r-k)  \to \mFM_M(r).$$

The collection of spaces $\mFM_M$ has the following structure:
\begin{itemize}
  \item A right $\FM_n^M$-action of the fiberwise little $n$-cubes operad.
  \item An action of the algebra object $\aFM_{N}$ by gluing at the collar naturally compatible with the right operadic $\FM_n^M$-action.
\end{itemize}

\subsection{Comparison of compactifications: configuration spaces of two points }
\label{sec:FM2discussion}

Of particular importance for us will be the spaces $\SFM_M(2,0)$, $\aFM_{N}(2)$, and $\mFM_M(2)$, because our models will be built from forms pulled back from these spaces via forgetful maps.
For this reason, and as an example, we shall discuss these spaces in detail in the present section.

\subsubsection{$\aFM_{N}(2)$}

In the simplest case, $\aFM_{N}(2)$, we have four strata (see Section~\ref{sec:typeIIcompactification}), corresponding to the trees of Table~\ref{tab.strata-ofm-n-i}.

\begin{table}[htbp]
  \centering
  \footnotesize
  \tikzset{every picture/.style={scale=.5}}
  \begin{tabular}{ccccc}
    \toprule
    Stratum
     & I                           & II & III & IV \\
    \midrule
    Codim.
     & 0                           & 1  & 1   & 1  \\
    {}
     & \begin{tikzpicture}
      \node {}
      child {
          child { node {1} }
          child { node{2} }
          child {node {*} }
        };
    \end{tikzpicture}
     & \begin{tikzpicture}
      \node {}
      child {
          child { node {1}}
          child {
              child { node{2} }
              child {node {*} }
            }
        };
    \end{tikzpicture}
     & \begin{tikzpicture}
      \node {}
      child {
          child { node {2}}
          child {
              child { node{1} }
              child {node {*} }
            }
        };
    \end{tikzpicture}
     & \begin{tikzpicture}
      \node {}
      child {
          child { node {*}}
          child {
              child { node{1} }
              child {node {2} }
            }
        };
    \end{tikzpicture}
    \\
    \bottomrule
  \end{tabular}
  \caption{Boundary strata of $\aFM_{N}$}
  \label{tab.strata-ofm-n-i}
\end{table}

Let us quickly recall the notation.
The leaves $1$ and $2$ represent the two points of the configuration, while the leaf labeled by $*$ represents roughly speaking the component $N \times \infty$.
Two leaves belong to a subtree if they are infinitesimally close compared to all the points outside this subtree.
Thus, stratum~I is the interior of $\aFM_{N}$, with the two points in general position.
Stratum~II represents (roughly speaking) configurations $((x_{1},t_{1}),(x_{2},t_{2})) \in (N \times \R_{>0})^{2}$ where $t_{2}/t_{1}$ is ``infinitesimally close'' to $\infty$.
Similarly, stratum~III represents such configurations with $t_{2}/t_{1}$ infinitesimally close to $0$.
Finally, stratum~IV represents configurations with $(x_{1},t_{1})$ infinitesimally close to $(x_{2},t_{2})$.
From these descriptions, it follows that stratum~I is $\Conf_{2}(N \times \R_{>0})$, strata~II and~III are both $N \times N$, and stratum~IV is the normal bundle of the submanifold $\Delta_N \times \{1\} \subset N^2 \times \R_{>0}$, where $\Delta_N \subset N^2$ is the diagonal.

The poset structure between the different strata by inclusion is reflected in trees by edge contraction, in the sense that a stratum corresponding to tree $T$ contains that corresponding to tree $T'$ iff $T$ may be obtained from $T'$ by iterated edge contraction.
We may draw a schematic picture of $\aFM_{N}(2)$ which reflects the adjacency correctly, with the notation $SN$ denoting the $S^n$-sphere bundle in $\R\times TN$, see Figure~\ref{fig:adj-afm}.
Mind that of course all strata are compact, which is not reflected in the schematic picture.

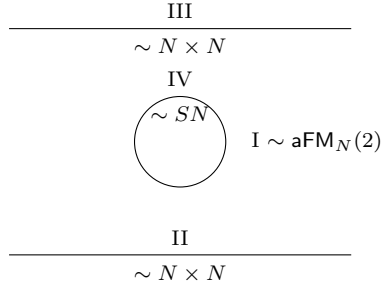
\begin{figure}[htbp]
  \centering
  \begin{tikzpicture}[scale=1.5, every node/.append style = {font=\scriptsize}]
    \draw
    (0,0) -- (3,0) node [midway, above] {II} node [midway, below] {$\sim N \times N$}
    (0,2) -- (3,2) node [midway, above] {III} node [midway, below] {$\sim N \times N$};
    \node[draw, circle, inner sep = 0pt, minimum size = 1.2cm] (circ) at (1.5,1) {};
    \node[above] at (circ.north) {IV};
    \node[below] at (circ.north) {$\sim SN$};
    \node at (2.7,1) {I $\sim \aFM_{N}(2)$};
  \end{tikzpicture}
  \caption{Adjacency of boundary strata in $\aFM_N(2)$}
  \label{fig:adj-afm}
\end{figure}

More generally, strata of $\aFM_{N}(r)$ are classified by rooted trees with $r+1$ leaves (one being labeled by $*$ and the others by $\{1,\dots,r\}$).
The algebra structure from Equation~\eqref{eq.alg-ofm-n-i} is given on strata by inserting a tree in the $*$ vertex, while the $\FM_{n}^{TN \times \R}$ operadic action is given on strata by inserting trees (with no $*$ vertex) in one of the numbered vertices.

\subsubsection{$\mFM_{M}(2)$}

Next let us turn to $\mFM_M(2)$.
The trees describing the stratification are depicted in Table~\ref{tab.strata-ofm-m}.
Again the inclusion relation between strata is captured by edge contraction.
The numbered leaves correspond to points of the configuration, while $*$ represents (roughly speaking) the component $\partial M \times \{ \infty \}$, and the black vertex represents the bulk of $M$.

\begin{table}[htbp]
  \centering
  \tiny
  \tikzset{every picture/.append style={scale=.35}}
  \begin{tabular}{ccccccccc}
    \toprule
    Stratum
     & I                           & II & III & IV & V & VI & VII & VIII \\
    \midrule
    Codim.
     & 0                           & 1  & 1   & 1  & 1 & 2  & 2   & 2
    \\
    {}
     & \begin{tikzpicture}
      \node {}
      child { node[int] {}
          child { node {1} }
          child { node{2} }
          child {node {*} }
        };
    \end{tikzpicture}
     & \begin{tikzpicture}
      \node {}
      child { node[int] {}
          child { node {1}}
          child {
              child { node{2} }
              child {node {*} }
            }
        };
    \end{tikzpicture}
     & \begin{tikzpicture}
      \node {}
      child { node[int] {}
          child { node {2}}
          child {
              child { node{1} }
              child {node {*} }
            }
        };
    \end{tikzpicture}
     & \begin{tikzpicture}
      \node {}
      child { node[int] {}
          child { node {*}}
          child {
              child { node{1} }
              child {node {2} }
            }
        };
    \end{tikzpicture}
     & \begin{tikzpicture}
      \node {}
      child { node[int] {}
          child {
              child { node {*}}
              child { node{1} }
              child {node {2} }
            }
        };
    \end{tikzpicture}
     & \begin{tikzpicture}
      \node {}
      child { node[int] {}
          child {
              child { node {1}}
              child {
                  child { node{2} }
                  child {node {*} }
                }
            }
        };
    \end{tikzpicture}
     & \begin{tikzpicture}
      \node {}
      child { node[int] {}
          child {
              child { node {2}}
              child {
                  child { node{1} }
                  child {node {*} }
                }
            }
        };
    \end{tikzpicture}
     & \begin{tikzpicture}
      \node {}
      child { node[int] {}
          child {
              child { node {*}}
              child {
                  child { node{1} }
                  child {node {2} }
                }
            }
        };
    \end{tikzpicture}
    \\
    \bottomrule
  \end{tabular}
  \caption{Boundary strata of $\mFM_M(2)$}
  \label{tab.strata-ofm-m}
\end{table}

We may draw a schematic picture capturing adjacency, where $SM$ denotes the sphere bundle in $TM$, and $S\p M$ is the restriction of this bundle to the boundary, see Figure~\ref{fig:adj-mfm}.
We are not representing the interior (boundary stratum I) which touches all the other boundary strata.

\begin{figure}[htbp]
  \centering
  \begin{tikzpicture}[scale=1.5, every label/.append style={font=\tiny}, every node/.append style={font=\tiny}]
    \node[int, label=left:{VI}, label=right:{$\sim\p M\times \p M$}] (v1) at (90:1) {};
    \node[int,label=left:{VII}, label=right:{$\sim\p M\times \p M$}] (v2) at (-30:1) {};
    \node[int, label=left:{VIII}, label=right:{$\sim S\p M$}] (v3) at (-150:1) {};
    \draw (0,0) edge (v1) edge (v2) edge (v3)
    (v1) -- (90:3) node[midway, above, sloped] {II} node [midway, below, sloped] {$\sim M \times \p M$}
    (v2) -- (-30:3) node[midway, above, sloped] {III} node [midway, below, sloped] {$\sim \p M \times M$}
    (v3) -- (-150:3) node[midway, above, sloped] {IV} node [midway, below, sloped] {$\sim SM$};
    \node at (.7,0.3) {V $\sim\aFM_{\p M}(2)$};
  \end{tikzpicture}
  \caption{Adjacency of boundary strata in $\mFM_M(2)$}
  \label{fig:adj-mfm}
\end{figure}
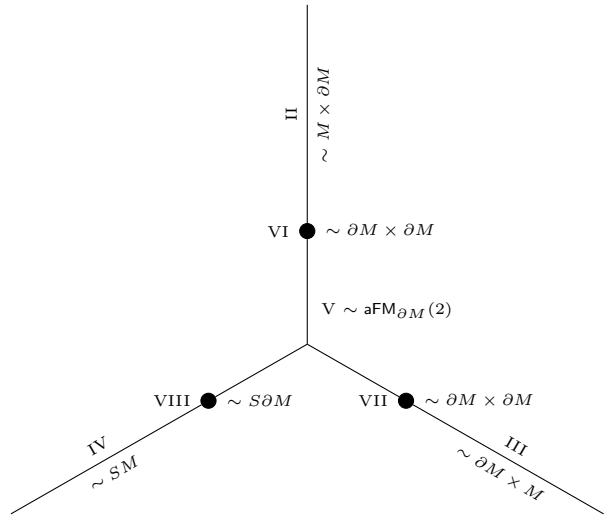

\subsubsection{$\SFM_{M}(0,2)$}

Finally let us turn to the space $\SFM_M(0,2)$, whose boundary strata are in Table~\ref{tab.strata-sfm-m}.
The inclusion of strata is again determined combinatorially by contracting edges.

\begin{table}[htbp]
  \centering
  \tikzset{every picture/.style={scale=.35}}
  \tiny
  \begin{tabular}{ccccccccccc}
    \toprule
    Stratum
     & I & II & III & IV & V & VI & VII & VIII & IX & X \\
    \midrule
    Codim.
     & 0 & 1  & 1   & 1  & 1 & 2  & 2   & 2    & 2  & 3
    \\
    {}
     &
    \begin{tikzpicture}
      \node {}
      child { node[int] {}
          child { node {1} }
          child { node{2} }
        };
    \end{tikzpicture}
     &
    \begin{tikzpicture}
      \node {}
      child { node[int] {}
          child { node {1}}
          child[dashed] {
              child[solid] { node{2} }
            }
        };
    \end{tikzpicture}
     &
    \begin{tikzpicture}
      \node {}
      child { node[int] {}
          child { node {2}}
          child[dashed] {
              child[solid] { node{1} }
            }
        };
    \end{tikzpicture}
     &
    \begin{tikzpicture}
      \node {}
      child { node[int] {}
          child[dashed] {
              child[solid] { node{1} }
              child[solid] {node {2} }
            }
        };
    \end{tikzpicture}
     &
    \begin{tikzpicture}
      \node {}
      child { node[int] {}
          child {
              child { node{1} }
              child {node {2} }
            }
        };
    \end{tikzpicture}
     &
    \begin{tikzpicture}
      \node {}
      child { node[int] {}
          child[dashed] {
              child[solid] { node {1}}
              child[dashed] {
                  child[solid] { node{2} }
                }
            }
        };
    \end{tikzpicture}
     &
    \begin{tikzpicture}
      \node {}
      child { node[int] {}
          child[dashed] {
              child[solid] { node {2}}
              child[dashed] {
                  child[solid] { node{1} }
                }
            }
        };
    \end{tikzpicture}
     &
    \begin{tikzpicture}
      \node {}
      child { node[int] {}
          child[dashed] {
              child[solid] {
                  child { node{1} }
                  child {node {2} }
                }
            }
        };
    \end{tikzpicture}
     &
    \begin{tikzpicture}
      \node {}
      child { node[int] {}
          child[dashed] {
              child[solid] { node{1} }
            }
          child[dashed] {
              child[solid] { node{2} }
            }
        };
    \end{tikzpicture}
     &
    \begin{tikzpicture}
      \node {}
      child { node[int] {}
          child[dashed] {
              child[dashed] {
                  child[solid] { node{1} }
                }
              child[dashed] {
                  child[solid] { node{2} }
                }
            }
        };
    \end{tikzpicture}
    \\
    \bottomrule
  \end{tabular}
  \caption{Boundary strata of $\SFM_M(0,2)$}
  \label{tab.strata-sfm-m}
\end{table}

We may draw a following schematic picture illustrating the inclusion of strata, see Figure~\ref{fig:adj-sfm}.
The notation $\HS$ denotes the ``half sphere bundle''.
Notice that we have not drawn the ``big'' stratum $I\sim \SFM_M(0,2)$ including all others.
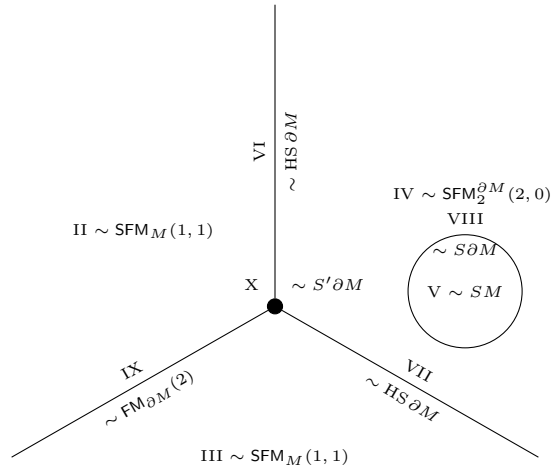
\begin{figure}[htbp]
  \centering
  \begin{tikzpicture}[every node/.append style={font=\tiny}]
    \node[int, label={above left}:{X}, label={above right}:{$\sim S \p M$}] (z) at (0,0) {};
    \draw
    (z) -- (90:4) node [midway, above, sloped] {VI} node [midway, below, sloped] {$\sim \HS \p M$}
    (z) -- (-30:4) node [midway, above, sloped] {VII} node [midway, below, sloped] {$\sim \HS \p M$}
    (z) -- (-150:4) node [midway, above, sloped] {IX} node [midway, below, sloped] {$\sim \FM_{\p M}(2)$};
    \node at (150:2) {II $\sim \SFM_M(1,1)$};
    \node at (-90:2) {III $\sim \SFM_M(1,1)$};
    \node at (30:3) {IV $\sim \SFM_2^{\p M}(0,2)$};
    \node [draw, circle, minimum size = 1.5cm] (c) at (2.5,.2) {V $\sim SM$};
    \node[above] at (c.north) {VIII};
    \node[below] at (c.north) {$\sim S M|_{\partial M}$};
  \end{tikzpicture}
  \caption{Adjacency of boundary strata in $\SFM_M(0,2)$}
  \label{fig:adj-sfm}
\end{figure}

\section{Construction of propagators}
\label{sec.propagator}

\subsection{The propagator for closed manifolds}\label{sec:closed propagator}
The proof of~\cite[Theorem~A]{Idrissi2018b} and the main results of~\cite{CamposWillwacher2016} -- in the case of closed compact orientable manifolds $M$ -- relied on the existence of a ``propagator'' $\varphi$.
Let us briefly recall its construction in this case, before going back to the case $\partial M \neq \varnothing$.

The propagator $\varphi$ is an $(n-1)$-form on $\FM_{M}(\underline{2})$ whose differential is the pullback of the diagonal class of $M^2$.
Moreover, the restriction of the propagator to $\partial \FM_{M}(\underline{2})$ is a global angular form for the $S^{n-1}$-bundle given by the projection $\partial \FM_{M}(\underline{2}) \to M$.
If $M$ is framed, then this bundle is trivial since the operadic right module structure map $\circ_{1} : M \times S^{n-1} \to \p \FM_{M}(\underline{2})$ is an isomorphism of bundles.
In that case, we further assume that $\circ_1^*\varphi$ is equal to $1 \times \vol_{n-1}$.

This propagator was constructed in several steps (see~\cite[Proposition~7]{CamposWillwacher2016} for the details).
We first consider a global angular form of the bundle $\p \FM_M(\underline{2}) \to M$.
We then pull it back to a tubular neighborhood of the boundary $\p \FM_M(\underline{2}) \subset \FM_{M}(\underline{2})$.
We then multiply it by a bump function, and finally we extend it by zero outside of the tubular neighborhood
Moreover, it can be chosen so that it belongs to the subalgebra $\Omega^{*}_{\triv}(\FM_{M}(\underline{2})) \subset \OmPA^{*}(\FM_{M}(\underline{2}))$ of ``trivial'' PA forms~\cite[Appendix~C]{CamposWillwacher2016}.
This implies that it can be integrated along the fibers of the canonical projections $p_{V}$.

In more detail, let us fix representatives of the cohomology, i.e., a quasi-isomorphism $\iota : H(M)\to \OmPA(M)$.
Let $(\gamma_i)$ be representatives of some basis of $H(M)$.
Let also $(\gamma_i^*)$ be the corresponding dual basis.
Note that this fits into the framework of Lemma~\ref{lem:abc} (where there are no $\alpha_{i}$, $\beta_i$ because $\partial M = \varnothing$).
This choice fixes a representative of the diagonal:
\begin{equation}
  \Delta \coloneqq \sum_{i} \iota(\gamma_i) \times \iota(\gamma_i^*) \in \OmPA^{n}(M \times M).
\end{equation}
As a convention, we will often write $x$ for the coordinate of the first $M$ and $y$ for the coordinate of the second $M$, so that e.g., the form $\Delta$ can be written as $\Delta(x,y)$.

The natural pairing of differential forms then also defines a projection onto cohomology
\begin{align*}
  p : \OmPA(M) & \to H(M)                                         \\
  \beta        & \mapsto \sum_i \gamma_i \int_M \gamma_i^* \beta.
\end{align*}

The \emph{propagator}\index{propagator} constructed in~\cite[Proposition 8]{CamposWillwacher2016} adapting earlier ideas in the literature~\cite{BottCattaneo1998} is a form
\begin{equation}
  \varphi\in \Omega^{n-1}_{\triv}(\FM_M(2))
\end{equation}
that satisfies the following properties:
\begin{itemize}
  \item It is (anti-)symmetric:
        \beq{equ:propsymmclosed}
        \varphi(y,x) = (-1)^{n} \varphi(x,y)
        \eeq
  \item Its differential is the pullback of the diagonal class:
        \begin{equation}
          d\varphi(x,y) =\Delta = \sum_{i} \gamma_i(x) \gamma_i^*(y).
        \end{equation}
  \item Its restriction to the boundary $\p\FM_M(\underline{2})$ (which consists of configurations of points that are infinitesimally close) is a fiberwise volume form, i.e., of volume $1$ on each fiber.
  \item For a suitable choice of representatives $\gamma_i$ as above and for all $i$, we have that:
        \begin{equation}
          \int_{y}\varphi(x,y) \gamma_i(x)=0.
        \end{equation}
\end{itemize}
The propagator shall morally be thought of as defining a homotopy $h$ through
\begin{equation}
  h\beta = \int_y \varphi(x,y) \beta(y),
\end{equation}
that satisfies $(dh+hd)(\beta)=\beta - \iota p \beta$.
Unfortunately, due to technical problems with PA forms the fiber integral occurring here is not defined for all PA forms, but only for $\beta\in \Omega_{\triv}(M)$.

\subsection{Diagonal data}
\label{sec.a-data}

In the previous section, we saw that to fix the differential of the propagator on a closed manifold, we needed a fixed diagonal class.
We will also want to fix the differential of our propagators on $\SFM_{M}$, $\aFM_{\partial M}$, and $\mFM_{M}$.
To this end, we define a general notion of ``diagonal data'' in the case of manifolds with boundary.
We will give examples of this notion below.

\begin{definition}
  Diagonal data\index{diagonal data} consists in the following data:
  \begin{itemize}
    \item Two CDGAs $A$ and $A_\p$;
    \item A CDGA morphism (the ``restriction'') $\rho : A\to A_\p$;
    \item A closed element (the  ``diagonal'') $\Delta_A \in A\otimes A$, such that:
          \[
            0 = (\rho \otimes \id) (\Delta_A) \in A_\p \otimes A;
          \]
    \item An element (the ``section'') $\sigma_{A} \in A \otimes A_{\partial}$, such that:
          \[ d\sigma_{A} = (\id \otimes \rho)(\Delta_{A}) \in A \otimes A_{\partial}. \]
  \end{itemize}
  Morphisms of diagonal data are defined to be morphisms of pairs of CDGAs which commute with the morphism and map the distinguished elements to the distinguished elements.
  In particular, quasi-isomorphisms of diagonal data are morphisms of diagonal data which are quasi-isomorphisms on the underlying two pairs of CDGAs.
\end{definition}

Given some diagonal data as in the definition above, we define as a shorthand the following element:
\begin{equation}
  \Delta_{A,A_{\partial}} \coloneqq (\id \otimes \rho)(\Delta_{A}) = d\sigma_{A} \in A \otimes A_{\partial}.
\end{equation}

\begin{example}
  \label{exa.diag-data}
  The reader should keep two examples of diagonal data in mind.
  Consider some compact manifold with boundary $(M,\p M)$.
  \begin{description}
    \item[Combinatorial case]
      Let $A=S (\tilde H(M)\oplus H(M,\p M))$ and $A_\p=S \tilde H(\p M)$ with the zero differentials.
      The map $\rho$ is the map induced from the restriction map $H(M)\to H(\p M)$, with $\rho(H(M,\p M))=0$.
      Recall the bases defined in Section~\ref{sec:note-cohom-comp}.
      The diagonal element is
      \begin{equation*}
        \Delta_A \coloneqq \sum_i d\beta_i \otimes \alpha_i + \sum_j \gamma_j \otimes \gamma_j^* \in H(M,\p M)\otimes H(M),
      \end{equation*}
      considered as an element of $A \otimes A$.
      The ``section'' $\sigma_{A}$ is given by $\sum_{i} \beta_{i} \otimes \alpha_i$.
    \item[Model case]
      Let $A=R$, $A_\p =R_\p$ and $\rho:A\to A_\p$ be CDGA models for $M$, $\p M$ and the inclusion $\p M\to M$ considered in Section~\ref{sec.pretty-nice}, obtained from a PLD model of $M$.
      The diagonal $\Delta_A$ is then the cocycle $\Delta_{R}$ defined in Lemma~\ref{lem.delta-r} below, and $\sigma_{A}$ is the element $\sigma_{R}$ also defined below~\eqref{eq:sigma-r}.
  \end{description}
  In Section~\ref{sec.comp-two-approaches} we explain the advantages of both approaches.
\end{example}

In order to define the elements of the second case, let us fix some PLD model of $M$ as in Equation~\eqref{eq.nice-diagram}.
Recall that the diagonal class $\Delta_{KP} \in K \otimes P$~\eqref{def.delta-ak} is sent to $\Delta_{P} \in P \otimes P$ under the map $K \subset B \xrightarrow{\pi} P$.

\begin{lemma}
  \label{lem.delta-r}
  There exists an element $\Delta_{R} \in R \otimes R$ such that $(\id \otimes \pi) g^{\otimes 2}(\Delta_{R}) = \Delta_{KP}$ and $\mu_{R}(\Delta_{R}) = 0$.
\end{lemma}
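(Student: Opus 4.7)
The strategy is a two-step procedure: first produce a cocycle lift of $\Delta_{KP}$ to $R \otimes R$, and then correct it by a coboundary so as to kill its image under the multiplication, without disturbing the lifting property. Throughout we use the notation of the PLD model diagram~\eqref{eq.nice-diagram}; we interpret the map denoted $g$ in the lemma as the quasi-isomorphism $R \to B$ (the only composition with $\pi$ that makes sense).

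First, I would observe that in the construction of $R$ given in the proof of Proposition~\ref{prop.more-general-model}, the morphism $R \to B$ is surjective, and by definition $\pi : B \twoheadrightarrow P$ is surjective as well. Consequently, the composite
\[ \phi \coloneqq (\id \otimes \pi) \circ g^{\otimes 2} \colon R \otimes R \longtwoheadrightarrow B \otimes P \]
is a surjective quasi-isomorphism, so $\ker \phi$ is acyclic. The element $\Delta_{KP}$, viewed in $B \otimes P$ via the inclusion $K \hookrightarrow B$, is a cocycle. A routine lifting argument then produces a cocycle preimage $\Delta_R^0 \in R \otimes R$: choose any $\tilde{\Delta} \in \phi^{-1}(\Delta_{KP})$; then $d\tilde{\Delta} \in \ker \phi$ is exact there, say $d\tilde{\Delta} = d\zeta$, and set $\Delta_R^0 \coloneqq \tilde{\Delta} - \zeta$.

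Next I would examine $\mu_R(\Delta_R^0) \in R$. Since $\pi \circ g \colon R \to P$ is a CDGA morphism that intertwines multiplications,
\[ (\pi \circ g)\bigl(\mu_R(\Delta_R^0)\bigr) = \mu_P\bigl((\pi \circ g)^{\otimes 2}(\Delta_R^0)\bigr) = \mu_P(\Delta_P), \]
which vanishes by Equation~\eqref{eq.mu-delta-zero} since $\partial M \neq \varnothing$. Hence $\mu_R(\Delta_R^0)$ is a cocycle in the acyclic subcomplex $\ker(\pi \circ g)$, so there exists $\eta \in \ker(\pi \circ g)$ with $d\eta = \mu_R(\Delta_R^0)$.

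Finally, set $\Delta_R \coloneqq \Delta_R^0 - d(1 \otimes \eta)$. This is still a cocycle. Because $\pi g(\eta) = 0$, the correction satisfies $\phi\bigl(d(1 \otimes \eta)\bigr) = 1 \otimes d(\pi g(\eta)) = 0$, so $\phi(\Delta_R) = \phi(\Delta_R^0) = \Delta_{KP}$. Moreover, $\mu_R(\Delta_R) = \mu_R(\Delta_R^0) - d\eta = 0$, as required. The main subtlety is the simultaneous satisfaction of the two constraints: achieving it requires the primitive $\eta$ to lie in $\ker(\pi \circ g)$, which in turn relies on the crucial input~\eqref{eq.mu-delta-zero} that $\mu_P(\Delta_P)$ vanishes in $P$ when $\partial M \neq \varnothing$.
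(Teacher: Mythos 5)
Your proof is correct and follows essentially the same route as the paper, which simply defers to the analogous argument of \cite[Proposition~3.3]{Idrissi2016}: lift the diagonal cocycle along the surjective quasi-isomorphism (using acyclicity of its kernel), then correct by the coboundary $d(1\otimes\eta)$ with $\eta\in\ker(\pi\circ g)$, the key input being $\mu_P(\Delta_P)=0$ from Equation~\eqref{eq.mu-delta-zero}. Your reading of $g$ as the (surjective) quasi-isomorphism $R\to B$ is also the intended one, consistent with Definition~\ref{def.delta-b}.
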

\begin{proof}
  The morphisms $R \to B$ and $B \to P$ are both surjective, and $\mu_{P}(\Delta_{P}) = 0$ when $\partial M \neq \emptyset$ (Equation~\eqref{eq.mu-delta-zero}).
  Thus the proof is identical to the proof of~\cite[Proposition~3.3]{Idrissi2018b}.
\end{proof}

\begin{definition}
  \label{def.delta-b}
  For convenience, we define the cocycle $\Delta_{B} \coloneqq g^{\otimes 2}(\Delta_{R}) \in B^{\otimes 2}$.
\end{definition}

Let us now choose a linear section $s : B_{\partial} \to B$ of $\rho : B \to B_{\partial}$.
It does not commute with the differential, in general.
However, $s(dx)-ds(x)$ belongs to $K = \ker \rho$ for all $x \in B$.
Using the Poincaré duality of $B_{\partial}$, we can make $s$ into an element $\sigma_{B} \in B \otimes B_{\partial}$.
This element satisfies $d \sigma_{B} \in K \otimes B_{\partial}$.
Using the fact that we have a surjective quasi-isomorphism (thanks to the five lemma):
\begin{equation}
  \cone(R_{\partial} \otimes \ker \rho) \to \cone(B \otimes K),
\end{equation}
we can find some element
\begin{equation}
  \label{eq:sigma-r}
  \sigma_{R} \in R_{\partial} \otimes R
\end{equation}
with $(g_{\partial} \otimes g)(\sigma_{R}) = \sigma_{B}$ and $d \sigma_{R} \in R_{\partial} \otimes \ker \rho$.

\begin{definition}
  Let $M$ be a manifold with boundary, and let $(A \xrightarrow{\rho} A_{\partial}, \Delta_{A})$ be some diagonal data.
  We say that this diagonal data \textbf{maps into $M$} if we are given maps $g:A\to \Omega_{\triv}(M)$ and $g_\p:A\to \Omega_{\triv}(\p M)$ such that the diagram
  \beq{equ:AdataCD}
  \begin{tikzcd}
    A \ar{d}{\rho} \ar{r}{g} & \Omega_{\triv}(M) \ar{d} \\
    A_\p \ar{r}{g_\p} & \Omega_{\triv}(\p M)
  \end{tikzcd}
  \eeq
  commutes, and such that $g^{\otimes2}(\Delta_{A})$ is a representative of the ``diagonal'' class in cohomology (defined using Poincaré--Lefschetz duality).
\end{definition}

Both of our main examples (Example~\ref{exa.diag-data}) map into $M$, either by definition in the model case, or by choosing closed representatives as in Lemma~\ref{lem:abc}.

\subsection{The propagator on \texorpdfstring{$\SFM_M$}{SFM\_M}}
\label{sec.propagator-sfm}

The propagator $\varphi\in \OmPA(\SFM_M(0,2))$ is constructed ``as usual in physics'' by the method of mirror charges.
The inspiration comes from the construction of $\SFM_{M}$ itself in Section~\ref{sec.constr-sfmm}.

Concretely, let us consider the double manifold:
\begin{equation}
  \tilde M = M \sqcup_{\p M} M^{\mathrm{op}},
\end{equation}
which is a closed compact oriented manifold.
It comes with an orientation reversing involution $I : \tilde M \to \tilde M$ that interchanges the two halves.
We consider a propagator $\tilde \varphi$ on $\FM_{\tilde M}(2)$ as constructed in~\cite[Proposition 8]{CamposWillwacher2016} (see Section~\ref{sec:closed propagator}).
In particular, we have that $\tilde\varphi(x,y) = (-1)^{n}\tilde\varphi(y,x)$.
Moreover, the element $d\tilde \varphi = \tilde \Delta \in \OmPA(M)\otimes \OmPA(M)$ is a representative of the diagonal class in $\tilde M\times \tilde M$.
We then define:
\begin{equation}
  \varphi'(x,y) \coloneqq \tilde \varphi(x,y) - \tilde \varphi(I x,y).
\end{equation}
If $x$ approaches the boundary $\p M$, then $\varphi'(x,y)\to 0$.

\begin{lemma}\label{lem:preprop}
  The form $d\varphi'$ descends to $\OmPA(M,\p M)\otimes \OmPA(M)$ and it is a representative of  the ``diagonal'' element.
\end{lemma}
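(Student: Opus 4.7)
The first assertion follows from the fact that the involution $X \colon \tilde M \to \tilde M$ fixes $\p M$ pointwise. Writing $\iota_1 \colon \p M \times M \hookrightarrow \tilde M \times \tilde M$ for the inclusion, one has $(X \times \id) \circ \iota_1 = \iota_1$, so the pullback of $\varphi' = \tilde\varphi - (X \times \id)^{*}\tilde\varphi$ along $\iota_1$ is identically zero. Since pullback commutes with the exterior differential, $d\varphi'$ also vanishes on the boundary piece corresponding to $x \in \p M$, which is exactly what is needed for $d\varphi'$ to descend to $\OmPA(M,\p M) \otimes \OmPA(M)$, where the first factor is understood as forms vanishing on $\p M$.

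For the cohomological statement, I would characterize the Poincar\'e--Lefschetz diagonal by integration against test forms: it is the unique class $[\Delta^{\mathrm{PL}}] \in H(M, \p M) \otimes H(M)$ such that
\[
  \int_{x \in M} \omega(x) \, \Delta^{\mathrm{PL}}(x,y) = [\omega(y)] \in H^{*}(M)
\]
for every closed form $\omega$ on $M$; this is the content of the Poincar\'e--Lefschetz pairing described in Lemma~\ref{lem:abc} and Section~\ref{sec.diagonal-classes}. To verify this property for $d\varphi'$, I would extend any such $\omega$ to an $X$-invariant closed form $\tilde\omega$ on $\tilde M$ (its symmetric extension across $\p M$). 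Since $\tilde\Delta = d\tilde\varphi$ represents the diagonal of the closed manifold $\tilde M$,
\[
  \int_{x \in \tilde M} \tilde\omega(x) \, \tilde\Delta(x,y) = \tilde\omega(y)
\]
in cohomology.

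Splitting the integral as $\int_{\tilde M} = \int_M + \int_{M^{\mathrm{op}}}$ using the orientations induced from $\tilde M$, and performing the change of variables $u = Xx$ on the second term, I obtain
\[
  \int_{x \in M^{\mathrm{op}}} \tilde\omega(x) \, \tilde\Delta(x,y) = -\int_{u \in M} \omega(u) \, \tilde\Delta(Xu, y),
\]
the sign coming from $X \colon M \to M^{\mathrm{op}}$ being orientation-reversing (as $X$ reverses the orientation of $\tilde M$), combined with $X^{*}\tilde\omega = \tilde\omega$ and $\tilde\omega|_M = \omega$. Using $\tilde\omega(y) = \omega(y)$ for $y \in M$, the two pieces combine to
\[
  \int_{x \in M} \omega(x) \bigl(\tilde\Delta(x,y) - \tilde\Delta(Xx,y)\bigr) = \int_{x \in M} \omega(x) \, d\varphi'(x,y) = \omega(y),
\]
which exhibits $[d\varphi']$ as the Poincar\'e--Lefschetz diagonal. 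The main technical hurdle will be careful bookkeeping of signs across the orientation-reversing involution $X$ and checking that the computation can be carried out within the subcomplex of trivial PA forms where the fiberwise integrations above are actually defined.
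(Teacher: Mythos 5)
Your proposal is correct, but it takes a genuinely different route from the paper. The paper's proof is a basis computation: it invokes the adapted bases of Lemma~\ref{lem:abc}, observes that $H(\tilde M)$ is spanned by $\gamma_i$, $X^*\gamma_i$, $\alpha_i+X^*\alpha_i$ and $d\beta_i-X^*d\beta_i$, writes $\tilde\Delta$ schematically in that basis, and computes $\tilde\Delta-(X^*\otimes\id)\tilde\Delta$ restricted to $M\times M$ term by term. You instead characterize the Poincar\'e--Lefschetz diagonal intrinsically by its reproducing property against $H(M)$ and verify that property for $d\varphi'$ via the mirror change of variables $u=Xx$. Your version is basis-free, makes transparent that the only input is the nondegeneracy of the pairing $H(M)\otimes H(M,\partial M)\to\R$ (which you do implicitly use to conclude uniqueness of the class in $H(M,\partial M)\otimes H(M)$, not merely in $H(M)^{\otimes2}$ --- worth saying explicitly, since $H(M,\partial M)\to H(M)$ need not be injective), and it pins down the normalization exactly: the paper's schematic computation outputs an extra factor of $2$ (coming from $\int_{\tilde M}\tilde\alpha_i\tilde\beta_j=2\delta_{ij}$, i.e.\ from not normalizing the dual basis) which it silently drops. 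The paper's computation, on the other hand, is shorter and directly ties the answer to the bases of Lemma~\ref{lem:abc} that are reused in the propagator construction. Two small points to tighten: the literal ``symmetric extension'' of a closed form across $\partial M$ need not be smooth (or PA), so you should either first make $\omega$ product-like in a collar or simply exhibit $X$-invariant closed extensions of the basis classes ($\gamma_j+X^*\gamma_j$ and $\alpha_i+X^*\alpha_i$ suffice, since classes of the form $d\beta_i$ vanish in $H(M)$ and need not be tested); and your argument for descent is fine --- $(X\times\id)\circ\iota_1=\iota_1$ on $\partial M\times M$ --- which is exactly the observation the paper makes just before the lemma.
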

\begin{proof}
  The statement is independent of the choices involved in the construction of $\varphi'$, so we are free to make these choices conveniently.
  To this end we suppose that we have chosen bases of $H(M)$, $H(M,\p M)$ and $H(\p M)$ as in Lemma \ref{lem:abc} above.
  The cohomology of $H(\tilde M)$ has then a basis represented by $\gamma_i, I^*\gamma_i$, $\tilde \alpha_i= \alpha_i + I^*\alpha_i,$ and $\tilde \beta_i = d\beta_i-I^*d\beta_i$.
  The diagonal in this basis is represented by (schematically)
  \begin{equation}
    \tilde \Delta = \sum_{i} \gamma_{i} \otimes \gamma_{i} + I^*\gamma_{i}\otimes I^*\gamma_{i} + \sum_{j} \tilde \alpha_j \otimes \tilde \beta_j+ \tilde \beta_j \otimes \tilde \alpha_j.
  \end{equation}
  Subtracting $(I^*\otimes \id)\tilde \Delta$ and restricting to $M\times M\subset \tilde M \times \tilde M$, we obtain as desired:
  \begin{equation}
    2 \sum_{i} \gamma_{i}\otimes \gamma_{i} +  2 \sum_{j} d\beta_j \otimes \alpha_j,
    \qedhere
  \end{equation}
\end{proof}

We use the previous lemma to show the following.
Let us fix some diagonal data $(A \xrightarrow{\rho} A_{\partial}, \Delta_{A}, \sigma_{A})$ that maps into $M$, with $g : A \to \OmPA^{*}(M)$ and $g_{\partial} : A_{\partial} \to \OmPA^{*}(\partial M)$.
Recall the description of the boundary strata of $\SFM_{M}(0,2)$ from Section~\ref{sec:FM2discussion}.
In particular, we will denote by
\begin{equation}
  j : \SFM_{M}(1,1) \to \SFM_{M}(0,2)
\end{equation}
the ``inclusion'' whose image is the adherence of the boundary stratum II (see Section~\ref{sec:FM2discussion}) consisting of configurations where one point is infinitesimally close to the boundary and the other remains on the bulk.
This map $j$ is defined using a collar $\partial M \times [0,1) \subset M$ in order to infinitesimally ``push'' a point of the boundary inside $M$.

\begin{proposition}
  \label{prop.propagator-sfmm}
  Let $M$ be a compact orientable $n$-manifold with boundary, for which we fix an algebraic realization as above.
  Let us also fix some diagonal data which maps into $M$ as above.
  Then there is a form $\varphi\in \Omega^{n-1}_{\triv}(\SFM_M(0,2))$ (the propagator\index{propagator}) with the following properties.
  \begin{itemize}
    \item $\varphi$ vanishes on the boundary stratum III of $\FM_M(2)$, using the notation of Section~\ref{sec:FM2discussion};
    \item The restriction of $\varphi$ to the boundary stratum V is fiberwise a volume form, and moreover if $M$ is framed then that restriction is given by $1 \times \vol_{S^{n-1}}$;
    \item We have $d\varphi = g^{\otimes2}(\Delta_{A})$;
    \item The restriction $j^{*}\varphi$ to the boundary stratum II is equal to $(g \otimes g_{\partial})(\sigma_{A})$, while the restriction to boundary stratum III vanishes;
    \item We have the following vanishing properties:
          \begin{align*}
            \forall \alpha \in A, \int_{y} \varphi(y,x) \alpha(y)        & = 0; &
            \forall \beta \in \ker(\rho), \int_{y} \varphi(x,y) \beta(y) & = 0.
          \end{align*}
  \end{itemize}
\end{proposition}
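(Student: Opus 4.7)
The plan is to build $\varphi$ by a ``method of mirror charges'' starting from the closed case. First, embed $\SFM_M(0,2) \subset \FM_{\tilde M}(2)$ via the double $\tilde M = M \sqcup_{\partial M} M^{op}$ (as in Section~\ref{sec.constr-sfmm}), pull back a closed-manifold propagator $\tilde\varphi \in \Omega^{n-1}_{\triv}(\FM_{\tilde M}(2))$ constructed as in Section~\ref{sec:closed propagator}, and set
\[
  \varphi'(x,y) := \tilde\varphi(x,y) - \tilde\varphi(Xx,y),
\]
where $X$ is the orientation-reversing involution of $\tilde M$. By construction $\varphi'$ vanishes whenever $x \to \partial M$, which already enforces the vanishing on stratum III, and by Lemma~\ref{lem:preprop} the form $d\varphi'$ descends to $\OmPA(M,\partial M)\otimes\OmPA(M)$ and represents the diagonal class there. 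Triviality as a PA form is arranged by the collar-and-bump-function procedure of Section~\ref{sec:closed propagator}.

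Since $g^{\otimes 2}(\Delta_A)$ represents the same cohomology class (by hypothesis that the diagonal data maps into $M$), one can choose a primitive $\eta \in \OmPA(M,\partial M)\otimes\OmPA(M)$ with $d\eta = d\varphi' - g^{\otimes 2}(\Delta_A)$; replacing $\varphi'$ by $\varphi' - \eta$ yields $d\varphi = \Delta_A$ on the nose, without reintroducing anything on stratum III, because $\eta$ lives in the relative complex that vanishes there. On the collision stratum V, the mirror term $\tilde\varphi(Xx,y)$ is smooth near the diagonal and contributes nothing to the fiberwise integral, so the global angular form property (and, in the framed case, the normalization $1 \times \vol_{S^{n-1}}$) is inherited from $\tilde\varphi$.

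To match the prescribed restriction on stratum II, observe that $d(j^*\varphi)$ and $d((g\otimes g_\partial)(\sigma_A)) = \Delta_{A,A_\partial}$ agree by the compatibility $d\sigma_A = (\id\otimes\rho)(\Delta_A)$ built into the notion of diagonal data. Hence $j^*\varphi - (g\otimes g_\partial)(\sigma_A)$ is closed and, being of the correct Poincaré--Lefschetz type, exact in the appropriate relative complex; an exact correction supported in a collar of stratum~II kills this discrepancy without affecting strata III or V. Finally, the two vanishing integral identities are enforced exactly as in \cite[Prop.~8]{CamposWillwacher2016}: add to $\varphi$ closed correction forms of the shape $\sum g(a_i)(x)\wedge f_i(y)$ pulled back from $M^{2}$, with coefficients $f_i$ determined by elementary linear algebra against dual bases; these corrections are closed, can be chosen to vanish near the relevant boundary strata, and therefore preserve all previously arranged properties.

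The main obstacle I anticipate is the simultaneous bookkeeping: the correction $\eta$ needed for the differential, the correction on stratum II, and the final cohomological corrections for the integrals must be chosen so as not to undo each other's effects on the remaining boundary strata. The key input that makes the whole scheme consistent is precisely the hardwired identity $d\sigma_A = (\id\otimes\rho)(\Delta_A)$ in the definition of diagonal data, which ensures that the Stokes-type obstructions relating the bulk differential to the boundary restriction cancel automatically; everything else is a matter of supporting each correction in a small enough neighborhood of the stratum it is meant to fix.
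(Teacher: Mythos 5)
Your overall strategy is the same as the paper's: start from the mirror-charge form $\varphi'(x,y)=\tilde\varphi(x,y)-\tilde\varphi(Xx,y)$, use Lemma~\ref{lem:preprop} to add a primitive $\psi\in\OmPA(M,\p M)\otimes\OmPA(M)$ so that $d\varphi''=\Delta_A$ exactly, and then normalize. The first two steps of your write-up match the paper's proof essentially verbatim, and your observation that the mirror term is smooth near the diagonal (so stratum~V is unaffected) is correct.

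The gap is in how you enforce the two integral identities. These are \emph{global} conditions on $\varphi$, so a correction of the product shape $\sum g(a_i)(x)\wedge f_i(y)$ cannot be ``chosen to vanish near the relevant boundary strata'': multiplying such a term by a cutoff supported away from strata~II, III, V changes the value of $\int_y\varphi(y,x)\alpha(y)$ and destroys exactly the identity you are trying to arrange, while leaving it uncut generically ruins the prescribed restrictions $\varphi|_{\mathrm{III}}=0$ and $j^*\varphi=(g\otimes g_\partial)(\sigma_A)$. The paper resolves this tension not by localization but by the Cattaneo--Mnev normalization formula~\eqref{eq:phi-prime}, whose corrective terms are built from pushforwards of $\varphi''$ against \emph{both} $\Delta$ and $\sigma$; the $\sigma$-terms are precisely what make the closed corrective term compatible with the boundary behavior (this is also why the pairing here is of Poincaré--Lefschetz type, between $A$ and $\ker\rho$, rather than plain Poincaré duality as in the closed case you cite). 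After this normalization there remains a residual discrepancy $j^*\varphi-(g\otimes g_\partial)(\sigma_A)$ on stratum~II, which is exact and is removed last by subtracting $d\hat\xi$, where $\hat\xi$ is an extension of a primitive $\xi$ obtained from the surjectivity of $j^*$. So the order of operations in the paper (normalize first, fix stratum~II last by an exact form) is not incidental, and your proposed bookkeeping by shrinking supports does not close the argument.
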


\begin{proof}
  We start with the $\varphi'$ as in Lemma \ref{lem:preprop}, constructed using the method of mirror charges.
  This form already satisfies the boundary conditions required from our propagator.
  By Lemma \ref{lem:preprop} we can find some $\psi\in \OmPA(M,\p M)\otimes \OmPA(M)$ such that $d\varphi'+d\psi=\Delta$.
  We then define $\varphi'' \coloneqq \varphi'+\psi$.
  This addition does not alter the desired boundary behavior and satisfies $d\varphi''=\Delta$.
  It remains to modify $\varphi''$ so as to also satisfy the last property.
  This can be done with a standard trick for ``normalizing'' homotopies as in~\cite[Lemma 3]{CattaneoMneev2010}.
  Concretely, we set
  \begin{multline}\label{eq:phi-prime}
    \varphi_{12}
    \coloneqq \varphi''_{12} - \int_{3} \varphi''_{23} \Delta_{13} - \int_{3} \varphi''_{23} \sigma_{13} - \int_{3} \varphi''_{13} \Delta_{23} - \int_{3} \varphi''_{13} \sigma_{23} \\
    + \int_{3,4} \Delta_{24} \varphi''_{34} \Delta_{13} + \int_{3,4} \sigma_{24} \varphi''_{34} \Delta_{13} + \int_{3,4} \Delta_{24} \varphi''_{34} \sigma_{13} + \int_{3,4} \sigma_{24} \varphi''_{34} \sigma_{13},
  \end{multline}
  where by $\int_{i,j\dots}$ we mean the pushforward (integration along the fibers) along the SA bundle which forgets the listed points, and e.g., $\Delta_{13}$ is the pullback $p_{13}^{*}((g \otimes g)(\Delta_{A}))$ (and so on).

  A careful application of the Stokes formula shows that the differential of the corrective term vanishes, which follows from general properties of the diagonal class and the properties of $\varphi''$ that were already proved.
  We can check on bases of the cohomologies that the difference $j^{*} \varphi - (g_{\partial} \otimes g)(\sigma_{R})$ is an exact form, say $d \xi$ with $\xi \in \Omega^{*}_{\triv}(\SFM_{M}(\{2\}, \{1\}))$.
  The application $j^{*}$ is surjective (because $\operatorname{im} j$ is a submanifold with corners of $\SFM_{M}(\varnothing, \{1,2\})$), hence there exists some $\hat{\xi}$ such that $j^{*} \hat{\xi} = \xi$.
  It then suffices to replace $\varphi$ by $\varphi - d \hat{\xi}$, which still satisfies the first three properties.
\end{proof}

\subsection{The propagator for \texorpdfstring{$\aFM_{N}$}{aFM\_N}}
\label{sec.propagator-afm}

We suppose that $N= \p M$ is a closed compacted oriented (PA) manifold bounding a compact orientable manifold $M$, with $\dim M = n$.
Again we fix some diagonal data which maps into $M$, namely $(A \xrightarrow{\rho} A_{\partial}, \Delta_{A}, \sigma_{A}, g, g_{\partial})$.
For convenience we let $\sigma_{\partial} \coloneqq (g_{\partial} \otimes 1)(\sigma_{A})$, which is, as we recall, ``half'' of the diagonal class of $A_{\partial}$ (in the sense that symmetrizing $\sigma_{\partial}$ is equal to $\Delta_{A_{\partial}}$).

\begin{proposition}\label{prop:oproploc}
  There is a form, the propagator\index{propagator}
  \[
    \varphi\in \Omega^{n-1}_{\triv}(\aFM_{N}(2))
  \]
  with the following properties:
  \begin{itemize}
    \item We have $d\varphi = 0$;
    \item The restriction to the infinitesimal boundary (stratum IV in the language of Section~\ref{sec:FM2discussion}) is a fiberwise volume form of volume one;
    \item On the two infinite boundary strata (respectively stratum II, where the second point goes to infinity, and III, where the first point does) we have the following behavior:
          \begin{align}
            \varphi|_{2 \to \partial M} = \varphi|_{\mathrm{II}} & = \sigma_{\partial},
                                                                 & \varphi|_{1 \to \partial M} = \varphi|_{\mathrm{III}} & = 0;
          \end{align}
    \item We have that
          \begin{equation}
            \label{equ:NxI}
            \forall \alpha \in A_{\partial},\; \int_y \varphi(x,y) \alpha(y) = 0.
          \end{equation}
  \end{itemize}
\end{proposition}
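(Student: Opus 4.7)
My plan follows the blueprint of Proposition~\ref{prop.propagator-sfmm}, though the situation is simpler here since we require $d\varphi = 0$ rather than $d\varphi = \Delta_A$. I build $\varphi$ in three stages: a closed raw form with the correct behavior on the collision stratum IV, exact corrections to pin down the boundary values on strata II and III, and a final normalization to kill the pairing integrals.

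For the raw form, note that the rank-$n$ bundle $TN \oplus \underline{\R}$ over $N = \p M$ admits a nowhere-vanishing section (the $\R$-summand), hence its Euler class vanishes and its unit sphere bundle $S(TN \oplus \underline{\R}) \to N$ carries a closed global angular form $\omega$ of degree $n-1$ that integrates to $1$ on each fiber. The collision stratum IV of $\aFM_N(2)$ is canonically identified with this sphere bundle, so a tubular collar, a bump function, and extension by zero (as in \cite[Proposition~8]{CamposWillwacher2016}) produce a closed form $\varphi_0 \in \Omega^{n-1}_{\triv}(\aFM_N(2))$ whose restriction to IV equals $\omega$.

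Next, I would correct $\varphi_0$ so that its restrictions to strata III and II equal $0$ and $\sigma_{\partial}$ respectively. The key cohomological facts are that $\varphi_0|_{\mathrm{III}}$ can be made cohomologically trivial by setting up the raw construction asymmetrically with respect to the two infinity directions, so that the angular form already vanishes near III, and that $\varphi_0|_{\mathrm{II}}$ differs from $\sigma_{\partial}$ by an exact form since both represent ``half'' of the $H^{n-1}$-diagonal class of $N \times N$. Using the surjectivity of restriction on trivial PA forms (see \cite[Appendix~C]{CamposWillwacher2016}), a primitive on the boundary extends to a global exact correction $d\hat\eta$, yielding some $\varphi_1$ satisfying all three boundary conditions on IV, II, III while remaining closed. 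Finally, a normalization step analogous to Equation~\eqref{eq:phi-prime} and \cite[Lemma~3]{CattaneoMneev2010}, subtracting suitable iterated fiber integrals of $\varphi_1$ paired against $\sigma_{\partial}$ and the diagonal of $A_\partial$, enforces the vanishing integral condition~\eqref{equ:NxI}, without disturbing the other properties by a routine Stokes computation.

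I expect the main obstacle to lie in the second stage: matching the cohomology classes at strata II and III, with attention to the interplay between the quotient by $\R_{>0}$-scaling and the asymptotic behavior as $t_i \to \infty$. In particular, one must engineer the raw form $\varphi_0$ so as to ``see'' the asymmetry between the two infinity strata II and III, since one should vanish outright while the other should yield the nontrivial class $\sigma_{\partial}$. This is most cleanly arranged by defining $\omega$ via a directional field on $N \times \R_{>0}$ that degenerates differently near $t = 0$ and $t = \infty$, reflecting the broken symmetry $t \mapsto 1/t$ inherent in choosing a reference scale for the quotient.
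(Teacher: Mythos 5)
There is a genuine gap in your first stage, and it propagates through the rest of the argument. The bump-function extension of a closed fiberwise angular form is \emph{not} closed: if $\omega$ is the closed global angular form on the sphere bundle and $\rho$ a bump function on a collar of stratum IV, then $d(\rho\,\pi^*\omega)=d\rho\wedge\pi^*\omega$ is a nonzero form supported in the annulus where $d\rho\neq 0$. The vanishing of the Euler class of $TN\oplus\underline{\R}$ only guarantees that $\omega$ is closed \emph{on the sphere bundle}; it says nothing about the extension. The differential of the extended form represents a class in $H^{n}(N\times N\times I,\,N\times N\times\{0,1\})\cong (H(N)\otimes H(N))[-1]$, and a Stokes computation (this is the heart of the paper's proof) identifies this class with the diagonal class of $N$, via $\int \nu_i(x)\nu_j(y)\,d\psi(x,y)=\pm\int_N\nu_i\nu_j$. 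Since the intersection pairing on $N$ is nondegenerate, this class is nonzero whenever $H^*(N)\neq 0$, so no closed form supported near stratum IV with the prescribed fiberwise behavior exists. Your $\varphi_0$ simply does not exist as described, and consequently your second stage, which treats the boundary values on II and III as independently adjustable by exact corrections of an already-closed form, cannot get off the ground.

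The correct mechanism, which your proposal misses, is that closedness and the boundary value $\sigma_{\partial}$ on stratum II are \emph{linked}, not separate conditions: by Stokes, any closed $\varphi$ that is a fiberwise volume form on IV must satisfy $\int_{\mathrm{II}}\varphi\,\nu_i\nu_j+\int_{\mathrm{III}}\varphi\,\nu_i\nu_j=\mp\int_N\nu_i\nu_j$, so the infinite boundary must carry the diagonal class. In the paper one cancels the relative class $[d\psi]=[\Delta\wedge dt]$ by subtracting an explicit term of the form $\pm\,\sigma_{\partial}\wedge(1-t)$ together with a primitive $\psi'$ vanishing on $N\times N\times\{0,1\}$; the non-vanishing of $(1-t)$ at one end is precisely what produces $\varphi|_{\mathrm{II}}=\sigma_{\partial}$ while keeping $\varphi|_{\mathrm{III}}=0$, and it is this term, not an ``asymmetric directional field,'' that breaks the symmetry between the two infinite strata. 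Your final normalization step via the Cattaneo--Mn\"ev trick is fine and matches the paper.
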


\begin{proof}
  Recall that the space $\aFM_{N}(2)$ has three boundary strata, which we described in Section~\ref{sec:FM2discussion}.
  We begin by picking some extension $\psi\in \Omega_{\mathrm{min}}(\aFM_{N}(2))$ of a fiberwise volume form on the infinitesimal boundary stratum, supported in a neighborhood of that stratum.
  We have that $d\psi$ is a closed form supported away from that diagonal and the infinite boundary strata, and hence represents a cohomology class in
  \begin{equation}
    H(M\times M \times I, M\times M \times \{0,1\})
    =
    (H(M)\otimes H(M) )[-1].
  \end{equation}
  Note that representatives of classes on the right are $\nu_i (x) \nu_j(y) dt$, where $t$ is the coordinate on $I$.
  We have a Poincaré--Lefschetz duality pairing with
  \begin{equation}
    H(M\times M \times I)= H(M)\otimes H(M),
  \end{equation}
  which is represented by classes $\nu_i(x) \nu_j(x)$.
  We can hence compute the cohomology class of $d\psi$ by using this pairing and Stokes' Theorem
  \begin{equation}
    \int_{M\times M\times I} \nu_i(x) \nu_j(y) d\psi(x,y)
    =
    \int_{\aFM_{N}(2)} \nu_i(x) \nu_j(y) d\psi(x,y)
    =\pm
    \int_{N}  \nu_i\nu_j,
  \end{equation}
  where in the last step we used Stokes' Theorem, that $\psi$ vanishes on the infinite boundary, and that $\psi$ is a fiberwise volume form on the infinitesimal boundary stratum.
  We hence find that
  \begin{equation}
    d\psi \pm \Delta_{A} \wedge d(1-t)
  \end{equation}
  represents the zero cohomology class in $H(M\times M \times I)$.
  Hence we may find a $\psi'\in \OmPA(M\times M\times I, M\times M\times \{0,1\})$ such that
  \begin{equation}
    \varphi' \coloneqq \psi - \psi' -\pm \sum_i \Delta_{A} \wedge (1-t)
  \end{equation}
  is a cocycle.
  Moreover, $\varphi'$ has the desired boundary behavior, as $\psi'$ vanishes on the boundary, the term $\psi$ has the desired behavior at the infinitesimal boundary while vanishing at the infinite boundary, and the remaining terms contribute the desired behavior at the infinite boundary, while not affecting the fiberwise properties at the infinitesimal boundary.

  Finally the last property can be ensured by a slight adaptation of a trick from~\cite{CattaneoMneev2010}.
  More concretely, we define:
  \begin{equation}
    \varphi_{12} \coloneqq \varphi'_{12}
    \pm \int_{3} \varphi'_{13} (\sigma_{\partial})_{32} dt
    \pm \int_{3} \varphi'_{23} (\sigma_{\partial})_{31} dt
    \mp \int_{3,4} \varphi'_{34} (\sigma_{\partial})_{23} (\sigma_{\partial})_{14}
  \end{equation}
  Note that the additions do not alter the boundary behavior due to the presence of the $dt$-factors.
\end{proof}

\subsection{The propagator for \texorpdfstring{$\mFM_{M}$}{mFM\_M}}
\label{sec.propagator-mfm}

We consider next our compact connected orientable manifold with non-empty boundary $M$.
We keep the same diagonal data $(A \xrightarrow{\rho} A_{\partial}, \Delta_{A}, \sigma_{A}, g, g_{\partial})$.

\begin{proposition}
  There is a form, the propagator\index{propagator}
  \begin{equation}
    \varphi\in \Omega^{n-1}_{\triv}(\mFM_{M}(2)),
  \end{equation}
  with the following properties:
  \begin{itemize}
    \item We have
          \beq{equ:opropd}
          d\varphi = \Delta_A
          \eeq
    \item On the infinite boundary strata II, III and V (see Section~\ref{sec:FM2discussion} for this notation) we have the following behavior:
          \begin{align}
            \varphi|_{2 \to \partial M} = \varphi|_{\mathrm{II}}        & = \sigma_{A}, \\
            \varphi|_{1 \to \partial M} = \varphi|_{\mathrm{III}}       & = 0,          \\
            \varphi|_{1 \approx 2 \to \partial} = \varphi|_{\mathrm{V}} & = \varphi_\p,
          \end{align}
          where $\varphi_\p \in \Omega^{*}_{\triv}(\aFM_{\p M}(2))$ is the propagator constructed in Proposition \ref{prop:oproploc};
    \item The restriction $\varphi|_{1 \approx 2}$ to the infinitesimal boundary $IV$ is a fiberwise volume form of volume $1$;
    \item We have that
          \beq{equ:propnorm}
          \forall \alpha \in A, \; \int_y \varphi(x,y) \alpha(y) = 0.
          \eeq
  \end{itemize}
\end{proposition}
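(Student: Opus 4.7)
The plan is to adapt the three-step recipe used in Propositions~\ref{prop.propagator-sfmm} and~\ref{prop:oproploc}: build a preliminary form $\varphi'$ with the prescribed boundary behaviour, correct its differential by an exact term supported away from the boundary, and finally apply a Cattaneo--Mn\"ev-style normalisation to achieve the vanishing property~\eqref{equ:propnorm}. The genuinely new feature compared with the earlier two propositions is the extra compatibility required at stratum V, namely $\varphi|_{\mathrm{V}} = \varphi_\partial$, which has to be compatible with the data prescribed on the adjacent strata II, III, IV at the corner strata VI, VII, VIII of $\mFM_M(2)$.

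I would first construct $\varphi'$ by patching together the prescribed data on the boundary. Using the collar $\partial M \times [0,1) \subset M$ and suitable bump functions one extends $\sigma_A$ from stratum II, $0$ from stratum III, a fiberwise volume form of total volume one from the infinitesimal stratum IV, and $\varphi_\partial$ from stratum V to open neighbourhoods of these strata inside $\mFM_M(2)$. The crucial point is that the boundary behaviour built into $\varphi_\partial$ in Proposition~\ref{prop:oproploc} is designed precisely so that these extensions agree on the overlaps at the corners: on the two infinite boundary strata of $\aFM_{\partial M}(2)$ we have $\varphi_\partial = \sigma_\partial$ and $\varphi_\partial = 0$, which match the restrictions of $\sigma_A$ and $0$ at corners VI and VII; and on the infinitesimal stratum of $\aFM_{\partial M}(2)$ it is a fiberwise volume form, compatible with the fiberwise volume form chosen on IV at corner VIII. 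After a standard partition-of-unity glueing in $\Omega^{*}_{\triv}$, we obtain $\varphi' \in \Omega^{n-1}_\triv(\mFM_M(2))$ with the correct behaviour on every boundary stratum.

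Next I would correct the differential. The form $d\varphi' - \Delta_A$ is closed and, by the construction together with $d\varphi_\partial = 0$ and $d\sigma_A = \Delta_{A,A_\partial}$, vanishes in a neighbourhood of the entire boundary, so it defines a relative cohomology class in $H^n(\mFM_M(2),\partial \mFM_M(2))$. A Stokes-type pairing argument against a Poincaré--Lefschetz dual basis, modelled on the one in the proof of Proposition~\ref{prop:oproploc} and using the decompositions from Lemma~\ref{lem:abc}, shows that this class vanishes: the only possibly nonzero pairings reduce via Stokes to integrals over the boundary pieces which cancel because of the prescribed behaviour of $\varphi'$ there. Hence there exists $\psi \in \Omega^{n-2}_\triv(\mFM_M(2))$ vanishing near the boundary with $d\psi = d\varphi' - \Delta_A$, and $\varphi'' \coloneqq \varphi' - \psi$ satisfies~\eqref{equ:opropd} while retaining all the boundary conditions.

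Finally, I would apply the normalisation trick in the style of~\eqref{eq:phi-prime}, subtracting from $\varphi''$ a combination of fiber integrals built out of $\varphi''$, $\Delta_A$, and $\sigma_A$, designed so that the subtracted terms are closed, leave the boundary restrictions unchanged (each term carries a factor that either restricts to zero on the relevant stratum or, as in Proposition~\ref{prop:oproploc}, contains a collar-direction factor), and cancel the offending pairings $\int_y \varphi''(x,y)\alpha(y)$ for $\alpha \in A$. The main obstacle is the middle step: verifying that the obstruction class of $d\varphi' - \Delta_A$ truly vanishes, which requires carefully bookkeeping the contribution of each boundary stratum (including the corners VI--VIII where the fiber dimensions change) in the Stokes computation; once this vanishing is established, the patching in step one and the normalisation in step three are routine adaptations of the arguments used for $\SFM_M$ and $\aFM_{\partial M}$.
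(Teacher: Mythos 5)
Your proposal follows essentially the same route as the paper's proof: extend the prescribed boundary data to a preliminary form, show via Stokes pairings against the basis of Lemma~\ref{lem:abc} that the relative cohomology class of $d\varphi'-\Delta_A$ vanishes so the differential can be corrected by a form vanishing near the boundary, and then apply the Cattaneo--Mn\"ev normalisation of Equation~\eqref{eq:phi-prime}. The corner-compatibility at strata VI--VIII that you flag is indeed the reason the boundary conditions of Proposition~\ref{prop:oproploc} are set up as they are, so your argument is correct and complete in the same sense as the paper's.
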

\begin{proof}
  We first pick a form $\psi\in \Omega^{n-1}_{\triv}(\mFM_M(2))$ that has the desired boundary behavior and continue it to the whole space $\mFM_M(2)$ as usual by considering a neighbourhood isomorphic to the normal bundle and extend it constantly along the fibers.
  Then  $d\psi$ is supported away from diagonal and represents a class in
  \begin{equation}
    H(M\times M, \p M\times M \cup M\times \p M)
    =H(M,\p M) \otimes H(M,\p M).
  \end{equation}
  We may compute this class by using the Poincaré duality pairing with $H(M)\otimes H(M)$.
  Recall the notations $\{\alpha_{i}, \beta_{j}, \gamma_{k}\}$ for the base elements of $H^{*}(M)$, $H^{*}(M, \partial M)$, and $H^{*}(\partial M)$ from Lemma~\ref{lem:abc}; we will use these to compute the cohomology class of $d\psi$.
  Using Stokes' Theorem, we find that
  \begin{align*}
    \int_{M \times M} \alpha_i(x) \alpha_j(y) d\psi(x,y)
     & =
    \int_{\mFM_M(2)} \alpha_i(x) \alpha_j(y) d\psi(x,y)
    \\
     & = \pm
    \int_{\p \mFM_M(2)} \alpha_i(x) \alpha_j(y) \psi(x,y)
    \\
     & = \int_M \alpha_i\alpha_j
    + \int_{\partial M} \alpha_i \beta_k \int_M\alpha_k\alpha_j
    + \int_{\partial M} \alpha_i \beta_k \int_M\alpha_k\alpha_j
    \\&\quad+\int_{\aFM_{N}(2)} \alpha_i(x)\alpha_j(y)\psi(x,y)
    \\
     & =0+0+0+0=0.
  \end{align*}
  Here we use the known boundary behavior of $\psi$, and for the vanishing in the last line the properties \eqref{equ:abc2} and \eqref{equ:NxI}.
  Similarly one computes
  \[
    \int_{M \times M} \alpha_i(x) \gamma_j(y) d\psi(x,y) = \int_{M \times M} \gamma_i(x) \alpha_j(y) d\psi(x,y) = 0
  \]
  and
  \[
    \int_{M \times M} \gamma_i(x) \gamma_j(y) d\psi(x,y) =\int_M \gamma_i \gamma_j= g_{ij}.
  \]
  where $g_{ij}$ is matrix of the (non-degenerate) natural pairing on the image of $H(M,\p M)\to H(M)$.
  As above we hence find that there is a $\psi'$ vanishing on the boundary such that
  \begin{equation}
    \varphi' = \psi-\psi'
  \end{equation}
  satisfies our boundary requirements and \eqref{equ:opropd}.
  We finally define $\varphi$ by the same formula as Equation~\eqref{eq:phi-prime}.
  It is then clear that the propagator $\varphi$ satisfies \eqref{equ:propnorm}.
  Also, at the infinitesimal boundary the additional terms are basic (independent of the fiber coordinate) and hence $\varphi$ is still a fiberwise volume form at that boundary, as $\varphi'$ is.
  Finally, a short computation shows that all the terms added to $\varphi'$ are closed and vanish at the infinite boundary, so that \eqref{equ:opropd} and the infinite boundary conditions continue to hold.
\end{proof}

\section{Graphical models -- recollections}
\label{sec.model-closed}

\subsection{Models for the little disks and Swiss-Cheese operads}
\label{sec.extens-swiss-cheese}

We now quickly describe the Hopf cooperad $\Graphs_{n}$ constructed by Kontsevich~\cite{Kontsevich1999} to prove the formality of the little $n$-disks operad (see also~\cite[Section~1.3]{Idrissi2018b} for notations that match our conventions).
The Hopf cooperad $\Gra_{n}$\index{Gran@$\Gra_{n}$} is defined as
\begin{equation}
  \Gra_{n}(V) \coloneqq \bigl( S(e_{vv'})_{v, v' \in V} / (e_{vv'} - (-1)^{n} e_{v'v}), d = 0 \bigr),
\end{equation}
with the generators $e_{vv'}$ carrying cohomological degree $n-1$.

This definition admits a graphical interpretation: a monomial in $\Gra_{n}(V)$ can be seen as a graph with vertices in bijection with $V$, and an edge between $v$ and $v'$ iff $e_{vv'}$ appears in the monomial.
The cocomposition $\circ_{T}^{\vee}(e_{vv'})$ (for $T \subset V$) is given by $1 \otimes e_{vv'}$ if $v,v' \in W$ and by $e_{[v][v']} \otimes 1$ otherwise, where $[v] \in V/T$ is the class of $v \in V$ in the quotient.
The element\index{mu@$\mu \in \MC(\GC_{n}^{\vee})$}
\begin{equation}
  \label{eq.mu}
  \mu = e_{12}^{\vee}
\end{equation}
dual to the generator $e_{12} \in \Gra_{n}(\underline{2})$ is a Maurer--Cartan element in the deformation complex $\Def(\hoLie_{n}, \Gra_{n})$.
This allows us to produce a new Hopf cooperad $\Tw \Gra_{n}$\index{TwGran@$\Tw\Gra_{n}$} through a procedure called ``operadic twisting''~\cite{Willwacher2014,DolgushevWillwacher2015}.
Elements of $\Tw\Gra_{n}(V)$ can be seen as graphs with ``external'' vertices, which are in bijection with $V$, and indistinguishable ``internal'' vertices of degree $-n$ (typically drawn in black).
The cocomposition is induced by the cocomposition of $\Gra_{n}$, and the product glues graphs along external vertices.
Finally, we mod out by the bi-ideal of graphs with connected component consisting exclusively of internal vertices (a.k.a.\ internal components) to obtain a Hopf cooperad $\Graphs_{n}$.

\begin{theorem}[{Kontsevich~\cite{Kontsevich1999}, Lambrechts and Volić~\cite{LambrechtsVolic2014}}]
  There is a zigzag of quasi-isomorphisms of Hopf cooperads:
  \[ H^{*}(\FM_{n}) \qiso* \Graphs_{n} \xrightarrow[\omega]{\sim} \OmPA^{*}(\FM_{n}). \]
  The first map is given by the quotient by graphs containing internal vertices, and the second map is given by an integral along the fibers of the canonical projections $\FM_{n}(V \sqcup J) \to \FM_{n}(V)$ of explicit minimal forms.
\end{theorem}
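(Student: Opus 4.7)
The plan is to construct the forms map $\omega$ explicitly via Kontsevich integrals and verify it compatibly with all Hopf cooperadic structures, then use a cohomological argument to conclude it is a quasi-isomorphism.

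First, I would define $\omega$ on the untwisted cooperad $\Gra_n$ by sending the generator $e_{vv'}$ to the pullback $p_{vv'}^{*}(\vol_{S^{n-1}})$, where $p_{vv'}\colon \FM_n(V)\to S^{n-1}$ is the Gauss map $c\mapsto (c(v)-c(v'))/|c(v)-c(v')|$ (extending continuously to the compactification), and $\vol_{S^{n-1}}$ is a chosen unit-volume rotationally symmetric form on $S^{n-1}$. Graded commutativity $e_{vv'}=(-1)^n e_{v'v}$ matches the behavior of $\vol_{S^{n-1}}$ under the antipode. Compatibility with the cooperadic cocomposition reduces to a direct check at the level of Gauss maps along insertion. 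Crucially, these forms lie in the subalgebra $\Omega^*_{\mathrm{min}}(\FM_n)$ of minimal PA forms, a property that I will need for fiber integrability.

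Second, I would extend $\omega$ to $\Tw\Gra_n$ by the Kontsevich integral: for a graph $\Gamma$ with external vertex set $V$ and internal vertex set $J$, set
\[
\omega(\Gamma)\;\coloneqq\;\int_{\pi_V}\prod_{e\in E(\Gamma)} p_e^{*}(\vol_{S^{n-1}}),
\]
where $\pi_V\colon\FM_n(V\sqcup J)\to \FM_n(V)$ is the canonical forgetful projection, which is an SA bundle. Minimality of the integrand ensures that the pushforward is well-defined as a PA form on $\FM_n(V)$, and dimension counting gives the correct degree. Compatibility with products (which glue at external vertices) follows from a Fubini-type argument for SA bundles.

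Third, I must verify compatibility with differentials. Applying Stokes' theorem to fiber integration, $d\omega(\Gamma)$ decomposes as a sum over boundary strata of the fiber $\pi_V^{-1}$, indexed by subsets $S\subseteq V\sqcup J$ of points colliding. The strata where $|S|=2$ and both vertices are internal or one is internal contribute exactly the edge-contraction differential of $\Tw\Gra_n$ (plus the twisting piece coming from the MC element $\mu$). The remaining strata (|S|\geq 3, or involving only one internal vertex, etc.) must be shown to vanish: this is precisely Kontsevich's vanishing lemma, whose proof exploits hidden $\SO(n)$- or translation-symmetries of the integrand on the $\SC$-model for the colliding cluster. This step is the main technical obstacle, and I would follow the detailed treatment in Lambrechts--Vol\'ic, checking each type of boundary stratum in turn.

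Fourth, I would show $\omega$ descends to the quotient $\Graphs_n=\Tw\Gra_n/(\text{internal components})$. If $\Gamma$ has a connected subgraph of purely internal vertices, the integrand is invariant under a translation or rescaling acting only on those vertices; pushing out the corresponding form factor shows the fiber integral vanishes for dimension reasons, exactly as in the closed/loop arguments of Kontsevich. Combined with the previous step, this makes $\omega\colon\Graphs_n\to \OmPA^*(\FM_n)$ a morphism of Hopf cooperads.

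Finally, for the quasi-isomorphism claim, consider the composition with the quotient $\Graphs_n\twoheadrightarrow H^*(\FM_n)$ sending a graph to zero if it has any internal vertex, and sending a graph with only external vertices to the corresponding product of Arnold classes. Computing the cohomology of $\Graphs_n$ (a spectral sequence argument, filtering by number of internal vertices, yields the Orlik--Solomon/Arnold algebra) shows this quotient is a quasi-isomorphism. Since $\omega$ sends each edge form to a known representative of the Arnold class in $H^*(\FM_n)$, the induced map on $H^*$ is the classical Arnold/Cohen presentation isomorphism, so $\omega$ itself is a quasi-isomorphism by 2-out-of-3.
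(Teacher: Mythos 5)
The paper does not actually prove this statement --- it is recalled as background and attributed to Kontsevich and Lambrechts--Voli\'c --- so the comparison is with the standard proof in those references, which is indeed the route you take: Gauss-map forms on $\Gra_{n}$, fiber integration over the forgetful projections, Stokes plus Kontsevich's vanishing lemmas for the chain-map property and for descent to the quotient by internal components, and finally a cohomology computation for $\Graphs_{n}$. Steps one through four are correct in outline (the descent in step four is again a consequence of the vanishing lemmas rather than a bare translation-invariance argument, but that is the idea underlying those lemmas, and the single-internal-vertex case is pure degree counting).

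The one genuine soft spot is the last step. Filtering $\Graphs_{n}(V)$ by the number of internal vertices is degenerate: every term of the differential (contraction of an edge with at least one internal endpoint) decreases that number by exactly one, so the associated graded differential is zero and the $E^{1}$ page is the whole complex with its full differential --- the spectral sequence gives no information, and in particular does not ``yield the Orlik--Solomon/Arnold algebra.'' The Arnold relations hold in $\Graphs_{n}$ only up to the differential (they are the boundaries of the tripod graphs with one internal vertex), so identifying $H^{*}(\Graphs_{n})$ with $\enV$ has real content. The argument in Lambrechts--Voli\'c is an induction on the arity using the Fadell--Neuwirth fibrations $\Conf_{k+1}(\R^{n}) \to \Conf_{k}(\R^{n})$, comparing the fiber $\R^{n}\smallsetminus\underline{k}$ with the quotient $\Graphs_{n}(k+1)\otimes_{\Graphs_{n}(k)}\K$ --- exactly the ``extra vertex'' technique this paper itself deploys in Sections~\ref{sec:sfm-main-thm} and~\ref{sec:afm-mfm-main-thm}. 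With that step repaired, your concluding argument (the projection to $\enV$ is a quasi-isomorphism, $\omega$ hits the Arnold generators, hence induces an isomorphism on cohomology) goes through.
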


\begin{remark}\label{rmk:tadpole}
  It is usual to kill tadpoles (i.e., edges between a vertex and itself) and double edges (i.e., two edges with the same endpoints) in $\Graphs_{n}$, in other words to mod out by $(e_{vv}, e_{vv'}^{2})$ in the definition of $\Gra_{n}$.
  If we denote by $\Graphs'_{n}$ the quotient of our $\Graphs_{n}$ by tadpoles and double edges, then the quotient map $\Graphs_{n} \to \Graphs'_{n}$ is a quasi-isomorphism, see~\cite[Proposition~3.8]{Willwacher2014} and ~\cite[Theorem~2]{WillwacherZivkovic2015}.
  We keep the tadpoles and double edges to remain consistent with the other graph complexes appearing in this paper.
  Note however that by symmetry, tadpoles vanish for even $n$, and double edges vanish for odd $n$.
\end{remark}

The Swiss-Cheese operad is not formal~\cite{Livernet2015,Willwacher2017a}, thus there can be no quasi-iso\-mor\-phism between $H^{*}(\SFM_{n})$ and $\OmPA^{*}(\SFM_{n})$.
Nevertheless, there is a model $\SGraphs_{n}$ established in \cite{Willwacher2015}, which is similar in spirit to the cooperad $\Graphs_{n}$.
The cohomology $H^{*}(\SFM_{n}) \cong H^{*}(\FM_{n}) \otimes H^{*}(\FM_{n-1})$ splits as a Voronov product (see~\cite{Voronov1999} and~\cite[Section~4.3]{Idrissi2017}).
The construction of $\SGraphs_{n}$ contains $\Graphs_{n}$ (as ``aerial'' graphs) and $\Graphs_{n-1}$ (as ``terrestrial'' graphs), and ``intertwines'' the two in a way that corrects the lack of formality.
Let us now describe it.

\begin{remark}
  Our notations differ slightly from the notations of~\cite{Willwacher2015}.
  We call $\SGraphs_{n}$ what is called $\Graphs_{n}^{1}$ there, i.e., the space of operations with output of ``type $1$'' (which corresponds to ``open'').
  This is a relative Hopf cooperad over $\Graphs_{n}$ (which would be $\Graphs_{n}^{2}$ in~\cite{Willwacher2015}, the space of operations with output of ``type $2$'', i.e., closed).
\end{remark}

Let us assume from the start that $n \geq 3$ to avoid some difficulties that arise when $n = 2$.
If we had $n = 2$, then the description of ``connected component'' in $\SGraphs_{n}$ would be slightly different.

The idea is to construct a relative Hopf cooperad $\SGra_{n}$\index{SGran@$\SGra_{n}$} over $\Gra_{n}$, with two types of vertices: aerial ones, corresponding to closed inputs, and terrestrial ones, corresponding to open inputs.
Edges are oriented, and the source of an edge may only be an aerial vertex.
More concretely, one defines:
\begin{equation}
  \SGra_{n}(U,V) \coloneqq S(e_{vu})_{u \in U, v \in V} \otimes S(e_{vv'})_{v, v' \in V}
\end{equation}
where the generators all have degree $n-1$, and the cooperad structure maps are given by:
\begin{equation}
  \begin{aligned}
    \circ_{T}^{\vee} : \SGra_{n}(U,V)
            & \to \SGra_{n}(U, V/T) \otimes \Gra_{n}(T) \\
    e_{vv'} & \mapsto
    \begin{cases}
      1 \otimes e_{vv'}     & \text{if } v, v' \in T; \\
      e_{[v][v']} \otimes 1 & \text{otherwise}.
    \end{cases}                         \\
    e_{vu}  & \mapsto e_{[v]u} \otimes 1.
  \end{aligned}
\end{equation}

\begin{equation}
  \begin{aligned}
    \circ^{\vee}_{W,T} : \SGra_{n}(U, V \sqcup T)
     & \to \SGra_{n}(U/W, V) \otimes \SGra_{n}(W, T) \\
    e_{vv'}
     & \mapsto
    \begin{cases}
      e_{vv'} \otimes 1 & \text{if } v,v' \in V; \\
      1 \otimes e_{vv'} & \text{if } v,v' \in T; \\
      0                 & \text{otherwise}.
    \end{cases}                      \\
    e_{uv}
     & \mapsto
    \begin{cases}
      e_{[u]v} \otimes 1 & \text{if } v \in V;          \\
      1 \otimes e_{uv}   & \text{if } u \in W, v \in T; \\
      0                  & \text{otherwise.}
    \end{cases}
  \end{aligned}
\end{equation}

A monomial in $\SGra_{n}(U,V)$ can be seen as a directed graph with two kinds of vertices: aerial and terrestrial.
The set $U$ is the set of terrestrial vertices, and the set $V$ is the set of aerial vertices.
A generator $e_{ij}$ corresponds to an oriented edge from vertex $i$ to vertex $j$, and an edge always starts at an aerial vertex.

This allows us to produce a first morphism $\omega' : \SGra_{n} \to \OmPA^{*}(\SFM_{n})$.
One can define, for $v, v' \in V$, $\omega'(e_{vv'}) \coloneqq p_{vv'}^{*}(\vol_{n-1})$, where
\begin{equation}
  \vol_{n-1} \in \OmPA^{n-1}(\SFM_{n}(\varnothing, \{v,v'\})) \simeq \OmPA^{n-1}(\FM_{n}(\{v,v'\})) \simeq \OmPA^{n-1}(S^{n-1}).
\end{equation}
Recall that $\SFM_{n}(\{u\}, \{v\})$ is homeomorphic to $D^{n-1}$, and we write $\overline{\vol}^{h}_{n-1}$ for the $(n-1)$-form on $\SFM_{n}(\{u\}, \{v\})$ obtained by pulling back the volume form of $S^{n-1}$ along the map $D^{n-1} \to S^{n-1}$ given by the hyperbolic geodesic (see~\cite[Equation~(8)]{Willwacher2015}).
Then for $u \in U$ and $v \in V$, define $\omega'(e_{vu}) \coloneqq p_{vu}^{*}(\overline{\vol}^{h}_{n-1})$.

If $\Gamma \in \SGra_{n}(U,V)$ is a graph, let\index{c@$c \in \MC(\SGC_{n}^{\vee})$}
\begin{equation}
  \label{eq.konts-coeff}
  c(\Gamma) \coloneqq \int_{\SFM_{n}(U,V)} \omega'(\Gamma).
\end{equation}
Note that these are analogous to the coefficients that appear in Kontsevich's~\cite{Kontsevich2003} universal $L_{\infty}$ formality morphism $T_{\mathrm{poly}} \to D_{\mathrm{poly}}$, defined for $n = 2$.

The cooperad $\SGra_{n}$ is then twisted with respect to the sum of the Maurer--Cartan element $\mu \in \Def(\hoLie_{n} \to \Gra_{n})$ with the Maurer--Cartan element defined by $c$, to obtain a relative Hopf cooperad $\Tw \SGra_{n}$\index{TwSGran@$\Tw \SGra_{n}$} over $\Tw \Gra_{n}$.

Concretely, $\Tw \SGra_{n}(U,V)$ is spanned by graphs with $2 \times 2 = 4$ types of vertices: they can be either aerial or terrestrial, and either internal or external.
Internal terrestrial vertices are of degree $1-n$ and indistinguishable among themselves, and internal aerial vertices are of degree $-n$ and indistinguishable among themselves.
Edges remain oriented and of degree $n-1$.
External terrestrial vertices are in bijection with $U$, and external aerial vertices are in bijection with $V$.
The cooperad structure maps collapse subgraphs, and the product glues graphs along external vertices.

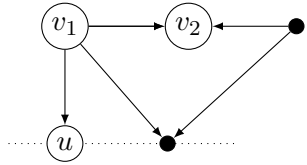
\begin{figure}[htbp]
  \centering
  \begin{tikzpicture}

    \pgfdeclarelayer{bg}
    \pgfsetlayers{bg,main}

    \node (0) {};
    \node (o) [right = 3cm of 0] {};
    \begin{pgfonlayer}{bg}
      \draw[dotted] (0) to (o);
    \end{pgfonlayer}
    \node[ext, fill=white] (u) [right = 0.5cm of 0] {$u$};
    \node[int] (i1) [right = 1cm of u] {};
    \node[ext] (v1) [above = 1cm of u] {$v_{1}$};
    \node[ext] (v2) [right = 1cm of v1] {$v_{2}$};
    \node[int] (i2) [right = 1cm of v2] {};
    \draw[-latex] (v1) edge (u) edge (i1) edge (v2);
    \draw[-latex] (v1) to (v2);
    \draw[-latex] (i2) edge (v2) edge (i1);
  \end{tikzpicture}
  \caption{A colored graph in $\Tw \SGra_{n}(\{u\}, \{v_{1}, v_{2}\})$.
    Terrestrial vertices are drawn on a dotted line to distinguish them, even though they are not ordered.
    This graph is of degree $5(n-1) - n - (n-1)$.}
  \label{fig.exa-sgra_n}
\end{figure}

The differential has several summands:
\begin{itemize}
  \item A first summand (corresponding to $\mu$) contracts edges between two aerial vertices, with at least one being internal.
  \item A second summand is given by contracting subgraphs $\Gamma' \subset \Gamma$ with at most one external vertex, which must be terrestrial, to obtain $\Gamma / \Gamma'$, with coefficient $c(\Gamma')$.
        One should note that in $\Gamma/\Gamma'$, the new vertex representing the collapsed subgraph is terrestrial, even if $\Gamma'$ is fully aerial.
  \item Finally, a third summand is given by forgetting some internal vertices and keeping a subgraph $\Gamma' \subset \Gamma$, with coefficient $c(\Gamma / \Gamma')$.
\end{itemize}

In the second and third cases, if the graph $\Gamma / \Gamma'$ contains an edge whose source is terrestrial, then the summand is defined to be zero (see~\cite[§6.4.2.2]{Kontsevich2003}).

One then checks that there is an extension $\omega : \Tw \SGra_{n} \to \OmPA^{*}(\SFM_{n})$.
Given a graph $\Gamma \in \SGra_{n}(U \sqcup I, V \sqcup J) \subset \Tw \SGra_{n}(U,V)$, then $\omega(\Gamma)$ is given by:
\[ \omega(\Gamma) = \int_{\SFM_{n}(U \sqcup I, V \sqcup J) \to \SFM_{n}(U,V)} \omega'(\Gamma) = (p_{U,V})_{*}(\omega'(\Gamma)). \]
It remains to mod out by the bi-ideal of graphs with internal components to obtain a Hopf cooperad $\SGraphs_{n}$\index{SGraphsn@$\SGraphs_{n}$}, and to check that $\omega$ factors through the quotient.

\begin{theorem}[{\cite{Willwacher2015}}]
  The morphism $\omega : \SGraphs_{n} \to \OmPA^{*}(\SFM_{n})$ is a quasi-isomorphism of relative Hopf cooperads.
\end{theorem}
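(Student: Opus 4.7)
The plan is to establish (i) that $\omega$ is a well-defined morphism of relative Hopf cooperads, and (ii) that it is a quasi-isomorphism. For (i), I would first check that $\omega'$ is well-defined on the generators of $\SGra_n$ by pulling back the volume forms of $S^{n-1}$ and $D^{n-1}$ along the natural projections; extending multiplicatively gives a CDGA morphism $\SGra_n \to \OmPA^{*}(\SFM_n)$, and on the twisted cooperad $\Tw\SGra_n$ one defines $\omega$ by fiber integration along the forgetful maps $\SFM_n(U\sqcup I, V\sqcup J)\to \SFM_n(U,V)$, which is available because $\omega'(\Gamma)$ is a product of pullbacks of minimal PA-forms. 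Compatibility with the cooperadic structure follows from naturality of fiber integration under the canonical projections and compactifications that index the cooperad structure maps. Vanishing on graphs with internal connected components follows from a standard symmetry/dimension argument: the integrand factors, and the ``floating'' factor corresponds to an integral over a full configuration space of forms whose degree cannot match the dimension.

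For the chain-map property, I would apply Stokes' theorem to the fiber integral. The boundary fibers of $\SFM_n(U\sqcup I, V\sqcup J)\to \SFM_n(U,V)$ are classified by the trees of Section~\ref{sec:FM2discussion}, and they fall into three families: (a) two vertices of the graph collide in the bulk (giving the edge-contraction summand of the differential), (b) a subgraph with at most one terrestrial external vertex collapses, in either the aerial or the terrestrial stratum (giving the subgraph-collapse summand with coefficient $c(\Gamma')$), and (c) a cluster of internal vertices escapes to infinity or to the boundary, which contributes the third summand with coefficient $c(\Gamma/\Gamma')$. All other strata give zero by Kontsevich-type vanishing lemmas adapted to the half-space: one exhibits a continuous symmetry (rotation in $\R^{n-1}$, dilation, translation along the boundary) of the fiber that kills the integrand for dimensional reasons. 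The rule that summands are set to zero when a terrestrial vertex has an outgoing edge in $\Gamma/\Gamma'$ is forced by the convention $\omega'(e_{uv})=0$ that would otherwise make the corresponding boundary integral vanish.

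For (ii), the strategy is to reduce to a cohomological comparison. Filter $\Tw\SGra_n(U,V)$ by the number of internal vertices; the associated graded is $\SGra_n(U,V)$ tensored with the twisted graphs with only internal vertices, whose cohomology is concentrated in a well-controlled piece by the analysis of the graph complexes in Appendix~\ref{sec:appxA} and the vanishing of certain ``loop'' components. This reduces the computation of $H^{*}(\SGraphs_n(U,V))$ to $H^{*}(\SGra_n(U,V))$, which by inspection is a free graded-commutative algebra on the generators $e_{vv'}, e_{vu}$ modulo the Arnold-type relations coming from triangular subgraphs. On the other hand, $H^{*}(\SFM_n(U,V))$ has a classical description in terms of the Voronov product of $H^{*}(\FM_n)$ and $H^{*}(\FM_{n-1})$, and the map induced by $\omega$ sends each generator to the corresponding cohomology class. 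One verifies that the Arnold relations are preserved and that no further relations are introduced, so the induced map is an isomorphism in every arity.

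The main obstacle is the last step: because $\SFM_n$ is not formal, the Maurer--Cartan element $c$ produces genuine non-zero terms in the twisted differential, and one must show that these do not introduce extra cohomological relations beyond the Arnold-type ones. This is handled by a careful analysis of the spectral sequence for the internal-vertex filtration, showing that the Kontsevich-coefficient twist only affects the representative of a class and not the dimension of cohomology; equivalently, one proves that the graph cohomology piece that could obstruct the isomorphism is acyclic, analogously to the closed case treated in~\cite{LambrechtsVolic2014}, but with an extra bicoloured bookkeeping needed to track the terrestrial/aerial distinction.
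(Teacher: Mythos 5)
A preliminary remark: the paper does not prove this statement — it is quoted as background from \cite{Willwacher2015} in Section~\ref{sec.extens-swiss-cheese} — so there is no in-paper proof to compare yours against; I can only assess your proposal on its own terms.

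Your part (i) — well-definedness of $\omega'$ on generators, extension by fiber integration, compatibility with the cooperad structure, and the Stokes/boundary-strata analysis for the chain-map property — is the standard Kontsevich-type argument and is correct in outline. The genuine gap is in part (ii). Every summand of the differential of $\Tw\SGra_{n}$ (edge contraction with at least one internal endpoint, subgraph collapse weighted by $c(\Gamma')$, and forgetting internal vertices weighted by $c(\Gamma/\Gamma')$) strictly decreases the number of internal vertices, so the filtration you propose has associated graded carrying the \emph{zero} differential; its cohomology is the entire underlying graded vector space, not $\SGra_{n}(U,V)$ tensored with something controllable. Relatedly, $\SGra_{n}(U,V)$ itself has zero differential, so its cohomology is the free graded-commutative algebra on the $e_{vv'}, e_{vu}$ with \emph{no} Arnold relations — those relations arise in cohomology only because of the internal vertices you are trying to filter away, so the step "reduce to $H^{*}(\SGra_{n})$ and check the Arnold relations" is circular. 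The assertion that "no further relations are introduced" is precisely the theorem, not a verification one can do by inspection.

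The standard way to close this gap (used in \cite{LambrechtsVolic2014} for $\FM_{n}$, in \cite{Willwacher2015} for $\SFM_{n}$, and adapted in Section~\ref{sec:sfm-main-thm} of this paper for the global statement) is a double induction on the arities $(\# U, \# V)$: one compares the homotopy fiber of $\SGraphs_{n}(U,V) \to \SGraphs_{n}(U, V \setminus \{v\})$ (computed via a derived tensor product and a secondary filtration by the valence of the distinguished vertex, isolating the graphs where that vertex is $0$- or $1$-valent) with the fiber of the topological forgetful map, which is a punctured half-space with known cohomology. Your proposal would need to be restructured around such an induction; as written, the key cohomological comparison does not go through.
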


\subsection{Graph complexes \texorpdfstring{$\GC_{n}$}{GC\_n} and \texorpdfstring{$\SGC_{n}$}{SGC\_n}}
\label{sec.graph-complexes}

Let us now introduce several Lie algebras that stem from Kontsevich's proof of the formality of $\FM_{n}$ and from the model of the Swiss-Cheese operad.
Later on, we will introduce decorated variants that will be endowed with actions of the non-decorated complexes.

Let us define\index{fGCn@$\fGC_{n}$}
\begin{equation}
  \fGC_{n} \coloneqq \Tw \Gra_{n}(\varnothing)[-n]
\end{equation}
to be the full graph complex.
It is spanned by graphs containing only internal vertices, with a degree shift: if $\gamma \in \fGC_{n}$ is a graph with $k$ edges and $l$ vertices, then its degree is $k(n-1) - l n + n$.
The differential is given by contracting edges.
Its suspension $\fGC_{n}[n]$ is a CDGA, and the product is the disjoint union of graphs.
Since the differential cannot create new connected components, this a free CDGA, and we have:
\begin{equation}
  \fGC_{n} = S(\GC_{n}[n])[-n]
\end{equation}
where $\GC_{n}$\index{GCn@$\GC_{n}$} is the submodule of connected graph.
The dual $\GC_{n}^{\vee}$ is a (pre)Lie algebra using insertion of graphs.
The variants $\GC_{n}^{2}$ (where bivalent vertices are allowed but univalent vertices forbidden) and $\GC_{n}^{\circlearrowright}$ (where loops are allowed) are defined similarly.

\begin{remark}
  Our notations are slightly nonstandard.
  It is often the dual complex $\GC_{n}^{\vee}$ which is called the graph complex, with a differential that is dually given by creating internal vertices.
  This dual is a Lie algebra using insertion of graphs.
\end{remark}

The homology of this dg-module is particularly hard to compute.
It is known that $H^{0}(\GC_{2}^{\vee})$ is isomorphic to the Grothendieck--Teichmüller Lie algebra $\mathfrak{grt}_{1}$ and that $H^{<0}(\GC_{2}^{\vee}) = 0$~\cite{Willwacher2014}; it is a conjecture that $H^{1}(\GC_{2}) = 0$.
The vanishing of $H^{*}(\GC^{2,\vee}_{n})$ in some degrees shows that the operad of little $n$-disks is intrinsically formal as a Hopf cooperad~\cite{FresseWillwacher2015}.
And this homology computes the homotopy groups of the space of rational automorphisms of the little $n$-disks operad~\cite{FresseTurchinWillwacher2017}.

There is also a Swiss-Cheese version of that graph complex appearing in the description of the model for the Swiss-Cheese operad from~\cite{Willwacher2015}:\index{SGCn@$\SGC_{n}$}\index{fSGCn@$\fSGC_{n}$}
\begin{equation}
  \label{eq.fsgcn}
  \fSGC_{n} \coloneqq \Tw \SGra_{n}(\varnothing, \varnothing)[1-n] = S(\SGC_{n}[n-1])[1-n]
\end{equation}
is the full Swiss-Cheese graph complex, a symmetric algebra on its subcomplex of connected graphs.
The dual\index{KGCn@$\KGC_{n}$}
\begin{equation}\label{eq.kgc}
  \KGC_{n} \coloneqq \SGC_{n}^{\vee}
\end{equation}
is a (pre)Lie algebra, using insertion of graphs, as well as a module over the Lie algebra $\GC_{n}^{\vee}$.
The Kontsevich integrals $c \in \KGC_{n}$ (see Equation~\eqref{eq.konts-coeff}) define a Maurer--Cartan element in this Lie algebra.

\subsection{Model for configuration spaces of closed manifolds}
\label{sec.model-conf-space-closed}

Let us recall the construction from~\cite{Idrissi2018b,CamposWillwacher2016} for configuration spaces of closed oriented manifold $M$.
Our aim, in this work, will be to generalize these two constructions for manifolds with boundary, in a framework that encompasses the two approaches.

In~\cite{CamposWillwacher2016}, one defines a sequence of CDGAs $\Graphs_M$\index{GraphsM@$\Graphs_{M}$} that model the real homotopy type of the configuration spaces of points $M$:
\begin{equation}
  \Graphs_M(n) \xrightarrow{\sim} \OmPA(\FM_M(n)).
\end{equation}

\begin{remark}
  In~\cite{CamposWillwacher2016}, the notation $\Graphs_{M}$ was used for the dual of what we call $\Graphs_{M}$, and $^{*}\Graphs_{M}$ was used for what we call $\Graphs_{M}$.
  We exchanged the notations to be coherent with the rest of the paper, where cooperads/comodules appear far more often than their duals.
\end{remark}

As a vector space, $\Graphs_M(n)$ is spanned by graphs with $n$ labeled external vertices and an unspecified number of indistinguishable internal vertices such that every vertex is decorated by an element of the unital symmetric algebra of the reduced cohomology of $M$, $S(\tilde H(M))$, under the condition that there are no connected components without external vertices.
A typical element of $\Graphs_{M}(n)$ is represented in the following picture, where black vertices are internal, the others are external, and the decorations are denoted by dotted edges (with the $\omega_{i} \in S(\tilde{H}(M))$):
\begin{equation}
  \begin{tikzpicture}[scale=1.4]
    \node[ext] (v1) at (0,0) {$\scriptstyle 1$};
    \node[ext] (v2) at (.5,0) {$\scriptstyle 2$};
    \node[ext] (v3) at (1,0) {$\scriptstyle 3$};
    \node[ext] (v4) at (1.5,0) {$\scriptstyle 4$};
    \node[int] (w1) at (.25,.5) {};
    \node[int] (w2) at (1.5,.5) {};
    \node[int] (w3) at (1,.5) {};
    \node (i1) at (1.7,1) {$\scriptstyle \omega_1$};
    \node (i2) at (1.3,1) {$\scriptstyle \omega_1$};
    \node (i3) at (-.4,.5) {$\scriptstyle \omega_2$};
    \node (i4) at (1.9,.4) {$\scriptstyle \omega_3$};
    \node (i5) at (0.25,.9) {$\scriptstyle \omega_4$};
    \draw (v1) edge (v2) edge (w1) (w1)  edge (v2) (v3) edge (w3) (v4) edge (w3) edge (w2) (w2) edge (w3);
    \draw[dotted] (v1) edge (i3) (w2) edge (i2) edge (i1) (v4) edge (i4) (w1) edge (i5);

    \node at (3,.2) {$\in \Graphs_M(4)$};
  \end{tikzpicture}
\end{equation}

The degree of a graph is $(m-1)\#\text{edges}-m\#\text{int. vertices} + \text{deg. of decorations}$ and the commutative product is given by the union of graphs with superposition of external vertices.

The differential $\delta$ splits as $\delta=\delta_{contr} + \delta_{cut}$, where $\delta_{contr}$ contracts edges adjacent to at least one internal vertex and $\delta_{cut}$ splits any edge into two decorations given by the diagonal class of $M$. Notice that due to the constraint of not allowing connected components without external vertices, $\delta_{cut}$ might create \textit{forbidden} graphs and in those cases such connected components are replaced by their image under the partition function ${Z_M} \colon \mathsf{GC}_{H{(M)}} \to \mathbb R$.

Let us now describe the map $\omega : \Graphs_M(n) \to \OmPA(\FM_M(n))$.
Let $p_{ij}\colon \FM_M(n)\to \FM_M(2)$ (resp.\ $p_{i} : \FM_{M}(n) \to M$) be the projections that forget all but two (resp.\ one) vertices, for $1 \le i, j \le n$, and recall the propagator $\varphi$ from Section~\ref{sec:closed propagator}.
The sub-CDGA of graphs with no internal vertices is generated by edges and decorations.
The map $\omega$ sends an edge connecting vertices $i$ and $j$ to $\varphi_{ij} = p_{ij}^*\varphi$, and it sends a decoration $\alpha \in \tilde{H}(M) \subset S(\tilde{H}(M))$ on vertex $i$ to $p_{i}^{*}(\alpha)$.

The extension of this map to the whole space $\Graphs_M(n)$ is given by the usual Feynman rules.
Let $\Gamma \in \Graphs_{M}(n)$ be a graph with $k$ internal vertices.
To define $\omega_{\Gamma}$, we first define a graph $\Gamma' \in \Graphs_{M}(n+k)$ obtained from $\Gamma$ by replacing the internal vertices by external ones.
We get a form $\omega_{\Gamma'} \in \OmPA(\FM_{M}(n+k))$ using the definition above.
We then consider the projection $\FM_{M}(n+k) \to \FM_{M}(n)$ which forgets the additional $k$ points, and we define $\omega_{\Gamma}$ to be the integral of $\omega_{\Gamma'}$ along the fibers of this projection.
We can summarize this procedure by the formula:
\beq{equ:feynmanclosed}
\Gamma \mapsto \omega_\Gamma = \int_{\fiber} \bigwedge_{(i,i)} \pi_{ij}^*\varphi
\eeq

\begin{theorem}[\cite{CamposWillwacher2016}]\label{thm:CW}
  The prescription above defines a quasi-isomorphism of differential graded commutative algebras
  \[ \Graphs_M(n) \to \OmPA(\FM_M(n)). \]
\end{theorem}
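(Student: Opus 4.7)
The plan is to verify the three pieces of the claim in turn: well-definedness, compatibility with the CDGA structures, and the quasi-isomorphism property. The first two are essentially computations, while the third is the substantive point.

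First I would check that the Feynman rule \eqref{equ:feynmanclosed} actually produces a well-defined element of $\OmPA^*(\FM_M(n))$. Since the propagator $\varphi$ belongs to the subalgebra $\Omega^{*}_{\triv}(\FM_M(2))$ of trivial PA forms (by construction in Section~\ref{sec:closed propagator}), pullbacks along the projections $\pi_{ij}$ remain trivial, and their wedge products also do. Thus the fiberwise integral along the proper semi-algebraic bundle $\FM_M(n+k)\to \FM_M(n)$ is legitimate by the framework of \cite{HardtLambrechtsTurchinVolic2011}. Compatibility with the CDGA structure is immediate from the definitions: the product in $\Graphs_M$ superimposes external vertices, which corresponds on the form side to a wedge of pullbacks along distinct projections; decorations $\omega_i$ attached to vertices pull back contravariantly.

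Next I would verify the chain map property by applying $d$ to $\omega_\Gamma$ and using the Stokes-type formula for PA fiber integrals. Here $d$ hits either a propagator factor $\pi_{ij}^*\varphi$ or a decoration, and boundary terms come from fiber strata where some subset of internal vertices collapses. The term $d\varphi=\Delta$ produces exactly the edge-cutting differential $\delta_{cut}$, with the two resulting decorations coming from $\Delta=\sum_i \alpha_i\otimes \alpha_i^*$. The codimension-one boundary strata correspond to two vertices colliding; for two vertices joined by an edge, the restriction of $\varphi$ is a fiberwise volume form, yielding the edge-contraction differential $\delta_{contr}$; for pairs not joined by an edge, the vanishing property $\int_y \varphi(x,y)\alpha_i(x)=0$ kills the contribution. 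The remaining strata, where a larger subgraph with no external vertices collapses, produce vacuum integrals over $\FM_n$ that exactly assemble into the partition function $Z_M$ acting on the collapsed component; this is why the definition of $\delta_{cut}$ must replace forbidden internal components by their image under $Z_M$.

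The main obstacle is the quasi-isomorphism statement. My strategy would be to filter both sides by the total number of decorations plus twice the number of internal vertices, so that on the associated graded the cut differential $\delta_{cut}$ and the boundary contributions via $Z_M$ disappear, leaving only $\delta_{contr}$ and Kontsevich's local configuration integrals. On $E^0$ one is then essentially comparing Kontsevich's graph model $\Graphs_n$ tensored fiberwise with cohomological decorations against $\OmPA^*$ of an iterated bundle with fibers equivalent to configuration spaces in $\R^n$. One then uses induction on the number $n$ of external vertices, exploiting the fact that $\FM_M(n+1)\to \FM_M(n)$ is a bundle whose fibers have the homotopy type of $M$ with $n$ punctures, combined with Kontsevich's formality quasi-isomorphism $\Graphs_n\xrightarrow{\sim}\OmPA^*(\FM_n)$ applied to the infinitesimal piece near a collision. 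A comparison with the cohomology computation of Cohen--Taylor/Kriz/Totaro/Lambrechts--Stanley closes the induction, since the $E^\infty$ page of both spectral sequences is then identified with $H^*(\FM_M(n))$.
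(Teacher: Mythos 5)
First, a framing remark: this paper does not prove Theorem~\ref{thm:CW} — it is quoted from \cite{CamposWillwacher2016} — but the directly analogous statements proved here (Theorem~\ref{thm.main-qiso} and the propositions of Sections~\ref{sec:sfm-main-thm} and~\ref{sec:afm-mfm-main-thm}) use exactly the architecture you describe, so your overall plan is the right one. Your first two steps are essentially correct but contain some misattributions worth fixing. In the Stokes analysis, the codimension-one fiberwise boundary strata are indexed by \emph{all} subsets $S$ with $\#S\geq 2$ collapsing, not only pairs; the strata with $\#S\geq 3$ are killed by Kontsevich's local vanishing lemmas (this is the content of Lemma~\ref{lem:vanI}), and you do not address them. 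For a colliding pair not joined by an edge, the contribution vanishes because the restricted integrand is basic for the $S^{n-1}$-bundle of the collapse, so its fiber integral vanishes for degree reasons — not because of the normalization $\int_y\varphi(x,y)\alpha_i(y)=0$, which serves a different purpose. Finally, the vacuum integrals assembling into $Z_M$ are integrals over the global spaces $\FM_M(k)$, not over $\FM_n$.

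The genuine gap is in how you close the induction for the quasi-isomorphism. Identifying the $E^\infty$ pages of two spectral sequences with $H^*(\FM_M(r))$ via Cohen--Taylor/Kriz/Totaro/Lambrechts--Stanley is both unavailable (those computations require projectivity or simple connectivity, neither of which is assumed) and insufficient: an abstract isomorphism of cohomology does not show that the specific map $\omega$ induces it. The correct closing step — used both in \cite{CamposWillwacher2016} and in this paper via Proposition~\ref{prop:fiberqiso} — is a comparison of homotopy fibers: one shows that the induced map
\[
\Graphs_M(r+1)\lotimes_{\Graphs_M(r)}\K \longrightarrow \OmPA(\FM_M(r+1))\lotimes_{\OmPA(\FM_M(r))}\K
\]
is a quasi-isomorphism. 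The right-hand side computes the cohomology of the fiber $M$ minus $r$ points, which is obtained by an elementary Mayer--Vietoris induction and needs no formality or connectivity hypotheses; the left-hand side is an honest tensor product by freeness and is computed by splitting graphs according to the valence and decoration of the new external vertex, as in the decomposition $V_0\oplus V_H\oplus V_{in}\oplus V_{\geq 1}$ of Section~\ref{sec:sfm-main-thm}. With this replacement your outline becomes a complete strategy; the filtration by decorations and internal vertices you propose is then needed only inside the computation of the algebraic fiber, not for the global comparison.
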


Given a trivialization of the tangent bundle of $M$, one can consider the right operadic action of $\FM_m$ on $\FM_M$. At the level of $\Graphs_M$ this can be expressed as a right $\Graphs_m$ comodule structure given by collapsing subgraphs containing the desired external vertices.

\begin{theorem}[\cite{CamposWillwacher2016}]
  For $M$ be a parallelized manifold, the previous map induces a quasi-isomorphism of Hopf comodules
  \[ \left(\Graphs_M, \Graphs_m\right) \longrightarrow \left( \OmPA(\FM_M), \OmPA(\FM_m)\right). \]
\end{theorem}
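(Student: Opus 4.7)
The previous theorem already establishes that the map $\omega \colon \Graphs_M(n) \to \OmPA(\FM_M(n))$ is a quasi-isomorphism of CDGAs for each $n$, so the only thing left to verify is compatibility with the right Hopf comodule structures. The plan is therefore reduced to checking that, for every inclusion of finite sets $W \subset V$, the square
\[
\begin{tikzcd}
\Graphs_M(V) \ar[r,"\omega"] \ar[d,"\circ_W^\vee"] & \OmPA(\FM_M(V)) \ar[d,"(\circ_W)^*"] \\
\Graphs_M(V/W) \otimes \Graphs_m(W) \ar[r,"\omega \otimes \omega"] & \OmPA(\FM_M(V/W)) \otimes \OmPA(\FM_m(W))
\end{tikzcd}
\]
commutes, where the left vertical map is the combinatorial cocomposition (sum over subgraphs on $W$ extracted with all their internal vertices) and the right vertical map is the pullback along the operadic insertion $\FM_M(V/W) \times \FM_m(W) \to \FM_M(V)$ (identified thanks to the parallelization of $M$).

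The key observation is that the Feynman-type map \eqref{equ:feynmanclosed} is essentially a fiber integral over the forgetful map $\FM_M(V\sqcup I) \to \FM_M(V)$ of a product of propagators. To compare the two compositions, one pulls the image $\omega(\Gamma)$ back along the insertion map and applies a Fubini-type argument on the iterated fiber integrals: this rewrites the pullback as a sum over ways of distributing internal vertices among the ``ambient'' factor $\FM_M(V/W)$ and the ``inserted'' factor $\FM_m(W)$. Each summand corresponds exactly to one term of the combinatorial coproduct, and this matches $(\omega \otimes \omega) \circ \circ_W^\vee$ provided that the propagator $\varphi$ on $\FM_M(2)$ restricts correctly at the relevant boundary strata.

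The crucial compatibility between propagators is where the parallelization hypothesis enters. Under the trivialization, the boundary stratum of $\FM_M(V)$ obtained by collision of the points indexed by $W$ is identified with $M \times \FM_m(|W|)$, and the propagator $\varphi$ can be chosen so that its restriction to the ``infinitesimally close'' boundary is precisely $1\times \vol_{n-1}$ (as recalled in Section~\ref{sec:closed propagator}); in other words it agrees, up to the trivialization, with the Kontsevich propagator used in the definition of the map $\omega \colon \Graphs_m(W) \to \OmPA(\FM_m(W))$. Decorations are compatible because external vertices going into an inserted subgraph carry no decorations on the $\Graphs_m$ side, while decorations on $\Graphs_M$ are pulled back from $M$ and thus only contribute to the $\FM_M(V/W)$ factor after the insertion.

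The main obstacle is the careful bookkeeping in the Fubini step: one must account for boundary contributions that arise from integrating along a fiber whose closure meets the diagonals, and ensure that no extra terms appear. This is handled exactly as in~\cite{CamposWillwacher2016}: the vanishing conditions imposed on $\varphi$ (in particular the normalization $\int_y \varphi(x,y)\alpha_i(x)=0$) guarantee that the spurious contributions from boundary strata of codimension one, other than the stratum corresponding to the chosen $W$, integrate to zero. Once this analysis is done for a single cocomposition $\circ_W^\vee$, naturality immediately extends the statement to all iterated coactions and to the full symmetric-sequence map.
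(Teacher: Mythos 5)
Your proposal is essentially the argument of the cited source: the paper itself does not reprove this statement (it is recalled from~\cite{CamposWillwacher2016}), but your reduction to compatibility of the comodule structure maps, via the boundary behaviour of the propagator at the collision stratum (where parallelizability identifies it with $1\times\vol_{n-1}$) and the double-pushforward/Fubini decomposition of the fiber integral, is exactly how the paper handles the analogous compatibility statements elsewhere (e.g.\ Proposition~\ref{prop:mainmapdef}). The only minor imprecision is that the normalization $\int_y\varphi(x,y)\alpha_i(x)=0$ is really needed for compatibility with the differential and the vanishing of the partition function rather than for the comodule square itself, but this does not affect the correctness of the argument.
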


\begin{remark}
  Let us denote $A= \Graphs_M(1)$ (or, if desired, its genus 0 part), which is a model for $M$.
  The previous theorem gives us a model for the right action of the fiberwise little disks operad since for $M$ parallelized $\FM_n^M = M \times \FM_n$.
  The result above can be restated as a quasi-isomorphism from the $A$-multicomodule $\left(\Graphs_M, A\otimes \Graphs_m \right)$ to the $\OmPA(M)$-multicomodule $\left( \OmPA(\FM_M),  \OmPA(M\times \FM_m)\right).$
\end{remark}

In~\cite{Idrissi2018b}, one must assume that $M$ is simply connected.
Then one starts from a Poincaré duality model of $M$, say $P$, connected to $\OmPA^{*}(M)$ via some CDGA $A$.
This Poincaré duality model $A$ is used to build a ``small'' model for $\Conf_{k}(M)$, the Lambrechts--Stanley model:\index{GP@$\GG{P}$}
\begin{equation}
  \GG{P}(k) \coloneqq \bigl( P^{\otimes k} \otimes H^{*}(\Conf_{k}(\R^{n})) / (p_{i}^{*}(x) \cdot \omega_{ij} = p_{j}^{*}(x) \cdot \omega_{ij}), d \omega_{ij} = \Delta_{P} \bigr),
\end{equation}
where $H^{*}(\Conf_{k}(\R^{n}))$ is generated by the classes $\omega_{ij} \in H^{n-1}(\Conf_{k}(\R^{n}))$ satisfying the classical Arnold relations~\cite{Arnold1969,Cohen1976}, $\Delta_{P} \in (P \otimes P)^{n}$ is a representative of the diagonal class of $M$ (defined using the Poincaré duality of $P$) and, for $x \in P$ and $1 \leq i \leq n$, we have $p_{i}^{*}(x) = 1 \otimes \dots \otimes 1 \otimes x \otimes 1 \otimes \dots \otimes 1 \in P^{\otimes k}$ with $x$ in position $i$.

One also build a graphical comodule $\Graphs_{A}$, similar to $\Graphs_{M}$ above, but vertices are decorated by elements of $A$ rather than elements of $S(\tilde{H}^{*}(M))$.
There is a quasi-isomorphism (or really, a zigzag, due to technicalities of PA forms) $\Graphs_{A} \to \OmPA^{*}(\FM_{M})$, and under the assumption that $\dim M \geq 4$ and $M$ simply connected, a quasi-isomorphism $\Graphs_{A} \to \GG{P}$.
These maps are compatible with the symmetric group actions; moreover, if $M$ is framed, they are compatible with the action of the operad $\FM_{n}$.
Thus:

\begin{theorem}[{\cite{Idrissi2018b}}]
  Let $M$ be a smooth, simply connected, closed manifold of dimension at least $4$.
  Then we have a zigzag of quasi-isomorphisms, compatible with the symmetric group actions:
  \[ \GG{P}(k) \simeq \OmPA^{*}(\FM_{M}(k)). \]
  Moreover, if $M$ is framed, then $\GG{P}$ is a right Hopf $H^{*}(\FM_{n})$-comodule, and the zigzag is compatible with the actions of $H^{*}(\FM_{n})$ and $\OmPA^{*}(\FM_{n})$ using the zigzag between these two Hopf cooperads defined by Kontsevich formality~\cite{Kontsevich1999}.
\end{theorem}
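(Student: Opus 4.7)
The plan is to follow the strategy outlined for closed manifolds in~\cite{Idrissi2016}: pass through the intermediate graphical model $\Graphs_{A}$ and show that both $\GG{P}$ and $\OmPA^{*}(\FM_{M})$ are quasi-isomorphic to it, compatibly with all structures. The graphical model $\Graphs_{A}(k)$ is built exactly like $\Graphs_{M}(k)$ recalled just above, except that vertices are decorated by elements of a Poincaré duality CDGA $A$ (playing the role of $P$) rather than by $S(\tilde{H}^{*}(M))$. Its differential involves contracting edges and splitting edges into diagonal decorations $\Delta_{A}$, together with a ``partition function'' $Z_{M}$ that evaluates vacuum diagrams against the propagator $\varphi$ of Section~\ref{sec:closed propagator}.

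First, I would construct a zigzag $\OmPA^{*}(\FM_{M}) \simeq \Graphs_{A}$ by essentially repeating the Feynman-rule construction \eqref{equ:feynmanclosed}, replacing cohomology class decorations by their chosen representatives through $f : A \to \OmPA^{*}(M)$. This is exactly Theorem~\ref{thm:CW} up to the change of decoration CDGA, and it is compatible with the comodule structure over $\Graphs_{n}$ when $M$ is framed, using the same proof (collapsing subgraphs on the $\Graphs_{A}$ side, insertion of configurations on the topological side). A short functoriality/independence-of-choices argument shows the zigzag descends to a well-defined statement at the homotopy level, independent of the representatives chosen.

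Second, I would build the comparison map $\Graphs_{A} \to \GG{P}$ (after possibly pushing $A$ down to $P$ via a surjective quasi-isomorphism). The idea is to send graphs with no internal vertices to the evident element obtained by replacing each edge $e_{ij}$ by the Arnold generator $\omega_{ij}$, and to send graphs with at least one internal vertex to zero. One checks this is a CDGA map by using the Arnold relations together with the defining relation $p_{i}^{*}(x) \omega_{ij} = p_{j}^{*}(x) \omega_{ij}$, the diagonal identity $d\omega_{ij} = \Delta_{P}$, and the Poincaré duality property $\mu_{P}(\Delta_{P}) \cdot \omega_{ij} = 0$ (or rather, the refinement involving the classes $p_{i}^{*}(x)$) to match $\delta_{cut}$ on the $\Graphs_{A}$ side. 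I would do this by a careful inductive argument on the number of edges and vertices.

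The main obstacle is showing that the projection $\Graphs_{A} \to \GG{P}$ is a quasi-isomorphism, which is where the hypotheses $M$ simply connected and $\dim M \geq 4$ enter. The argument runs via a spectral-sequence / filtration by number of edges (or by a ``small graph complex'' subdivision): the obstruction is the vanishing of the cohomology of an associated graph complex of connected graphs with internal vertices only. This vanishing is exactly what is proved in~\cite{Idrissi2016}, using the $2$-connectivity of internal vertices (degree $-n \leq -4$) combined with the simple-connectedness of $M$ to kill all would-be cocycles; the partition function $Z_{M}$ is then forced to be cohomologous to the ``trivial'' tree part. Finally, to upgrade the statement to framed manifolds: one needs to check that the comparison $\Graphs_{A} \to \GG{P}$ is a morphism of Hopf right $\Graphs_{n}$-comodules, and then compose with the Kontsevich zigzag $H^{*}(\FM_{n}) \qiso* \Graphs_{n} \xrightarrow{\omega} \OmPA^{*}(\FM_{n})$ to relate the $\enV = H^{*}(\FM_{n})$-coaction on $\GG{P}$ with the $\OmPA^{*}(\FM_{n})$-coaction on $\OmPA^{*}(\FM_{M})$. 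Equivariance under $\Sigma_{k}$ is manifest at every step since all our constructions are functorial in the finite set indexing the configuration.
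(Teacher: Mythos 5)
Your proposal is correct and follows essentially the same route the paper (and the cited reference~\cite{Idrissi2016}) takes: interpolate via the graphical comodule $\Graphs_{A}$, obtain the zigzag to $\OmPA^{*}(\FM_{M})$ by Feynman-rule integrals, and project onto $\GG{P}$ by killing graphs with internal vertices, with the quasi-isomorphism property of that projection resting on the connectivity and dimension hypotheses via a filtration/spectral-sequence argument. The compatibility with the $\Sigma_{k}$-actions and, in the framed case, with the $\enV$- and $\OmPA^{*}(\FM_{n})$-coactions through the Kontsevich zigzag is handled exactly as you describe.
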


\subsection{Functoriality of the models}
\label{sec.functoriality-models}

The graphical models above depend on several choices: the choice of CDGAs which are models for the base manifold, and the choice of some Maurer--Cartan elements, namely the partition functions and the MC elements used to twist the base cooperads.
We now make precise the notion that the models are ``functorial'' in terms of these data, and that ``weak equivalences'' (quasi-isomorphisms or gauge equivalences) produce quasi-isomorphic objects.

We will make extensive use of this functoriality in the rest of this paper and will not repeat the arguments.
In our notation for our different models ($\SGraphs_{A,A_{\partial}}$, $\mGraphs_{A}$, etc.) we will not write down the Maurer--Cartan elements, with the understanding that the object still depends on them -- it will be clear from the context what the (gauge equivalence classes) of the Maurer--Cartan elements are.

Let us first deal with the Maurer--Cartan elements.
Two kinds of Maurer--Cartan elements appear in the discussions above:
\begin{itemize}
  \item a first kind is used to twist (co)operads and comodules over them in the sense of~\cite{Willwacher2014,DolgushevWillwacher2015};
  \item a second kind of used to remove ``vacuum type'' components (connected components with only internal vertices).
\end{itemize}
We will write our arguments in a general setting which will then be applicable to our graphical models.

Suppose first we are given a cooperad $\CC$ equipped with a morphism to the cooperad of shifted homotopy Lie algebras $\hoLie_{k}^{\vee}$.
Using the general framework of~\cite{Willwacher2014,DolgushevWillwacher2015}, this morphism correspond to a Maurer--Cartan element $\mu$ in the deformation complex $\mathfrak{g} \coloneqq \Def(\CC, \hoLie_{k}^{\vee})$.
It can be used to produce a ``twisted'' cooperad $\Tw_{\mu} \CC$, whose coalgebras are of the form $(A, d + [x,-])$, where $A$ is a $\CC$-coalgebra and $x$ is a Maurer--Cartan element in $A$ (one must take special care of duals in order to make sense of this).
Moreover, if $\MM$ is a cooperadic right comodule over $\CC$, then it can be twisted into a right comodule $\Tw_{\mu} \MM$ over $\Tw_{\mu} \CC$.

We now reuse the arguments of~\cite[Section~7.2]{CamposWillwacher2016} in order to show that two gauge equivalent Maurer--Cartan elements $\mu \sim \mu'$ produce quasi-isomorphic twists.
By definition, $\mu$ and $\mu'$ are gauge equivalent if there exists a Maurer--Cartan element
\begin{equation}
  \hat{\mu} \in \mathfrak{g}[t,dt] \coloneqq \mathfrak{g} \otimes \Omega^{*}(\Delta^{1}),
\end{equation}
where $\Omega^{*}(\Delta^{1}) = \Bbbk[t,dt]$ is a path object for $\Bbbk$ in the model category of CDGAs, such that $\hat{\mu}|_{t = 0} = \mu$ and $\hat{\mu}|_{t = 1} = \mu'$.
This is equivalent to the existence of an element $\gamma \in \exp \mathfrak{g}$ in the exponential group (equal to $\mathfrak{g}^{0}$ with product given by the Baker--Campbell--Hausdorff formula) and such that the flow of $\gamma$ sends $\mu$ to $\mu'$, i.e.
\begin{equation}
  \mu' = \gamma \cdot \mu \coloneqq \exp(\operatorname{ad}_{\gamma})(\mu) = \mu + \operatorname{ad}_{\gamma}(\mu) + \frac{1}{2} \operatorname{ad}_{\gamma}^{2}(\mu) + ({\dots}).
\end{equation}
This flows induces an isomorphism of twisted Lie algebras $\mathfrak{g}^{\mu} \cong \mathfrak{g}^{\mu'}$.
One then checks, as is done in~\cite[Section~7.2]{CamposWillwacher2016}, that this induces an isomorphism of twisted cooperads and comodules:
\begin{align}
  \Tw_{\mu} \CC & \cong \Tw_{\mu'} \CC, & \Tw_{\mu} \MM \cong \Tw_{\mu'} \MM.
\end{align}
This can be applied to $\CC = \Graphs_{n}$, $\MM = \Graphs_{M}$, for example, and takes care of the first kind of Maurer--Cartan elements.

Let us now deal with the second kind of Maurer--Cartan elements.
Let us assume that we are given
\begin{itemize}
  \item a Hopf cooperad $\CC$ and a Hopf right comodule $\MM$;
  \item a Lie algebra $V^{\vee}$, which can equivalently be seen as a quasi-free CDGA $fV = (S(V[1]), \delta)$ such that $\delta(V) \subset V \oplus S^{2}(V)$;
  \item an $fV$-module structure on each $\MM(k)$ such that the comodule structure maps $\circ_{W}^{\vee} : \MM(U) \to \MM(U/W) \otimes \CC(W)$ preserve this module structure;
  \item a Maurer--Cartan element $Z \in \MC(V^{\vee})$, or equivalently a CDGA morphism $Z : fV \to \Bbbk$; this makes $\Bbbk$ into a left $fV$-module.
\end{itemize}

Then we can define the ``reduced'' $\CC$-comodule $\MM_{Z}$, given in each arity by:
\begin{equation}
  \MM_{Z}(k) \coloneqq \MM(k) \otimes_{fV} \Bbbk.
\end{equation}

Now let us assume that $Z$ is gauge equivalent to some other Maurer--Cartan $Z'$.
Suppose moreover that $\MM(k)$ is cofibrant as an $fV$ module.
This is for example the case if $\MM(k)$ is quasi-free as an $fV$-module and is equipped with a good filtration.

We then use a proof technique similar to what is done in~\cite[Section~4.1]{Idrissi2018b} to show that $\MM_{Z}$ is quasi-isomorphic to $\MM_{Z'}$.
The gauge equivalence $Z \sim Z'$ can be seen as a homotopy between the two CDGAs morphism $Z$ and $Z'$:
\begin{equation}
  \begin{tikzcd}
    {} & fV \ar[dl, bend right, "Z" swap] \ar[d, dashed, "\exists h"] \ar[dr, bend left, "Z'"] \\
    \Bbbk & \Omega^{*}(\Delta^{1}) = \Bbbk[t,dt] \ar[l, "\operatorname{ev}_{0}" swap, "\sim"] \ar[r, "\operatorname{ev}_{1}", "\sim" swap] & \Bbbk
  \end{tikzcd}
  .
\end{equation}
We can thus define a third Hopf right $\CC$-comodule $\hat{\MM}$ by $\hat{\MM}(k) \coloneqq \MM(k) \otimes_{fV} \Omega^{*}(\Delta^{1})$, where the action of $fV$ on $\Omega^{*}(\Delta^{1})$ is induced by $h$.
We also have morphisms of comodules $\MM_{Z} \gets \hat{\MM} \to \MM_{Z'}$, induced by evaluation at $t = 0$ and $t = 1$.
Because we have assumed that each $\MM(k)$ is cofibrant as a right $fV$-module, the functor $\MM(k) \otimes_{fV} -$ preserves quasi-isomorphisms, hence both morphisms of comodules are quasi-isomorphisms in each arity.

It then remains to show that the models are functorial in terms of the CDGA models for the base manifold.
The following general method will work for all of the graphical models we build.
Let $f : A \to A'$ be a quasi-isomorphism of CDGAs which sends the MC element in $A$ to the MC element in $A'$.
It induces a morphism between the corresponding graphical models, by applying $f$ to the labels of all the vertices.
We can filter the graphical model by the total number of vertices, as the differential always decreases it.
The morphism $f$ being a quasi-isomorphism, we obtain an isomorphism on the $E^{1}$ pages of the associated spectral sequences.
Standard arguments (the filtration is bounded in each degree for a fixed number of external vertices) then show that $f$ induces a quasi-isomorphism on the graphical models.

We must also deal with the choice of MC elements corresponding to the partition function for this case.
If we are given an MC element $z$ with values in $A$, then we can simply apply $f$ to it to obtain an MC element $z' = f(z)$ with values in $A'$.
Conversely, if we are given an MC element $z'$ with values in $A'$, then we apply the Goldman--Millson theorem to show that $z'$ is gauge equivalent to $f(z)$ for some MC element with values in $A$.
As we have already proved that gauge equivalent elements yield quasi-isomorphic objects, we then obtain a zigzag (and not necessarily a direct map) of quasi-isomorphism between the model defined by $(A,z)$ and the one defined by $(A',z')$.

\subsection{Model for the fiberwise little \texorpdfstring{$n$}{n}-disks operad}\label{sec:model-fiberwiseLD}
Recall from~\cite{CDW17} the construction of a model of the action of the fiberwise little disks operad on $\FM_M$ for non-parallelized manifolds (see Section~\ref{sec:fiberwiseLD} for the definition of $\FM_{m}^{M}$).

For $n=\dim M$, and a CDGA $A$ one defines a symmetric sequence of graded commutative algebras\index{GranA@$\Gra_{n}^{A}$}
\begin{equation}
  \Gra_n^A(r) \coloneqq A\otimes \Gra_n(r).
\end{equation}

Suppose we are given a CDGA map $f:A\to \OmPA(M)$ and a fiberwise volume form $\varphi\in \Omega_{\triv}(\FM_n^M(2))$ on the sphere bundle of $M$, which we assume to be (anti-)symmetric and such that $d\varphi=f(E)$, with $E\in A$. Note that for $n$ even we can assume $E=0$.
Then, using the usual Feynman rules, we can define a map of Hopf symmetric sequences
\beq{equ:GraMFeynman}
\begin{aligned}
  (\Gra_n^A, d_A + d + E T\cdot) & \to \OmPA(\FM_n^M)                                              \\
  a\otimes \Gamma                & \mapsto a\wedge \bigwedge_{(ij)\in E_{\Gamma}} p_{ij}^*\varphi,
\end{aligned}
\eeq
where the operation $T\cdot$ is the action of the tadpole
\begin{equation}
  T \coloneqq
  \begin{tikzpicture}
    \node (v0) at (0.5,1) [int] {};
    \draw (v0) to [out=45,in=135,loop] ();
  \end{tikzpicture}
  \in \GC_n^{\vee},
\end{equation}
removing one edge from the graph.

\begin{remark}
  Here we do not allow tadpoles in graphs in $\Gra_n$.
  We could however extend the formula to graphs with tadpoles as follows.
  Note that the action $T\cdot$ does not remove tadpole edges, so that the ``differential of a tadpole edge'' is zero.
  Hence we may consistently understand $p_{ii}^*\varphi \coloneqq 0$ in the above formula, to extend it to the tadpole case.
\end{remark}

Also note that the map \eqref{equ:GraMFeynman} is compatible with the cooperad structures on both sides.
(I.e., the structure of a cooperad in CDGAs under $A$ is intertwined with the structure of almost cooperad in CDGAs under $\OmPA(M)$, using the map $f$ from $A$ to $\OmPA(M)$, see Remark~\ref{rmk:almost}.)

We may now twist the map \eqref{equ:GraMFeynman}.
This is done by formally introducing an additional arity-zero cogenerator to $\Gra_n^A$ producing the collection of CDGAs $A\otimes \Tw\Gra_n(r)$
built from graphs with numbered external and unnumbered ``internal'' vertices.
The map to $\OmPA(\FM_M)$ is extended by taking a fiber integral over the positions of the points corresponding to internal vertices
\begin{equation}
  \begin{aligned}
    A\otimes \Tw\Gra_n & \to \OmPA(\FM_n^{M})                                                        \\
    a\otimes \Gamma    & \mapsto a\wedge \int_{\fiber} \bigwedge_{(ij)\in E_\Gamma} p_{ij}^*\varphi.
  \end{aligned}
\end{equation}

We mind that this map is a priori not compatible with the differential as is, due to boundary terms resulting from applying the fiberwise Stokes' formula.
More concretely, for $A=\OmPA(M)$, the arity zero piece produces a Maurer--Cartan element\index{zomega@$z_{\Omega}$}
\begin{equation}
  z_\Omega \coloneqq ET + \sum_\gamma \left( \scalebox{.8}{$ \int_{\fiber} \bigwedge_{(ij)\in E\Gamma} p_{ij}^*\varphi$} \right) \otimes \gamma \in \OmPA(M) \hotimes \GC^{\vee}_n.
\end{equation}
such that (only) twisting with this MC element makes the above map compatible with the differential.
Fortunately, there is the vanishing Lemma:
\begin{lemma}\label{lem:vanI}
  In the situation above, $z_\Omega=ET$.
\end{lemma}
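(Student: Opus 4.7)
The strategy is to show that for every graph $\gamma$ appearing in the sum (i.e., every $\gamma \in \GC_m^\vee$ different from the tadpole), the coefficient
\[
I_\gamma \coloneqq \int_{\FM_m^M(k) \to M} \bigwedge_{(ij) \in E\gamma} p_{ij}^*\varphi
\]
vanishes as a form on $M$, leaving only the $ET$ contribution.

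First I would unpack the $\SO(m)$-equivariance. Since $\varphi$ is (the restriction of) a fiberwise volume form on the sphere bundle $\FM_m^M(2) = F_M \times_{\SO(m)} S^{m-1}$, it is $\SO(m)$-equivariant, and so is each $p_{ij}^*\varphi$ on $\FM_m^M(k) = F_M \times_{\SO(m)} \FM_m(k)$. Consequently the integrand $\bigwedge p_{ij}^*\varphi$ is an $\SO(m)$-equivariant form on $F_M \times \FM_m(k)$, and the fiber integral $I_\gamma$ descends to a basic form on $F_M$, i.e., under the Chern--Weil correspondence, to a characteristic form of the tangent bundle $TM$ determined by an $\mathfrak{so}(m)$-invariant polynomial.

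Second, I would reduce $I_\gamma$ to a universal integral: the relevant invariant polynomial is obtained by performing the fiber integral $\widetilde I_\gamma = \int_{\FM_m(k)} \bigwedge \omega_{ij}$, where $\omega_{ij} = p_{ij}^*\vol_{S^{m-1}}$ is the Kontsevich propagator on the model fiber. Thus $I_\gamma$ is a \emph{universal} characteristic form, and in particular vanishes identically on $M$ whenever the corresponding invariant polynomial on $\mathfrak{so}(m)$ vanishes.

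Third, the heart of the argument is an application of Kontsevich's vanishing lemma from \cite{Kontsevich1999} (see also~\cite{LambrechtsVolic2014}) to the integrals $\widetilde I_\gamma$ for non-tadpole connected graphs with $k \geq 2$ vertices. The argument proceeds via an involution trick: one exhibits either (i) an orientation-reversing involution of the fiber $\FM_m(k)$ (for instance, the one induced by an antipodal reflection of $\R^m$) under which the integrand transforms by a sign distinct from the orientation sign, forcing $\widetilde I_\gamma = -\widetilde I_\gamma$; or (ii) a combinatorial symmetry of $\gamma$ whose induced action on the integrand is odd. This handles every graph besides the tadpole. The $ET$ summand itself arises as a genuine boundary term: it is the contribution of the codimension-one stratum of $\FM_m^M(k) \to M$ where two vertices connected by an edge collide, producing an insertion of $d\varphi = f(E)$; gathering these contributions yields exactly $E \otimes T$.

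The main obstacle is the combinatorial control required in the third step, especially for even $m$ where naive antipodal involutions produce sign $+1$. In those cases one must use a more refined symmetry argument, either exploiting a nontrivial automorphism of the graph or combining multiple involutions, to force the universal integral to vanish. This combinatorial verification, case by case on the structure of $\gamma$, is the computational heart of the lemma.
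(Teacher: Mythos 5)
Your proposal has a genuine gap: it omits the degree-counting argument, which is the actual engine of the paper's proof, and it overstates what involution/reflection tricks can deliver. The paper's argument (recalled from \cite{CamposWillwacher2016}) is a case analysis on vertex valences in $\gamma$: if some vertex has valence $\leq 1$ the fibre integral vanishes by degree counting; if some vertex is bivalent one uses the (anti-)symmetry \eqref{equ:propsymmclosed} of $\varphi$ together with Kontsevich's reflection trick; and if every vertex is at least trivalent, then $e \geq 3v/2$ forces the resulting form on $M$ to have degree
\begin{equation*}
e(m-1)-(mv-m-1)\;\geq\; \tfrac{v(m-3)}{2}+m+1\;>\;m=\dim M \qquad (m\geq 3),
\end{equation*}
so it vanishes simply because it exceeds the dimension of the base (for $m=2$ one needs Kontsevich's deeper vanishing theorem). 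Your claim that a symmetry argument "handles every graph besides the tadpole" cannot be correct: the fibrewise antipodal involution only kills graphs for which $m\cdot(\text{loop order})$ is odd, and the reflection trick applies only at bivalent vertices. If symmetry alone sufficed, the identical proof would work for vector bundles over a base of dimension larger than $m$ — but the paper notes in Section~\ref{sec:localMC1} that the statement \emph{fails} in that generality. The constraint $\dim M=m$ must therefore enter somewhere, and it enters exactly through the degree count you are missing.

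Two secondary problems. First, the reduction to a "universal characteristic form" is both unnecessary and too weak here: for an arbitrary (anti-)symmetric fibrewise volume form $\varphi$ the individual integrals $I_\gamma$ are not characteristic forms — only the \emph{gauge-equivalence class} of $z_\Omega$ is universal (Lemma~\ref{lem:gaugeindep}, Proposition~\ref{prop:fromzu}, Remark~\ref{rem:univprop}) — so that route only yields Corollary~\ref{cor:locvanishing1} ($z_\Omega$ gauge equivalent to $E\cdot T$), not the on-the-nose equality $z_\Omega=ET$ asserted by the lemma. Second, the $ET$ term is not a boundary contribution from colliding vertices of some other graph; it is the tadpole's own contribution, inserted by the convention that the Feynman rules send a tadpole edge to $0$ while its formal "cut" differential produces the Euler class $d\varphi=f(E)$, cf. Remark~\ref{rem:tadpoles}.
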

\begin{proof}[Sketch of proof, recollected from \cite{CamposWillwacher2016}]
  This vanishing result is shown in the following way (see \cite{CamposWillwacher2016} for details):
  If the graph $\gamma$ has a vertex of valence 1 (or 0), then a simple degree counting argument asserts that the integral is zero.
  If the graph has a vertex of valence $2$, then using the symmetry property \eqref{equ:propsymmclosed} one can show that the integral vanishes by a reflection argument due to Kontsevich. Hence we may assume that $\gamma$ has no vertices of valence $\leq 2$.
  But then if $n\geq 3$ the integral vanishes again by a simple degree counting argument.
  If $n=2$ one needs to use a more sophisticated vanishing Theorem of Kontsevich.
\end{proof}

We finally pass to the quotients:\index{GraphsmA@$\Graphs_{m}^{A}$}
\begin{equation}
  \Graphs_n^A(r) \coloneqq A \otimes \Graphs_n(r)
\end{equation}
and equip it with the differential
\begin{equation}
  d_A + d + E T \cdot (-).
\end{equation}
One can show in particular:
\begin{proposition}
  If $f:A\to \OmPA(M)$ is a quasi-isomorphism of CDGAs, then the map $\Graphs_n^A\to \OmPA(\FM_n^M)$ is a quasi-isomorphism of collections of CDGAs, compatible with the (almost) cooperad structures (see Remark~\ref{rmk:almost}).
\end{proposition}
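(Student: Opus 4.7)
The claim splits into two parts: compatibility with the (almost) cooperad structures, and the quasi-isomorphism property in each arity. I would handle the former first, since it amounts to a direct check on generators. On $\Gra_m^A = A \otimes \Gra_m$, the cocomposition sends an edge $e_{ij}$ to either $1 \otimes e_{ij}$ (if both endpoints lie in the ``inner'' subset $W$) or $e_{[i][j]} \otimes 1$ (otherwise), with the $A$-decorations moving according to the CDGA structure of $A$ over $M$. On the geometric side, the almost-cooperad structure on $\OmPA(\FM_m^M)$ is obtained by pullback along the fiberwise composition maps $\FM_m^M(U/W) \times_M \FM_m^M(W) \to \FM_m^M(U)$, under which $p_{ij}^*\varphi$ restricts to the propagator on the appropriate factor. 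The Feynman rules map \eqref{equ:GraMFeynman} intertwines these because fiber integration commutes with the relevant pullbacks, and this compatibility survives the twisting (i.e.\ the passage to $\Tw\Gra_m^A$ and hence to $\Graphs_m^A$) because the internal vertices are integrated out universally on both sides.

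For the quasi-isomorphism, the plan is to first reduce to the universal case $A = \OmPA(M)$ via the functoriality arguments recalled in Section~\ref{sec.functoriality-models}. Since $f : A \to \OmPA(M)$ is a quasi-isomorphism of CDGAs that by hypothesis intertwines the partition-function-type Maurer--Cartan element $ET$ with its counterpart on $\OmPA(M)$, applying $f$ vertex-wise yields a morphism $\Graphs_m^A \to \Graphs_m^{\OmPA(M)}$. Filtering both sides by the total number of vertices (a filtration preserved by all summands of the differential, with $d_A$ acting through $f$ on the associated graded), a standard spectral sequence argument shows that this morphism is a quasi-isomorphism in each arity. So it suffices to prove the claim with $A = \OmPA(M)$ and $E$ a representative of the Euler class.

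In the universal case, one argues arity by arity. On $\Graphs_m^{\OmPA(M)}(r)$, filter by minus the number of internal vertices. The edge-contraction summand of the differential strictly decreases this count, while $d_A + ET\cdot$ preserves it, so the $E^0$ page collapses to a complex of ``graphs without internal-vertex contractions,'' on which the Kontsevich vanishing Lemma~\ref{lem:vanI} plays the role of trivializing higher corrections. On the geometric side, the fiber bundle $\FM_m(r) \to \FM_m^M(r) \to M$ yields the Serre spectral sequence, whose $E^2$ page is $H^*(M; H^*(\FM_m(r)))$, with the $SO(m)$-action twisting recovered by the action of the Euler class $E$. The Kontsevich formality quasi-isomorphism $\Graphs_m(r) \xrightarrow{\sim} \OmPA(\FM_m(r))$ induces a map from the $E^1$ page of our filtration to the $E^2$ page of the Serre spectral sequence that is an isomorphism fiberwise, and the standard comparison of spectral sequences then gives the desired quasi-isomorphism.

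The delicate point, and what I expect to be the main obstacle, is verifying that the twist by $ET\cdot$ on the graph side is precisely the residual differential coming from the non-triviality of the bundle on the forms side, so that the two filtrations line up on the $E^\infty$ page. This is where the hypothesis $d\varphi = f(E)$ is used crucially: it identifies the Euler-class twist in $\Graphs_m^A$ with the deviation of $\varphi$ from being closed, which is exactly the obstruction that prevents a naive trivialization of $\FM_m^M$ over $M$. With this identification in hand, the rest of the argument is essentially the proof appearing in~\cite{CDW17}, to which one can otherwise appeal directly.
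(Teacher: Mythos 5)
The paper does not actually prove this proposition: it is stated with ``one can show'' and the argument is deferred to~\cite{CDW17}, so there is no in-text proof to compare against. Your overall strategy --- check cooperad compatibility on generators, reduce to the universal case $A=\OmPA(M)$ by the functoriality/vertex-count filtration of Section~\ref{sec.functoriality-models}, then compare with the fibration $\FM_m(r)\to\FM_m^M(r)\to M$ --- is the right one, and the first two steps are fine.

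The weak point is the spectral sequence comparison in the universal case. With your filtration by (minus) the number of internal vertices, the $E^0$ differential on the graph side is $d_A + ET\cdot$, not edge contraction; the resulting $E^1$ page, namely $H\bigl(A\otimes\Graphs_m(r),\, d_A+ET\bigr)$, is not readily computable (the operator $ET\cdot$ changes the underlying graph), and it does not map to the $E^2$ page of the Serre spectral sequence in any way induced by a map of filtered complexes. Comparing the $E^1$ page of one spectral sequence with the $E^2$ page of another is not a legitimate step unless the two filtrations are intertwined by the morphism, which you have not arranged. Two standard fixes: either filter the graph side by the cohomological degree in $A$ (so that $E_0$ carries only edge contraction, $E_1 \cong A\otimes H^*(\FM_m(r))$, $E_2\cong H^*(M)\otimes H^*(\FM_m(r))$ matches the Serre $E_2$ page, and the twist $ET\cdot$ reappears as the Euler-class/Gysin differential on the page of degree $m$ --- which is exactly the identification you correctly flag as the delicate point); or, more in the spirit of this paper, apply Proposition~\ref{prop:fiberqiso} directly: the map is a map of algebras under $A$ (resp.\ $\OmPA(M)$), the derived fiber $\OmPA(\FM_m^M(r))\lotimes_{\OmPA(M)}\R$ computes $\OmPA(\FM_m(r))$ since $\FM_m^M(r)\to M$ is a fibration, while $\Graphs_m^A(r)$ is free as an $A$-module so $\Graphs_m^A(r)\otimes_A\K \cong \Graphs_m(r)$ with untwisted differential (as $E$ augments to zero), and the induced map on fibers is Kontsevich's formality quasi-isomorphism. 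Either repair makes your argument complete; as written, the key comparison step does not go through.
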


One can furthermore show that the action of the fiberwise little disks operad $\FM_n^M$ on $\FM_M$ naturally intertwines with the right coaction of $\Graphs_n^A$ on $\Graphs_M$
\begin{equation}
  \begin{aligned}
    \Delta_i \colon \ntstG_A & \to \Graphs_A \otimes_A (\Graphs_n^A)                                                                    \\
    \Gamma                   & \mapsto \sum_{\Gamma' \text{ subgraph at } i} \Gamma/\Gamma' \otimes_A (\omega_{\Gamma'}\otimes \Gamma')
  \end{aligned}
\end{equation}
where $\omega_{\Gamma'}$ represents the cohomological decorations of $\Gamma'$.
Here we assume that one makes choices so that $\Graphs_A$ supports an $A$-action.
\begin{proposition}[\cite{CDW17}]\label{prop:CDW}
  Let $M$ be a closed oriented manifold. There exists a quasi-isomorphisms of right comultimodules
  \[
    \left(\Graphs_A, \Graphs_n^A\right) \longrightarrow \left(\OmPA(\FM_M), \OmPA(\FM_n^M)\right)
  \]
  extending the map from Theorem \ref{thm:CW}.
\end{proposition}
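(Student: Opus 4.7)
The plan is to promote the quasi-isomorphism $\Graphs_A \to \OmPA(\FM_M)$ of Theorem~\ref{thm:CW} to a morphism of right comultimodules by specifying an explicit coaction on the graphical side that mirrors the topological one coming from operadic insertion, and then deducing the result by combining Theorem~\ref{thm:CW} with the proposition that $\Graphs_m^A \to \OmPA(\FM_m^M)$ is a quasi-isomorphism.

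First, I would make the combinatorial coaction precise. For each external vertex $i$ of a graph $\Gamma \in \Graphs_A(U)$ one sets
\[
  \Delta_i(\Gamma) = \sum_{\Gamma' \subset \Gamma \text{ at } i} \Gamma/\Gamma' \otimes_A (\omega_{\Gamma'} \otimes \Gamma'),
\]
where the sum runs over subgraphs $\Gamma'$ attached to vertex $i$, $\omega_{\Gamma'} \in A$ collects the $A$-decorations of $\Gamma'$, and $\Gamma/\Gamma'$ denotes collapsing $\Gamma'$ to the vertex $i$ (the decorations on the collapsed vertex are transported using the $A$-action). I would verify, by a bookkeeping argument, that $\Delta_i$ is compatible with the quotient by graphs containing internal components, with the CDGA structures, and with the twisted differential $\delta_{contr} + \delta_{cut} + E T\cdot$ on both factors; in particular the cutting piece $\delta_{cut}$ and the partition-function contributions match under collapsing because $Z_M$ on a graph whose external vertex lies in $\Gamma'$ is canceled by the corresponding term appearing in $\Graphs_m^A$. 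This gives a well-defined morphism of right comodules at the algebraic level.

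Next, on the topological side, the operadic multimodule action of $\FM_m^M$ on $\FM_M$ induces, by pullback of PA forms along the insertion maps $\FM_M(U/W) \times_M \FM_m^M(W) \to \FM_M(U)$, a coaction $\OmPA(\FM_M) \to \OmPA(\FM_M) \otimes_{\OmPA(M)} \OmPA(\FM_m^M)$. The heart of the proof is then to check that the Feynman-integral map defined by \eqref{equ:feynmanclosed} intertwines $\Delta_i$ with this topological coaction. This reduces, via the fiberwise Stokes formula, to the geometric observation that the boundary stratum of $\FM_M$ corresponding to a subset of points collapsing near $i$ is exactly the image of $\FM_M(U/W) \times_M \FM_m^M(W)$; the Feynman integral naturally factors over that stratum as a product of the integral for $\Gamma/\Gamma'$ (giving a form on $\FM_M(U/W)$) and the integral for $\Gamma'$ (giving a form on $\FM_m^M(W)$), decorated by $\omega_{\Gamma'}$. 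The vanishing Lemma~\ref{lem:vanI} plays the usual role of ensuring that no additional correction MC element appears beyond $E T$.

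Finally, once the compatibility with coactions is established, it remains to conclude that the map is a quasi-isomorphism in each arity. This follows directly from Theorem~\ref{thm:CW} for $\Graphs_A \to \OmPA(\FM_M)$ and from the proposition immediately preceding the statement for $\Graphs_m^A \to \OmPA(\FM_m^M)$. The main obstacle I expect is purely technical: carefully matching signs and boundary contributions when applying fiberwise Stokes to the insertion stratum, especially ensuring that the tadpole action $E T \cdot$ on $\Graphs_m^A$ corresponds exactly to the boundary contribution of edges $p_{ij}^*\varphi$ collapsing within $\Gamma'$. This is handled by the same reflection and degree arguments used in the proof of Lemma~\ref{lem:vanI}, and by the standard analysis of the stratification of $\FM_M$ from Section~\ref{sec:comp-conf-space-closed}.
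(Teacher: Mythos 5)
Your proposal is correct and follows exactly the route the paper indicates: the paper does not prove this proposition itself (it is recalled from \cite{CDW17}), but the coaction $\Delta_i$ by subgraph collapse displayed just before the statement, the intertwining with the insertion strata via the fiberwise Stokes formula and the boundary behaviour of the propagator, and the reduction of the quasi-isomorphism claim to the arity-wise statements of Theorem~\ref{thm:CW} and the preceding proposition are precisely the ingredients of that argument. The only point worth flagging is that the tadpole/Euler-class bookkeeping you defer to Lemma~\ref{lem:vanI} is genuinely where the care is needed (cf.\ Remark~\ref{rem:tadpoles} and the restriction to $\ntstG_A$ in the paper's displayed coaction), but your treatment of it is consistent with the paper's conventions.
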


\subsection{The local Maurer--Cartan element I}\label{sec:localMC1}

We want to highlight one particular aspect of the constructions recalled above, and slightly generalize the setting.
To this end, suppose more generally that $\R^n\hookrightarrow Y\to B$ is a bundle of $n$-dimensional oriented vector spaces over a connected base $B$.
The example to keep in mind should be $B = M$ and $Y = TM$.

We may form the associated sphere bundle $S^{n-1}\to S_Y\xrightarrow{\pi} B$.
We choose a fiberwise volume form on $S_Y$, that is, an element $\omega \in \Omega^{n-1}(S_Y)$ such that $\int_{\fiber}\omega=1$ and $d\omega=\pi^* E$ for some $E\in \Omega^n(B)$.
By definition $E$ is the Euler class of $Y$.
Also, $\omega$ is unique up to addition of exact elements and basic forms, i.e., pullbacks under $\pi$ of closed forms representing elements in $H^{n-1}(B)$.

For simplicity (though this is not strictly necessary) we furthermore assume that $\omega$ is (anti-)symmetric under the involution $I$ of the spheres:
\beq{equ:omegasymm}
I^*\omega = (-1)^n \omega.
\eeq
This can always be achieved by symmetrization $\omega\mapsto (\omega+(-1)^n I^*\omega)/2$.
(If we did not ask for \eqref{equ:omegasymm}, then we would just replace $\GC_n$ by the quasi-isomorphic complex of directed graphs.)

Using the usual configuration space formulas, we create an element\index{zomega@$z_{\Omega}$}
\begin{equation}
  z_\Omega \coloneqq E \otimes \tadpole + \sum_{\gamma} \bigl( \scalebox{.8}{$\int_{\fiber} \bigwedge_{ij}\pi_{ij}^*\omega$} \bigr) \otimes \gamma\in \OmPA(B)\hotimes \GC_n^{\vee},
\end{equation}
where we integrate over the fiber in $\FM_n^Y$, the fiberwise configuration space of points on $Y$ modulo scaling as defined in Section~\ref{sec:fiberwiseLD}.
Notice that there is an identification of $B$-bundles $\FM_n^Y(2)=S_Y$.
By the Stokes formula, $z_\Omega$ is a Maurer--Cartan element.

We may now essentially repeat the constructions of Section~\ref{sec:model-fiberwiseLD}.
Suppose a $f: A\to \OmPA(B)$ is a CDGA quasi-isomorphism, and suppose further that $z\in A\otimes\GC_n^{\vee}$\index{z@$z$} is a Maurer--Cartan element satisfying $f(z)=z_\Omega$.
Then, as above, we may construct a quasi-isomorphism of collections of CDGAs\index{GraphsmA@$\Graphs_{m}^{A}$}
\begin{equation}
  \Graphs_n^A \coloneqq (A\otimes \Graphs_n)^z \to \OmPA(\FM_n^Y)
\end{equation}
compatible with the cooperadic cocompositions.
It is also clear that in this form the construction is naturally functorial in $(A,z)$.
Furthermore, changing the MC element $z$ by gauge transformations produces quasi-isomorphic collections of CDGAs.
Hence we are interested in determining the gauge equivalence class of $z$ or $z_\Omega$.
Unfortunately, the simple argument of Lemma \ref{lem:vanI} fails in general, if the dimension of the base is greater than $n$.

Let us nevertheless note:
\begin{lemma}\label{lem:gaugeindep}
  The gauge equivalence class of $z_\Omega$ is independent of the choice of $\omega$.
\end{lemma}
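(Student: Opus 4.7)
The plan is to construct, for any two admissible choices $\omega_{0}, \omega_{1}$ of fiberwise volume form, an explicit Maurer--Cartan element in $(\OmPA(B) \hotimes \GC_{n}^{\vee})[s,ds]$ whose evaluations at $s=0$ and $s=1$ are $z_{\Omega_{0}}$ and $z_{\Omega_{1}}$ respectively. By definition of gauge equivalence in a dg Lie algebra this proves the claim, and the most natural way to produce such an MC element is to run the very same construction that defines $z_{\Omega}$, but over the extended base $B\times [0,1]$.

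First I would interpolate linearly, setting $\omega_{s} := (1-s)\omega_{0} + s\omega_{1}$ for $s\in[0,1]$. Each $\omega_{s}$ is a fiberwise volume form with $d_{S_{Y}}\omega_{s} = \pi^{*} E$ and satisfies the symmetry \eqref{equ:omegasymm}. Next, to promote this family to a genuine fiberwise volume form $\tilde\omega \in \Omega^{n-1}(S_{Y}\times[0,1])$ on the trivial extension of the bundle, I would write it in the form
\[
\tilde\omega = \omega_{s} + ds\wedge \alpha,
\]
where $\alpha \in \Omega^{n-2}(S_{Y})$ is chosen so that $d\tilde\omega = \pi^{*}E + ds\wedge(\omega_{1}-\omega_{0}-d_{S_{Y}}\alpha)$ is basic, i.e., a pullback from $B\times[0,1]$. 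This requires $\omega_{1}-\omega_{0}-d_{S_{Y}}\alpha = \pi^{*}\beta$ for some closed $\beta\in \Omega^{n-1}(B)$.

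The key observation is that such $\alpha$ and $\beta$ exist: the difference $\omega_{1}-\omega_{0}$ is closed (both have the same differential $\pi^{*}E$), and its fiber integral vanishes, because each $\omega_{i}$ integrates fiberwise to $1$. Via the Gysin sequence for the sphere bundle $S_{Y}\to B$,
\[
\cdots \to H^{n-1}(B) \xrightarrow{\pi^{*}} H^{n-1}(S_{Y}) \xrightarrow{\int_{\fiber}} H^{0}(B) \xrightarrow{\cup E} H^{n}(B) \to \cdots,
\]
the class $[\omega_{1}-\omega_{0}]$ lies in the image of $\pi^{*}$, hence $\omega_{1}-\omega_{0} = d_{S_{Y}}\alpha + \pi^{*}\beta$ for some primitive $\alpha$ and some closed basic $\beta$. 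After replacing $\alpha$ by $(\alpha + (-1)^{n} I^{*}\alpha)/2$ we can moreover assume $\alpha$ satisfies the symmetry $I^{*}\alpha = (-1)^{n}\alpha$, so that $\tilde\omega$ is itself $I$-(anti-)symmetric on fibers.

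Now I would run the construction of $z_{\Omega}$ verbatim over the base $B \times [0,1]$ with the bundle $Y\times[0,1]$, the fiberwise volume form $\tilde\omega$, and the Euler form $\tilde E := E + ds\wedge \beta$. This produces a Maurer--Cartan element
\[
z_{\tilde\Omega} = \tilde E \otimes \tadpole + \sum_{\gamma}\Bigl(\int_{\fiber}\bigwedge_{ij}\pi_{ij}^{*}\tilde\omega\Bigr)\otimes \gamma \in \OmPA(B\times[0,1])\hotimes \GC_{n}^{\vee} \cong (\OmPA(B)\hotimes\GC_{n}^{\vee})[s,ds].
\]
By naturality/functoriality of the construction with respect to pullback along the inclusions $B\times\{i\}\hookrightarrow B\times[0,1]$ (applied fiberwise on the total spaces), restriction at $s=0$ and $s=1$ returns exactly $z_{\Omega_{0}}$ and $z_{\Omega_{1}}$. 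Hence $z_{\Omega_{0}}$ and $z_{\Omega_{1}}$ are gauge equivalent.

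The only step with any real content is the existence of the correction $\alpha$ making $d\tilde\omega$ basic; once this Gysin-style decomposition is in place everything else is formal from the definitions of $z_{\Omega}$ and of gauge equivalence.
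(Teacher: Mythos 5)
Your proof is correct and follows essentially the same route as the paper's: the paper likewise handles the exact part of $\omega_{1}-\omega_{0}$ by running the construction for $\omega+d(t\beta)$ over the extended bundle $Y\times I\to B\times I$, and disposes of the remaining closed basic part $\pi^{*}\alpha$ by the explicit gauge transformation $\tadpole\,\alpha$ (the decomposition of the ambiguity into exact plus basic pieces being stated just before the lemma, where you derive it from the Gysin sequence). Your version merely unifies the two cases by letting the Euler form acquire a $ds$-component, which is a clean packaging of the same argument; note that, as in the paper, the tadpole term is what makes this work.
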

\begin{proof}
  Changing the propagator by some exact form $\beta$, we may just apply the construction to the propagator $\omega+d(t\beta)$ for the bundle $Y\times I\to B\times I$ (with $I=[0,1]$ the interval) to find an explicit gauge equivalence.
  Changing the propagator by $\pi^*\alpha$, with $\alpha$ a closed basic form, changes $m$ by gauge transformation with $\tadpole \alpha$.
  (Attention: Here it is important that we include the tadpole in our complex.)
\end{proof}

The above construction is furthermore functorial in the bundle $Y$.
Hence we immediately arrive at:
\begin{proposition}\label{prop:fromzu}
  Let $\tau \colon M\to B\SO(n)$ be the classifying map of the frame bundle.
  We have that $z_{\Omega}$ as above is gauge equivalent to $\tau^*z_{u}$, where\index{zu@$z_{u}$}
  \[ z_u\in H^{*}(B\SO(n)) \hotimes \GC_n \]
  is the universal Maurer--Cartan element constructed in~\cite{KhoroshkinWillwacher2017}.
\end{proposition}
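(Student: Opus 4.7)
The plan is to deduce the proposition from the naturality of the $z_\Omega$ construction under pullbacks of oriented vector bundles, combined with Lemma~\ref{lem:gaugeindep}.

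First, I would handle the fact that $B\SO(n)$ is not itself a finite-dimensional manifold by approximation. Since $M$ is a finite-dimensional manifold, the classifying map $\tau: M\to B\SO(n)$ factors, up to homotopy, through an oriented Grassmannian $G_n^+(\R^N)$ for $N$ large, which is a compact semi-algebraic manifold carrying the tautological oriented rank-$n$ bundle $\gamma$. Replacing $\tau$ by a smooth (even semi-algebraic) representative, we may work with the construction of Section~\ref{sec:localMC1} applied verbatim to $\gamma\to G_n^+(\R^N)$, producing an element $z_\gamma\in \OmPA(G_n^+(\R^N))\hotimes \GC_n^\vee$. By the alternative description given in the statement of the proposition, a representative of $z_u$ in this finite-dimensional approximation is precisely $z_\gamma$.

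Next I would verify naturality on the nose for a particular choice of propagator. Fix a fiberwise volume form $\omega_\gamma$ on $S_\gamma$, satisfying the symmetry condition~\eqref{equ:omegasymm}, and let $E_\gamma\in \Omega(G_n^+(\R^N))$ be the corresponding representative of the Euler class. Via the bundle isomorphism $Y\cong \tau^*\gamma$, the pullback $\omega\coloneqq \tau^*\omega_\gamma$ is a fiberwise volume form on $S_Y$ with Euler-class representative $\tau^*E_\gamma$. The key point is that fiberwise integration of PA forms is natural with respect to Cartesian squares in the base: the configuration-space integrals defining the coefficients of $z_\Omega$ with respect to $\omega$ are obtained by applying $\tau^*$ to the corresponding coefficients of $z_\gamma$. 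Therefore, with this particular choice of propagator,
\[
z_\Omega \;=\; \tau^* z_\gamma \qquad \text{in } \OmPA(M)\hotimes \GC_n^\vee.
\]

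Finally, by Lemma~\ref{lem:gaugeindep}, any other choice of propagator on $S_Y$ yields a Maurer--Cartan element in the same gauge equivalence class as the one above. Hence the gauge equivalence class of $z_\Omega$ equals that of $\tau^*z_\gamma$, which by the identification in the first step is $\tau^* z_u$.

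The main obstacle is the technical verification that fiberwise integration of PA forms commutes strictly with pullback along the semi-algebraic square $S_Y\to S_\gamma$ over $M\to G_n^+(\R^N)$; this follows from the base-change properties of PA pushforwards established in~\cite{HardtLambrechtsTurchinVolic2011}, but requires care because the fiber of $\FM_n^Y(r)\to M$ must be identified with that of $\FM_n^\gamma(r)\to G_n^+(\R^N)$ compatibly with semi-algebraic structure. Apart from this bookkeeping, the proof is a direct naturality argument.
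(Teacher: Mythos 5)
Your argument is correct, and its core is the same as the paper's: the construction of $z_{\Omega}$ is natural in the bundle $Y$ (fiberwise integration commutes with base change along the Cartesian square $\FM_n^{Y}(r)\to\FM_n^{\gamma}(r)$ over $\tau$), and Lemma~\ref{lem:gaugeindep} absorbs the choice of propagator. The paper's proof is exactly this one-line functoriality argument; where you diverge is in how the universal object is made finite-dimensional. You replace $B\SO(n)$ by an oriented Grassmannian $G_n^+(\R^N)$ and pull back an arbitrary propagator from there, whereas the paper (Remark~\ref{rem:univprop}) works with the Cartan model of equivariant forms on $S^{n-1}$, uses the explicit universal propagator of Khoroshkin--Willwacher, and transports it to $S_Y$ by a Chern--Weil map associated to a connection on $\Fr Y$. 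Your route avoids connections entirely but shifts the burden to two points you state a bit too quickly: (i) the identification of $z_\gamma$ with ``a representative of $z_u$'' on the Grassmannian is not on the nose --- it is itself an application of Lemma~\ref{lem:gaugeindep} on $G_n^+(\R^N)$, comparing your chosen $\omega_\gamma$ with the Cartan-model propagator restricted there; and (ii) one needs $H^{*}(B\SO(n))\to H^{*}(G_n^+(\R^N))$ to be an isomorphism in the degrees that survive pullback to $M$ (harmless for $N$ large since only coefficients of degree $\le\dim M$ matter). The paper's Chern--Weil formulation buys an equality of MC elements before any gauge comparison and sidesteps the stable-range bookkeeping; your version is more elementary and stays entirely within the semi-algebraic framework. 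Both are valid.
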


\begin{remark}
  The universal Maurer--Cartan element $z_{u}$ can equivalently be seen as applying the above construction for the universal bundle $(Y \to B) = (E\SO(n) \to B\SO(n))$, which governs the $\SO(n)$-action on the little $n$-disks operad.
\end{remark}

\begin{remark}\label{rem:univprop}
  The careful reader might object that in the universal case we apply the construction to the infinite dimensional base $B\SO(n)$, then we pull back forms.
  However, all constructions here can be conducted in finite dimensions effectively, using the standard ``Chern--Weil''-type map from the equivariant forms on a space to the forms on the associated bundle.

  Concretely, for the equivariant forms (i.e., forms on the universal sphere bundle) we may take the Cartan model $A_{\text{Cartan}}=(S(\alg g^*[-2])\otimes \Omega(S^{n-1}))^G$. An explicit formula for the propagator there has been given in~\cite[Appendix A]{KhoroshkinWillwacher2017}.

  Choose some $\alg g$-valued connection $A\in \Omega^1(FY)$ on the frame bundle $Y$, i.e., $\iota_x A=x$ for all $x\in \alg g$ and $A$ is $G$-invariant. Then we have a map
  \begin{equation}
    \begin{aligned}
      A_{\text{Cartan}} & \to \Omega(FY\times S^{n-1})         \\
      \alpha            & \mapsto \exp(\pm \iota A) \alpha(F),
    \end{aligned}
  \end{equation}
  where $F=dA+[A,A]/2$ is the curvature, to be ``inserted'' in the factors $\alg g^*$. One checks that the resulting form is $G$-basic and hence descends to a form{\tiny } on the quotient $S_Y$.
  This is the desired propagator. As in~\cite[Section~6]{KhoroshkinWillwacher2017} one checks that the resulting MC element is precisely the pullback of the universal one as stated.
\end{remark}

By the above, it suffices to understand the universal MC element $m_u$.
This has been done in~\cite[Theorem 7.1]{KhoroshkinWillwacher2017} with the following result.
\begin{theorem}[\cite{KhoroshkinWillwacher2017}]\label{thm:KW}
  The element $z_u$  is gauge equivalent to
  \[
    \begin{cases}
      E\tadpole                   & \text{for $n$ even,} \\
      \sum_{j\geq 1}
      \frac{p_{2n-2}^{j}}{4^j}
      \frac{1}{2(2j+1)!}
      \begin{tikzpicture}[baseline=-.65ex]
        \node[int] (v) at (0,.5) {};
        \node[int] (w) at (0,-0.5) {};
        \draw (v) edge[bend left=50] (w) edge[bend right=50] (w) edge[bend left=30] (w) edge[bend right=30] (w);
        \node at (2,0) {($2j+1$ edges)};
        \node at (0,0) {$\scriptstyle\cdots$};
      \end{tikzpicture} & \text{for $n$ odd,}
    \end{cases}
  \]
  where $p_{2n-2}\in H^{8n-8}(B\SO(n))$ is the top Pontryagin class.
\end{theorem}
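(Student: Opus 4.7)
The plan is to identify the gauge-equivalence class of $z_u$ by exploiting the fact that gauge-equivalence classes of Maurer--Cartan elements in a (pronilpotent) dg Lie algebra are detected by the first cohomology. Since $H^*(B\SO(n))$ carries the zero differential and the Lie structure on the ambient algebra $H^*(B\SO(n))\hotimes \GC_n^\vee$ comes entirely from $\GC_n^\vee$, the relevant classifying cohomology is essentially $H^*(B\SO(n))\otimes H^*(\GC_n^\vee)$ (with appropriate care for the completion). So the proof splits into two tasks: first identifying which elements of this tensor product can possibly match $z_u$ by degree and naturality, and then computing the precise coefficients.

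For the first task, I would use the known computations of the loop-class content of $H^*(\GC_n)$: the tadpole $\tadpole$ is a nonzero class for $n$ even, and the ``bubble'' graphs $B_{2j+1}$ with two vertices joined by $2j+1$ edges are nonzero classes for $n$ odd (for even number of edges they vanish by the symmetry \eqref{equ:omegasymm}, which forces them into the quasi-isomorphic undirected graph complex). Matching degrees: in $\GC_n^\vee$ the tadpole sits in degree $-(n-1)+n = 1$ so it wants to pair with a scalar class of degree $n$, of which only the Euler class $E\in H^n(B\SO(n))$ is available for $n$ even. Similarly, $B_{2j+1}$ has degree $-(2j+1)(n-1)+2n$ in $\GC_n^\vee$, and demanding a Maurer--Cartan degree forces a pairing with a class of degree $(2j+1)(n-1)-2n+1 = j(2n-2)$, exactly the degree of $p_{2n-2}^j\in H^*(B\SO(n))$ for $n$ odd. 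Appealing to invariant theory for $\SO(n)$ one checks that no other polynomial in the Pontryagin and Euler classes fits the bill.

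For the second task, I would work on the concrete Cartan model side, as in Remark~\ref{rem:univprop}. Choose a connection on the universal bundle and Chern--Weil transfer the explicit equivariant propagator on $S^{n-1}$ from the appendix of~\cite{KhoroshkinWillwacher2017} into a propagator on the universal sphere bundle. Then the coefficient of each graph $\Gamma$ in $z_u$ is the fiber integral $\int_{\FM_n^{E\SO(n)}/B\SO(n),\ \text{interior vertices}} \bigwedge_{ij\in E_\Gamma}\pi_{ij}^*\omega$, expressed as an equivariant form in $(S(\alg{g}^*[-2])\otimes \Omega(S^{n-1}))^{\SO(n)}$. For $n$ even, a Kontsevich-type reflection/valence argument (cf.\ Lemma~\ref{lem:vanI}) eliminates all connected graphs except the tadpole, whose coefficient is recognized as the Euler class by Chern--Gauss--Bonnet on the fibers. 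For $n$ odd, the same valence/reflection argument forces all surviving connected graphs to be the $2j+1$-bubbles, and one reduces the coefficient to an explicit iterated integral over the relative configuration space of $2j+1$ collinear points on $S^{n-1}$ weighted by curvature-valued angle forms, which evaluates to a single Pontryagin monomial $p_{2n-2}^j$.

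The main obstacle is the last evaluation: extracting the factor $\tfrac{1}{4^j\cdot 2(2j+1)!}$ requires a careful combinatorial and analytic computation. One must track (i) the symmetry factor of $B_{2j+1}$ as a graph with two indistinguishable vertices and $(2j+1)!$ edge orderings, (ii) the normalization of the equivariant propagator on $S^{n-1}$, and (iii) the residual integral over the one remaining fiber coordinate after fiberwise Stokes reductions. I would handle (iii) by expanding $\omega^{2j+1}$ using the recursion $d\omega = \pi^*E$ repeatedly (which produces the power of the Euler/Pontryagin curvature), and then isolate the coefficient by pairing against a standard generator of $H_*(S^{n-1})$. Everything else in the argument is formal manipulation of Chern--Weil forms and cooperadic twisting; it is this explicit combinatorial-analytic computation of the universal coefficient that is the genuinely hard step.
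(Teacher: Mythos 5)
First, a caveat: the paper does not prove this statement at all --- it is imported verbatim from \cite[Theorem 7.1]{KhoroshkinWillwacher2017}, so there is no in-paper proof to compare against. The closest the paper comes is the ``elementary independent proof'' of Corollary~\ref{cor:locvanishing1}, which uses exactly the two ingredients you lead with (the $O(n)/\SO(n)$ reflection/parity argument and degree constraints) but only to extract the weaker conclusion needed later. Your overall two-step skeleton --- (a) pin down the possible shape of $z_u$ up to gauge by cohomological and symmetry constraints, (b) evaluate the surviving coefficients by an explicit Chern--Weil/configuration-space integral --- is the correct strategy and is the one used in the cited reference.

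That said, there are two genuine gaps. The first is in step (a): you only inventory the \emph{one-loop} classes of $H(\GC_n)$ (tadpole and odd bubbles) and then invoke ``invariant theory for $\SO(n)$'' to exclude everything else. Invariant theory only constrains the $H^{*}(B\SO(n))$ factor; it does nothing to exclude contributions from the (plentiful) higher-loop cohomology of $\GC_n$, e.g.\ wheel and $\mathfrak{grt}$-type classes. What actually kills these is the loop-order degree bound \eqref{equ:GCdegbds} (Lemma~\ref{lem:GCdegbounds}): a $k$-loop class with $k\geq 2$ would have to pair with a class of \emph{negative} degree in $H^{*}(B\SO(n))$, which does not exist. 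Without this input the reduction to one-loop graphs is unjustified, and your degree bookkeeping for the bubbles also appears to be off by a shift of $n$ (it should be checked against the convention $\deg = e(n-1)-vn+n$). Relatedly, ``gauge-equivalence classes are detected by $H^1$'' is too strong as stated; what one actually runs is the obstruction-theoretic argument along the loop-order filtration, gauging the MC element into cohomology at each stage and observing that no further corrections are possible by degree. The second gap is that the entire content of the precise coefficients $\tfrac{1}{4^{j}\,2(2j+1)!}$ --- which is what distinguishes the theorem from the qualitative statement --- is deferred to a computation you describe but do not carry out. For the purposes of this paper that is forgivable (the authors themselves only need the gauge-equivalence \emph{class}, and explicitly say they will not use the full statement in the boundary case), but as a proof of the theorem as stated it is incomplete.
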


From this we can immediately (re)derive a weaker version of Lemma \ref{lem:vanI}:
\begin{corollary}\label{cor:locvanishing1}
  If $\dim(B)=n$, then $z_\Omega$ is gauge equivalent to $E\tadpole$.
\end{corollary}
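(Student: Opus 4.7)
The plan is to combine Proposition~\ref{prop:fromzu} with Theorem~\ref{thm:KW}, using a degree argument to dispose of the remaining non-trivial terms in the odd-dimensional case. First, I would invoke Proposition~\ref{prop:fromzu} to replace $z_\Omega$ (up to gauge equivalence) by $\tau^{*} z_u$, where $\tau : B \to B\SO(n)$ classifies the oriented frame bundle of $Y$. This shifts the problem from the arbitrary base $B$ to the pullback of a universal element, about which Theorem~\ref{thm:KW} provides concrete information.

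Next I would split into two cases according to parity of $n$. For $n$ even, Theorem~\ref{thm:KW} directly identifies $z_u$ with $E_u \cdot \tadpole$, where $E_u \in H^{n}(B\SO(n))$ is the universal Euler class. Naturality of the Euler class under pullback then gives $\tau^{*} z_u$ gauge equivalent to $(\tau^{*} E_u)\cdot \tadpole = E \cdot \tadpole$, where $E \in \OmPA^{n}(B)$ is the Euler class of $Y$ featuring in the defining equation $d\omega = \pi^{*} E$.

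For $n$ odd, the statement of Theorem~\ref{thm:KW} presents $z_u$ as a sum over $j \geq 1$ of terms whose scalar coefficient involves $p_{2n-2}^{j} \in H^{j(8n-8)}(B\SO(n))$, multiplied by a fixed graph. After pulling back by $\tau$, each such term sits in $\OmPA^{j(8n-8)}(B) \otimes \GC_{n}^{\vee}$. Since $\dim B = n$ and $j(8n-8) \geq 8n - 8 > n$ for $n \geq 2$ (equivalently $7n > 8$), each of these forms vanishes for degree reasons on the $n$-dimensional PA manifold $B$. Hence $\tau^{*} z_u = 0$. On the other hand, for $n$ odd the rational Euler class of the oriented rank-$n$ bundle $Y$ vanishes ($2E = 0$ in $H^{n}(B)$), so $E\cdot \tadpole = 0$ as well, and both sides are trivially gauge equivalent.

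The one subtlety to be careful about is the compatibility between the universal MC element $z_u$, which is naturally defined in $H^{*}(B\SO(n)) \hotimes \GC_{n}^{\vee}$, and the PA-form-valued MC element $z_\Omega$. This is already resolved by Remark~\ref{rem:univprop}, where the Chern--Weil / Cartan model construction realises $\tau^{*} z_u$ as a concrete cocycle built from a connection on the frame bundle of $Y$. In particular, pullback of closed forms of degree exceeding $n$ yields zero forms (not merely exact forms) on $B$, so no further argument about gauging away exact terms is needed. This is really the only place where the hypothesis $\dim B = n$ is used; all other steps are formal consequences of the universal result.
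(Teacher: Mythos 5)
Your argument is correct, and it is precisely the ``immediate'' derivation that the paper gestures at in the sentence preceding the corollary but deliberately does not write out; the proof the paper actually records is the \emph{elementary independent proof}, which is a genuinely different route. Your version leans on the full strength of Theorem~\ref{thm:KW} (the complete evaluation of $z_u$ from~\cite{KhoroshkinWillwacher2017}), after which everything reduces to naturality of the Euler class for $n$ even and, for $n$ odd, to the observation that the coefficients $p_{2n-2}^{j}$ live in cohomological degree exceeding $\dim B=n$ and that $E$ is rationally trivial for odd-rank bundles — all fine, though note that for odd $n$ your inequality really needs $n\geq 3$, which is automatic here. What the paper's proof buys by avoiding Theorem~\ref{thm:KW} is robustness: it uses only the $O(n)/\SO(n)$ reflection argument (odd-loop graphs must carry a factor of $E$, Pontryagin classes being invariant) together with the loop-order degree bounds \eqref{equ:GCdegbds} on $H(\GC_n)$, and this is exactly the argument that ``transcribes'' to the boundary situation in Proposition~\ref{prop:locvanishing2}, where no analogue of Theorem~\ref{thm:KW} is available. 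So your proof is shorter but less portable; the paper's is longer but is the one reused later. The one technical point you rightly flag — that $\tau^{*}z_u$ is realized by honest forms via the Chern--Weil map of Remark~\ref{rem:univprop}, so that classes of degree $>n$ pull back to literally zero rather than merely exact forms — is handled correctly, and in any case Lemma~\ref{lem:gaugeindep} would absorb an exact discrepancy.
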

For later purposes let us give yet another proof of the result, that generalizes well.
\begin{proof}[Elementary independent proof of Corollary \ref{cor:locvanishing1}]
  For our purposes it in fact suffices to record that, using the $O(n)\supset \SO(n)$-action on $S^{n-1}$, the element $m_u$ is $\Z_2=\pi_0(O(n))=O(n)/\SO(n)$-invariant, with $\Z_2$ acting on $\GC_n$ by multiplication by $(\pm)^{\text{loop number}}$.
  This invariance statement is shown by a simple and elementary reflection argument, cf.~\cite[Lemma D.1]{KhoroshkinWillwacher2017}.
  One merely notes that $\omega$ can be taken reflection anti-invariant, and hence the integrals inherit that property.
  Recall also that the $\Z_2$-action on $H(B\SO(n))$ is trivial on all Pontryagin classes and is the sign action on the Euler class.
  Finally it is known that the $k$-loop part (with $k\geq 2$) of $H(\GC_n)$ satisfies the following degree bounds (cf. Lemma \ref{lem:GCdegbounds})
  \beq{equ:GCdegbds}
  H^{>-(k+2)(n-3)-3}_{k\text{-loop}}(\GC_n) = 0
  \eeq
  In particular, if $\dim(B)=n\geq 3$, then only 1-loop classes can occur in the MC element $m$ by degree reasons.
  By the above parity argument the coefficient of such 1-loop classes must involve one copy of the Euler class at least (since all Pontryagin classes are even) and hence by degree reasons there are no further possible corrections on top of the leading term, and we hence arrive at the conclusion that
  \[
    z\sim E \tadpole. \qedhere
  \]
\end{proof}

\begin{remark}\label{rem:tadpoles}[A remark on tadpoles]
  We note that using the definitions of~\cite{CamposWillwacher2016} tadpoles, i.e., edges connecting a vertex to itself are not allowed in graphs in $\Graphs_M$.
  We use different conventions, tadpoles are generally allowed in all graphs and graph complexes.
  We extend the Feynman rules \eqref{equ:feynmanclosed} by assigning to a tadpole the zero form, so that eventually the construction of~\cite{CamposWillwacher2016} is recovered.
  However, this rule might seem inconsistent at first sight, because the piece $\delta_{cut}$ of the differential of a tadpole edge should formally map a tadpole to a representative of the Euler class, which is generally not exact.
  But note that this piece of the differential is precisely cancelled by the action of the Maurer--Cartan element  $z$ above, which removes a tadpole and replaces it by (a representative of) the Euler class, making all formulas formally consistent.
\end{remark}

\section{A model for \texorpdfstring{$\SFM_{M}$}{SFM\_M}}
\label{sec.model-right-hopf}

\subsection{The local Maurer--Cartan element II and models for the fiberwise Swiss-Cheese operad}\label{sec:locMC2}

We can repeat the discussion of Section~\ref{sec:localMC1} in the local case ``with boundary''.
More precisely, we now consider an oriented vector bundle
\begin{equation}
  \R^{n}\hookrightarrow Y|_{\partial B} \to \partial B,
\end{equation}
where $\partial B$ is the boundary of some manifold $B$ and $Y$ is a vector bundle on $B$.
The most notable example for us will be $\partial B = \partial M$ and $Y = TM$.

Let $\Ha_n$\index{Han@$\Ha_{n}$} be the upper half-space of $\R^n$, which we consider with its natural $G=\SO(n-1)$-action.
The group $G$ acts on $\Fr Y|_{\partial B}$ naturally.
To $Y$ we may then associate the half-space bundle
\begin{equation}
  \Fr Y|_{\partial B} \times_G \Ha_n.
\end{equation}
By analogy with Section~\ref{sec:localMC1}, we may consider compactified fiberwise configuration spaces of points\index{SFMnY@$\SFM_{n}^{Y}$}
\begin{equation}
  \SFM_{n}^{Y}(r,s) \coloneqq \Fr Y|_{\partial B} \times_{\SO(n-1)} \SFM_{n}(r,s)
\end{equation}
in these half-spaces, which form a colored operad in spaces over $\partial B$.

If $(r,s)=(0,2)$, each fiber is a ``Kontsevich eye''~\cite[Section~5.2]{Kontsevich2003}, i.e., the space $\SFM_{n}(0,2)$ (see Figure~\ref{fig:eye} adapted from~\cite{Kontsevich2003}).
To explain the figure, note that up to translation and rescaling, we can fix the first point (e.g., at $(0,\dots,0,1) \in \Ha_{n}$).
Then the second point moves in $\Ha_{n}$ with that first point removed.
It can either come infinitesimally close to the first point (the ``iris''), to $\R^{n-1} \times \{0\} \subset \R^{n}$ (the ``lower lid''), or go to infinity, which can equivalently be seen as the first point being infinitesimally close to $\R^{n-1} \times \{0\}$ (the ``upper lid'')

\begin{figure}[htbp]
  \centering
  \begin{tikzpicture}[every node/.style={font=\tiny, inner sep=1pt}, decoration={snake, amplitude=1pt}]
    \path[draw, fill=black!10] (-2,0) -- (2,0) arc[radius=2, start angle=0, end angle=180];
    \node[circle,draw,fill=white,inner sep=0.2cm] (O) at (0,1) {};
    \node (iris) at (1.5,2) {iris};
    \draw[->, very thin, decorate] (iris) -- (O);
    \node (lower lid) at (-2.5,1.5) {lower lid};
    \draw[->, very thin, decorate] (lower lid) -- (-.7,0);
    \node (upper lid) at (-1.5,2.5) {upper lid};
    \draw[->, very thin, decorate] (upper lid) -- (120:2);
  \end{tikzpicture}
  \caption{The Kontsevich eye for $n=2$}
  \label{fig:eye}
\end{figure}

\begin{proposition}
  There exists a ``propagator''\index{propagator}
  \[ \tilde \omega \in \Omega^{n-1}(\SFM_n^{Y}(0,2)) \]
  satisfying the following properties:
  \begin{itemize}
    \item restricted to each fiber (Figure~\ref{fig:eye}) the propagator becomes a volume form at the iris (point $2$ is infinitesimally close to point $1$), and zero on the lower lid (point $2$ is infinitesimally close to $\R^{n-1}$).
    \item at the iris, the volume form is (anti-)symmetric.
    \item $d\tilde \omega = 0$.
  \end{itemize}
\end{proposition}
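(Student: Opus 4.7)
My plan is to construct $\tilde\omega$ in two stages: first a fiberwise propagator on Kontsevich's eye $F := \SFM_n(2,0)$, exploiting the mirror-charges trick already used in Section~\ref{sec.propagator-sfm}; then globalizing the construction over $B$ via an $\SO(n-1)$-equivariant Chern--Weil type procedure as in Remark~\ref{rem:univprop}. The resulting form will inherit the desired fiber behavior and closedness from the corresponding statements on $F$.

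For the fiberwise construction, I would fix coordinates on $\Ha_n\subset\R^n$ and let $I \colon \R^n \to \R^n$ denote the reflection through $\partial \Ha_n$. Let $\omega_0 \in \Omega^{n-1}(S^{n-1})$ be the round unit volume form, $\SO(n-1)$-invariant and antipodally (anti-)symmetric, and set $\phi(x,y) = \pi^*\omega_0$ for $\pi(x,y) = (y-x)/\|y-x\|$. On $F$ I would define
\[
  \tilde\omega_F(x,y) \coloneqq \tfrac12\bigl(\phi(x,y) - \phi(I(x), y)\bigr) + (-1)^n\tfrac12\bigl(\phi(y,x) - \phi(I(y), x)\bigr),
\]
which is (anti-)symmetric in $(x,y)$ by construction. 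Both summands are closed away from the diagonal and its mirror, and the mirror terms are smooth on $F$ because $y\neq I(x)$ whenever $x,y$ lie in the \emph{open} upper half-space. At the iris (where $y\to x$ after rescaling), the mirror contributions remain bounded and the Gauss-map terms restrict to the standard fiberwise volume form on the link $S^{n-1}$; at the lower lid (where both points approach $\partial\Ha_n$), we have $I(x)=x$ and $I(y)=y$ so all four summands cancel pairwise. Thus $\tilde\omega_F$ satisfies the three required properties on a single fiber.

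For the global construction, I would choose a principal $\SO(n-1)$-connection $A$ on $\Fr Y$ and apply the Cartan-model/Chern--Weil construction described in Remark~\ref{rem:univprop} to the $\SO(n-1)$-invariant form $\tilde\omega_F$, obtaining a form $\tilde\omega \in \Omega^{n-1}(\SFM_n^Y(2,0))$. Equivariance of the constructed boundary conditions (both the iris stratum and the lower lid are $\SO(n-1)$-stable, and the fiberwise data are invariant) guarantees that properties (i) and (ii) persist after globalization. For closedness, note that the ordinary Chern--Weil correction to $d\tilde\omega$ would be a polynomial in the curvature $F_A$ multiplied by $\SO(n-1)$-equivariant classes coming from ``$d$'' applied to the fiberwise form; but $d\tilde\omega_F = 0$, and the standard naturality of the Cartan map shows that the extension of a closed invariant form is closed. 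Equivalently, one can view $\tilde\omega$ on the universal base $B\SO(n-1)$ and pull back: there the analogue of Theorem~\ref{thm:KW} for the half-space bundle has no obstruction in degree $n-1$ because the only candidate characteristic class in that degree for $\SO(n-1)$ is the Euler class of rank $n-1$, which does not enter here.

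The main obstacle I anticipate is the last verification: making precise that no curvature corrections appear when passing from the fiberwise closed form $\tilde\omega_F$ to the globalized $\tilde\omega$. The cleanest way is to organize everything in the Cartan model, check that $\tilde\omega_F$ extends to an $\SO(n-1)$-equivariantly closed cochain in $(S(\mathfrak{so}(n-1)^*[-2]) \otimes \Omega^*(F))^{\SO(n-1)}$ (which is essentially forced by invariance together with the vanishing of $d\tilde\omega_F$), and then invoke the Chern--Weil map with respect to $A$ to obtain a \emph{closed} form on the associated bundle $\SFM_n^Y(2,0)$. The required boundary behavior is preserved strata-by-strata, completing the proof.
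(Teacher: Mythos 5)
Your two-stage strategy (fiberwise form on Kontsevich's eye, then globalization over $B$) matches the paper's, but the fiberwise construction contains a genuine error coming from a misidentification of the lower lid. The lower lid is the \emph{codimension-one} stratum $\SFM_n(\underline 1,\underline 1)\cong D^{n-1}$ of the eye, where exactly one of the two points (say the source $x$) lies on $\partial\Ha_n$ while the other stays in the interior; the locus where \emph{both} points reach $\partial\Ha_n$ is only a corner stratum of codimension $\ge 2$. On the actual lower lid your pair $\phi(x,y)-\phi(I(x),y)$ does cancel, but the symmetrizing pair $(-1)^n\tfrac12\bigl(\phi(y,x)-\phi(I(y),x)\bigr)$ does not, since $I(y)\neq y$ there: its restriction to the lid is a nonvanishing $(n-1)$-form on $D^{n-1}$ (essentially the hyperbolic volume form $\overline{\vol}^h_{n-1}$). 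So your $\tilde\omega_F$ fails the ``zero on the lower lid'' condition, and no globally (anti-)symmetric form can satisfy it, because that condition is intrinsically asymmetric in the two points. The fix is to \emph{not} symmetrize: the proposition only demands (anti-)symmetry \emph{at the iris}, and the unsymmetrized mirror-charge form $\phi(x,y)-\phi(I(x),y)$ --- equivalently the pullback of $\vol_{S^{n-1}}$ under the collapse $\SFM_n(2,0)\to\SFM_n(2,0)/(\text{lower lid})\simeq S^{n-1}$, which is the construction of~\cite{Willwacher2015} the paper invokes --- already restricts on the iris to the antipodally (anti-)symmetric $\omega_0$, because the mirror Gauss map is constant (equal to the vertical direction) along the iris and so contributes nothing there.

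The globalization step also needs repair, though the conclusion is salvageable. An $\SO(n-1)$-invariant fiberwise-closed form is \emph{not} ``essentially forced'' to extend to an equivariantly closed cochain in the Cartan model: the obstruction is the transgression of the iris class, i.e.\ the Euler class of the iris sphere bundle, which here is the unit sphere bundle of $Y\oplus\R$. Your degree count is moreover wrong in general --- $H^{n}(B\SO(n-1))$ need not vanish (e.g.\ $p_1\in H^4(B\SO(3))$ for $n=4$). What actually kills the obstruction is that $e(Y\oplus\R)=0$ because of the nowhere-vanishing section furnished by the trivial $\R$-summand; this is precisely the paper's argument, which it implements by a \v Cech--de Rham patching of the local construction rather than by Chern--Weil.
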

\begin{proof}
  The first property is achieved by the local construction of the propagator~\cite{Willwacher2015} which is given as a pullback of the volume form on $S^{n-1}$ and is therefore closed.
  One can then follow the usual globalization argument using the \v{C}ech--de Rham complex of a good cover on $B$ and we get that the obstruction to globalize the propagator to a closed form is the Euler class  of $Y|_{\partial B} \times \R\to B$.
  This class vanishes because the latter bundle has a nonzero global section.
\end{proof}

The boundary value at the iris of a propagator $\tilde \omega$ is a ``non-boundary-case''-propagator $\omega \in \Omega^{n-1}(S_{Y|_{\partial B}})$ in the sense of Section~\ref{sec:localMC1}.
Therefore it gives rise to a MC element\index{zomega@$z_{\Omega}$}
\begin{equation}
  z_{\Omega} \in \Omega^*(\partial B) \hotimes \GC_n^{\vee}.
\end{equation}

We may extend this to a Maurer--Cartan element in the semi-direct product:
\begin{equation}
  z_{\Omega} + z_{\Omega}^{\partial} \in \Omega^{*}(B) \hotimes \GC^{\vee}_n \ltimes \Omega^{*}(\partial B) \hotimes \SGC^{\vee}_n,
\end{equation}
where\index{zomegad@$z_{\Omega}^{\partial}$}
\begin{equation}
  z_{\Omega}^{\partial} = \sum_{\Gamma} \bigl( \scalebox{.8}{$\int_{\fiber} \bigwedge_{ij} \pi_{ij} \tilde \omega$} \bigr) \otimes \Gamma \in \Omega^{*}(\partial B) \hotimes \SGC_{n}^{\vee}.
\end{equation}

\begin{remark}
  Applied to the trivial bundle over $B = \Ha_{n}$, this yields the Kontsevich coefficients $c \in \SGC_{n}^{\vee}$ of Equation~\eqref{eq.konts-coeff}.
\end{remark}

We also note that, since $\SFM_n(2,0)/(\text{lower lid})\simeq S^{n-1}$, picking a propagator in the boundary case is topologically the same as picking a propagator in the previous subsection.
We then have the result analogous to Lemma \ref{lem:gaugeindep}:
\begin{lemma}
  The gauge equivalence class of the Maurer--Cartan element $z_{\Omega} +z_{\Omega}^{\partial}$ is independent of the choice of $\tilde \omega$.
  \qed
\end{lemma}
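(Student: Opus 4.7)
The strategy is to upgrade the proof of Lemma \ref{lem:gaugeindep} to the colored setting by showing that any two choices $\tilde\omega_0, \tilde\omega_1$ can be interpolated by a single propagator on the pulled-back bundle $Y \times I \to B \times I$, where $I = [0,1]$. Since the Feynman-rule construction producing $z_\Omega + z_\Omega^\partial$ is functorial under pullback of bundles, applying it to $Y \times I \to B \times I$ with this interpolating propagator yields a Maurer--Cartan element in
\begin{equation*}
\Omega^*(B \times I) \hotimes (\GC_n^{\vee} \ltimes \SGC_n^{\vee}) \cong \bigl(\Omega^*(B) \hotimes (\GC_n^{\vee} \ltimes \SGC_n^{\vee})\bigr)[t, dt].
\end{equation*}
An MC element of this form is, by the standard Sullivan formalism, a path in the space of MC elements of $\Omega^*(B) \hotimes (\GC_n^{\vee} \ltimes \SGC_n^{\vee})$ connecting its $t = 0$ and $t = 1$ specializations; for the pronilpotent Lie algebras in play, Goldman--Millson identifies such paths with gauge equivalences. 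The two specializations are precisely $z_\Omega + z_\Omega^\partial$ computed using $\tilde\omega_0$ and $\tilde\omega_1$, so this will give the desired gauge equivalence.

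The technical heart is the construction of the interpolating propagator. First I would reduce, via Lemma \ref{lem:gaugeindep} applied to the iris bundle $S_Y \to B$, to the case where the two iris values $\omega_i \coloneqq \tilde\omega_i|_{\mathrm{iris}}$ coincide: the explicit gauge interpolating $\omega_0$ and $\omega_1$ in that lemma can itself be lifted to a family on $\SFM_n^Y(2,0) \times I$ by the same existence argument as in the preceding proposition, since the Euler class of $(Y \oplus \R) \times I \to B \times I$ continues to vanish (the $\R$-summand still supplies a nowhere-zero global section). Once the iris values agree, I would build $\hat\omega$ by the ``Chern--Simons trick''
\begin{equation*}
\hat\omega \coloneqq \tilde\omega_0 + t(\tilde\omega_1 - \tilde\omega_0) + dt \wedge \eta,
\end{equation*}
which is closed iff $d\eta = \tilde\omega_0 - \tilde\omega_1$, and preserves the volume-form condition at the iris and the vanishing on the lower lid iff $\eta|_{\mathrm{iris}} = 0 = \eta|_{\mathrm{lower\ lid}}$.

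The main obstacle is the existence of such an $\eta$, which is a relative de Rham problem on the manifold-with-corners $\SFM_n^Y(2,0)$ with prescribed vanishing on two boundary strata. The cleanest way to sidestep this is to observe that the existence argument already proved for $\tilde\omega$ applies verbatim to the bundle $Y \times I \to B \times I$ with prescribed boundary data at $t = 0, 1$ (in addition to the iris/lower-lid constraints at each $t$): the obstruction is still the Euler class of $(Y \oplus \R) \times I$, which vanishes for the same reason as before. Equivalently, one can invoke the universal argument of Remark \ref{rem:univprop}, replacing $B$ by $B \times I$ throughout, so that $\hat\omega$ is produced from a Chern--Weil-type formula over $B\SO(n-1) \times I$ and thus exists without needing to solve the $\eta$-equation by hand.
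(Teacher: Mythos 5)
Your proposal is correct and follows the route the paper itself (tacitly) takes: the text gives no proof beyond observing that $\SFM_n(2,0)/(\text{lower lid})\simeq S^{n-1}$ reduces the choice of $\tilde\omega$ to the situation of Lemma~\ref{lem:gaugeindep}, whose proof is exactly the interpolation over the bundle $Y\times I\to B\times I$ combined with functoriality that you spell out. One small correction that actually simplifies your argument: the obstruction to the interpolating propagator is not the Euler class but the class of $\tilde\omega_1-\tilde\omega_0$ in $H^{n-1}$ of the eye bundle relative to the lower lid, and since that group is detected by the fiberwise iris integral alone (the fiber pair has the cohomology of $(S^{n-1},\mathrm{pt})$), the difference of any two propagators is automatically exact relative to the lower lid --- so neither the preliminary matching of iris values nor the basic-form ambiguity $\pi^*\alpha$ of the closed case needs to be handled separately.
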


The construction is functorial in $Y$.
We have a result analogous to Proposition \ref{prop:fromzu}:
\begin{proposition}
  Let $\tau \colon M\to B\SO(n)$ be the classifying map of the frame bundle.
  We have that the Maurer--Cartan element $z_{\Omega} + z_{\Omega}^{\partial}$ above is gauge equivalent to $\tau^*(z_{u}+z^{\partial}_u)$, where\index{zu@$z_{u}$}\index{zud@$z_{u}^{\partial}$}
  \[ z_u + z^{\partial}_u \in H^{*}(B\SO(n-1)) \hotimes \GC_n^{\vee} \ltimes H^{*}(B\SO(n)) \hotimes \SGC_n^{\vee} \]
  is the universal Maurer--Cartan element, constructed as above for $(Y \to B) = (E\SO(n) \to B\SO(n))$, governing the $\SO(n)$-action on the little $n$-disks operad.
  \qed
\end{proposition}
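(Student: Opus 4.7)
The plan is to mirror the proof of Proposition \ref{prop:fromzu}, adapted to the boundary/halfspace setting. The two pillars of that argument were: (a) functoriality of the construction in the input bundle $Y \to B$, and (b) independence of the gauge class from the choice of propagator. Both extend essentially verbatim to the present setting, since the construction of $\tilde\omega$, of its iris restriction $\omega$, and of the Feynman-rule fiber integrals defining $z_{\Omega}$ and $z_{\Omega}^{\partial}$ all depend only on pulling back structures under the classifying map of $Y$. In particular, for any map $f\colon B' \to B$, the pullback bundle $f^*Y$ carries the pulled-back propagator $f^*\tilde\omega$, and the associated Maurer--Cartan element is $f^*(z_{\Omega}+z_{\Omega}^{\partial})$. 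This is a direct unwinding of the Stokes-based definition of $z_{\Omega}^{\partial}$ together with the analogous statement for $z_{\Omega}$ that was already used in Proposition~\ref{prop:fromzu}.

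Given functoriality, I would next factor the given bundle as $Y \cong \tau^*(E\SO(n-1))$ and reduce the statement to computing the Maurer--Cartan element associated to the universal bundle $E\SO(n-1) \to B\SO(n-1)$. To make sense of this despite the infinite-dimensional base, I would invoke the Cartan-model mechanism from Remark~\ref{rem:univprop}: choose an $\mathfrak{so}(n-1)$-connection $A \in \Omega^1(\Fr Y)$ with curvature $F = dA + \tfrac12[A,A]$, and construct $\tilde\omega$ by applying the Chern--Weil-type map $\alpha \mapsto \exp(\pm\iota A)\, \alpha(F)$ to an $\SO(n-1)$-equivariant propagator $\tilde\omega_{\mathrm{Cartan}}$ on $\Ha_n$ (living in the Cartan-model complex $(S(\mathfrak{g}^*[-2]) \otimes \Omega(\Ha_n))^{\SO(n-1)}$, with prescribed boundary behavior on the iris and lower lid). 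Because fiber integration along $\SFM_n^Y(r,s) \to B$ is $\SO(n-1)$-equivariant and commutes with the Chern--Weil map, each graph coefficient obtained from $\tilde\omega$ is exactly the image under $\tau^*$ of the corresponding equivariant coefficient built from $\tilde\omega_{\mathrm{Cartan}}$. Thus by construction the resulting MC element equals $\tau^*(z_u + z_u^{\partial})$ on the nose.

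Finally, to remove the dependence on the specific choice of connection-induced propagator, I would appeal to the lemma immediately preceding the proposition: any two choices of $\tilde\omega$ yield gauge-equivalent MC elements. Combining this with the previous two steps, the originally given $z_\Omega + z_\Omega^\partial$ is gauge equivalent to the Chern--Weil propagator's MC element, which is $\tau^*(z_u + z_u^{\partial})$.

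The main technical obstacle is bookkeeping for the semi-direct product structure $\GC_n^{\vee} \ltimes \SGC_n^{\vee}$: one must check that the Chern--Weil descent respects the splitting into ``aerial'' integrals over $\FM_n^Y(r)$ (producing $z_\Omega$) and ``terrestrial'' integrals over $\SFM_n^Y(r,s)$ with $r \geq 1$ (producing $z_{\Omega}^{\partial}$), and that the Lie-algebra action of $\GC_n^{\vee}$ on $\SGC_n^{\vee}$ matches the action induced by the stratification of the compactified configuration spaces on the boundary (specifically, the iris stratum $\SFM_n(2,0)/(\text{lower lid}) \simeq S^{n-1}$ that links the two pieces). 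Once this is checked for the universal model (where it was effectively already handled by the construction of the relative cooperad $\SGraphs_n$ in Section~\ref{sec.extens-swiss-cheese} and the propagator in Proposition~\ref{prop.propagator-sfmm}), functoriality transports the identification to the general case.
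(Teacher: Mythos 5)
Your proposal is correct and follows essentially the same route the paper takes: the paper deduces the proposition immediately from functoriality of the construction in the bundle $Y$ together with the preceding lemma on gauge-independence of the propagator choice, with the Cartan-model/Chern--Weil mechanism of Remark~\ref{rem:univprop} invoked to make sense of the universal case over the infinite-dimensional base. Your additional care about the $\SO(n-1)$-reduction and the semi-direct product bookkeeping is consistent with the remark following the proposition, where the universal boundary propagator is obtained from the Khoroshkin--Willwacher one by mirror charges restricted to $\mathfrak{so}_{n-1}$.
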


\begin{remark}
  The universal propagator $\tilde \omega_u$ in the boundary case can be constructed by the method of ``mirror charges'' from the ``no boundary'' universal propagator $\omega_u$ whose explicit formula is given in~\cite{KhoroshkinWillwacher2017}, (restricted to $\SO(n-1)\subset\SO(n)$).
  Concretely, (with slightly imprecise notation)
  \[
    \tilde \omega_u(x,y) = (\omega_u(x,y)-\omega_u(Ix,y))|_{\alg{so}_{n-1}},
  \]
  where $X$ is the reflection of $\R^n$ at the boundary.
\end{remark}

We would now like to proceed as in Section~\ref{sec:localMC1} to evaluate $z_u+z^{\partial}_u$. Unfortunately, a result analogous to Theorem \ref{thm:KW} is currently not available (although it could be done if necessary).
Fortunately, however, the short independent proof of Corollary \ref{cor:locvanishing1} still can be transcribed to the ``boundary''-situation.

\begin{proposition}
  \label{prop:locvanishing2}
  In the case that $\dim \partial B = n-1$ and $n\geq 4$, we have that
  \[
    z_{\Omega} + z_{\Omega}^{\partial} \sim  0 + z^{\partial}_{0} + z^{\partial}_{1}
  \]
  where:\index{z0d@$z_{0}^{\partial} + z_{1}^{\partial}$}
  \begin{align*}
    z_{0}^{\partial}
     & \coloneqq
    \sum_{n \ge 0} \frac{1}{n!}
    \begin{tikzpicture}[baseline=.5cm]
      \draw (-1,0)--(1,0);
      \node[int] (v) at (0,1) {};
      \node[int] (v1) at (-.8,0) {};
      \node[int] (v2) at (-.4,0) {};
      \node at (0,.5) {$\scriptstyle \cdots$};
      \node[int] (v3) at (.4,0) {};
      \node[int] (v4) at (.8,0) {};
      \draw[-latex] (v) edge (v1) edge (v2) edge (v3) edge (v4);
      \draw [
        thick,
        decoration={
            brace,
            mirror,
            raise=0.1cm
          },
        decorate
      ] (v1.south west) -- (v4.south east)
      node [pos=0.5,anchor=north,yshift=-0.2cm] {$\scriptstyle n\times$};
    \end{tikzpicture}
    ,
     & z_{1}^{\partial}
     & \coloneqq
    E \cdot
    \biggl(
    \sum_{n\geq 0}
    \begin{tikzpicture}[baseline=.5cm]
      \draw (-1,0) -- (1,0);
      \node[int] (v) at (0,1) {};
      \node[int] (v1) at (-.8,0) {};
      \node[int] (v2) at (0,0) {};
      \node[int] (v3) at (.4,0) {};
      \node[int] (v4) at (.8,0) {};
      \draw[-latex] (v) edge[bend left] (v1) edge[bend right] (v1) edge (v2) edge (v3) edge (v4);
      \draw [
        thick,
        decoration={
            brace,
            mirror,
            raise=0.1cm
          },
        decorate
      ] (v2.south west) -- (v4.south east)
      node [pos=0.5,anchor=north,yshift=-0.2cm] {$\scriptstyle n\times$};
    \end{tikzpicture}
    \pm
    \sum_{n\geq 0}
    \begin{tikzpicture}[baseline=.5cm]
      \draw (-1,0) -- (1,0);
      \node[int] (v) at (0,1) {};
      \node[int] (v1) at (-.4,0) {};
      \node[int] (v2) at (0,0) {};
      \node[int] (v3) at (.4,0) {};
      \draw[-latex] (v) edge[loop] (v) edge (v1) edge (v2) edge (v3);
      \draw [
        thick,
        decoration={
            brace,
            mirror,
            raise=0.1cm
          },
        decorate
      ] (v1.south west) -- (v3.south east)
      node [pos=0.5,anchor=north,yshift=-0.2cm] {$\scriptstyle n\times$};
    \end{tikzpicture}
    \biggr),
  \end{align*}
  and $E\in \Omega^{n-1}(B)$ is (a representative of) the Euler class of $Y$.
\end{proposition}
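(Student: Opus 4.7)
The plan is to adapt the elementary argument used in the second proof of Corollary \ref{cor:locvanishing1} to the boundary setting, treating the $\GC_n^\vee$ and $\SGC_n^\vee$ components of the universal Maurer--Cartan element separately. First, I would invoke the universality statement immediately preceding the proposition: it suffices to evaluate $z_u + z_u^\partial$ for the universal bundle $Y = E\SO(n-1) \to B\SO(n-1)$ and transport the answer back via the classifying map $\tau$. Recall that $H^*(B\SO(n-1))$ is a polynomial algebra generated by Pontryagin classes (in degrees divisible by $4$) and, when $n-1$ is even, the Euler class $E$ of $Y$ in degree $n-1$.

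Second, for the $\GC_n^\vee$-component $z_\Omega$: here the form-degree available on $B$ is bounded by $\dim B = n-1$, which is strictly less than in the setting of Corollary \ref{cor:locvanishing1}. Applying the degree bound of Lemma \ref{lem:GCdegbounds} restricts contributions to loop number at most $1$. The reflection-invariance argument used in Corollary \ref{cor:locvanishing1}, applied to the restriction of $\tilde \omega$ at the iris, forces any surviving $1$-loop class to be paired with an Euler class; but the relevant bundle whose sphere bundle supports this restricted propagator is $Y \oplus \mathbb{R}$, which has a nowhere-vanishing section and therefore vanishing Euler class. Hence $z_\Omega \sim 0$.

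Third, for $z_\Omega^\partial \in \Omega^*(B) \hotimes \SGC_n^\vee$: here one invokes the Swiss-Cheese analog of Lemma \ref{lem:GCdegbounds}, which we expect to be developed in Appendix \ref{sec:appxA}, and which again restricts contributions to loop number at most $1$ in dimension $\dim B = n-1$. The loop-zero (tree) contributions reduce to a single combinatorial family, namely graphs with one aerial internal vertex connected by edges to a varying number $n$ of terrestrial internal vertices; the symmetry factor $1/n!$ comes from the permutations of terrestrial leaves, and the resulting series is precisely $z_0^\partial$. The loop-one contributions, by the same $\Z_2 = O(n-1)/\SO(n-1)$ parity argument (the graphs carry a sign $(-1)^{\text{loop}}$, while the universal base cohomology is trivial on Pontryagin classes and a sign on the Euler class), must be paired with $E$; enumerating connected $1$-loop graphs built on a single aerial internal vertex yields exactly the two families in $z_1^\partial$, namely those in which an aerial-to-terrestrial edge is doubled and those in which a tadpole is attached to the aerial vertex.

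The main obstacle I anticipate is establishing the Swiss-Cheese version of the degree bound of Lemma \ref{lem:GCdegbounds} with sufficient strength to eliminate all higher-loop contributions in every dimension $n \geq 4$; this is where the restriction $n \geq 4$ enters, and it will likely require a spectral-sequence or direct combinatorial argument in the appendix that carefully separates the aerial and terrestrial portions of a graph. A secondary difficulty is nailing down the precise numerical coefficients $1/n!$ and the relative sign between the two families in $z_1^\partial$; this would be handled by computing a minimal-arity representative case and propagating via functoriality under the forgetful maps between the fibered spaces $\SFM_n^Y(r,0)$.
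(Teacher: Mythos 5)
Your proposal is correct and follows essentially the same route as the paper's proof: reduce to the universal Maurer--Cartan element over $B\SO(n-1)$, use the $O(n-1)/\SO(n-1)$ reflection argument to force odd-loop-order graphs to carry a factor of the Euler class, kill higher loop orders by degree bounds (this is exactly where $n\geq 4$, i.e.\ $n-1\geq 3$, enters), and fix the weights of $z_0^{\partial}$ and $z_1^{\partial}$ by an explicit integral computation. The only notable difference is that the ``Swiss-Cheese degree bound'' you anticipate having to prove is obtained in the paper indirectly, via Proposition~\ref{prop.kgc-z-0}, which computes $H^{*}(\SGC_n^{\vee,z^{\partial}_0})\cong H(\GC_n)[-1]\oplus H(\GC_{n-1})$ by a mapping-cone/long-exact-sequence argument and then applies the already known bounds \eqref{equ:GCdegbds} for $\GC_n$ and $\GC_{n-1}$.
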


Before we give the proof, let us check:
\begin{lemma}
  The element above is indeed a Maurer--Cartan element.
\end{lemma}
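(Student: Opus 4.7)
The plan is to verify the Maurer--Cartan equation $\delta z + \tfrac{1}{2}[z,z] = 0$ directly for $z = z_0^\partial + z_1^\partial$, viewed as an element of $\Omega^*(B) \hotimes \SGC_n^\vee$ (the $\GC_n^\vee$-component of the proposed MC element vanishes, so the semi-direct product equation reduces to the $\SGC_n^\vee$-part). Choose $E$ to be a closed representative of the Euler class so the de Rham differential on the coefficients gives no contribution. Since $E \in \Omega^{n-1}(B)$ and $\dim B = n-1$ force $E\wedge E = 0$, the bracket $[z_1^\partial, z_1^\partial]$ vanishes automatically, and the MC equation splits by form-degree into
\begin{equation*}
  \delta z_0^\partial + \tfrac{1}{2}[z_0^\partial, z_0^\partial] = 0
  \qquad \text{and} \qquad
  \delta z_1^\partial + [z_0^\partial, z_1^\partial] = 0.
\end{equation*}

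For the first identity, I would compute the pre-Lie insertion of two corollas $T_m, T_k$ (where $T_n$ denotes the graph with one aerial vertex connected by single edges to $n$ terrestrial leaves). The rule that every edge must originate from an aerial vertex forces all nontrivial insertions to happen at the unique aerial vertex, producing $T_{m+k}$ with a specific combinatorial weight. Summing over all such insertions and antisymmetrizing, the coefficient of $T_N$ in $\tfrac{1}{2}[z_0^\partial, z_0^\partial]$ collects the contributions $\frac{1}{m!k!}$ over $m+k = N$. This is matched termwise by the contribution of $\delta z_0^\partial$, which either splits an aerial vertex or opens an edge to produce a $T_N$ from a $T_{N-1}$; the matching is the generating-function identity for the ``group-like'' corolla series $\sum_n \tfrac{1}{n!} T_n$.

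For the second identity, the bracket $[z_0^\partial, z_1^\partial]$ combines tree corollas with the loop-carrying corollas of $z_1^\partial$ to produce new loop-carrying corollas (with either a tadpole or a double edge), while $\delta z_1^\partial$ acts on the tadpole or double edge by splitting. The sign ``$\pm$'' between the tadpole and double-edge sums in $z_1^\partial$ is chosen precisely so that the two types of loop-splittings cancel against the corresponding bracket contributions---a cancellation directly analogous to the Kontsevich reflection/vanishing argument recalled in the proof of Lemma~\ref{lem:vanI} (and in the elementary proof of Corollary~\ref{cor:locvanishing1}).

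The main obstacle will be careful sign and combinatorial bookkeeping: tracking orientations of graphs, the degree shifts built into $\SGC_n^\vee$ via~\eqref{eq.fsgcn}, and distinguishing the tadpole from the double-edge contributions in $z_1^\partial$. A more conceptual alternative would be to verify the equation universally over $B\SO(n-1)$ via Chern--Weil representatives and then pull back, using functoriality in the bundle $Y$; but since the corollas are so explicit, the direct combinatorial verification should be tractable and gives the cleanest self-contained proof of this preliminary lemma, which is needed so that the gauge equivalence asserted in Proposition~\ref{prop:locvanishing2} is well-posed.
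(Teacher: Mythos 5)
Your proposal is correct and follows essentially the same route as the paper: the paper's proof likewise observes that $E^{2}=0$ by degree reasons, reduces the Maurer--Cartan equation to the two identities $\delta z^{\partial}_0 + \frac 1 2 [z^{\partial}_0,z^{\partial}_0]=0$ and $\delta z^{\partial}_1 + [z^{\partial}_0,z^{\partial}_1]=0$, and then dismisses both as ``easy explicit computations'' of exactly the combinatorial kind you outline. The only cosmetic difference is that you invoke a reflection-type cancellation for the second identity where a direct term-by-term check of the tadpole and double-edge splittings suffices.
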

\begin{proof}
  We have $E^2=0$ by degree reasons.
  Therefore we just have to check that $\delta z^{\partial}_0 + \frac 1 2 [z^{\partial}_0,z^{\partial}_0]=0$ and that $z^{\partial}_1$ is $z^{\partial}_0$-closed, i.e., $\delta z^{\partial}_1 + [z^{\partial}_0,z^{\partial}_1]=0$.
  Both facts are easy explicit computations.
\end{proof}

\begin{proof}[Proof of Proposition~\ref{prop:locvanishing2}]
  We will first consider the universal MC element $z_u+Z_u$ from~\cite{KhoroshkinWillwacher2017} (see Proposition~\ref{prop:fromzu}; the piece $z_u$ may in fact be evaluated by using Theorem \ref{thm:KW}, but we shall not need to use it here.)
  As has been shown in Proposition~\ref{prop.kgc-z-0}, we have that
  \begin{equation}
    H^{*}(\SGC_n^{\vee,z^{\partial}_0}) = H(\GC_n)[-1]\oplus H(\GC_{n-1}).
  \end{equation}
  By the same reflection argument as in the elementary proof of Corollary \ref{cor:locvanishing1}, we see that any odd-loop-order graph must have a coefficient divisible by the Euler class, which is of degree $n-1$.
  In particular, pulling back via $H(BSO(n-1))\to \Omega(B)$, by degree counting we see that the 1-loop graphs do not contribute to the MC element except possibly for the 1-loop graph  in $\GC_{n-1}$ with one vertex, which corresponds to the term $z^{\partial}_1$ above.

  By the degree bounds \eqref{equ:GCdegbds}, however, we find that higher loop classes can also not contribute non-trivially, using that $n-1\geq 3$.
  This shows that the loop order zero piece $0+z^{\partial}_0$ is deformable in at most one direction, namely $z^{\partial}_1$.
  One then shows by an explicit integral computation (which we leave to the reader) that the pieces $z^{\partial}_0$ and $z^{\partial}_1$ indeed occur with the weights shown.
\end{proof}

We can now mimic the constructions of Sections~\ref{sec:model-fiberwiseLD} and~\ref{sec:localMC1} and build models of the fiberwise Swiss Cheese operad $(\FM_m^M,\SFM_m^M)$.
To this end we consider models $A$ and $A_\p$ for $M$ and $\partial M$, fitting into a commutative diagram:
\beq{equ:AAddiag}
\begin{tikzcd}
  A \ar{d} \ar{r} & \OmPA(M) \ar{d} \\
  A_\p \ar{r} & \OmPA(\p M).
\end{tikzcd}
\eeq
We can then consider the collection of CDGAs $A_\p \otimes \SGra_n$.
It comes equipped with maps of CDGAs
\[
  (A_\p \otimes \SGra_n, d_{A_\p} + d) \to \OmPA(\SFM_n^M).
\]
These maps are compatible with the cooperad structures and the right coaction of $A\otimes \Gra_n$ (respectively $\OmPA(\FM_n^M)$).

Again we may twist the map, extending the twist discussed in Section~\ref{sec:model-fiberwiseLD}.
We assume that were are given MC element $(z,z^{\partial})\in A\otimes \GC_n \ltimes A_\p \otimes \KGC_n$ mapping to $(z_\Omega,z^{\partial}_\Omega)$ above under the given maps $A\to \OmPA(M)$, $A_\p\to \OmPA(\p M)$.
Then we obtain a map of collection of CDGAs\index{SGraphsnA@$\SGraphs_{n}^{A_{\partial}}$}
\begin{equation}
  \SGraphs_n^{A_\p} \coloneqq (A_\p \otimes \SGraphs_n)^{z,z^{\partial}} \to \OmPA(\SFM_n^M)
\end{equation}
compatible with the cooperadic structure and with the right action of $\SGraphs_n^{A}$.
If the horizontal arrows in \eqref{equ:AAddiag} are quasi-isomorphisms, then we obtain accordingly a quasi-isomorphism of colored Hopf (almost, see Remark~\ref{rmk:almost}) cooperads
\begin{equation}
  (\Graphs_n^A,\SGraphs_n^{A_\p}) \to (\OmPA(\FM_M), \OmPA(\SFM_n^M)),
\end{equation}
which provides our desired combinatorial model for the fiberwise Swiss Cheese operad.
Furthermore the construction is functorial in the pair $A\to A_\p$, and changing the MC element $(z,z^{\partial})$ to a gauge equivalent one produces a quasi-isomorphic colored Hopf cooperad, see Section~\ref{sec.functoriality-models}.
We may hence assume that the MC element has the simple form discussed above.

\subsection{Colored labeled graphs}
\label{sec.colored-labeled-graphs}

We now introduce a colored version of the graph comodules from Section~\ref{sec.model-conf-space-closed}.

\begin{definition}
  Given diagonal data $(A, A_\p, \rho, \Delta_A, \sigma_{A})$ (see Section~\ref{sec.a-data}), we define the CDGA of $(A,A_\p)$-labeled graphs on the finite sets $U$, $V$ as:\index{SGraAAd@$\SGra_{A,A_{\partial}}$}
  \begin{equation}
    \SGra_{A, A_\p}(U,V) \coloneqq \bigl( (A_\p)^{\otimes U} \otimes A^{\otimes V} \otimes \SGra_{n}(U,V), d \bigr)
  \end{equation}
  with differential defined by
  \begin{align}
    de_{vv'}
     & =
    \begin{cases}
      \iota_{vv'}(\Delta_{A}) & v \neq v' \in V, \\
      0                       & v = v' \in V.
    \end{cases}
    \\
    de_{vu}
     & = \iota_{vu}(\Delta_{A, A_{\p}}), \; u \in U, v \in V.
  \end{align}
\end{definition}

\begin{remark}
  The fact that the differential of a tadpole edge is zero should be understood as being due to the action of the MC element $ET$.
  More precisely, the above definition is reasonable if we assume below that the distinguished element $E\in A$ of Section~\ref{sec:model-fiberwiseLD} is equal to the image (say $E'$) of $\Delta_A\in A\otimes A$ under the multiplication map $A\otimes A\to A$.
  More generally, we would define the differential of the tadpole edge to be $E-E'$.
  In either case, we can show the following result.
\end{remark}

\begin{proposition}
  The bisymmetric collection $\SGra_{A, A_\p}$ is a Hopf right $(\Gra_n^A,\SGra_{n}^{A_\p})$-comodule.
\end{proposition}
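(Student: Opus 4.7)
The plan is to define the coaction maps by extending the cooperad cocomposition of $\SGra_n$ over $\Gra_n$ (recalled in Section~\ref{sec.extens-swiss-cheese}) to the decorated setting, and then verify in turn: (a) the Hopf (multiplicative) property, (b) compatibility with differentials, and (c) the axioms of a relative comodule.

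First I would specify the coaction on generators. For $T \subset V$, the map $\circ^\vee_T : \SGra_{A,A_\p}(U,V) \to \SGra_{A,A_\p}(U,V/T) \otimes_A \Gra_n^A(T)$ applies the $\SGra_n$-cocomposition to the underlying graph, leaves the decorations at $U$ and at $V \smallsetminus T$ in place on the $\SGra_{A,A_\p}(U,V/T)$ factor, gives the new vertex $[*] \in V/T$ the label $1 \in A$, and multiplies all labels at vertices of $T$ into the global $A$-factor of $\Gra_n^A(T)$. For $W \subset U$, the map $\circ^\vee_{W,T} : \SGra_{A,A_\p}(U,V \sqcup T) \to \SGra_{A,A_\p}(U/W,V) \otimes_{A_\p} \SGra_n^{A_\p}(W,T)$ is defined analogously, except that $A$-labels at vertices of $T$ (aerial) are pushed forward through $\rho : A \to A_\p$ before being multiplied into the global $A_\p$-factor. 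By construction these maps are morphisms of CDGAs, since multiplication in $\SGra_{A,A_\p}$ glues graphs at matching external vertices and multiplies labels there, while the graph cocomposition distributes over this gluing and the label operations are given by pointwise CDGA multiplications.

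Next I would check compatibility with the differentials. On an edge $e_{vv'}$ with $v,v' \in V$, the differential $de_{vv'} = \iota_{vv'}(\Delta_A)$ distributes the two tensor factors of $\Delta_A$ to the endpoints. Three cases arise under $\circ^\vee_T$ according to whether each endpoint lies in $T$: if both are collapsed, both factors of $\Delta_A$ multiply into the global $A$ of $\Gra_n^A(T)$; if only one is collapsed, one factor goes into the global $A$ and the other stays at the surviving vertex (using $\otimes_A$ if the surviving endpoint is $[*]$); if neither is collapsed, both stay. In each case the result matches the differential applied after cocomposition, precisely because $\Delta_A$ is a cocycle distributed symmetrically. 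The analogous check for terrestrial--aerial edges $e_{vu}$ with $de_{vu} = \iota_{vu}(\Delta_{A,A_\p})$ works for $\circ^\vee_{W,T}$ thanks to the identity $\Delta_{A,A_\p} = (\id \otimes \rho)(\Delta_A)$, which is exactly what is needed so that sending the $A$-label of a collapsed aerial vertex through $\rho$ before its entry into the global $A_\p$ agrees with the differential of the surviving edge $e_{[v]u}$ in $\SGra_{A,A_\p}(U/W,V)$.

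Finally, the coassociativity and equivariance axioms follow from the corresponding axioms for the cooperad $(\Gra_n^A, \SGra_n^{A_\p})$ at the level of graphs, which are inherited from $\SGra_n$, together with trivial checks on decorations that amount to associativity and commutativity of the CDGA multiplications of $A$ and $A_\p$ and to the identity $\rho \circ \mu_A = \mu_{A_\p} \circ (\rho \otimes \rho)$. The main obstacle is the bookkeeping required in the differential compatibility check for edges straddling the "collapsed" and "surviving" parts: there, the $\otimes_A$ (or $\otimes_{A_\p}$) relation must be used carefully to move label factors across, and the role of $\rho$ in the definition of $\Delta_{A,A_\p}$ is essential. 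Once this is settled, everything else is formal and the structure goes through just as in the untwisted graph cooperad case.
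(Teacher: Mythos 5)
Your overall strategy is the same as the paper's: the underlying graded collection is a Hopf right $(\Gra_n^A,\SGra_n^{A_\p})$-comodule for formal reasons, so everything reduces to checking that the coaction maps commute with the differential on generators, and for the edges this is what the definitions $de_{vv'}=\iota_{vv'}(\Delta_A)$ and $de_{vu}=\iota_{vu}(\Delta_{A,A_\p})$ were designed for. Your treatment of $\circ^\vee_T$ and of the terrestrial--aerial edges under $\circ^\vee_{W,T}$ is correct.

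However, your case analysis for $\circ^\vee_{W,T}$ omits the one case that genuinely requires new input: an aerial--aerial edge $e_{vv'}$ with exactly one endpoint in $T$ (collapsed towards the boundary) and the other in $V$. With the bare $\SGra_n$ cocomposition, which is what you propose to extend, such an edge is sent to $0$; but applying the coaction to $de_{vv'}=\iota_{vv'}(\Delta_A)$ pushes the label of the collapsed endpoint through $\rho$, and only one of the two orientations is killed by the axiom $(\rho\otimes\id)(\Delta_A)=0$. The other orientation produces $\iota\bigl((\id\otimes\rho)(\Delta_A)\bigr)=\iota(\Delta_{A,A_\p})=\iota(d\sigma_A)$, which is nonzero in general, so the coaction as you have defined it is not a chain map. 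The repair is that the section $\sigma_A$ from the diagonal data must enter the definition of $\circ^\vee_{W,T}$: an edge whose endpoint carrying the $A_\p$-leg of $\Delta_{A,A_\p}$ is collapsed to the boundary must be replaced by the decoration $\sigma_A$ (whose differential is exactly $\Delta_{A,A_\p}$), while the opposite orientation is sent to $0$. This mirrors the boundary behaviour of the propagator ($\varphi|_{\mathrm{II}}=(g\otimes g_\p)(\sigma_A)$ and $\varphi|_{\mathrm{III}}=0$ in Proposition~\ref{prop.propagator-sfmm}) and is made explicit for the analogous coaction on $\mGraphs_A$ in Section~\ref{sec:agraphs-mgraphs}. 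The fact that your argument never uses $\sigma_A$ at all, even though it is part of the diagonal data, is the tell-tale sign of this gap.
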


\begin{proof}
  The proof is similar to the proof of \cite[Proposition~3.8]{Idrissi2018b}.
  General considerations imply that if we forget the differential $d$, then the collection $\{ (A_{\partial})^{\otimes U} \otimes A^{\otimes V} \otimes \SGra_{n}(U,V)\}_{U,V}$ is a Hopf right $(\Gra_{n}^{A}, \SGra_{n}^{A_{\partial}})$-comodule.
  Thus it is sufficient to check that the comodule structure maps are compatible with the differential on all the generators.
  This is clear on the generators coming from $A$ and $A_{\partial}$, and we have defined the differential on the edges precisely so that it is compatible with the structure maps.
\end{proof}

We note that the construction of $\Gra_{A,\p A}$ is evidently functorial in the data $(A,A_\p,\rho,\Delta_A)$.

This comodule has a graphical description similar to $\SGra_{n}$.
The difference is that aerial points (corresponding to the interior of $M$) are labeled by $A$, while terrestrial points (corresponding to the boundary) are labeled by $A_{\partial}$.

Given a graph $\Gamma$, the differential $d \Gamma$ is a sum over the set of edges of $\Gamma$.
For each summand, one removes the edge from the graph and multiplies the endpoints of the edge by either $\Delta_{A}$ (if both endpoints are aerial) or $\Delta_{A,A_{\partial}}$ (if one endpoint is aerial and the other terrestrial).
We call this ``splitting'' the edge, see Figure~\ref{fig.d-split}.
Recall that we write $\Delta_{A} = \sum_{(\Delta_{A})} \Delta_{A}' \otimes \Delta_{A}'' \in A^{\otimes 2}$.

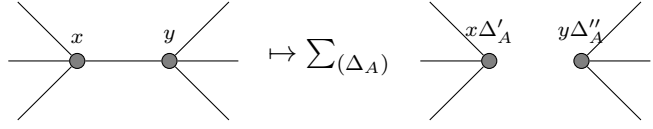
\begin{figure}[htbp]
  \centering
  \begin{tikzpicture}[baseline]
    \node[unkv, label = {\scriptsize $x$}] (1) {};
    \node[unkv, label = {\scriptsize $y$}] (2) [right = 1cm of 1] {};
    \node (x1) [left = 0.8cm of 1] {};
    \node (y1) [above left = 1cm of 1] {};
    \node (z1) [below left = 1cm of 1] {};
    \node (x2) [right = 0.8cm of 2] {};
    \node (y2) [above right = 1cm of 2] {};
    \node (z2) [below right = 1cm of 2] {};
    \draw (1) -- (2);
    \draw (x1) -- (1); \draw (y1) -- (1); \draw(z1) -- (1);
    \draw (x2) -- (2); \draw (y2) -- (2); \draw(z2) -- (2);
  \end{tikzpicture}
  $\mapsto  \sum_{(\Delta_A)}$
  \begin{tikzpicture}[baseline]
    \node[unkv, label = {\scriptsize $x \Delta_{A}'$}] (1) {};
    \node[unkv, label = {\scriptsize $y \Delta_{A}''$}] (2) [right = 1cm of 1] {};
    \node (x1) [left = 0.8cm of 1] {};
    \node (y1) [above left = 1cm of 1] {};
    \node (z1) [below left = 1cm of 1] {};
    \node (x2) [right = 0.8cm of 2] {};
    \node (y2) [above right = 1cm of 2] {};
    \node (z2) [below right = 1cm of 2] {};
    \draw (x1) -- (1); \draw (y1) -- (1); \draw(z1) -- (1);
    \draw (x2) -- (2); \draw (y2) -- (2); \draw(z2) -- (2);
  \end{tikzpicture}
  \caption{The splitting differential of $\SGra_{A,A_\p}$ (a gray vertex can be of any kind).}
  \label{fig.d-split}
\end{figure}

The product glues graphs along their vertices (multiplying the labels), and the comodule structure collapses subgraphs, multiplying the
labels and applying $\rho$ to them if necessary.

\begin{definition}\label{def.m-realized}
  Let $M$ be a compact oriented semi-algebraic manifold with boundary $\p M$.
  We define $M$-realized diagonal data\index{realized diagonal data} to be diagonal data $(A \xrightarrow{\rho} A_{\partial}, \Delta_{A}, \sigma_{A})$ mapping into $M$ through $(g,g_{\partial})$ (see the end of Section~\ref{sec.a-data}), together with a propagator $\varphi \in \Omega_{\triv}^{*}(\SFM_{M}(0,2))$ as built in Proposition~\ref{prop.propagator-sfmm}.
  In particular, $d\varphi = (g \otimes g)(\Delta_{A})$.
\end{definition}

Note that in the definition above, we use the inclusion $j : \SFM_{M}(1,1) \to \SFM_{M}(0,2)$ defined using a collar $\partial M \times [0,1) \subset M$ around the boundary.

\begin{proposition}
  \label{prop.omega-prime-colored}
  Given $M$-realized diagonal data $(A \xrightarrow{\rho} A_\p,\Delta_A,\sigma_{A},g,g_\p,\varphi)$, there is a morphism of bisymmetric collections $\omega' : \SGra_{A, A_\p} \to \OmPA^{*}(\SFM_{M})$ characterized by
  \begin{align*}
    \omega'(\iota_{v}(x))
     & = p_{v}^{*}(g(x))
     & v \in V, \; x \in A          \\
    \omega'(\iota_{u}(x))
     & = p_{u}^{*}(g_\p(x))
     & u \in U, \; x \in A_\p       \\
    \omega'(e_{vv'})
     & = p_{vv'}^{*}(\varphi)
     & v, v' \in V                  \\
    \omega'(e_{vu})
     & = p_{vu}^{*}(j^{*}(\varphi))
     & u \in U, \; v \in V
  \end{align*}
  The morphism $\omega'$ is naturally compatible with the action of the fiberwise Swiss-Cheese operad, and we can extend $\omega'$ by the maps of Section~\ref{sec:locMC2} to a Hopf (relative) comodule map
  \[
    \omega' : (\SGra_{A, A_\p}, \Gra_n^A,  \SGra_{n}^{A_\p}) \to  (\OmPA^{*}(\SFM_{M}), \OmPA^{*}(\FM_{n}^M), \OmPA^{*}(\SFM_{n}^M))
  \]
\end{proposition}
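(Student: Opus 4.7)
The plan is to define $\omega'$ on the generators of the free graded-commutative algebra $\SGra_{A, A_\p}(U,V)$ and to extend it multiplicatively. Well-definedness requires only checking the graded-symmetry relations on edges, namely $\omega'(e_{vv'}) = (-1)^n\omega'(e_{v'v})$, which follows from the corresponding (anti-)symmetry of the propagator $\varphi$ (and hence of $j^{*}\varphi$) built in Proposition~\ref{prop.propagator-sfmm}. The maps $g$ and $g_\p$ take values in trivial forms, which ensures that all wedge products and fiber integrals used below are well defined.

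Compatibility with the differential must be verified on each generator. For vertex labels it is immediate since $g$ and $g_\p$ are CDGA maps and pullback commutes with $d$. For an aerial edge we use the third bullet of Proposition~\ref{prop.propagator-sfmm}:
\[
  d\omega'(e_{vv'}) = p_{vv'}^{*}(d\varphi) = p_{vv'}^{*}((g \otimes g)(\Delta_{A})) = \omega'(\iota_{vv'}(\Delta_{A})) = \omega'(de_{vv'}).
\]
For a terrestrial-to-aerial edge $e_{vu}$, one computes $d(j^{*}\varphi) = j^{*}d\varphi = j^{*}(g\otimes g)(\Delta_{A})$; because the inclusion $j$ sends the first point to $\p M\subset M$, the second factor of the wedge is pulled back under the boundary restriction, which on $A$ factors as $g_\p\circ\rho$. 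Thus $j^{*}(g\otimes g)(\Delta_{A}) = (g\otimes g_\p)((\id\otimes\rho)\Delta_{A}) = (g\otimes g_\p)(\Delta_{A,A_\p})$, matching $\omega'(de_{vu})$. Tadpoles are handled by the convention explained in the remark following the definition of $\SGra_{A,A_\p}$, matching the tadpole convention of Section~\ref{sec:model-fiberwiseLD}.

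The main step is compatibility with the right $(\Gra_n^A, \SGra_n^{A_\p})$-coaction. The cocomposition $\circ^{\vee}_{T}$ extracting an aerial subgraph on $T\subset V$ is geometrically realized by the inclusion of the boundary stratum of $\SFM_{M}(U,V)$ where the points indexed by $T$ collide in the interior, canonically identified with $\SFM_{M}(U, V/T)\times_{M} \FM_{n}^{M}(T)$. On this stratum, the restriction of $p_{vv'}^{*}\varphi$ with $v,v'\in T$ equals (on the fiber factor) the pullback of the local Kontsevich propagator $\omega$ of Section~\ref{sec:localMC1}; for $v\in T$, $v'\notin T$ it factors through the projection to the $M$-coordinate of the collision point. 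This is precisely the content of $\circ^{\vee}_{T}$ in $\SGra_{n}$, so $\omega'$ intertwines the two. The same argument, now with the boundary stratum where the $W$-labeled terrestrial points collide on $\p M$ --- identified with $\SFM_{M}(U/W, V)\times_{\p M} \SFM_{n}^{Y}(W, T)$ for $Y = TM|_{\p M}$ --- yields compatibility with $\circ^{\vee}_{W,T}$, using that the restriction of $\varphi$ to that stratum is the halfspace propagator $\tilde\omega$ of Section~\ref{sec:locMC2}.

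The hardest point I anticipate is reconciling the boundary formula $j^{*}\varphi = (g\otimes g_\p)(\sigma_{A})$ of Proposition~\ref{prop.propagator-sfmm} with both the edge differential and the coaction: $\sigma_{A}$ is not closed, and its appearance is exactly what makes the terrestrial-to-aerial differential consistent with the various stratum restrictions. This is the reason the pair $(\Delta_{A},\sigma_{A})$ was built into the definition of diagonal data in Section~\ref{sec.a-data}, and the reason the propagator was tailored to have the specified behavior on the boundary strata II and III. Once these checks are in place, the extension of $\omega'$ to the Hopf (relative) comodule map into $(\OmPA^{*}(\FM_{n}^{M}), \OmPA^{*}(\SFM_{n}^{M}))$ follows from the construction of Section~\ref{sec:locMC2}, with the MC element controlling the twist arising exactly from the fiberwise integrals of $\varphi$ by the normalization properties in the last bullet of Proposition~\ref{prop.propagator-sfmm}.
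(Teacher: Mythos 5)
Your proposal follows the same route as the paper: define $\omega'$ on generators, verify compatibility with the differential from the properties of $\varphi$ in Proposition~\ref{prop.propagator-sfmm} and from $g, g_{\partial}$ being CDGA maps, and verify compatibility with the coaction by restricting the propagator to the collision strata of $\SFM_{M}(U,V)$. Your treatment of the differential on the mixed edges $e_{vu}$ via $j^{*}d\varphi = (g\otimes g_{\partial})(\Delta_{A,A_{\partial}})$ is correct and matches the role that $\sigma_{A}$ plays in the diagonal data.

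There is, however, one point in the coaction check that you gloss over and that the paper's proof singles out as the only non-obvious step. For the cocomposition in which aerial points collapse onto the boundary, you assert that the restriction of $\varphi$ to the corresponding stratum \emph{is} the halfspace propagator of Section~\ref{sec:locMC2}. For odd $n$ this is not quite right: the restriction is the local propagator \emph{plus} a basic term representing the Euler class of $\partial M$. On the algebraic side this is matched by the fact that cocomposing an edge $e_{12}$ along such a collapse produces, besides the expected term $1 \otimes e_{12}$, a tadpole term $e_{**}\otimes 1$ at the newly created terrestrial vertex, and tadpoles are sent to the Euler class by the conventions of Section~\ref{sec:model-fiberwiseLD}. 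Without accounting for both extra terms, the identity you need for $\circ^{\vee}_{W,T}$ would simply fail for odd $n$ (for even $n$ there is no issue, as the Euler class term can be taken to vanish and tadpoles are mapped to zero). This is a repairable omission rather than a wrong approach, but it is exactly the consistency check that justifies the tadpole conventions built into $\SGra_{A,A_{\partial}}$.
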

\begin{proof}
  Due to the description of $\SGra_{A,A_{\partial}}$, it is clear that we can get an algebra map with the prescribed behavior on the generators.
  The properties of $\varphi$ (see Proposition~\ref{prop.propagator-sfmm}, most notably the fact that $d\varphi = \Delta_{A}$) and the fact that $g$, $g_{\partial}$ are CDGA maps also show that this map is compatible with the differentials on both sides.

  It remains to check that this is a morphism of comodules.
  This is clear on the generators coming from $A$ and $A_{\partial}$.
  For edges, we use again the properties of the propagator.
  Note that for odd $n$, when we apply the comodule structure map to the propagator $\varphi \in \OmPA^{n-1}(\SFM_{M}(0, 2))$, we get a sum of the local propagator $\varphi \in \OmPA^{*}(\FM_{n}(2))$ plus the Euler class of $\partial M$, which is consistent with the fact that applying the comodule structure map $\circ_{\varnothing,\underline{2}}^{\vee}$ to the edge $e_{12} \in \SGra_{A,A_{\partial}}(\varnothing, \underline{2})$ produces a sum $1 \otimes e_{12} + e_{**} \otimes 1 \in \SGra_{A,A_{\partial}}(\{*\}, \varnothing) \otimes \Gra_{n}(\underline{2})$ and that tadpoles are sent to the Euler class of $\partial M$.
  For even $n$ there is no such difficulty (and tadpoles are mapped to zero).
\end{proof}

Next we want to extend the operadic twist from $(\Gra_n^A,  \SGra_{n}^{A_\p})$ to $(\Graphs_n^A, \SGraphs_n^{A_\p})$ discussed above to also include $\SGra_{A, A_\p}$.
Again we may proceed by passing to a collection of CDGAs $\Tw\SGra_{A, A_\p}$ obtained by formally adding a zero-ary cogenerator.
From Sections~\ref{sec:localMC1} and~\ref{sec:locMC2}, we have that that $M$-realized diagonal data produces Maurer--Cartan elements:
\begin{equation}
  z_{\Omega} + z_{\Omega}^{\partial} \in \Omega^{*}_{\triv}(M) \hotimes \GC_{n}^{\vee} \ltimes \Omega^{*}_{\triv}(\partial M) \hotimes \SGC_{n}^{\vee},
\end{equation}
We assume that we are given MC elements\index{zzd@$z+z^{\partial}$}
\begin{equation}
  z+z^{\partial} \in A \hotimes \GC_{n}^{\vee} \ltimes A_{\partial} \hotimes \SGC_{n}^{\vee}
\end{equation}
mapping to $z_{\Omega}+z^{\partial}_{\Omega}$ under the given map $A \to \Omega^{*}_{\triv}(M)$, $A_{\partial} \to \Omega^{*}_{\triv}(\partial M)$.
The following definition will depend on this choice, although we will suppress it from the notation.

\begin{definition}
  The twisted colored labeled graph comodule $\Tw \SGra_{A, A_\p}$\index{TwSGraAAd@$\Tw \SGra_{A,A_{\partial}}$} is the Hopf right $\Tw(A\otimes \Gra_n, A_\p\otimes \SGra_{n})$-comodule obtained by twisting $\SGra_{A, A_\p}$ with respect to the Maurer--Cartan element $(z, z_\p)$.
\end{definition}

Let us give a graphical description of the module $\Tw \SGra_{A, A_\partial}(U,V)$.
It is spanned by graphs with four types of vertices: external aerial, internal aerial, external terrestrial, internal terrestrial.
Recall that aerial vertices are labeled by $A$ while terrestrial ones are labeled by $A_{\partial}$.
External aerial vertices are in bijection with $V$, while external terrestrial vertices are in bijection with $U$.
Internal aerial vertices are of degree $-n$, while internal terrestrial vertices are of degree $-(n-1)$.
Both kinds of internal vertices are indistinguishable among themselves.
Finally, edges are of degree $n-1$, and the source of an edge may only be aerial.
See Figure~\ref{fig.exa-col-lab-graph} for an example.

\begin{figure}[htbp]
  \centering
  \begin{tikzpicture}[baseline = 0.5cm]
    \node (0) {};
    \node (o) [right = 3cm of 0] {};
    \draw[dotted] (0) to (o);
    \node[ext, label = {above left}:{\scriptsize $x_{1}$}, fill=white] (u) [right = 0.5cm of 0] {$u$};
    \node[int, label = {\scriptsize $x_{2}$}] (i1) [right = 1cm of u] {};
    \node[ext, label = {\scriptsize $y_{1}$}] (v1) [above = 1cm of u] {$v_{1}$};
    \node[ext, label = {\scriptsize $y_{2}$}] (v2) [right = 1cm of v1] {$v_{2}$};
    \node[int, label = {\scriptsize $y_{3}$}] (i2) [right = 1cm of v2] {};
    \draw[-latex] (v1) edge (u) edge (i1) edge (v2);
    \draw[-latex] (v1) to (v2);
    \draw[-latex] (i2) edge (v2) edge (i1);
  \end{tikzpicture}
  \quad ($x_{1}, x_{2} \in A_{\partial}$ and $y_{1}, y_{2}, y_{3} \in A$)
  \caption{A colored labeled graph in $\Tw \SGra_{A, A_\p}(\{u\}, \{v_1, v_2\})$.}
  \label{fig.exa-col-lab-graph}
\end{figure}
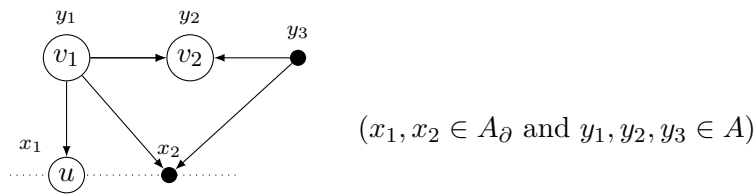

The product glues graphs along external vertices (multiplying the labels).
The comodule structure maps collapse subgraphs, and the label of the collapsed subgraph is the product of all the labels inside that subgraph (applying $\rho : A \to A_{\partial}$ as needed).
Finally, the differential has several parts (in addition to the internal differentials of $A$ and $A_{\partial}$):
\begin{itemize}
  \item A first part comes from $\SGra_{A, A_\p}$: it splits edges between vertices of any type, and then multiplies the endpoints of the removed edge by either $\Delta_{A}$ or $\Delta_{A,A_{\partial}}$.
  \item A second part contracts a subgraph $\Gamma'$ with only aerial vertices, at most one of them being external.
        The result of that contraction is an aerial vertex (external if $\Gamma'$ contained one, internal otherwise).
        The label of that vertex by the products of all the labels in $\Gamma'$, multiplied by $z(\bar{\Gamma}')$, where $\bar{\Gamma}' \in \GC_{n}$ is $\Gamma'$ with all the labels removed.
        See Figure~\ref{fig.diff-tw-sgra} for an example.
  \item A third part contracts a subgraph $\Gamma''$ with at most one external vertex.
        The label of the contracted subgraph is the product of all the labels inside it, multiplied by $z^{\partial}(\bar{\Gamma}'')$, where $\bar{\Gamma}'' \in \SGC_{n}$ is the subgraph with the labels removed.
        If the result contains a ``bad edge'' (whose source is terrestrial), then the summand vanishes.
        Again, see Figure~\ref{fig.diff-tw-sgra}.
\end{itemize}

\begin{figure}[htbp]
  \centering
  \tikzset{every label/.append style={font=\scriptsize}, every picture/.append style={baseline=1cm, scale=0.7}}
  \begin{tikzpicture}
    \draw[dotted] (0,0) -- (4,0);
    \node[ext, label={$a_{1}$}] (a1) at (1,2) {1};
    \node[ext, label={$a_{2}$}] (a2) at (3,2) {2};
    \node[int, label={above right}:{$a_{3}$}] (a3) at (2,1) {};
    \node[int, label={$a_{4}$}] (a4) at (2,2) {};
    \node[ext, label=below:{$b_{1}$}] (b1) at (1,0) {1};
    \node[int, label=below:{$b_{2}$}] (b2) at (3,0) {};
    \draw[-latex]
    (a1) edge (a3) edge (a4)
    (a3)  edge (b1) edge (b2)
    (a4) edge (a2) edge (a3);
    \node[draw, dashed, fit=(a1) (a3) (a4), label=left:$\Gamma'$] (circ) {};
  \end{tikzpicture}
  $\mapsto$
  \begin{tikzpicture}
    \draw[dotted] (0,0) -- (4,0);
    \node[ext, label={above left}:{$a_{1} a_{3} a_{4} z(\bar{\Gamma}')$}] (a1) at (1.5,1.5) {1};
    \node[ext, label={$a_{2}$}] (a2) at (3,2) {2};
    \node[ext, label=below:{$b_{1}$}] (b1) at (1,0) {1};
    \node[int, label=below:{$b_{2}$}] (b2) at (3,0) {};
    \draw[-latex]
    (a1) edge (b1) edge (b2) edge (a2);
  \end{tikzpicture}
  \\
  \;
  \begin{tikzpicture}
    \draw[dotted] (0,0) -- (4,0);
    \node[ext, label={$a_{1}$}] (a1) at (1,2) {1};
    \node[ext, label={$a_{2}$}] (a2) at (3,2) {2};
    \node[int, label={above right}:{$a_{3}$}] (a3) at (2,1) {};
    \node[int, label={$a_{4}$}] (a4) at (2,2) {};
    \node[ext, label=below:{$b_{1}$}] (b1) at (1,0) {1};
    \node[int, label=below:{$b_{2}$}] (b2) at (3,0) {};
    \draw[-latex]
    (a1) edge (a3) edge (a4)
    (a3) edge (b1) edge (b2)
    (a4) edge (a2) edge (a3);
    \node[draw, dashed, fit=(a3) (b1) (b2), label=right:$\Gamma''$] (circ) {};
  \end{tikzpicture}
  $\mapsto$
  \begin{tikzpicture}
    \draw[dotted] (0,0) -- (4,0);
    \node[ext, label={$a_{1}$}] (a1) at (1,2) {1};
    \node[ext, label={$a_{2}$}] (a2) at (3,2) {2};
    \node[int, label={$a_{4}$}] (a4) at (2,2) {};
    \node[ext, label=below:{$\rho(a_{3}) b_{1} b_{2} z^{\partial}(\bar{\Gamma}'')$}] (b1) at (1.5,0) {1};
    \draw[-latex]
    (a1) edge (b1) edge (a4)
    (a4) edge (b1) edge (a2);
  \end{tikzpicture}
  \caption{Second and third part of the differential in $\Tw \SGra_{A,A_\p}$}
  \label{fig.diff-tw-sgra}
\end{figure}
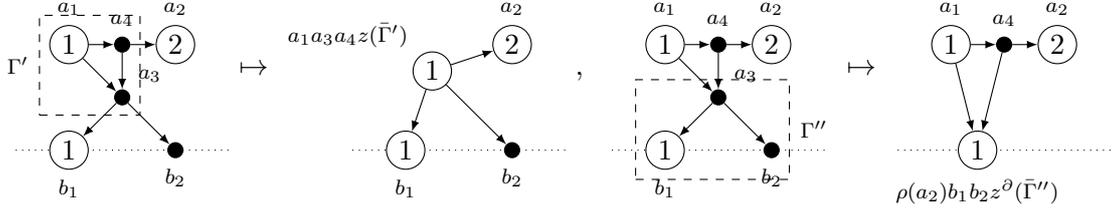

\begin{remark}
  The second part of the differential has often a simpler description.
  Assume that $z \in \Omega^{*}(M) \otimes \GC_{n}^{\vee}$ is simply the Maurer--Cartan element given by $1 \otimes \mu$, where $\mu \in \GC_{n}^{\vee}$ is the graph with two vertices and one edge as usual (see Equation~\eqref{eq.mu}).
  Then that second part of the differential simply contracts an edge between an (aerial) internal vertex and an aerial vertex of any kind, multiplying the labels.
  Note that this includes dead ends (i.e., edges connected to a univalent internal vertex), unlike the differential in $\SGraphs_{n}$.
\end{remark}

\begin{remark}
  Note that there are several differences compared to the description of the differential from Section~\ref{sec.extens-swiss-cheese}: dead ends are contractible, and the last part of the differential which forgets some internal vertices is not present.
  This comes from the fact that in the definition of the twisting of a right comodule over a cooperad, the Maurer--Cartan element $z + z^{\partial}$ can only act from the right on the comodule.
\end{remark}

\begin{remark}
  \label{rmk.stokes}
  One should not forget that when a subgraph $\Gamma' \subset \Gamma$ with only internal vertices is contracted (the third part), the result may be a terrestrial vertex even though the subgraph contains only aerial vertices, see Figure~\ref{fig.collapse-aerial-terrestrial}.
\end{remark}

\begin{figure}[htbp]
  \centering
  \begin{tikzpicture}[baseline = 0.5cm]
    \node (0) {};
    \node (o) [right = 1.5cm of 0] {};
    \draw[dotted] (0) to (o);
    \node (u) [right = 0.5cm of 0] {};
    \node[int, label = {\scriptsize $x$}] (i1) [above = 0.5cm of u] {};
  \end{tikzpicture}
  $\mapsto$
  \begin{tikzpicture}[baseline = 0.5cm]
    \node (0) {};
    \node (o) [right = 1.5cm of 0] {};
    \draw[dotted] (0) to (o);
    \node[int, label = {\scriptsize $\rho(x)$}] (u) [right = 0.5cm of 0] {};
  \end{tikzpicture}
  \caption{Collapsing an aerial vertex into a terrestrial vertex}
  \label{fig.collapse-aerial-terrestrial}
\end{figure}
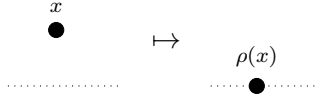

\begin{proposition}
  The morphism $\omega' : \SGra_{A, A_\partial} \to \OmPA^{*}(\SFM_{M})$ of Proposition~\ref{prop.omega-prime-colored} extends to a morphism of bisymmetric collections $\omega : \Tw \SGra_{A, A_\partial} \to \OmPA^{*}(\SFM_{M})$, given on a graph $\Gamma \in \SGra_{A, A_\partial}(U \sqcup I, V \sqcup J) \subset \Tw \SGra_{A, A_\p}(U,V)$ by:
  \[ \omega(\Gamma) \coloneqq (p_{U,V})_{*}(\omega'(\Gamma)) = \int_{\mathrlap{\SFM_{M}(U \sqcup I, V \sqcup J) \to \SFM_{M}(U,V)}} \quad \omega'(\Gamma). \]
  This morphism is compatible with the fiberwise Swiss Cheese action, i.e., we have a map of relative right Hopf multimodules
  \[ \omega : ( \Tw\SGra_{A, A_\partial}, \Tw\Gra_n^A, \Tw\SGra_{n}^{A_\p}) \to (\OmPA^{*}(\SFM_{M}),\OmPA(\FM_n^M), \OmPA^{*}(\SFM_{n}^M)). \]
\end{proposition}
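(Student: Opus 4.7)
The plan is to proceed in three stages, mirroring the strategy used for closed manifolds in~\cite{CamposWillwacher2016} and the Swiss-Cheese operad in~\cite{Willwacher2015}, but now simultaneously tracking both ``bulk'' and ``boundary'' degenerations. First, one defines $\omega(\Gamma)$ as advertised by fiber integration of $\omega'(\tilde\Gamma)$ along the canonical semi-algebraic projection $\SFM_M(U\sqcup I, V\sqcup J)\to \SFM_M(U,V)$ that forgets points corresponding to internal vertices. Using that the propagator $\varphi$ and the pullbacks $p_v^*(g(x))$, $p_u^*(g_\partial(x))$ all lie in $\Omega^*_{\triv}$, the forms $\omega'(\tilde\Gamma)$ are fiber-integrable, and $\omega$ is obviously compatible with the commutative product (products of graphs glue the configuration spaces along the appropriate external points, and the fiber integrations factor).

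The heart of the proof is the check that $\omega$ is a chain map, for which the key tool is the semi-algebraic fiberwise Stokes formula $d\int_{\fiber}\alpha = \int_{\fiber}d\alpha \pm \int_{\partial \fiber}\alpha$. The internal-vertex positions form a fiber over $\SFM_M(U,V)$, and $\partial\fiber$ decomposes into boundary strata classified (via Section~\ref{sec:FM2discussion}) by the types of collision or boundary-approach being performed. Applying $\int_{\fiber}$ to $d\omega'(\tilde\Gamma)$ recovers the ``edge splitting'' part of the differential, because $d\varphi=(g\otimes g)(\Delta_A)$ for aerial-aerial edges and $dj^*\varphi=(g\otimes g_\partial)(\Delta_{A,A_\partial})$ for aerial-terrestrial ones. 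The boundary contributions then match the two ``contraction'' parts of the differential: strata where an aerial subgraph $\Gamma'$ collapses in the interior of $M$ produce factors of $z(\bar\Gamma')$ as in Section~\ref{sec:localMC1}, while strata where a subgraph $\Gamma''$ collides at (or runs into) $\partial M$ produce $z^\partial(\bar\Gamma'')$ as in Section~\ref{sec:locMC2}. The fact that bad edges (terrestrial source) give a vanishing contribution on the algebraic side exactly matches the fact that the corresponding boundary stratum of $\SFM_M$ yields zero, by the vanishing $\varphi|_{\mathrm{III}}=0$ together with the normalization identities $\int_y\varphi(y,x)\alpha(y)=0$ for $\alpha\in A$ and $\int_y\varphi(x,y)\beta(y)=0$ for $\beta\in\ker\rho$ of Proposition~\ref{prop.propagator-sfmm}.

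The main obstacle here, as always in such Feynman-diagrammatic constructions, is ruling out \emph{spurious} boundary contributions that do not match any term of the combinatorial differential. These come from collisions of subgraphs in which two or more external vertices participate, or from internal subgraphs with vertices of valence $\leq 2$; all of these are discarded by Kontsevich's classical vanishing arguments (degree counting and reflection symmetry), whose adaptations to the manifold-with-boundary setting are already encoded in the construction of the propagator and in the vanishing lemmas underlying Proposition~\ref{prop:locvanishing2} and Lemma~\ref{lem:vanI}. One must also verify that when an aerial subgraph collapses onto the boundary the resulting vertex is terrestrial (cf.\ Remark~\ref{rmk.stokes}), which is automatic because the image of $(g\otimes g_\partial)(\sigma_A)$ or $\rho$-applied labels appears naturally in the Stokes expansion.

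Finally, compatibility of $\omega$ with the right $(\Tw\Gra_n^A,\Tw\SGra_n^{A_\partial})$-multimodule coaction is an essentially formal consequence of the construction. The cooperadic/comodule structure maps on the graph side are defined by collapsing subgraphs at inserted points, while on the geometric side the right action of $\FM_n^M$ and $\SFM_n^M$ on $\SFM_M$ is defined by insertion of infinitesimal configurations at interior or boundary points. The two operations intertwine: collapsing a subgraph $\Gamma'\subset \Gamma$ produces on the form side a fiber-integrated factor $\omega(\Gamma')$, which by the construction of $\Graphs_n^A$ and $\SGraphs_n^{A_\partial}$ in Section~\ref{sec:locMC2} is precisely the form pulled back along the insertion map. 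Since the comodule maps are generated by the cocomposition on edges and labels, and we already verified the behavior on each class of generator in Proposition~\ref{prop.omega-prime-colored}, the extension to $\Tw\SGra_{A,A_\partial}$ is automatic by functoriality of fiber integration under fiber products.
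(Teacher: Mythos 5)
Your proposal follows essentially the same route as the paper: fiber-integrability via trivial forms, the fiberwise Stokes formula identifying $\int_{\fiber}d\omega'$ with the edge-splitting part and the fiberwise boundary faces with the contraction parts weighted by $z$ and $z^{\partial}$, and a formal argument for compatibility with the comodule coaction. One small correction: at this stage no Kontsevich-type vanishing lemmas are needed to discard ``spurious'' faces --- strata where two or more \emph{external} vertices collide are simply not part of the fiberwise boundary of $p_{U,V}$ (they lie over the boundary of the base, which is the content of the conditions defining $\BF'(V,J)$ and $\BF''(U,I;V,J)$ in the paper), and every remaining face, including those from low-valence internal subgraphs, matches a summand of the twisted differential by construction of the Maurer--Cartan elements; the vanishing arguments only enter later, when one simplifies $z$ and $z^{\partial}$ up to gauge.
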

\begin{proof}
  We have chosen the propagator $\varphi$ so that it is a trivial form, and we have assumed that the morphisms $A \to \OmPA^{*}(M)$ and $A_{\partial} \to \OmPA^{*}(\partial M)$ factor through the sub-CDGAs of trivial forms.
  Hence for any graph $\Gamma$, $\omega'(\Gamma)$ is a trivial form and can be integrated along the fiber of $p_{U,V}$.

  We can now reuse the proof of~\cite[Proposition~3.14]{Idrissi2018b}.
  The difference is the description of the decomposition of the fiberwise boundary of $p_{U,V}$ used to show that $\omega$ is a chain map through the application of Stokes' formula.
  This description is similar to the one implicitly used by~\cite{Willwacher2015} (see also~\cite[Section~5.2.1]{Kontsevich2003}) with some variations accounting from the fact that no normalization is done to compactify $M$ (see the discussion before the proof of~\cite[Lemma~3.18]{Idrissi2018b}).
  More concretely, the boundary of $\SFM_{M}(U,V)$ is given by:
  \[ \partial \SFM_{M}(U,V) = \bigcup_{\hspace{-2em} \mathrlap{T \in \BF'(V)}} \im (\circ_{T}) \cup \bigcup_{\hspace{-2em} \mathrlap{(W,T) \in \BF''(U;V)}} \im (\circ_{W,T}), \]
  where:
  \begin{equation}
    \begin{aligned}
      \BF'(V)    & \coloneqq \{ T \subset V \mid \# T \geq 2 \},                                        \\
      \BF''(U;V) & \coloneqq \{ (W,T) \mid W \subset U, \; T \subset V, \; 2 \cdot \#T + \#W \geq 2 \}.
    \end{aligned}
  \end{equation}
  Note that in the description of $\partial \SFM_{n}(U,V)$, there is an additional condition $W \cup T \subsetneq U \cup V$.
  Indeed in $\SFM_{n}$ the normalization by the affine group prevents the points from becoming infinitesimally close all at once; in $\SFM_{M}$, no such normalization occurs.

  Then the fiberwise boundary of the canonical projection $p_{U,V}$ is given by:
  \begin{equation}
    \SFM^{\partial}_{M}(U,V) =  \bigcup_{\hspace{-2em} \mathrlap{T \in \BF'(V,J)}} \im (\circ_{T}) \cup \bigcup_{\hspace{-2em} \mathrlap{(W,T) \in \BF''(U, I; V, J)}} \im (\circ_{W,T}) \subset \SFM_{M}(U \sqcup I, V \sqcup J),
  \end{equation}
  where the subsets $\BF'(V,J) \subset \BF'(V \sqcup J)$ and $\BF''(U,I;V,J) \subset \BF''(U \sqcup I; V \sqcup J)$ are defined by:
  \begin{equation}
    \begin{aligned}
      T \in \BF'(V,J)           & \iff \#(T \cap J) \leq 1,                         \\
      (W,T) \in \BF''(U,I; V,J) & \iff V \cap T = \varnothing, \#(U \cap W) \leq 1.
    \end{aligned}
  \end{equation}

  One can then check that the boundary faces of that decomposition correspond to the summands of the differential.
\end{proof}

\begin{definition}
  Define the full colored graph complex to be:\index{fSGCAAd@$\fSGC_{A,A_{\partial}}$}
  \[ \fSGC_{A,A_\p} \coloneqq \Tw \SGra_{A, A_\p}(\varnothing, \varnothing). \]
\end{definition}

This is the CDGA of colored, labeled graphs with only internal vertices.
The product is the disjoint union of graphs, thus $\fSGC_{A,A_\p}$ is free as an algebra, generated by the graded module $\SGC_{A,A_\p}$\index{SGCAAd@$\SGC_{A,A_{\partial}}$} of connected graphs.
Each $\Tw \SGra_{A,A_\p}(U,V)$ is a module over the CDGA $\fSGC_{A,A_\p}$ by adding connected components.

\begin{remark}
  \label{rmk.maurer-cartan}
  Since $\fSGC_{A,A_\p}$ is a CDGA, the dual module $\SGC_{A,A_\p}^{\vee}$ is naturally an $\hoLie_{1}$-algebra.
  The differential of $\fSGC_{A,A_{\p}}$ cannot create more than two connected components, thus $\SGC_{A,A_{\partial}}^{\vee}$ is actually a $\Lie_{1}$-algebra.
  The differential blows up vertices (like in $\GC_{n}^{\vee}$) and joins pairs of vertices by an edge, while the Lie bracket joins two graphs by an edge.
\end{remark}

We may consider the differential graded Lie algebra\index{gAAd@$\alg g_{A,\partial A}$}
\begin{equation}
  \label{eq.g-a-a-del}
  \alg g_{A,\p A} \coloneqq \SGC_{A,A_\p}\rtimes \SGC_{n}^{A_\partial} \rtimes \GC_n^A.
\end{equation}
A CDGA morphism
\begin{equation}
  \fSGC_{A, A_\p}(\varnothing, \varnothing) \to \K
\end{equation}
may be interpreted as an extension of the Maurer--Cartan element $z^\p + z$ to a Maurer--Cartan element
\begin{equation}
  Z+z^\p+z \in \alg g_{A,\p A}.
\end{equation}
The universal example of such a MC element, in some sense, is given by the following definition.

\begin{definition}\label{def.szphi}
  The colored partition function $Z_{\Omega} : \fSGC_{\Omega^{*}_{\triv}(M),\Omega^{*}_{\triv(\partial M)}} \to \R$\index{ZOmega@$Z_{\Omega}$} is the CDGA morphism given by the restriction in empty arity
  \[ Z_{\Omega} \coloneqq \omega|_{(\varnothing,\varnothing)} : \Tw \SGra_{\Omega_{\triv}(M), \Omega^{*}_{\triv}(\partial M)}(\varnothing, \varnothing) \to \OmPA^{*}(\SFM_{M}(\varnothing, \varnothing)) = \OmPA^{*}(\{*\}) = \R. \]
\end{definition}

We will next assume that we are given a Maurer--Cartan element (the partition function) $Z+z^\p+z \in \alg g_{A,\p A}$\index{Z@$Z$} that maps to $Z_{\Omega}+z^{\partial}_{\Omega}+z_\Omega$ under the given maps $A\to \Omega_{\triv}(M)$, $A_\p\to \Omega_{\triv}(\p M)$.

\begin{definition}
  The reduced colored labeled graph comodule $\SGraphs_{A, A_\p}$\index{SGraphsAAd@$\SGraphs_{A,A_{\partial}}$} is the bisymmetric collection given in each arity by:
  \[ \SGraphs_{A, A_\p}(U, V) \coloneqq \R \otimes_{\fSGC_{A,A_\p}} \Tw \SGra_{A, A_\p}(U,V), \]
  where we understand $\R$ as an $\fSGC_{A,A_\p}$-module via the given partition function $Z$.
\end{definition}

\begin{proposition}\label{prop:mainmapdef}
  The bisymmetric collection $\SGraphs_{A, A_\p}$ forms a Hopf right multimodule over $(\SGraphs_{n}^{A_\p},\Graphs_n^{A})$, and the map $\omega : \Tw \SGra_{A, A_\p} \to \OmPA^{*}(\SFM_{M})$ factors through a Hopf right comodule morphism:
  \[ \omega : (\SGraphs_{A,A_\p}, \SGraphs_{n}^{A_\p},\Graphs_n^A) \to (\OmPA^{*}(\SFM_{M}), \OmPA^{*}(\SFM_{n}^M),\OmPA^*(\FM_n^M)). \]
\end{proposition}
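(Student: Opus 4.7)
The plan is to leverage the twisting/reduction formalism. The reduction from $\Tw \SGra_{A,A_\p}$ to $\SGraphs_{A,A_\p}$ via $- \otimes_{\fSGC_{A,A_\p}} \R$ is controlled by the partition function $Z \in \SGC_{A,A_\p}^{\vee}$, and its compatibility with the cooperadic twists by $z$ and $z^\p$ is expressed by the global Maurer--Cartan equation for $Z + z^\p + z$ in the semidirect-product DGLA $\alg g_{A,\p A}$ of Equation~\eqref{eq.g-a-a-del}.

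First I would check that the $\fSGC_{A,A_\p}$-module structure on $\Tw \SGra_{A,A_\p}(U,V)$, given by disjoint union of connected internal graphs, commutes with the cooperadic coactions $\circ^\vee_T$ and $\circ^\vee_{W,T}$. The point is that these coactions only peel off subgraphs containing at least one of the selected external vertices, so internal connected components are never extracted into the cooperad factor; they remain untouched in the comodule factor. Hence the quotient $\SGraphs_{A,A_\p}(U,V) = \R \otimes_{\fSGC_{A,A_\p}} \Tw \SGra_{A,A_\p}(U,V)$ inherits coactions of $\SGraphs_n^{A_\p}$ and $\Graphs_n^A$, provided the reductions on the cooperad side are built from the same MC data. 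The Maurer--Cartan equation for $Z + z^\p + z$ is precisely what guarantees that the differentials descend to the quotient coherently: when a contraction in $\Tw \SGra_{A,A_\p}$ produces or destroys internal connected components, the corresponding $Z$-evaluation matches the $z$- and $z^\p$-evaluations already imposed on the cooperads by definition of $\SGraphs_n^{A_\p}$ and $\Graphs_n^A$.

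Next I would show that $\omega$ descends through the quotient. For a graph $\Gamma = \Gamma_0 \sqcup \Gamma'$ with $\Gamma'$ an internal connected component, the Feynman-type fiber integral defining $\omega(\Gamma)$ factorizes along the corresponding decomposition of the configuration space, yielding $\omega(\Gamma_0) \cdot Z_{\Omega}(\Gamma')$, where $Z_{\Omega}(\Gamma')$ is the scalar from Definition~\ref{def.szphi}. Under the standing hypothesis that $Z$ maps to $Z_{\Omega}$ via the given maps $A \to \Omega_{\triv}(M)$ and $A_\p \to \Omega_{\triv}(\p M)$ (and similarly for $z$, $z^\p$), the relations defining $\SGraphs_{A,A_\p}$, which identify $\Gamma_0 \sqcup \Gamma'$ with $Z(\Gamma') \cdot \Gamma_0$, become actual equalities in $\OmPA^{*}(\SFM_M)$, so $\omega$ descends. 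Compatibility of the descended map with the Hopf right multimodule structure is automatic from the corresponding property of $\omega$ at the twisted level, since the reductions on both sides match by the MC compatibility.

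The main obstacle is verifying that the three distinct parts of the differential in $\Tw \SGra_{A,A_\p}$ (edge-splitting, contracting subgraphs containing one external vertex, and contracting subgraphs that collapse to a terrestrial vertex as in Remark~\ref{rmk.stokes}) interact correctly with the quotient by $\fSGC_{A,A_\p}$. These three contributions mirror the three summands of $Z + z^\p + z$, and the MC equation in $\alg g_{A,\p A}$ is exactly the compatibility condition ensuring that all signs and coefficients balance. Once this bookkeeping is carried out, the proposition follows from functoriality of the twisting and reduction constructions recalled in Section~\ref{sec.functoriality-models}.
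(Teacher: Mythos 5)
Your proposal is correct and takes essentially the same route as the paper, whose proof simply defers to the analogous statement in earlier work together with the double-pushforward formula for fiber integrals of PA forms: that formula is exactly your factorization $\omega(\Gamma_0 \sqcup \Gamma') = \omega(\Gamma_0)\cdot Z_{\Omega}(\Gamma')$, which makes the defining relations of $\SGraphs_{A,A_\p}$ into actual equalities in $\OmPA^{*}(\SFM_{M})$ and lets $\omega$ descend. The remaining compatibility checks you describe (coactions never extract internal components; the MC equation for $Z+z^{\partial}+z$ makes the differentials descend coherently) are precisely the routine verifications the paper leaves implicit.
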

\begin{proof}
  This is identical to the proof of~\cite[Proposition~3.23]{Idrissi2018b} and follows from the general properties of integration along fibers, most prominently the double-pushforward formula of \cite[Proposition 8.13]{HardtLambrechtsTurchinVolic2011}.
\end{proof}

\subsection{Cohomology and quasi-isomorphism theorem}
\label{sec:sfm-main-thm}

One of the main results of this paper is the following, more precise version of Theorem~\ref{thm.intro.sgraphs-qiso}:
\begin{theorem}
  \label{thm.main-qiso}
  Suppose we are given realized diagonal data $(A \xrightarrow{\rho} A_\p, \Delta_{A}, \sigma_{A}, g, g_{\partial}, \varphi)$ fitting into a commutative diagram~\eqref{equ:AdataCD}, as well as a Maurer--Cartan element $Z+z^\p+z\in \alg g_{A,A_\p}$ mapping to the canonical Maurer--Cartan element from Definition~\ref{def.szphi}.
  Suppose that the horizontal maps in \eqref{equ:AdataCD} are quasi-isomorphisms.
  Then the map
  \[ \omega : (\SGraphs_{A,A_\p}, \SGraphs_{n}^{A_\p},\Graphs_n^A) \to (\OmPA^{*}(\SFM_{M}), \OmPA^{*}(\SFM_{n}^M),\OmPA^*(\FM_n^M)). \]
  of Proposition \ref{prop:mainmapdef} is a quasi-isomorphism.
  The same result holds in the ``combinatorial'' case $A = S(\tilde{H}^{*}(M) \oplus H^{*}(M,\partial M))$, $A_{\partial} = S(\tilde{H}^{*}(\partial M))$ from Example~\ref{exa.diag-data}.
\end{theorem}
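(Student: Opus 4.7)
The parts concerning $\Graphs_n^A \to \OmPA(\FM_n^M)$ and $\SGraphs_n^{A_\partial} \to \OmPA(\SFM_n^M)$ have already been established in Sections~\ref{sec:model-fiberwiseLD} and~\ref{sec:locMC2}, so the crux is to show that the map $\omega : \SGraphs_{A,A_\partial}(U,V) \to \OmPA^*(\SFM_M(U,V))$ is a quasi-isomorphism in each bi-arity; compatibility with the Hopf multi-co\-module structure then follows automatically from Proposition~\ref{prop:mainmapdef}. My plan is to reduce first to the combinatorial case and then run a spectral-sequence comparison analogous to~\cite[\S3]{Idrissi2016}. Using the functoriality arguments of Section~\ref{sec.functoriality-models}, any quasi-isomorphism of diagonal data $(A,A_\partial) \qiso (A',A_\partial')$ together with compatible MC elements yields a quasi-isomorphism of the associated graph comodules $\SGraphs_{A,A_\partial} \qiso \SGraphs_{A',A_\partial'}$. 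It therefore suffices to prove the theorem for one convenient model of the pair $(M,\partial M)$, the natural choice being the combinatorial one $A = S(\tilde H(M)\oplus H(M,\partial M))$, $A_\partial = S(\tilde H(\partial M))$ of Example~\ref{exa.diag-data}, where the partition function $Z+z^\partial+z$ is uniquely determined up to gauge by the canonical one from Definition~\ref{def.szphi}.

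The second step is to equip both sides with filtrations whose associated graded pieces are tractable. On the algebraic side, I would filter $\SGraphs_{A,A_\partial}(U,V)$ by the number of internal vertices (both aerial and terrestrial); on the associated graded, only the two ``contraction'' pieces of the differential that come from the Maurer--Cartan elements $z$ and $z^\partial$ are killed, so that what remains is the edge-splitting differential acting on decorations together with the internal differentials of $A$ and $A_\partial$. On the topological side, one uses the stratification of $\SFM_M(U,V)$ described in Section~\ref{sec:configspaces}: projecting to the ``macroscopic'' configuration and resolving the boundary strata gives a Leray-type spectral sequence whose $E^1$-page is built from the cohomology of the base $\Conf_{U,V}(M)$ tensored with the fiberwise contributions of $\FM_n$ and $\SFM_n$ at each collapsed cluster. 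The Arnold-type relations are encoded precisely by the edge-splitting differential through $\Delta_{A}$ and $\Delta_{A,A_\partial}$, matching the combinatorial side term-for-term.

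The key comparison is then the identification of the $E^1$-pages with a suitable Poincaré--Lefschetz analogue of the Cohen--Taylor--Totaro description of $H^*(\Conf_k(M))$. Because the vertex decorations are elements of $S(\tilde H(M)\oplus H(M,\partial M))$ respectively $S(\tilde H(\partial M))$, the associated graded of $\SGraphs_{A,A_\partial}(U,V)$ (summed over internal vertex configurations) gives exactly the two-colored Arnold--Totaro algebra computing $H^*(\Conf_{U,V}(M))$, up to the extra factor coming from the collapsed $\FM_n$- and $\SFM_n$-pieces which are handled by Kontsevich's original formality theorem and the Swiss-Cheese variant of Section~\ref{sec.extens-swiss-cheese}. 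I expect the main technical obstacle to be bookkeeping at precisely this point: unlike the closed case, the labels live in three different pieces $H^*(M)$, $H^*(M,\partial M)$, $H^*(\partial M)$ intertwined by $\rho$ and $\sigma_A$, and one must check that the Poincaré--Lefschetz pairing data encoded by $\Delta_A$ and $\Delta_{A,A_\partial}$ reproduces on $E^1$ the correct cohomology of $\Conf_{U,V}(M)$ with its mixed ``aerial/terrestrial'' structure; the computations of the relevant graph cohomologies in Appendix~\ref{sec:appxA} should provide exactly what is needed for the vanishing of the higher pages.

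Finally, once $\omega$ is shown to be a quasi-isomorphism in each bi-arity, the compatibility with the operadic multi-comodule structure and with the symmetric group actions is preserved by the zigzag of functorial reductions, so the theorem follows in both the combinatorial case and, by another application of functoriality, in the general ``model'' case of a Poincaré--Lefschetz duality model from Section~\ref{sec.pretty-nice}.
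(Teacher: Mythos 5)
Your opening reduction is fine in spirit (though the natural direction, used in the paper, is to reduce via functoriality to the universal case $A=\Omega_{\triv}(M)$, $A_{\partial}=\Omega_{\triv}(\partial M)$, to which every other choice of realized diagonal data maps), and your observation that the cooperad parts are already handled elsewhere is correct. The problem is the core comparison step.

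The gap is in your third paragraph: you propose to match the $E^{1}$-page of the internal-vertex filtration on $\SGraphs_{A,A_{\partial}}(U,V)$ against ``a suitable Poincaré--Lefschetz analogue of the Cohen--Taylor--Totaro description of $H^{*}(\Conf_{U,V}(M))$''. No such description is available as an input here. For closed smooth projective varieties the Kriz--Totaro argument works because one can degenerate the Leray spectral sequence of $\Conf_k(M)\hookrightarrow M^k$ by weight/formality arguments; for a compact manifold with boundary there is no known a priori presentation of $H^{*}(\Conf_{U,V}(M))$ of Arnold--Totaro type, and indeed computing these cohomology groups is a \emph{consequence} of Theorem~\ref{thm.main-qiso} (see Theorem~\ref{thm.ga-same-cohom}), not an ingredient. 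Relatedly, the ``two-colored Arnold--Totaro algebra'' you would read off the associated graded is essentially $\GG{P}$, which the paper shows has the right cohomology only as a graded vector space and is \emph{not} a model of $\Conf_k(M)$; the genuine model is a perturbed version. Your appeal to Appendix~\ref{sec:appxA} does not close this: those computations concern the graph complexes controlling the Maurer--Cartan elements, not the collapse of the spectral sequence you would need. Finally, the Leray-type spectral sequence you invoke on the topological side is never constructed, and its identification page-by-page with the graph-side filtration is exactly the hard content.

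The paper's proof avoids all of this by inducting on the arities $(r,s)$ along the forgetful maps $\SFM_{M}(r,s+1)\to\SFM_{M}(r,s)$ and $\SFM_{M}(r+1,0)\to\SFM_{M}(r,0)$. Proposition~\ref{prop:fiberqiso} reduces each induction step to comparing the derived tensor products $\SGraphs_{A,A_{\partial}}(r,s+1)\lotimes_{\SGraphs_{A,A_{\partial}}(r,s)}\K$ with the cohomology of the fiber, which is just $M$ (resp.\ $\partial M$) with finitely many points removed --- computable by elementary Mayer--Vietoris as in~\eqref{equ:fibercohom0}. The graph-side fiber is then analyzed by a small explicit splitting ($V_{0}\oplus V_{H}\oplus V_{in}\oplus V_{\geq 1}$, and its terrestrial analogue with wedges and stubs) rather than by a global identification of $E^{1}$-pages. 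If you want to salvage your approach you would have to either prove the Totaro-type presentation for manifolds with boundary first (circular) or switch to the fiberwise induction.
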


The construction of $(\SGraphs_{A,A_\p}, \SGraphs_{n}^{A_\p},\Graphs_n^A)$ is functorial in $A\to A_\p$, see Section~\ref{sec.functoriality-models}.
Hence it sufficient to consider the universal case with $A=\Omega_{\triv}(M)$, $A_\p=\Omega_{\triv}(\p M)$ and the Maurer--Cartan element equal to the universal one.

We will need the following technical result:

\begin{proposition}\label{prop:fiberqiso}
  Suppose we have a commutative diagram of augmented CDGAs
  \[
    \begin{tikzcd}
      A \ar{r}{\sim} \ar{d} & A' \ar{d} \\
      B \ar{r}{f} & B'.
    \end{tikzcd}
  \]
  Suppose that in addition the induced map between the derived tensor products
  \[
    B \lotimes_A \K \to B' \lotimes_A \K
  \]
  is a quasi-isomorphism.
  Suppose further that $A$ is cohomologically connected (so that there is a unique augmentation up to homotopy) and of finite cohomological type.
  Then $f$ is a quasi-isomorphism.
\end{proposition}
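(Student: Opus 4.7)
The plan is to reduce the statement to a derived Nakayama-type vanishing result. Let $C \coloneqq \cone(f)$ be the mapping cone of $f$, viewed as a dg $A$-module (where $B'$ is regarded as an $A$-module via the composite $A \to A' \to B'$, so that $f$ is indeed a map of $A$-modules by commutativity of the square). Applying the triangulated functor $(-) \lotimes_A \K$ to the defining cofiber sequence yields
\[ C \lotimes_A \K \simeq \cone\bigl(B \lotimes_A \K \to B' \lotimes_A \K\bigr), \]
and the right-hand side is acyclic by hypothesis. Since $f$ is a quasi-isomorphism if and only if $C$ is acyclic, the problem reduces to the following assertion: \emph{if $A$ is a cohomologically connected CDGA of finite cohomological type and $C$ is a dg $A$-module with $C \lotimes_A \K$ acyclic, then $C$ itself is acyclic.}

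To prove this, I would first replace $A$ by a minimal Sullivan model $\widetilde{A} = (\Lambda V, d)$ via a quasi-isomorphism $\widetilde{A} \xrightarrow{\sim} A$; such a model exists and has $V$ of finite type concentrated in strictly positive degrees, precisely because $A$ is cohomologically connected and of finite cohomological type. Restriction of scalars along this map preserves derived tensor products and quasi-isomorphisms, so $C$ may be treated as a dg $\widetilde{A}$-module satisfying the same vanishing hypothesis. Then I would take a minimal semi-free resolution $F \xrightarrow{\sim} C$ over $\widetilde{A}$, meaning a semi-free $\widetilde{A}$-module whose differential satisfies $d(F) \subset \Lambda^{+} V \cdot F$, where $\Lambda^{+} V$ denotes the augmentation ideal. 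Such minimal resolutions exist for modules that are bounded below, a condition satisfied in all intended applications (where the graph complexes in question are bounded below in each fixed arity).

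The crucial point of minimality is that the induced differential on $F \otimes_{\widetilde{A}} \K = F / (\Lambda^{+} V \cdot F)$ vanishes identically. Consequently,
\[ H^{*}(C \lotimes_A \K) \cong H^{*}(F \otimes_{\widetilde{A}} \K) = F / (\Lambda^{+} V \cdot F) \]
as graded vector spaces, and the vanishing hypothesis forces $F = \Lambda^{+} V \cdot F$. Since $V$ is concentrated in strictly positive degrees and $F$ is bounded below, a short induction on degree (or an argument with the filtration by powers of $\Lambda^{+} V$) then yields $F = 0$, whence $C$ is acyclic and $f$ is a quasi-isomorphism.

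The main obstacle will be ensuring the existence of a minimal semi-free resolution in sufficient generality, which rests on the finiteness and boundedness-below conditions. These are automatic in the intended applications (configuration-space graph complexes in fixed arity), so the argument goes through as stated; a more delicate treatment would be required for genuinely unbounded modules, but is not needed here.
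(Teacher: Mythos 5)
Your argument is correct in substance but takes a genuinely different route from the paper's. The paper computes both sides as $B \lotimes_{C} C \to B' \lotimes_{C} C$ using the normalized bar resolution over a connected model $C$ of $A$, filters by the cohomological degree of the last bar factor, identifies the associated graded with $(B \lotimes_{C} \K) \otimes C$ resp.\ $(B' \lotimes_{C} \K) \otimes C$, and concludes from the isomorphism on $E^{1}$. You instead pass to the mapping cone of $f$ and prove a derived Nakayama statement via a minimal Sullivan model of $A$ and a minimal semi-free resolution of the cone. Both are standard devices; your version isolates the homological content (Nakayama) more transparently, while the paper's avoids minimal resolutions at the price of a convergence claim for the bar filtration. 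The reduction to the cone, the base change along $\widetilde{A} \xrightarrow{\sim} A$, and the Nakayama induction are all fine.

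Two remarks on the boundedness issue, which is the one real soft spot. First, your instinct that some bounded-below hypothesis is needed is correct: the proposition is actually false as literally stated. Take $A = A' = B = \K[x]$ with $x$ of degree $2$ and zero differential, and $B' = \K[x] \times \K[x,x^{-1}]$ with $f(b) = (b,b)$; then $B \lotimes_{A} \K \to B' \lotimes_{A} \K$ is a quasi-isomorphism (the second factor contributes an acyclic complex), yet $f$ is not. The paper's proof has the same hidden assumption, since convergence of its bar filtration also requires $B$ and $B'$ to be bounded below. Second, however, your parenthetical justification is wrong: the graph complexes in the intended applications are \emph{not} bounded below in fixed arity --- a star of $k$ univalent internal aerial vertices attached to a single external vertex of $\SGraphs_{A,A_{\partial}}(r,s)$ has degree $-k$. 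What your argument actually needs is that $H^{*}(\cone(f))$ is bounded below (so that a minimal semi-free resolution with bounded-below generators exists), i.e.\ that $H^{*}(B)$ is bounded below; for $B = \SGraphs_{A,A_{\partial}}(r,s+1)$ this is not available a priori and is dangerously close to what the induction is trying to establish. This gap is shared with the paper's own proof, but your claim as written is false and should be replaced either by a separate argument bounding the cohomology below or by adding the boundedness hypothesis to the statement.
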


\begin{proof}
  The proof is essentially done by using the Serre spectral sequence.
  We may pick a resolution $C\to A$ which is connected.
  It suffices to show that the middle arrow in the following zigzag is a quasi-isomorphism:
  \begin{equation}
    B \xleftarrow{\sim} B \lotimes_{ C}  C \to B' \lotimes_{ C} C \xrightarrow{\sim} B'.
  \end{equation}
  To this end we realize the derived tensor products by using normalized bar resolutions
  \begin{equation}
    M \lotimes_C M' \coloneqq \operatorname{B}_{*}(M, C, M') = \bigl( \bigoplus_{n\geq 0} M \otimes (\bar C [1])^{\otimes n} \otimes M', d \bigr),
  \end{equation}
  where $\bar{C} = \ker(C \to \K)$.

  We filter $B \lotimes_{C}  C$ and $B' \lotimes_{ C} C$ by the cohomological degree of the last factor $C$.
  The associated graded is identified with $(B \lotimes_{C} \K) \otimes_{\K} C$ and $(B' \lotimes_{C} \K) \otimes_{\K} C$ respectively.
  By our assumption, the map is an isomorphism on the $E^1$-page, and moreover our filtration is exhaustive and bounded below, whence the claim follows.
\end{proof}

\begin{proof}[Proof of Theorem~\ref{thm.main-qiso}]
  We only need to show that the map $\SGraphs_{A,A_{\partial}} \to \OmPA(\SFM_M)$ of the theorem is a quasi-isomorphism.
  We use a proof technique by induction similar to~\cite[Chapter~8]{LambrechtsVolic2014}, with some extra work to ``push'' the decorations away from a given external vertex (similar to~\cite[Lemmas~4.27--28]{Idrissi2018b}), adapted to the Swiss-Cheese case.

  More concretely, we proceed by an induction on the arities $r$ (in the boundary) and $s$ (in the interior).
  Say we begin with $s$.
  For the induction step, we apply the Proposition \ref{prop:fiberqiso} to the diagram
  \begin{equation}
    \begin{tikzcd}
      \SGraphs_{A,A_{\partial}}(r,s) \ar{r}{\sim} \ar{d} & \OmPA(\SFM_{M}(r,s)) \ar{d} \\
      \SGraphs_{A,A_{\partial}}(r,s+1) \ar{r} & \OmPA(\SFM_{M}(r,s+1)).
    \end{tikzcd}
  \end{equation}

  Note that
  \begin{equation}
    \OmPA(\SFM_{M}(r,s+1)) \lotimes_{\OmPA(\SFM_{M}(r,s))} \K
  \end{equation}
  is a model for the homotopy fiber $\SFM_{M}(r,s+1) \to \SFM_{M}(r,s)$, and since this map is a fibration, we obtain a model for the fiber.
  This fiber is homotopy equivalent to $M \setminus \underline{s}$, i.e., the manifold $M$ with $s$ points removed from the interior.

  Using the Mayer--Vietoris exact sequence inductively on the number of removed points one computes
  \beq{equ:fibercohom0}
  H^*(M \setminus \underline{s}) = H^*(M) \oplus (\R[1-n])^{s}.
  \eeq
  On the other hand let us compute
  \begin{equation}\label{eq:V}
    V \coloneqq \SGraphs_{A,A_{\partial}}(r,s+1) \lotimes_{\SGraphs_{A,A_{\partial}}(r,s)} \K.
  \end{equation}
  The module $\SGraphs_{A,A_{\partial}}(r,s+1)$ is free over the algebra $\SGraphs_{A,A_{\partial}}(r,s)$, therefore the derived tensor product agrees with the ordinary one.
  As a graded vector space, this tensor product can be represented by graphs in which the (external aerial) vertex labeled by $1$ is $\geq 1$ valent, together with a copy of $\R$ in degree zero accounting for the remaining graphs using the evaluation map $\SGraphs_{A,A_{\partial}}(r,s)\to \R$.
  As a chain complex, whenever the differential produces a ``forbidden'' (0-valent on vertex $1$) graph, one replaces it by its evaluation.

  To compute the cohomology of $V$, we split the complex into pieces
  \begin{equation}\label{equ:Vfiltr}
    V =
    \begin{tikzcd}
      V_{0} \ar[phantom]{r}{\oplus}
      & V_{H} \ar[phantom]{r}{\oplus} \ar[loop above]
      & V_{in} \ar[phantom]{r}{\oplus} \ar[loop above]
      \ar[bend left]{ll} \ar[bend right]{l} \ar[bend right]{r}
      & V_{\geq 1} \ar[loop above]
    \end{tikzcd}
    .
  \end{equation}
  The splitting depends on whether we are in the model-theoretic case or the combinatorial case $A = S(\tilde{H}^{*}(M) \oplus H^{*}(M,\partial M))$.
  We have:
  \begin{itemize}
    \item $V_0$ is the piece where vertex 1 has valence zero and is decorated by the unit $1 \in A^{0}$;
    \item $V_H$ is the piece where vertex 1 has valence zero and is either decorated by an element of $A^{> 0}$ (in the model case), or by a single element of $\tilde{H}^{*}(M)$ and not $H^{*}(M,\partial M)$ (in the combinatorial case);
    \item $V_{in}$ is the piece where vertex 1 has valence one, the edge is incoming, and it is decorated by the unit in $A^{0}$;
    \item $V_{\geq 1}$ consists of all other graphs.
  \end{itemize}
  Clearly $V_0 = A^{0} = \R$ and $V_H$ is either $A^{>0}$ or $\tilde{H}^{*}(M)$.
  There are various pieces of the differential between the summands as shown by arrows above.

  Let us filter $V_{0}$, $V_{H}$, and $V_{in}$ by the number of edges, and $V_{\ge 1}$ by the number of edges shifted by $1$.
  Then in the associated spectral sequence, the first differential is the one pointing furthest right in \eqref{equ:Vfiltr}, i.e., the map from $V_{in}\to V_{\geq 1}$ given by the contraction of the edge at vertex $1$.
  All the other parts of the differential strictly decrease the filtration and thus vanish.

  This map is surjective.
  Its kernel is composed of graphs where either
  \begin{enumerate*}[label={(\roman*)}]
    \item the edge is connected to another external vertex, necessarily aerial;
    \item the edge is connected to an internal vertex, decorated by an element of $H^{*}(M)$ in the combinatorial case;
    \item the edge is connected to an internal vertex of valence 2, with one incoming and 1 outgoing edge.
  \end{enumerate*}

  The next page's differential comes from the piece of the differential in \eqref{equ:Vfiltr} mapping the summands to themselves, and is only non-trivial on the $V_{in}$ factor.
  One can check that this differential kills all remaining terms in the $V_{in}$ except for the graphs where vertex $1$ is connected to one external vertex.
  The spectral sequence converges here to \eqref{equ:fibercohom0}: elements of $H(M)$ are represented by $V_{H}$ (i.e., vertex $1$ is decorated by $1$ or $\tilde{H}(M)$) and the classes of $\R[1-n]^{s}$ are represented by elements of the form $e_{ij}$ of $V_{in}$ where $1$ is connected to another external vertex $j$.
  The classes of $H(M)$ are by definition sent to representatives in $\OmPA(M)$ (pulled back to $\OmPA(\SFM_{M}(r,s+1))$ and then restricted to the fiber).
  The element $e_{ij}$ is sent to the propagator, whose restriction to the fiber we consider here is assumed (Proposition~\ref{prop.propagator-sfmm}) to be a volume form of the sphere obtained by letting point $j$ rotate around point $i$, which is precisely the class found by the Mayer--Vietoris argument above.
  So we have finished the induction step on $r$.

  We are thus reduced to the case $s = 0$, i.e., all the (external) vertices are on the boundary.
  Next we proceed with the induction on $r$.
  We proceed in the same manner as above, starting with the diagram
  \begin{equation}\label{eq:2}
    \begin{tikzcd}
      \SGraphs_{A,A_{\partial}}(r,0) \ar{r}{\sim} \ar{d} & \OmPA(\SFM_{M}(r,0)) \ar{d} \\
      \SGraphs_{A,A_{\partial}}(r+1,0) \ar{r} & \OmPA(\SFM_{M}(r+1,0)).
    \end{tikzcd}
  \end{equation}
  The fiber of $\FM_{M}(r+1,0)\to \FM_{M}(r,0)$ is homotopy equivalent to $\p M$ minus $r$ points, with cohomology
  \beq{equ:fibercohom0bdry}
  H^{<n-1}(\p M) \oplus (\R[1-n])^{r-1}.
  \eeq

  We have to compare this to the cohomology of
  \begin{equation}
    \SGraphs_{A,A_{\partial}}(r+1,0) \lotimes_{\SGraphs_{A,A_{\partial}}(r,0)} \K
    =
    \SGraphs_{A,A_{\partial}}(r+1,0) \otimes_{\SGraphs_{A,A_{\partial}}(r,0)} \K
    .
  \end{equation}
  The latter space can again be identified with graphs ``modulo'' internally connected components not connected to external vertex $1$ (note that it is now terrestrial).

  We want to compute the cohomology of that space by splitting it as above.
  First let us define ``wedges'', which are bivalent aerial internal vertices, decorated by the unit of $A$ and connected to two terrestrial vertices; and stubs, which are univalent aerial internal vertices, decorated by an element of $\ker(A \to A_{\partial})$ and connected to a terrestrial vertex.
  See Figure~\ref{fig:wedge-stub} for examples.

  \begin{figure}[htbp]
    \centering
    \begin{subfigure}[t]{.45\linewidth}
      \centering
      \begin{tikzpicture}
        \draw[dotted] (-1,0)--(1,0);
        \node[ext] (v)  at (0,0) {$i$};
        \node[int] (w) at (0,1) {};
        \node at (0,1.3) {\small $\alpha \in \ker(A \to A_{\partial})$};
        \draw[-latex] (w) edge (v);
      \end{tikzpicture}
      \caption{Stub}
    \end{subfigure}
    ~
    \begin{subfigure}[t]{.45\linewidth}
      \centering
      \begin{tikzpicture}[scale=1.3]
        \draw[dotted] (-1,0) -- (1,0);
        \node[int] (v) at (0,.5) {};
        \node[ext] (v1) at (-.5,0) {$i$};
        \node[ext] (v2) at (.5,0) {$j$};
        \draw[-latex] (v) edge (v1) edge (v2);
      \end{tikzpicture}
      \caption{Wedge}
    \end{subfigure}
    \caption{Wedges and stubs}
    \label{fig:wedge-stub}
  \end{figure}
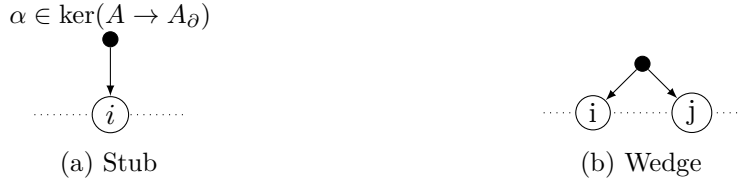

  We may now split our complex as:
  \begin{equation}
    \label{equ:Wfiltr}
    W \coloneqq
    \begin{tikzcd}
      W_0 \ar[phantom]{r}{\oplus} & W_A \ar[phantom]{r}{\oplus} & W_w \ar[bend right]{l} \ar[bend left]{r} \ar[phantom]{r}{\oplus} & \ar[bend left]{l} W_{\geq 1}
    \end{tikzcd},
  \end{equation}
  where:
  \begin{itemize}
    \item $W_0=\K$ are graphs for which vertex 1 has valence zero and is decorated by the unit $1 \in A^{0}_{\partial}$;
    \item $W_A$ are graphs where vertex 1 has valence zero and is either decorated by an element of $A_{\partial}^{> 0}$ (in the model case) or by a single element of $\tilde{H}^{*}(\partial M)$ (in the combinatorial case);
    \item $W_w$ is the piece where $1$ has valence one, is decorated by the unit $1 \in A^{0}_{\partial}$, and is either connected by a wedge to some other terrestrial vertex or to a stub;
    \item $W_{\geq 1}$ are all remaining graphs.
  \end{itemize}

  Again we take a spectral sequence such that the leading differential is the piece ``pointing the most right'' in \eqref{equ:Wfiltr}, i.e., the piece $W_w\to W_{\geq 1}$.
  This is the piece contracting our wedge (unless at the wedge there is another wedge, or an $A$-decoration, or a stub).
  The map is surjective, and its kernel consists graphs of the following types:
  \begin{equation}
    \begin{tikzpicture}[baseline=.5cm]
      \draw[dotted] (0,0) -- (4,0);
      \node[ext] (a) at (1,0) {$1$};
      \node[int] (i) at (2,1) {};
      \node[ext] (b) at (3,0) {$j$};
      \draw[-latex] (i) edge (a) edge (b);
      \foreach \ang in {30,60,90}
      \draw (b) edge[dashed] +(\ang:1.5)  edge[latex-] +(\ang:.75);
    \end{tikzpicture}
  \end{equation}

  The next differential consists of the pieces acting on each summand in \eqref{equ:Wfiltr} separately.
  For $W_A$ the piece of the cohomology is $\tilde H(\p M)$.
  For $W_w$ only the piece remains where vertex $1$ is connected to one other external vertex via a wedge.
  The remaining differentials in the spectral sequence then let one of the wedges to an external vertex kill the piece of $H(W_A)\cong \tilde H(M)$ corresponding to the decoration by the top class.
  Together we reproduce \eqref{equ:fibercohom0bdry} and hence we are done.
\end{proof}

\subsection{Small model and canonical combinatorial model}
\label{sec.comp-two-approaches}

Note that the models discussed above depend on the following data:
\begin{enumerate*}[label={(\roman*)}]
  \item a CDGA model $A\to A_\p$ for the map $\p M\to M$;
  \item a Maurer--Cartan element $z+z^{\partial}+Z\in \mathfrak{g}_{A,A_{\partial}}$ (see Equation~\eqref{eq.g-a-a-del}).
\end{enumerate*}
We thus obtain two different kinds of similar but different models for $\Conf_{r,s}(M)$, both derived from the two examples of diagonal data in Example~\ref{exa.diag-data}.

In the first case, we assume that $(M, \partial M)$ admits a Poincaré--Lefschetz duality model as in Section~\ref{sec.pretty-nice}.
We may then take our diagonal data to the the CDGAs $(B, B_{\partial})$ together with their diagonal classes, and we obtain a gauge equivalence class of Maurer--Cartan elements in $z+z^{\partial}+Z \in \mathfrak{g}_{B,B_{\partial}}$.
The advantage of this model is that it is rather smaller than the next one, and can be used to obtain the generalization of the Lambrechts--Stanley model of configuration spaces~\cite{Idrissi2018b} in Section~\ref{sec.model-conf-spac}.
Its disadvantage is its cost in terms of connectivity and dimension assumption on $M$.

In the second case, we take the diagonal data obtained by setting $A = S(\tilde{H}^{*}(M) \oplus H^{*}(M, \partial M))$ and $A_{\partial} = S(\tilde{H}^{*}(\partial M))$.
The construction of $\SGraphs_{A,A_{\partial}}$ then depends on the choice of the Maurer--Cartan elements $z+z^{\partial}+Z \in \mathfrak{g}_{A,A_{\partial}}$ as before.
While this model is much bigger than the previous one ($A$ and $A_{\partial}$ are infinite dimensional), it has advantages for several purposes:
\begin{itemize}
  \item we only need to assume that $M$ is compact and orientable and that $\dim M\geq 3$ (for $\dim M=2$ the boundary is a union of circles, which can be handled by the constructions of \cite{CamposWillwacher2016});
  \item it agrees with spaces of Feynman diagrams of AKSZ theories studied in physics, and thus gives a topological meaning to these objects;
  \item the collection $\SGraphs_{A,A_{\p}}$ consists of quasi-free algebras, which is important for example when computing homotopy automorphisms of these objects.
\end{itemize}
We shall also see that the tree (i.e., 0-loop) piece of the Maurer--Cartan element $Z$ precisely encodes the real homotopy type of the map $\p M\to M$.

\begin{remark}\label{rmk:direct-map}
  Since the combinatorial model is cofibrant as an algebra, we can get a direct quasi-isomorphism from the combinatorial model to the small model in each arity, by general theory.
  However, the small model is not fibrant as a Hopf right comodule, so these general arguments do not necessarily produce a direct map of Hopf right comodules.
  To get a direct map between the two, we can proceed as in the following sketch of proof.
  Let $A = S(H(M) \oplus H(M,\p M))$ and $A_{\partial} = H(\p M)$.
  Picking a Poincaré--Lefschetz duality model $(B, B_{\partial})$, we can define $\SGraphs_{S(B), S(B_\partial)}$ like we had done for $\SGraphs_{A,A_{\partial}}$.
  By picking representatives of cohomology classes, we can get a quasi-isomorphism $\SGraphs_{A,A_{\partial}} \to \SGraphs_{S(B), S(B_\partial)}$.
  The latter model depends on a certain partition function, which can be reduced to its tree piece as in the next section.
  As explained in the next section, this tree piece encodes Massey products of $(B, B_{\partial})$, but since $(B, B_{\partial})$ is already a model for $(M, \partial M)$, all these Massey products vanish, so the partition function is trivial up to gauge equivalence.
  Using the general arguments of Section~\ref{sec.functoriality-models}, we can thus replace the partition function by the trivial one and obtain an isomorphic model $\SGraphs^{0}_{S(B), S(B_{\partial})}\cong \SGraphs_{S(B), S(B_\partial)}$, which can then be projected down to $\SGraphs_{B,B_{\partial}}$ by a quasi-isomorphism.
\end{remark}

\subsection{The global Maurer--Cartan element and homotopy invariance}
\label{sec.global-maurer-cartan}

In this section, we use the ``canonical combinatorial'' model approach, i.e., we use the diagonal data obtained by setting $A = S(\tilde{H}^{*}(M) \oplus H^{*}(M,\partial M))$ and $A_{\partial} = S(\tilde{H}^{*}(\partial M))$, see Example~\ref{exa.diag-data}.

Due to its definition as an integral, the Maurer--Cartan element $Z \in \SGC_{A,A_{\partial}}^{\vee}$ depends, a priori, on more than just the cohomology type of $M$ and the classes of the maps $H(B\SO(n))\to \Omega(M)$ and $H(B\SO(n-1))\to \Omega(\p M)$, but it is not necessarily clear how much information about $M$ it ``knows''.
We now show that $Z$ actually captures all the information about the real homotopy type of $M$.
We moreover show that under good conditions, the real homotopy type of $(M, \partial M)$ determines the gauge equivalence class of $Z$, and thus the real homotopy type of $\SFM_{M}$ (including the real homotopy type of configuration spaces of $M$).

We generally cannot give an explicit formula for the integrals defining this MC element, except perhaps in simple enough cases.
However, one can understand the loop order zero piece of $Z$, i.e., the ``classical piece'' in the physics slang.
As shown in Corollary~\ref{cor:ZtreedM} below, this piece just encodes the real homotopy types of the spaces $M$ and $\p M$ and the real homotopy type of the inclusion $\p M\hookrightarrow M$.
Furthermore, if $M$ and $\p M$ are simply connected and $n=\dim M\geq 5$, then we will see that the classical piece is already not deformable and fully determines the MC element $Z$.

\subsubsection{Loop order zero (classical) piece of $Z$}

For ease of notation we will define the Lie algebra (with $A = S(\tilde{H}^{*}(M) \oplus H^{*}(M,\partial M))$, $A_{\partial} = S(\tilde{H}^{*}(\partial M))$ the diagonal data of the canonical combinatorial model):\index{KGCM@$\KGC_{M}$}
\begin{equation}
  \KGC_{M} \coloneqq \SGC_{A,A_{\partial}}^{\vee}.
\end{equation}
We first consider its quotient\index{KGCMtree@$\KGC_{M}^{\tree}$}
\begin{equation}
  \KGC_{M}^{\tree} \coloneqq \KGC_{M} / I,
\end{equation}
where $I$ is the ideal spanned by all graphs with at least one loop.
Our MC element $Z \in \KGC_{M}$ projects onto an MC element $Z^{\tree} \in \KGC_{M}^{\tree}$.

Let $\hCom = \Omega(\Com^{\text{¡}})$\index{Cominf@$\hCom$} be the operad governing commutative and homotopy associative algebras (also known as $C_{\infty}$-algebras), obtained by Koszul duality from the operad $\Com$ of commutative and associative algebras.

Consider the Lie algebra $\alg g$ which governs (i.e., whose Maurer--Cartan elements are) the following data:
a $\hCom$-structure on $H(M)$;
a $\hCom$ structure on $H(\p M)$;
an $\infty$-map $H(M)\to H(\p M)$ between these $\hCom$ algebras.
More concretely,
\[
  \alg g \coloneqq \Harr(\tilde H(M), H(M)) \oplus \Harr(\tilde H(M), H(\p M)) \oplus \Harr(\tilde H(\p M),H(\p M)),
\]
where we work with the normalized Harrison complexes for simplicity (the map from the unnormalized one is a quasi-isomorphism~\cite[Proposition~1.6.5]{Loday1992}).
We write $\alg g_{1}$, $\alg g_{2}$, $\alg g_{3}$ for the three summands in the definition of $\alg g$.

Using Poincaré duality, $\alg g$ may as well be interpreted as a graph complex, whose elements are:
\begin{enumerate}
  \item rooted trees with leaves labelled by elements of $\tilde H_\bullet(M)$ and root labelled by $H_\bullet(M, \p M)$, governing the $\hCom$-structure on $H(M)$,
  \item similar rooted trees, but with root labelled by $H_\bullet(\p M)$, governing the map, and
  \item rooted trees with leaves labelled by $\tilde H_\bullet(\p M)$ and root by $H_\bullet (\p M)$, governing the $\hCom$-structure on $H(\p M)$.
\end{enumerate}
Keep in mind that the trees in the Harrison complexes are ``Lie trees'' in the sense that they are $3$-valent and satisfy the IHX relations.

We claim that we have a map
\beq{equ:MCmap}
\MC(\KGC_M^{\tree}) \to \MC(\alg g).
\eeq
Concretely, given a Maurer--Cartan element on the left we may produce a Maurer--Cartan element on the right as follows.
First recall that given a contraction data
\begin{equation}
  \begin{tikzcd}
    H(M) \ar[shift left]{r}{\iota} & \OmPA(M) \ar[shift left]{l}{p} \ar[loop right]{l}{h}
  \end{tikzcd}
\end{equation}
we may put a $\hCom$-structure on $H(M)$ by the homotopy transfer theorem, and simultaneously extend the morphisms $p$ and $\iota$ to $\hCom$-quasi-isomorphisms, see~\cite[Section~10.3]{LodayVallette2012} or~\cite{Berglund2014}.
Furthermore, the $\hCom$ structure and the morphisms are given by explicit combinatorial formulas built from $\iota,p,h$ only.
For example, the $\hCom$ operation associated to a rooted tree is obtained by interpreting the tree as a composition tree, with a map $h$ applied on any edge, and multiplication in $\OmPA(M)$ on vertices, see loc. cit. for details.

In particular, given contraction data for $\OmPA(M)$ and $\OmPA(\p M)$ as above, we may build a MC element in $\alg g$, such that the $\hCom$-structures on $H(M)$ and $H(\p M)$ are those given by transfer, and the $\hCom$ map is the composition of $\hCom$-morphisms
\begin{equation}
  H(M) \to \OmPA(M) \to \OmPA(\p M) \to H(\p M).
\end{equation}
Furthermore, note that we may interpret the homotopy on $M$ as an edge, the homotopy on $\p M$ as a boundary edge.
Hence the $\hCom$ map is combinatorially completely determined by the numeric values of tree graphs such as occurring in $Z_M^{\tree}$.
This defines the map \eqref{equ:MCmap}.
Note that it is given by explicit combinatorial formulas, whose precise form will not be too relevant for us.

Given \eqref{equ:MCmap} we may use graded polarization as in~\cite[Section~1.3]{FresseTurchinWillwacher2017} to obtain an $L_\infty$ map
\beq{equ:KGCgmap}
\KGC_M^{\tree} \to \alg g.
\eeq

We will not need the explicit combinatorial formula for the map \eqref{equ:KGCgmap} is not so important.
Let us describe the leading order pieces, which will be enough to prove that we have an $L_{\infty}$ quasi-isomorphism:
\begin{itemize}
  \item The coefficient of a rooted tree $T\in \alg g_1$ is the same as the coefficient of the identical tree in $\KGC_M$, with all edges oriented towards the root.
  \item The coefficient of a rooted tree $T\in \alg g_3$ is the same as the coefficient of the identical tree in $\KGC_M$, where all vertices are terrestrial and edges are interpreted as wedges in the sense of Figure~\ref{fig:wedge-stub}.
  \item The coefficient of a rooted tree $T\in \alg g_2$ is a sum of coefficients of various trees in $\KGC_M$, but the piece with fewest terrestrial vertices is the same tree as $T$, with edges oriented towards the root, and with the root connected to a single terrestrial vertex with the root decoration. For example:
        \begin{equation}
          \begin{tikzpicture}[yscale=.5, baseline=1cm]
            \node {$\alpha$} [grow'=up]
            child {
                child {
                    child {}
                    child {}
                  }
                child {}
              };
          \end{tikzpicture}
          \leadsto
          \begin{tikzpicture}[yscale=.5, baseline=1cm]
            \node[int, label={above right}:{$\alpha$}] (i) {} [grow'=up]
            child[<-] { node[int] {}
                child {
                    node[int] {}
                    child { node[int] {} }
                    child { node[int] {} }
                  }
                child { node[int] {}}
              };
            \draw[dotted] ($(i)-(1,0)$) -- ($(i)+(1,0)$);
          \end{tikzpicture}
          + (\dots), \quad \alpha \in H^{*}(\partial M)
        \end{equation}
\end{itemize}

Furthermore, let us consider the MC element $m\in \alg g$ which is the image of $Z \in \MC(\KGC_{M})$, and the twisted $L_\infty$ morphism
\beq{equ:KGCgmap2}
\KGC_M^{\tree,Z} \to (\alg g^m)',
\eeq
where $(\alg g^m)'$ is the Lie subalgebra in which the products and lowest component of the map are fixed.

Knowing the leading order terms above suffices to show:
\begin{proposition}\label{prop:kgc-harr-qiso}
  The map \eqref{equ:KGCgmap2} constructed above is a quasi-isomorphism.
\end{proposition}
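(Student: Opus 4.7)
The plan is to establish the quasi-isomorphism by combining a filtration argument with standard tree-level computations in the spirit of Willwacher's analysis of $\GC_{n}$ and the Koszul duality between $\Com$ and $\Lie$. The leading-order terms listed immediately before the statement will give exactly the isomorphism on the $E_{1}$-page of a suitable spectral sequence.

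First, I would set up compatible decreasing filtrations on both sides. On $\KGC_{M}^{\tree,Z}$, I would filter by the total number of internal vertices plus edges of the underlying graph. Among the pieces of the differential, edge splitting and edge contraction decrease this quantity by fixed amounts, while the various twisting terms coming from $Z$ either preserve or strictly decrease it (since $Z$ itself is supported on graphs with at least some minimal complexity). On $(\alg{g}^{m})'$, I would filter by the number of internal edges of the Lie tree, so that the bar-type differential of the transferred $C_\infty$-structures contributes on the $E_{0}$-page while the twisting by $m$ and all higher Harrison brackets from the $\infty$-morphism lie in strictly lower filtration.

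Second, I would identify the $E_{1}$-page on both sides. On the $\alg{g}$-side this is direct: one obtains the sum of the three Harrison complexes $\alg{g}_{1} \oplus \alg{g}_{2} \oplus \alg{g}_{3}$ with only their internal (cooperadic) differential. On the $\KGC_{M}^{\tree}$-side, the associated graded splits according to the three types of trees that match the explicit leading-order formulas: purely aerial trees with edges oriented towards a root labeled by $H^{*}(M,\partial M)$; mixed trees with an aerial part grafted onto a single terrestrial vertex carrying an $H^{*}(\partial M)$-decoration; and purely terrestrial trees whose edges are wedges in the sense of Figure~\ref{fig:wedge-stub}. For each summand, the Koszul duality between $\Com$ and $\Lie$, realized concretely as the familiar identification between tree-level cooperadic graph complexes and normalized Harrison complexes (cf.\ the tree part of $\GC_{n}$), gives the required identification.

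Third, I would invoke the comparison theorem for spectral sequences: the filtration is exhaustive and bounded below in each cohomological degree (since the arity of any tree contributing to a fixed Harrison-degree is bounded), so an isomorphism on $E_{1}$ implies a quasi-isomorphism on total complexes. The hard part will be verifying carefully that all the auxiliary pieces of the differential coming from the $L_\infty$-structure (higher twisting terms from $Z$, higher Harrison brackets from the transferred $\hCom$-maps, and the compensating terms in the map \eqref{equ:KGCgmap} beyond the listed leading-order ones) indeed live in strictly lower filtration degree, so that the map of spectral sequences induced by \eqref{equ:KGCgmap2} agrees on $E_{1}$ with the tautological Koszul-duality identification described above. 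A secondary subtlety is the mixed summand $\alg{g}_{2}$, where the translation between Lie trees with a distinguished ``boundary root'' and aerial graphs with a single attached terrestrial decoration requires tracking signs and IHX relations coming from the Harrison normalization; this should be isolated in a standalone combinatorial lemma rather than folded into the main spectral sequence argument.
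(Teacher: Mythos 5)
Your overall strategy (filter, identify the $E_{1}$-page with the Harrison complexes, compare) is the right family of argument, but there is a genuine gap in the central step. You assert that on the associated graded of your filtration ``the $\KGC_{M}^{\tree}$-side splits according to the three types of trees'' matching $\alg g_{1}\oplus\alg g_{2}\oplus\alg g_{3}$. That is not true: the associated graded of a filtration by (vertices $+$ edges) is the \emph{entire} space of decorated Swiss-Cheese-type trees with a reduced differential, and the overwhelming majority of these graphs — e.g.\ trees with several terrestrial vertices joined by wedges, with aerial subtrees hanging off more than one of them, or aerial vertices carrying decorations in $H^{*}(M,\partial M)$ in non-root positions — do not correspond to any of your three distinguished shapes. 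The actual content of the proposition is precisely showing that all of these extra graphs cancel in cohomology, and your appeal to ``Koszul duality between $\Com$ and $\Lie$ for each summand'' does not address the mixed graphs, which belong to no single summand. Your flagged ``hard part'' (filtration bookkeeping for the higher terms of $Z$ and of the transferred $\hCom$-maps) is comparatively routine; the missing work is the $E_{1}$ computation itself.

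The paper's proof organizes this differently, and the difference matters. It first filters by the number of \emph{aerial} vertices, which splits off the purely terrestrial subcomplex $\GC_{\partial M}^{\tree}$; Lemma~\ref{lem:tree to harr} (itself a two-step spectral sequence: reduction to trivalent Lie trees modulo IHX, then a rooting argument on decorations) identifies its cohomology with $\Harr(\tilde H(\partial M),H(\partial M))=\alg g_{3}$. Only then does one treat the remaining summand $A$ of graphs with at least one aerial vertex, where the differential creating a single aerial vertex collapses the terrestrial structure, and a second application of the Lemma-\ref{lem:tree to harr}-type reductions produces rooted Lie trees whose root decoration is either a $\gamma$ (contributing to $\alg g_{1}$) or an $\alpha$ (contributing to $\alg g_{2}$). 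In particular $\alg g_{1}$ and $\alg g_{2}$ emerge \emph{together} from the same residual complex rather than as separate summands of an associated graded, which is why your plan to isolate $\alg g_{2}$ in ``a standalone combinatorial lemma'' about a distinguished boundary root does not match any subcomplex that actually exists before passing to cohomology. To repair your argument you would need to replace the claimed splitting by a sequence of spectral sequences of this kind.
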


For this we consider the following Lemma from~\cite[Section~8]{CamposWillwacher2016}.

\begin{lemma}[{\cite[Definition/Proposition~61]{CamposWillwacher2016}}]\label{lem:tree to harr}
  The quotient map $\GC_{\p M}^{\tree}\to \Harr(\tilde H(\p M),H(\p M))$ is a quasi-isomorphism.
\end{lemma}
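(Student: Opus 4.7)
The plan is to identify both complexes as computing the same derived invariant of the pair $(\tilde H(\p M), H(\p M))$, namely the space parameterizing $\hCom$-coderivations, and then to compare them via a spectral sequence argument.

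First I would make precise the description of $\GC_{\p M}^{\tree}$ in this genus zero setting: elements are (isomorphism classes of) trees with a distinguished ``root'' vertex carrying a decoration by $H(\p M)$ and other vertices decorated by $\tilde H(\p M)$, with the differential given by the sum of the edge-splitting piece (which replaces an internal edge by two new decorations via the diagonal class and reconnects) and the edge-contracting piece (which merges two adjacent vertices, multiplying their decorations in the CDGA structure of $H(\p M)$). Dually, this corresponds precisely to describing $\hCom$-cooperations on $\tilde H(\p M)$ landing in $H(\p M)$: by Koszul duality, cooperations of the cooperad $\Com^{\text{¡}}$ are modeled by ``Lie trees'' modulo IHX, which is exactly the shape of trees appearing in $\Harr(\tilde H(\p M), H(\p M))$.

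Next I would set up a comparison map from $\GC_{\p M}^{\tree}$ to $\Harr(\tilde H(\p M), H(\p M))$ by antisymmetrizing/polarizing vertices in the sense described in the preceding paragraph of the paper: a tree in the graph complex is sent to the corresponding rooted Lie tree whose leaves carry the non-root decorations and whose root carries the root decoration. To check this is a chain map one verifies that the edge-contracting part of the graph differential matches the Harrison differential (which encodes the commutative product on $H(\p M)$), while the edge-splitting part vanishes after polarization because the diagonal class pairs trivially with the antisymmetrized Lie tree relations.

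The quasi-isomorphism would then be established by a spectral sequence: filter both complexes by total number of internal vertices (or equivalently by number of edges). On the associated graded the edge-splitting part of the graph differential vanishes, and one is left only with the contracting part on the graph side and with the intrinsic Harrison differential on the other side. At this point the identification of both $E^1$ pages reduces to the classical statement that the cobar of the Koszul dual cooperad $\Com^{\text{¡}}$, applied to the commutative algebra $H(\p M)$, computes its Harrison complex—an instance of Koszul duality for the operad $\Com$. Convergence is ensured by the filtration being bounded below and exhaustive in each total degree.

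The main obstacle I expect is bookkeeping for the signs, degree shifts, and especially the verification that the IHX relations appearing in the Harrison complex correspond exactly to the automorphism relations on trees in $\GC_{\p M}^{\tree}$ (together with the image of the edge-splitting differential modulo the filtration). This is a classical but delicate verification, and is presumably the reason the statement is explicitly attributed to Section 8 of \cite{CamposWillwacher2016}, which I would invoke rather than redo in detail.
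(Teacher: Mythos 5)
Your strategy captures only half of the paper's argument, and the missing half is hidden by a mis-description of the source complex. You describe $\GC_{\p M}^{\tree}$ as consisting of trees ``with a distinguished root vertex carrying a decoration by $H(\p M)$ and other vertices decorated by $\tilde H(\p M)$'' --- but that is a description of the \emph{target} $\Harr(\tilde H(\p M),H(\p M))$. The source $\GC_{\p M}^{\tree}$ consists of \emph{unrooted} trees of arbitrary valence whose vertices each carry an arbitrary element of $S(\tilde H(\p M))$ (zero, one, or several decorations per vertex). The paper therefore factors the quotient map as $\GC_{\p M}^{\tree}\xrightarrow{q}\GC_{\p M}^{\Lie}\xrightarrow{r}\Harr(\tilde H(\p M),H(\p M))$: the step $q$ (kill $\geq 4$-valent vertices and impose IHX, i.e.\ the $\Lie_\infty\to\Lie$ comparison) is essentially what your spectral sequence / Koszul-duality argument addresses, but the step $r$ --- which sums over all ways of \emph{rooting} a decoration in $H(\p M)$ and redistributes multiple decorations on a vertex onto separate leaves --- is a genuinely separate quasi-isomorphism. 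Proving it requires its own filtration argument, where the leading differential splits a vertex decorated by a product $\omega=\alpha\beta$ into a trivalent vertex with decorations $\alpha$ and $\beta$, followed by a filtration on the number of non-unital decorations. Your proposal, having assumed the source is already rooted with one decoration per leaf, never confronts this step, so it does not establish the lemma as stated.

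A secondary inaccuracy: the reason the ``splitting'' piece of the differential (joining two decorations by a new edge via the diagonal class) is absent from $\GC_{\p M}^{\tree}$ is not that ``the diagonal class pairs trivially with the antisymmetrized Lie tree relations''; it is simply that this operation raises the loop order by one on a connected graph, hence vanishes in the quotient by graphs with at least one loop. This is minor compared to the rooting issue, but the stated justification would not survive scrutiny.
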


\begin{proof}
  The map can be written as a composition of two quasi-isomorphisms
  \begin{equation}
    \GC_{\p M}^{\tree} \stackrel{q}{\to} \GC_{\p M}^{\Lie} \stackrel{r}{\to}  \Harr(\tilde H(\p M),H(\p M)),
  \end{equation}
  where the space of rooted Lie trees $\GC_{\p M}^{\Lie}$ is the quotient of $\GC_{\p M}^{\tree}$ by graphs that are $\geq 4$-valent and by the IHX relations.

  Let us first check that the quotient map $q : \GC_{\p M}^{\tree} \to \GC_{\p M}^{\Lie}$ is a quasi-isomorphism.
  We can first filter both complexes by $\deg \Gamma - \# E_{\Gamma}$ for $\Gamma \in \GC_{\p M}^{\tree}, \GC_{\p M}^{\Lie}$, the map $q$ being obviously compatible with this filtration.
  On the associated graded, the differential increases the number of vertices by exactly one.
  We can then filter both complexes by the number of decorations.
  The associated graded of the first complex is then isomorphic to the complex $\bigoplus_{k \ge 0} \Lie_{\infty}(k)_{\Z/(k+1)\Z}$, while the second is isomorphic to $\bigoplus_{k \ge 0} \Lie(k)_{\Z/(k+1)\Z}$, where the action of $\Z/(k+1)\Z$ is given by the cyclic structures of the operads $\Lie_{\infty}$ and $\Lie$ (permuting the root with the other vertices in a circle).
  The map $q$ induces the usual quotient map $\Lie_{\infty} \to \Lie$.
  Since this map is a quasi-isomorphism, we can deduce that $q$ is a quasi-isomorphism.

  The map $r$ sends a graph to the sum over all possible ways of rooting a decoration in $H(\p M)$.
  One sees that this is a quasi-isomorphism by considering a spectral sequence in which the first differential sees only the splitting of a vertex decorated by $\omega \in H(\p M)$ into a 3-valent vertex decorated by $\alpha$ and $\beta$, where $\alpha \beta =\omega$.
  A second filtration by the number of non-unital decorations gives us the result.
\end{proof}

\begin{proof}[Proof of Proposition~\ref{prop:kgc-harr-qiso}]
  We start by considering a filtration by the number of aerial vertices.
  We split $\KGC_M^{\tree} = \GC_{\p M}^{\tree} \oplus A$, where $A$ is spanned by all other graphs.
  On the associated spectral sequence, if we restrict to the piece of the map from $\GC_{\p M}^{\tree} \to \Harr(\tilde H(\p M),H(\p M))$, we obtain the projection from the previous Lemma which is a quasi-isomorphism.

  On the second page it remains
  \begin{equation}
    H(A,d_0) \to \Harr(\tilde H( M),H( M))\oplus \Harr(\tilde H(M), H(\p M)),
  \end{equation}
  where the differential $d_1$ that creates a single aerial vertex now acts by producing Lie trees with decorations in $H(M)$.

  Next, by the same argument as Lemma \ref{lem:tree to harr} we obtain (after taking the corresponding spectral sequence) cyclic Lie trees on the left hand side.

  Finally, the identification is given by the $\gamma$-decorations contributing to the remaining piece of $\alg g_1$ and simultaneously the $\alpha$-decorations giving the remaining map piece of $\alg g_2$.
\end{proof}

As a consequence, we find that the Maurer--Cartan elements up to gauge equivalence are in 1:1 correspondence and hence arrive at the following corollary:

\begin{corollary}\label{cor:ZtreedM}
  The tree level part $Z^{\tree}$ of the MC element $Z$ encodes precisely the (naive) real homotopy type of the inclusion $\p M\to M$.
\end{corollary}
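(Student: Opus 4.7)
The plan is to deduce the corollary directly from Proposition~\ref{prop:kgc-harr-qiso} together with a Goldman--Millson type statement: an $L_\infty$-quasi-isomorphism between pro-nilpotent (or appropriately filtered) $L_\infty$-algebras induces a bijection on the sets of Maurer--Cartan elements modulo gauge equivalence. Applying this to the quasi-isomorphism $\KGC_M^{\tree,Z} \to (\alg g^m)'$ (interpreted as an $L_\infty$-quasi-isomorphism before twisting, so that the bijection of gauge equivalence classes is an honest consequence), we get that the gauge equivalence class of $Z^{\tree}$ on the left corresponds bijectively to a gauge equivalence class of MC elements on the right.

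The second step is to unpack the meaning of $\MC(\alg g)/\!\sim$. By construction $\alg g = \alg g_{1} \oplus \alg g_{2} \oplus \alg g_{3}$ is the semidirect sum of Harrison complexes, and its Maurer--Cartan elements are in bijection with triples consisting of a $\hCom$-structure on $H(M)$, a $\hCom$-structure on $H(\partial M)$, and a $\hCom$-morphism between them extending the restriction map on cohomology. Gauge equivalences of such MC elements correspond to $\hCom$-isomorphisms of triples. By the homotopy transfer theorem, this data is exactly the (naive) real homotopy type of the map $\partial M \to M$, in the sense that it recovers $(\OmPA(M), \OmPA(\partial M), \text{res})$ up to $\hCom$-quasi-isomorphism.

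Third, I would verify that the MC element $m \in \alg g$ obtained as the image of $Z$ under \eqref{equ:MCmap} is precisely the transferred $\hCom$-structure on cohomology, starting from any choice of contraction data for $\OmPA(M)$ and $\OmPA(\partial M)$ and any lift of the restriction map. This is immediate from the explicit combinatorial description of the map \eqref{equ:MCmap}, which reproduces the standard homotopy transfer formulas (products at vertices, homotopy on edges), with interior edges using the homotopy on $M$ and boundary edges using that on $\partial M$.

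The main technical obstacle is checking that the Goldman--Millson bijection actually applies in this setting. The Lie algebras involved are not nilpotent, but they both admit a complete descending filtration (by number of vertices in $\KGC_M^{\tree}$, respectively by arity of Harrison generators in $\alg g$) which is preserved by the $L_\infty$-morphism \eqref{equ:KGCgmap}. Hence one can either invoke the standard complete-filtered form of Goldman--Millson, or argue level by level using that the twisted $L_\infty$-morphism \eqref{equ:KGCgmap2} is compatible with these filtrations and is a quasi-isomorphism on each page of the associated spectral sequence. Once this is secured, the corollary follows: the tree part $Z^{\tree}$, up to gauge equivalence, is the same datum as the real homotopy type of $\partial M \hookrightarrow M$.
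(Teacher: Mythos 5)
Your proposal follows essentially the same route as the paper: the corollary is deduced from Proposition~\ref{prop:kgc-harr-qiso} via a Goldman--Millson type bijection on Maurer--Cartan elements modulo gauge, combined with the identification of $\MC(\alg g)/{\sim}$ with $\hCom$-structures and morphisms obtained by homotopy transfer. The paper states this deduction in one sentence, and your filling-in of the complete-filtration hypothesis needed for Goldman--Millson and of the verification that $m$ is the transferred structure is a correct elaboration of what the paper leaves implicit.
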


\subsubsection{Higher loop orders in high dimensions and homotopy invariance}
Let us assume now that $n=\dim M\geq 5$, and that $M$ and $\p M$ have no rational homology in degree 1.
Then by the Proposition \ref{prop:HKGCM} we see that there is no cohomology in $\KGC_M$ in higher loop orders in the relevant degrees, i.e., the map
\begin{equation}
  \KGC_M\to \KGC_M^{\tree}
\end{equation}
induces an isomorphism on cohomology in degrees $\geq 0$.
We find that in this case the spaces of MC elements modulo gauge in $\KGC_M^{Z}$ and $\KGC_M^{\tree,Z}$ are isomorphic.

The last thing left to check is the dependence on the Euler class, which appears in the expression of $z_{1}^{\partial}$ in Proposition~\ref{prop:locvanishing2}.
Given a CDGA model for the pair $(M, \partial M)$, the cohomology class of the Euler class is only defined up to a nonzero scalar.
Given two Euler class $E$ and $E'$, we can first apply a rescaling by the loop order, sending a graph $\Gamma$ to $\lambda^{g} \Gamma$ (where $g$ is the genus of the graph) for some $\lambda \in \R \setminus \{ 0 \}$.
The element $z_{0}^{\partial}$ is invariant under this gauge equivalence, and choosing $\lambda$ correctly it sends $E'$ to $E + d\alpha$ for some $\alpha$.
Then we can use $\alpha$ to produce a second gauge equivalence and show that, whichever Euler class we choose, we get quasi-isomorphic graph complexes $\SGraphs_{M}$.

We therefore obtain Corollary~\ref{cor:mainSC}:
\begin{corollary}
  \label{cor.inv-htp1}
  If $n=\dim M\geq 5$ and $H^1(M)=H^1(\p M)=0$ then the (naive) real homotopy type of $\SFM_M$ depends only on the real homotopy type of the inclusion $\p M\to M$.
\end{corollary}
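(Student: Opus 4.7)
The plan is to reduce the statement to a comparison of gauge equivalence classes of Maurer--Cartan elements. By Theorem~\ref{thm.main-qiso} together with the functoriality discussion of Section~\ref{sec.functoriality-models}, the real homotopy type of $\SFM_M$ (as a Hopf right multimodule) is captured by $(\SGraphs_{A,A_\partial}, \SGraphs_n^{A_\partial}, \Graphs_n^A)$, whose quasi-isomorphism class depends on exactly two pieces of data: the quasi-isomorphism type of the diagonal data $(A \xrightarrow{\rho} A_\partial, \Delta_A, \sigma_A)$, and the gauge equivalence class of the Maurer--Cartan element $Z + z^\partial + z$ in the deformation Lie algebra $\mathfrak{g}_{A,A_\partial}$ of Equation~\eqref{eq.g-a-a-del}.

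First I would work with the canonical combinatorial diagonal data $A = S(\tilde H^*(M) \oplus H^*(M,\partial M))$ and $A_\partial = S(\tilde H^*(\partial M))$ from Example~\ref{exa.diag-data}. The CDGAs $A$, $A_\partial$, together with $\rho$, $\Delta_A$ and $\sigma_A$, are built directly from the real cohomology of $M$ and $\partial M$ and the Poincaré--Lefschetz pairings, so they are manifestly invariants of the real homotopy type of the inclusion $\partial M \hookrightarrow M$. Similarly, the local Maurer--Cartan elements $z$ and $z^\partial$ are, by Proposition~\ref{prop:fromzu} and its Swiss-Cheese analogue, gauge equivalent to pullbacks of the universal elements $z_u$, $z_u^\partial$ along the classifying maps of the tangent bundles; under our simple-connectivity hypotheses these classifying maps are themselves determined by the real homotopy type of the pair.

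The remaining freedom is the gauge class of the global Maurer--Cartan element $Z \in \KGC_M$ (in the Lie algebra twisted by $z + z^\partial$). Under the projection $\KGC_M \twoheadrightarrow \KGC_M^{\tree}$ that kills all graphs with at least one loop, $Z$ maps to its tree part $Z^{\tree}$, and Corollary~\ref{cor:ZtreedM} identifies $Z^{\tree}$ precisely with the data of the real homotopy type of $\partial M \hookrightarrow M$. To close the argument I would invoke Proposition~\ref{prop:HKGCM} from the appendix: under $n \geq 5$ and $H^1(M) = H^1(\partial M) = 0$, the higher-loop part of $\KGC_M$ has vanishing cohomology in degrees $\geq 0$, so the projection $\KGC_M \to \KGC_M^{\tree}$ is a cohomology isomorphism in the relevant degrees. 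Applying a Goldman--Millson type argument to this filtered $L_\infty$-quasi-isomorphism then yields a bijection on sets of Maurer--Cartan elements modulo gauge, showing that the gauge class of $Z$ is determined by $Z^{\tree}$.

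The main obstacle here is really the graph-cohomology computation underlying Proposition~\ref{prop:HKGCM}; every other step is either formal (the Goldman--Millson reduction) or already established (Theorem~\ref{thm.main-qiso}, Corollary~\ref{cor:ZtreedM}). Once the loop-wise vanishing of $H^{\geq 0}(\KGC_M)$ is secured, the corollary follows by chaining these reductions: the real homotopy type of $\SFM_M$ is a function of the gauge class of $Z + z^\partial + z$, which is a function of $Z^{\tree}$, which is a function of the real homotopy type of the inclusion $\partial M \hookrightarrow M$.
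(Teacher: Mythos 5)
Your proposal matches the paper's own argument: the paper likewise reduces to the gauge class of the global Maurer--Cartan element $Z$, identifies its tree part with the real homotopy type of $\p M \hookrightarrow M$ via Corollary~\ref{cor:ZtreedM}, and uses Proposition~\ref{prop:HKGCM} to conclude that $\KGC_M \to \KGC_M^{\tree}$ is an isomorphism on cohomology in degrees $\geq 0$, hence a bijection on Maurer--Cartan elements modulo gauge. The reduction chain and the key inputs are identical to those in Section~\ref{sec.global-maurer-cartan}.
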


\section{A model for \texorpdfstring{$\mFM_M$}{mFM\_M} and \texorpdfstring{$\aFM_{\partial M}$}{aFM\_dM}}\label{sec:secondmodel}

The goal of this Section~is to construct a graphical model for the spaces $\mFM_M$ of Section~\ref{sec.ofmm}, i.e., for the configuration space of points in the interior of a manifold $M$ with non-empty boundary $\p M$.
Our model will be compatible with all algebraic structures, in particular with the action of $\aFM_{\p M}$, a model for which we construct as well.
We will not write the constructions in full detail, as they are quite similar to the constructions of $\Graphs_{A}$ and $\SGraphs_{A,A_{\partial}}$ that we already carried out.

\subsection{Graphical models \texorpdfstring{$\aGraphs_{A_{\partial}}$}{aGraphs\_Ad} and \texorpdfstring{$\mGraphs_{A}$}{mGraphs\_A}}
\label{sec:agraphs-mgraphs}

Throughout this section, we choose some diagonal data $(A \xrightarrow{\rho} A_{\partial}, \Delta_{A}, \sigma_{A}, g, g_{\partial})$ which maps into $M$ (see Section~\ref{sec.a-data} for the definition).
We also choose some compatible propagators on $\aFM_{\partial M}$ and $\mFM_{M}$ as in Sections~\ref{sec.propagator-afm} and~\ref{sec.propagator-mfm}.
We will also define $\sigma_{\partial}$ to be the image of $\sigma_{A}$ in $A_{\partial}^{\otimes 2}$, recalling that it is ``half'' the diagonal class of $\partial M$ (i.e., $\Delta_{A_{\partial}} = \sigma_{\partial} + \sigma_{\partial}^{21}$).

\subsubsection{Graphical model for $\aFM_{\partial M}$}

We define a graph complex $\aGraphs_{A_{\partial}}(r)$\index{aGraphsAd@$\aGraphs_{A_{\partial}}$} spanned by graphs of the following type:
\begin{itemize}
  \item there are $r$ numbered (``external'') vertices and a finite but arbitrary number of ``internal vertices'';
  \item each vertex is decorated by an element of $A_{\partial}$;
  \item edges are directed and have degree $n-1$, internal vertices have degree $-n$, and we add the degrees of the decorations to the degree of the graph;
  \item there are no connected components with only internal vertices.
\end{itemize}

We moreover have the following algebraic structures on $\aGraphs_{A_{\partial}}$:
\begin{itemize}
  \item the right coaction of $\Graphs_{n}^{A_{\partial}}$ is given by multiplication and subgraph contraction at an external vertex;
  \item the coproduct, which corresponds to the algebra structure on configuration spaces (by gluing), is obtained by decomposing the graph in two parts, and then replacing any edge connecting the two parts by $\sigma_\p$;
  \item we obtain a corresponding graph complex $\aGC_{A_{\partial}}$\index{aGCAd@$\aGC_{A_{\partial}}$} of connected graphs with no external vertices; the coalgebra structure yields a Lie bracket on the dual complex $\aGC^{\vee}_{A_{\partial}}$, which acts on on $\aGraphs_{A_{\partial}}$.
\end{itemize}

Finally, the differential is defined as follows.
The usual configuration space integral formulas give rise to a Maurer--Cartan element $w \in \aGC_{A_{\partial}}^{\vee}$, to be defined precisely in Equation~\eqref{eq:def-w} below.
The differential on $\aGraphs_{A_{\partial}}$ is given by edge contraction, plus a twist by $w$ (see Figure~\ref{fig.diff-tw-sgra} for an idea of how this works).
If anything produces a graph containing an internal component (i.e., a connected component with only internal vertices), then this component is replaced by a real coefficient given by $w$.

Note that this construction is functorial in terms of the diagonal data (see Section~\ref{sec.functoriality-models}).
Using the propagator of Section~\ref{sec.propagator-afm}, we may build maps
\begin{equation}
  \aGraphs_{A_{\partial}} \to \OmPA^{*}(\aFM_{\partial M})
\end{equation}
which are compatible with the algebraic structures (CDGA, module over the Fulton--MacPherson operad, $E_{1}$-algebra).

\subsubsection{Graphical model for $\mFM_{M}$}

We now consider a graph complex $\mGraphs_{A}(r)$\index{mGraphsA@$\mGraphs_{A}$}, spanned by graphs of the following type:
\begin{itemize}
  \item there are $r$ numbered (``external'') vertices and an finite but arbitrary number of ``internal vertices'';
  \item each vertex is decorated by an element of $A$;
  \item edges are directed and have degree $n-1$, internal vertices have degree $-n$, and we add the degrees of the decorations to the degree of the graph.
  \item there are no connected components with only internal vertices.
\end{itemize}

The following algebraic structures are defined on $\mGraphs_{A}$:
\begin{itemize}
  \item a right coaction of $\Graphs_{n}^{A}$, given by subgraph contraction at an external vertex using the multiplication from $A$;
  \item a coaction of $\aGraphs_{A_{\partial}}$, corresponding to the module structure on configuration spaces (again, by gluing), obtained by decomposing the graph in two parts, replacing any edge connecting the two parts by either $\sigma_{A}$ if the tip of the edge goes to the boundary and zero otherwise, and applying $\rho : A \to A_{\partial}$ to all the labels in the graph going to the boundary;
  \item we obtain a corresponding graph complex $\mGC_{A}$\index{mGCA@$\mGC_{A}$} of connected graphs as above with no external vertices; the dual space is a dg Lie algebra $\mGC^{\vee}_A$ which comes with an action of $\mGC_{A_{\partial}}$, i.e., we have a Lie algebra $\mGC^{\vee}_{A} \rtimes \aGC^{\vee}_{A_{\partial}}$, which acts on $\mGraphs_{A}$.
\end{itemize}

Finally we describe the differential.
The usual configuration space integral formulas give rise to a Maurer--Cartan element $W \in \mGC_{A}^{\vee}$, see Equation~\eqref{eq:def-W2} -- more precisely, $w+W$ is a MC element in the semidirect product  $\mGC_A^{\vee} \ltimes \aGC^{\vee}_{A_{\partial}}$.
The differential on $\mGraphs_{A}$ is given by edge contraction, plus replacing one edge by $\Delta_{A}$, plus a twist by $w+W$.

\begin{remark}
  If we consider the combinatorial diagonal data $A = S(\tilde{H}^{*}(M) \oplus H^{*}(M,\partial M))$ (see Section~\ref{sec.a-data}), then we may define an undirected version of the graph complex.
  In this version, edges are undirected (formally an edge is identified with $(-1)^{n}$ times its opposite), and decorations may only be in $\tilde{H}^{*}(M)$, not $H^{*}(M, \partial M)$.
  The differential of the edge is $\Delta_{A}$, and the boundary value of all edges is $\sigma_{A}$, regardless of whether the shaft or the tip of the edge goes to the boundary.
\end{remark}

This construction is again functorial in terms of the diagonal data, and using the propagator of Section~\ref{sec.propagator-mfm}, we build maps:
\begin{equation}
  \mGraphs_{A} \to \OmPA^{*}(\mFM_{M})
\end{equation}
which are compatible with all the algebraic structures (CDGA, module over the Fulton--MacPherson operad, module over the $E_{1}$-algebra $\aFM_{\partial M}$).

\subsection{The Maurer--Cartan element}\label{sec:oGCMC}
\subsubsection{Boundary}

For convenience, we let $N \coloneqq \partial M$.
Using the usual Feynman rules and the propagator, we can define a Maurer--Cartan element:\index{w@$w$}
\begin{equation}\label{eq:def-w}
  w \coloneqq \sum_{\gamma} \bigl( \scalebox{.8}{$\int_{\aFM_{N}} \bigwedge_{(ij)} \varphi(x_i,x_j)$} \bigr) \cdot \gamma \in \aGC^{\vee}_{A_{\partial}}.
\end{equation}

We have $w = w_{0} + ({\cdots})$ is a sum of\index{w0@$w_{0}$}
\beq{equ:oGCMC1}
w_0 \coloneqq
\sum_i
\begin{tikzpicture}
  \node[int,label={$\scriptstyle \alpha_i\beta_i$}] at (0,0) {};
\end{tikzpicture}
,
\eeq
plus terms with at least one edge or more than two decorations.

Note that the dg Lie algebra $\aGC^{\vee}_{A_{\partial}}$ is equipped with a descending complete filtration $\mF^p$ by loop order.
In particular, we note that the tree part $w_{\tree}$ of $w$ determines a Maurer--Cartan element in
\begin{equation}
  \aGC^{\vee}_{A_{\partial}} / \mF^1\aGC^{\vee}_{A_{\partial}}.
\end{equation}

The piece $w_{\tree}$ encodes precisely the cyclic $\hCom$-structure on $H(N)$, i.e., the real homotopy type of $N$ (cf.~\cite[Section~8]{CamposWillwacher2016}, where one finds an analogous computation).
A priori, there might be higher loop pieces in $w$.
We will next show, however, that they vanish up to gauge equivalence.

We consider first the case $\dim N=1$.
Since $N$ is compact, we have $N=S^1 \sqcup \cdots \sqcup S^1$.
In fact, it is sufficient to consider $N=S^1$, as the MC element for a union of circles is essentially just a sum of MC elements for each circle separately.
For $N=S^1$, we have that $\aFM_{N}$ is homotopy equivalent to the configuration space of points on the cylinder.
The spaces $\aFM_{N}$ and graphical models for them have been constructed in~\cite{Willwacher2016}.
Among other things, it was shown there that for a suitable choice of propagator
\begin{equation}
  w =
  \begin{tikzpicture}
    \node[int,label={$\scriptstyle \omega$}] at (0,0) {};
  \end{tikzpicture},
\end{equation}
where $\omega$ is a volume form on $S^1$, representing the top cohomology class.

Next consider the case $\dim N\geq 2$.
Recall from Proposition~\ref{prop:oGCNI} that the dg Lie subalgebra
\begin{equation}
  \aGC_{A_{\partial}}^{\vee,\geq 3} \subset (\aGC^{\vee}_{A_{\partial}}, \delta + [w_0,-]),
\end{equation}
spanned by diagrams all of whose vertices have valence at least 3, is quasi-isomorphic to the full dg Lie algebra in degrees $\geq 1$.
It follows that, up to gauge equivalence, we may assume that our MC element $w$ takes values in this dg Lie subalgebra.
However, by the degree counting result of Lemma~\ref{lem:oGCdegcounting}, we see that $\aGC_{A_{\partial}}^{\vee}$ has no elements in degree $1$ in loop order $\geq 1$.
Hence, we find that the tree piece of our MC element already agrees with the MC element.

Let us summarize these findings:

\begin{proposition}\label{prop:ozinvariance}
  The gauge equivalence class of the MC element $w$ is encoded by and encodes the real homotopy type of $N$.
  Furthermore, the MC element $w$ above is gauge equivalent to a Maurer--Cartan element
  \[
    w_0+ w'_{\tree}
  \]
  where $w'_{\tree} \in \aGC_{A_{\partial}}^{\vee,\geq 3}\subset (\aGC^{\vee}_{A_{\partial}}, \delta + [w_0,-])$ contains no graphs of loop order $\geq 1$.
\end{proposition}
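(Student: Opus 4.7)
The plan is to prove the two assertions separately: first, that $w$ is gauge equivalent to its tree part plus the leading term $w_0$; second, that the gauge class of this tree piece is precisely equivalent to the real homotopy type of $N$.

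For the reduction to $w_0 + w'_{\tree}$, I would split on $\dim N$. If $\dim N = 1$, then $N$ is a disjoint union of circles, and the MC element decomposes accordingly as a sum of contributions. For each $S^1$-component I would invoke the explicit computation from \cite{Willwacher2016}, which identifies $w$ with a single vertex decorated by the top class, exhibiting no higher-loop pieces (this is already in the spirit of the tree piece after gauge equivalence). If $\dim N \geq 2$, the argument proceeds by combining Proposition~\ref{prop:oGCNI} with a degree count. Proposition~\ref{prop:oGCNI} gives a quasi-isomorphism of dg Lie algebras from $\aGC_{A_{\partial}}^{\vee,\geq 3}$ into $(\aGC^{\vee}_{A_{\partial}}, \delta + [w_0,-])$ in degrees $\geq 1$. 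Applying Goldman--Millson (the relevant filtration by loop order and number of vertices is complete and nilpotent in each filtered piece), any MC element whose leading term is $w_0$ is gauge equivalent to one supported on the subcomplex of diagrams with all vertices of valence $\geq 3$.

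Now by Lemma~\ref{lem:oGCdegcounting}, the subspace of $\aGC_{A_{\partial}}^{\vee,\geq 3}$ of loop order $\geq 1$ in MC degree $1$ vanishes. Hence the higher-loop pieces of such a representative are forced to be zero, leaving a purely tree-level correction $w'_{\tree}$ as claimed.

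For the identification of $w_{\tree}$ with the real homotopy type of $N$, I would follow the same strategy as used later in the proof of Proposition~\ref{prop:kgc-harr-qiso}, which itself is modeled on \cite[Section~8]{CamposWillwacher2016}. Concretely I would construct a morphism of dg Lie algebras from the tree part $\aGC_{A_{\partial}}^{\vee,\tree}$ (with differential twisted by $w_0$) to the Harrison-type complex $\Harr(\tilde H(N), H(N))$ that governs cyclic $\hCom$-structures on $H(N)$. The map takes a decorated tree to the Lie tree obtained by applying Poincaré duality at the designated root; it factors as the composition of the projection to rooted Lie trees and a Harrison quasi-isomorphism, as in Lemma~\ref{lem:tree to harr}. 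This is a quasi-isomorphism, so Goldman--Millson gives a bijection of gauge equivalence classes of MC elements. The image of $w_{\tree}$ under this map is, by the homotopy transfer theorem applied to the contraction $H(N) \rightleftarrows \OmPA(N)$ induced by the chosen propagator on $\aFM_N$, precisely the cyclic $\hCom$ structure on $H(N)$ transferred from $\OmPA(N)$; and the gauge class of such a structure is exactly the real homotopy type of $N$.

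The main obstacle will be the degree-counting input Lemma~\ref{lem:oGCdegcounting}: one must verify carefully that in the appropriate shifted grading the subcomplex of $\geq 3$-valent diagrams has no Maurer--Cartan-degree cohomology in loop order $\geq 1$, which forces the higher-loop corrections to be trivial. The Harrison-complex identification is in principle routine given Lemma~\ref{lem:tree to harr}, but one should also check that the propagator chosen in Section~\ref{sec.propagator-afm} indeed realizes the transferred $\hCom$-structure on $H(N)$ via the standard Feynman-rule formulas — this is the point where the ``section'' $\sigma_\partial$ and the vanishing conditions on $\varphi$ play their role, ensuring that the relevant integrals compute exactly the transferred operations and not something gauge-inequivalent.
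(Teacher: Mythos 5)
Your proposal follows essentially the same route as the paper: the case split on $\dim N$ with the $S^{1}$ case handled by the explicit computation of \cite{Willwacher2016}, the reduction to $\aGC_{A_{\partial}}^{\vee,\geq 3}$ via Proposition~\ref{prop:oGCNI} (the paper leaves the Goldman--Millson invocation implicit), the vanishing of higher-loop corrections in MC degree $1$ via Lemma~\ref{lem:oGCdegcounting}, and the identification of the tree part with the cyclic $\hCom$-structure on $H(N)$ via the Harrison-complex comparison of \cite[Section~8]{CamposWillwacher2016}. The only difference is that you spell out the Harrison-complex step, which the paper handles by citation; the argument is correct.
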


\subsubsection{Bulk}

We may also define an element:\index{W@$W$}
\begin{equation}\label{eq:def-W2}
  W \coloneqq \sum_{\Gamma}\bigl( \scalebox{.8}{$\int_{\mFM_M} \bigwedge_{(ij)} \varphi(x_i,x_j)$} \bigr) \cdot \Gamma \in \mGC^{\vee}_A,
\end{equation}
so that the pair $(w,W)\in \aGC^{\vee}_{A_{\partial}} \ltimes \mGC^{\vee}_A$ is a Maurer--Cartan element.
Concretely, $W$ has the form $W = W_{0} + ({\cdots})$, where\index{W0@$W_{0}$}
\beq{equ:oGCMC2}
W_0 \coloneqq
\sum_i
\begin{tikzpicture}[baseline=-.65ex]
  \node[int,label={$\scriptstyle \gamma_i\gamma_i^*$}] at (0,0) {};
\end{tikzpicture},
\eeq
and $({\cdots})$ are terms with at least one edge or three or more decorations.

Once again the complex $\mGC_A^{\vee}$ is equipped with a descending complete filtration $\mF^p$ by loop order.
The tree piece $W_{\tree}$ extends $w_{\tree}$ above to encode a Maurer--Cartan element in the complex
\begin{equation}
  \aGC^{\vee}_{A_{\partial}} \ltimes \mGC_A /  \mF^1\left(\aGC_{A_{\partial}}^{\vee} \ltimes \mGC_A^{\vee}\right).
\end{equation}
This tree piece can be seen (as in Section~\ref{sec.global-maurer-cartan} above) to encode the real homotopy type of the map
\begin{equation}
  \p M\to M.
\end{equation}

Furthermore, we have the subcomplex of graphs all of whose vertices are at least trivalent
\begin{equation}
  \aGC_{A_{\partial}}^{\vee,\geq 3} \ltimes \mGC_A^{\vee,\geq 3} \subset (\aGC_{A_{\partial}}^{\vee} \ltimes \mGC_A^{\vee}, z_0+Z_0).
\end{equation}
Proposition~\ref{prop:incl-agc-mgc} shows that the inclusion is a quasi-isomorphism in degrees $\geq 0$, so that we may in particular gauge change our MC element $w-w_0+W-W_0$ deforming $w_0+W_0$ to one which is contained in the subcomplex.
But then, if $H^1(M)=0$ and $\dim M\geq 4$, the degree counting argument of Lemma~\ref{lem:oGCdegcounting2} shows that there are no elements in that subcomplex of degree 1 and loop orders $\geq 1$. It follows that our gauge changed MC element is in fact equal to the tree piece.
We summarize this as:
\begin{proposition}
  \label{prop:wW-invar}
  If $\dim M\geq 4$ and $H^1(M)=0$, then the gauge equivalence class of the Maurer--Cartan element $w+W \in \aGC_{A_{\partial}}^{\vee} \ltimes \mGC_A$ encodes and is encoded by the real homotopy type of the inclusion $\p M\to M$.
  We can find representative without terms of loop order $\geq 1$, and all vertices in graphs at least trivalent, with the exception of those occurring in $w_0+W_0$.
\end{proposition}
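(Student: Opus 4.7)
The plan is to prove the statement in two moves: first reduce $w+W$ by gauge transformations to a representative concentrated in trivalent, loop-order-zero graphs (plus the distinguished term $w_0+W_0$); then show that such a tree-level Maurer--Cartan element records precisely the real homotopy type of $\partial M \hookrightarrow M$.

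For the first move, I would start from the semidirect product $(\aGC_{A_\partial}^{\vee} \ltimes \mGC_A^{\vee}, \delta + [w_0+W_0,-])$ and invoke Proposition~\ref{prop:incl-agc-mgc}, which states that the subcomplex $\aGC_{A_\partial}^{\vee,\geq 3} \ltimes \mGC_A^{\vee,\geq 3}$ of graphs with all vertices at least trivalent includes quasi-isomorphically in non-negative degrees. By the Goldman--Millson type argument used repeatedly in this paper (compare Section~\ref{sec.functoriality-models}), this quasi-isomorphism induces a bijection on gauge equivalence classes of Maurer--Cartan elements, so I can gauge-transform the perturbation $(w-w_0)+(W-W_0)$ into the trivalent subcomplex. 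Next, since Maurer--Cartan elements live in degree $1$, I apply the degree-counting Lemma~\ref{lem:oGCdegcounting2}, whose hypotheses $\dim M \geq 4$ and $H^1(M) = 0$ are exactly those assumed, to conclude that this trivalent subcomplex has no elements of degree $1$ at loop order $\geq 1$. Hence the gauge-transformed representative has no positive-loop-order contributions, and only tree pieces survive.

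For the second move, I would adapt the Harrison-complex identification used for $Z^{\tree}$ in Section~\ref{sec.global-maurer-cartan} to the present setting. The tree-level classes $w_{\tree}$ and $W_{\tree}$ together define a Maurer--Cartan element in the quotient
\[
(\aGC_{A_\partial}^{\vee} \ltimes \mGC_A^{\vee}) / \mathcal{F}^1,
\]
and I would construct an $L_\infty$ quasi-isomorphism from this quotient to the Lie algebra
\[
\mathfrak{h} \coloneqq \Harr(\tilde H(M), H(M)) \oplus \Harr(\tilde H(M), H(\partial M)) \oplus \Harr(\tilde H(\partial M), H(\partial M))
\]
governing a $\hCom$-structure on $H(M)$, on $H(\partial M)$, and an $\infty$-morphism between them. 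The construction parallels Proposition~\ref{prop:kgc-harr-qiso}: filter first by the number of bulk (aerial) vertices, split off the pure boundary piece and identify it with $\Harr(\tilde H(\partial M), H(\partial M))$ via Lemma~\ref{lem:tree to harr}, then analyze the remaining associated graded by iterating the same argument, matching cyclic Lie trees with bulk decorations to $\Harr(\tilde H(M), H(M))$ and trees with boundary root to $\Harr(\tilde H(M), H(\partial M))$. Standard deformation theory then translates the gauge equivalence class of $w+W$ into a homotopy class of $\hCom$-maps $H(M) \to H(\partial M)$ lifting the restriction, i.e.\ the real homotopy type of the inclusion $\partial M \hookrightarrow M$.

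I expect the genuine obstacle to be the bookkeeping in this second move, specifically setting up the $L_\infty$ quasi-isomorphism in the mixed bulk/boundary situation so that the orientation conventions for edges straddling the boundary (which are decorated by $\sigma_A$, not $\Delta_A$) and the application of $\rho \colon A \to A_\partial$ to collapsed subgraphs line up with the differential on the Harrison complex for the map. This requires setting up a spectral sequence filtered both by the number of aerial vertices and by the number of non-unital decorations, and running the transferred $\hCom$-homotopy data (as in~\cite[Section~10.3]{LodayVallette2012}) through the graphical formulas. Once these combinatorics are settled, the remainder is a direct application of the Goldman--Millson theorem together with the tree-to-Harrison comparison already established in the previous section for the closed case.
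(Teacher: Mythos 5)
Your proposal follows essentially the same route as the paper: gauge the perturbation of $w_0+W_0$ into the trivalent subcomplex via Proposition~\ref{prop:incl-agc-mgc}, kill the positive-loop-order part by the degree counting of Lemma~\ref{lem:oGCdegcounting2} (together with the boundary bound of Lemma~\ref{lem:oGCdegcounting}), and identify the surviving tree piece with the Harrison-complex data encoding the real homotopy type of $\partial M \hookrightarrow M$ as in Section~\ref{sec.global-maurer-cartan}. The paper states the second step only by reference to that section, so your more detailed sketch of the $L_\infty$ comparison is a faithful expansion rather than a different argument.
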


\subsection{Cohomology, and proof of quasi-isomorphism property}
\label{sec:afm-mfm-main-thm}

\begin{proposition}
  The map $\aGraphs_{A_{\partial}} \to \OmPA(\aFM_{N})$ of the preceding section is a quasi-isomorphism.
\end{proposition}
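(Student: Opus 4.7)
The plan is to adapt the inductive strategy from the proof of Theorem~\ref{thm.main-qiso}, replacing $\SFM_M$ by $\aFM_N$ throughout. We proceed by induction on the arity $r$. The base case $r = 0$ is immediate: $\aFM_N(0)$ is a point, and $\aGraphs_{A_\partial}(0) \cong \R$ (the empty graph is the only admissible one). For the inductive step we apply Proposition~\ref{prop:fiberqiso} to the square
\[
\begin{tikzcd}
\aGraphs_{A_\partial}(r) \rar{\sim} \dar & \OmPA(\aFM_N(r)) \dar \\
\aGraphs_{A_\partial}(r+1) \rar & \OmPA(\aFM_N(r+1))
\end{tikzcd}
\]
reducing matters to a quasi-isomorphism of derived tensor products with $\R$. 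Since $\aGraphs_{A_\partial}(r+1)$ is free as an $\aGraphs_{A_\partial}(r)$-module, these derived tensor products agree with the ordinary ones and, on the topological side, compute the cohomology of the homotopy fiber of $\aFM_N(r+1) \to \aFM_N(r)$.

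The forgetful map $\aFM_N(r+1) \to \aFM_N(r)$ is a fibration whose fiber, after fixing the $\R_{>0}$-scaling using the first $r$ points, is homotopy equivalent to the punctured cylinder $(N \times \R_{>0}) \setminus \underline{r}$. A standard inductive Mayer--Vietoris argument (the cylinder is homotopy equivalent to $N$, and each puncture in the $n$-manifold $N \times \R_{>0}$ contributes a linking class) yields
\[
H^*\bigl((N \times \R_{>0}) \setminus \underline{r}\bigr) \cong H^*(N) \oplus (\R[1-n])^r,
\]
where each new class sits in degree $n-1$.

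On the combinatorial side we compute the cohomology of $V \coloneqq \aGraphs_{A_\partial}(r+1) \otimes_{\aGraphs_{A_\partial}(r)} \R$ by a decomposition analogous to \eqref{equ:Vfiltr}. We split $V$ into four pieces according to the local configuration at the new external vertex $r+1$: an isolated vertex decorated by $1 \in A_\partial^0$, contributing the summand $\R \subset H^*(N)$ via the evaluation map; an isolated vertex decorated by an element of $A_\partial^{>0}$ (or of $\tilde H^*(N)$ in the combinatorial case), contributing the rest of $H^*(N)$; a univalent vertex decorated by $1$ with its unique edge being either a stub or a direct connection to another external vertex, contributing the $(\R[1-n])^r$ summand; and all other graphs. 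A spectral sequence whose leading differential contracts the edge incident to vertex $r+1$ is surjective from the third piece onto the fourth, and a secondary analysis of the remaining differentials then leaves precisely $H^*(N) \oplus (\R[1-n])^r$ in cohomology, matching the fiber side.

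The main obstacle will be keeping track of the orientation of edges (which breaks the aerial/terrestrial symmetry used in §\ref{sec:sfm-main-thm}) and of the possible presence of stubs coming from the Maurer--Cartan element $w$, whose tree/loop decomposition is controlled by Proposition~\ref{prop:ozinvariance}. One must verify that the pieces of the differential induced by $w$ (and in particular by its non-$w_0$ part) do not spoil the first-page surjectivity, by showing that they only produce summands already killed by the leading edge-contraction differential. Once these combinatorial subtleties are handled in a manner paralleling §\ref{sec:sfm-main-thm}, the quasi-isomorphism follows by induction.
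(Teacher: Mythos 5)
Your proposal follows essentially the same route as the paper: induction on arity via Proposition~\ref{prop:fiberqiso}, identification of the fiber with $N\times I$ minus $r$ points and its cohomology $H(N)\oplus(\K[1-n])^{r}$, and a Lambrechts--Voli\'c-style splitting of the ordinary tensor product by the local configuration at the new external vertex, with the edge-contraction piece giving the leading surjective differential. The only cosmetic differences are that the paper first filters by the total number of edges (which is exactly how it disposes of the higher terms of $w$ that you flag as a potential obstacle) and uses a three-piece rather than four-piece splitting.
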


\begin{proof}
  We proceed by an induction on the arity $r$, using a proof technique similar to the one of Section~\ref{sec:sfm-main-thm}.
  For the induction step, we apply Proposition~\ref{prop:fiberqiso} to the diagram
  \begin{equation}
    \begin{tikzcd}
      \aGraphs_{A_{\partial}}(r) \ar{r}{\sim} \ar{d} & \OmPA(\aFM_{N}(r)) \ar{d} \\
      \aGraphs_{A_{\partial}}(r+1) \ar{r} & \OmPA(\aFM_{N}(r+1)).
    \end{tikzcd}
  \end{equation}
  Note that
  \begin{equation}
    \OmPA(\aFM_{N}(r+1)) \lotimes_{\OmPA(\aFM_{N}(r))} \K
  \end{equation}
  is a model for the homotopy fiber, but since $\aFM_{N}(r+1)\to \aFM_{N}(r))$ is a fibration it is a model for the fiber.
  This fiber is just $N\times I \setminus *^{\sqcup r}$, the manifold $N\times I$ with $r$ points removed.
  As in \eqref{equ:fibercohom0}, the cohomology is easy to compute by an Mayer--Vietoris inductive argument and agrees with
  \beq{equ:fibercohom}
  H(N) \oplus (\K[1-n])^{\otimes r}.
  \eeq
  On the other hand we compute
  \begin{equation}
    \aGraphs_{A_{\partial}}(r+1) \lotimes_{\aGraphs_{A_{\partial}}(r)} \K.
  \end{equation}
  Since the left-hand side is a free module over the algebra, the homotopy tensor product agrees with the ordinary one.
  This in turn can be identified with graphs, where graphs which have internally connected components not connected to vertex $1$ are evaluated using the map $\aGraphs_{A}(r) \to \K$.

  To compute the cohomology of that complex (say $V$), we take a filtration by the number of edges.
  All pieces of the differential reduce that number.
  The differential on the first page of the spectral sequence is given by the piece of the differential killing exactly one edge.
  It has two pieces: The contraction of an edge, plus letting a vertex attached to a dingle edge go to infinity.

  We next mimic the trick of~\cite{LambrechtsVolic2014}.
  We impose another filtration by the arity of the first vertex.
  \begin{equation}
    V=V_0 \oplus V_1 \oplus V_{\geq 1}.
  \end{equation}
  As in loc.\ cit.\ the piece of the differential  $V_1\to V_{\geq 1}$ is surjective, and the kernel consists of graphs such that the first vertex has valence 1 and is connected to either another external vertex or is decorated by $A_{\partial}^{>0}$.
  Note also that $V_0=\K$.
  The spectral sequence abuts here, so we that $H(V)$ agrees with \eqref{equ:fibercohom}.
  Hence invoking Proposition~\ref{prop:fiberqiso} we are done.
\end{proof}

\begin{remark}
  Together with the characterisation of the MC element $w$ in Proposition \ref{prop:ozinvariance} the above result shows in particular that the real homotopy type of the configuration space of points on $N\times I$ only depends on the real homotopy type of $N$, without further conditions on $N$.
  It is interesting to compare this result to the main result of~\cite{RaptisSalvatore2016}, who show that the homotopy type of (a space related to) the configuration space of two points on $N \times X$, for $X$ contractible and not equal to a point, is a homotopy invariant of $N$.
  Both results are not true without taking a product with the contractible space $I$ or $X$~\cite{LoSa2005}.
  Hence both results confirm the picture that thickening our manifold takes away the ``interesting information'' in the homotopy type of the configuration space.
\end{remark}

We now turn our attention to configurations in $M$.

\begin{proposition}
  The map $\mGraphs_{A} \to \OmPA(\mFM_{M})$ defined above is a quasi-isomorphism.
\end{proposition}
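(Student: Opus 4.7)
The plan is to mirror the induction strategy just used for $\aGraphs_{A_\partial} \to \OmPA(\aFM_N)$, substituting the fibration $\mFM_M(r{+}1) \to \mFM_M(r)$ for its boundary analogue. I would proceed by induction on the arity $r$, the base case $r=0$ being trivial since $\mFM_M(0) = \mathrm{pt}$ and $\mGraphs_A(0) = \R$ (after evaluation of vacuum diagrams through $W$). For the inductive step, I would invoke Proposition~\ref{prop:fiberqiso} applied to the square
\[
\begin{tikzcd}
\mGraphs_A(r) \ar{r}{\sim} \ar{d} & \OmPA(\mFM_M(r)) \ar{d} \\
\mGraphs_A(r+1) \ar{r} & \OmPA(\mFM_M(r+1)),
\end{tikzcd}
\]
whose top arrow is a quasi-isomorphism by induction.

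On the topological side, the forgetful map $\mFM_M(r{+}1) \to \mFM_M(r)$ is a fibration, so $\OmPA(\mFM_M(r{+}1)) \lotimes_{\OmPA(\mFM_M(r))} \R$ computes the cohomology of the fiber. This fiber is homotopy equivalent to the interior $\mathring M$ with $r$ points removed. A repeated application of Mayer--Vietoris, together with the vanishing $H^n(M;\R)=0$ which follows from $\partial M \neq \varnothing$, yields
\[
H^*(\mathring M \setminus \{r \text{ points}\}) \;\cong\; H^*(M) \oplus (\R[1-n])^{r}.
\]

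On the algebraic side, $\mGraphs_A(r{+}1)$ is free as a module over $\mGraphs_A(r)$ (by superposition at external vertices), so the derived tensor product collapses to the ordinary one and is represented by graphs where the distinguished vertex $r{+}1$ is singled out, with the convention that internally connected components not attached to $r{+}1$ are evaluated via $W$. Following the technique of Lambrechts--Voli\'c (and the arguments of Sections~\ref{sec.extens-swiss-cheese} and~\ref{sec:sfm-main-thm}), I would filter this complex first by the total number of edges, which is strictly decreased by every piece of the differential, and then by the valence of vertex $r{+}1$, splitting into $V_0 \oplus V_1 \oplus V_{\geq 2}$. The edge-contraction differential $V_1 \to V_{\geq 2}$ is surjective, and its kernel is spanned by graphs in which vertex $r{+}1$ has valence zero (with any $A$-decoration, contributing $A \simeq H^*(M)$) or has valence one with the edge going to another external vertex (contributing the $(\R[1-n])^r$ summand). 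This matches the fiber cohomology precisely, and Proposition~\ref{prop:fiberqiso} concludes the induction.

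The main obstacle, compared with the boundary case treated just above, is bookkeeping of the $A$-decorations on the distinguished vertex $r{+}1$: these contribute the full $H^*(M)$ rather than only $\R$, so the isolated-vertex stratum must be handled as in the closed-manifold proof of Theorem~\ref{thm:CW} rather than as in the $\aGraphs_{A_\partial}$ argument. Once this is accounted for, the remainder of the spectral-sequence analysis proceeds exactly as in the boundary case.
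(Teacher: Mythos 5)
Your proposal follows essentially the same route as the paper's proof: induction on arity via Proposition~\ref{prop:fiberqiso}, identification of the fiber of $\mFM_M(r{+}1)\to\mFM_M(r)$ with $M$ minus $r$ points, and computation of the algebraic fiber by filtering first on the number of edges and then on the valence of the distinguished vertex \`a la Lambrechts--Voli\'c. The only point treated more explicitly in the paper is that one further filters by loop order to suppress the edge-cutting (replacement by $\sigma_A$) piece of the differential on the first page, but this is a routine refinement of the filtration you already set up.
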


\begin{proof}
  We proceed as in the ``boundary-case''-proof of the preceding section, by an induction on the arity $r$.
  For the induction step, we now apply the Proposition \ref{prop:fiberqiso} to the diagram
  \begin{equation}
    \begin{tikzcd}
      \mGraphs_{A}(r) \ar{r}{\sim} \ar{d} & \OmPA(\mFM_{M}(r)) \ar{d} \\
      \mGraphs_{A}(r+1) \ar{r} & \OmPA(\mFM_{M}(r+1)).
    \end{tikzcd}
  \end{equation}
  The fiber of the map $\mFM_{M}(r+1)\to \mFM_{M}(r)$ agrees with $M$ with $r$ points removed. The cohomology is hence
  \beq{equ:fibercohom2}
  H(M) \oplus (\K[1-n])^{\otimes r}.
  \eeq
  On the other hand let us compute
  \begin{equation}
    \mGraphs_{A}(r+1) \lotimes_{\mGraphs_{A}(r)} \K
    =
    \mGraphs_{A}(r+1) \otimes_{\mGraphs_{A}(r)} \K.
  \end{equation}
  Again the right-hand side can be identified with graphs, where graphs which have internally connected components not connected to vertex $1$ are evaluated using the map $\mGraphs_{A}(r) \to \K$.

  To compute the cohomology of that complex, we take a filtration by the number of edges.
  All pieces of the differential reduce that number.
  The differential on the first page of the spectral sequence is given by the piece of the differential killing exactly one edge.
  It has now three pieces: the contraction of an edge; letting a vertex attached to a single edge go to the boundary; a term replacing an edge by the section $\sigma_{A}$.

  We next mimic again the trick of~\cite{LambrechtsVolic2014}.
  We impose another filtration by the arity of vertex $1$.
  \begin{equation}
    V=V_0 \oplus V_1 \oplus V_{\geq 1}.
  \end{equation}
  As above, one can check that $H(V)$ agrees with \eqref{equ:fibercohom2}.
  Indeed, if we filter by loop order, then on the first page of the associated spectral sequence we get the same complex but without edge cutting in the differential.
  Hence invoking Proposition \ref{prop:fiberqiso} we are done.
\end{proof}

This ends the proof of Theorem~\ref{thm:B}.

\begin{proof}[Proof of Corollary~\ref{cor:mainoGC}]\label{proof:cor-d}
  The idea is the same as the one in Section~\ref{sec.global-maurer-cartan}.
  The key point of the proof is Proposition~\ref{prop:wW-invar}, i.e., the vanishing results of Corollaries~\ref{cor:vanish-agc} and~\ref{cor:vanish-mgc}.
\end{proof}

\section{Relation and small models for ``good'' spaces}
\label{sec.model-conf-spac}

From now on, let us assume that $M$ is a smooth, simply connected manifold with simply connected boundary, such that $n=\dim M \ge 7$.
We fix a Poincaré--Lefschetz duality model of $M$ (see Section~\ref{sec.definition}).

We will use the notations of Equation~\eqref{eq.nice-diagram}.
Let  $B \xrightarrow{\lambda} B_{\partial}$ be the model of $\partial M \hookrightarrow M$, $\varepsilon : \cone(\lambda) \to \R[-n+1]$ be the orientation inducing the Poincaré--Lefschetz pairing, $K = \ker(\lambda)$ be the model for $\Omega^{*}(M,\partial M)$, $P = B / \ker \theta_{B}$ be the model for $\Omega^{*}(M)$, and $\theta : P \cong K^{\vee}[-n]$ be the isomorphism of Poincaré--Lefschetz duality.
We also recover diagonal data as in Example~\ref{exa.diag-data}.

\subsection{The dg-module model of \texorpdfstring{$\Conf_{k}(M)$}{Conf\_k(M)}}
\label{sec.dgmodel}

Given a finite set $V$ and an element $v \in V$, define the canonical injection $\iota_{v} : P \to P^{\otimes V}$ by
\begin{equation}
  \iota_{v}(x) \coloneqq 1 \otimes \dots \otimes 1 \otimes \underbrace{x}_{v} \otimes 1 \otimes \dots \otimes 1.
\end{equation}

\begin{definition}
  \label{def.ga}
  Define a symmetric collection of CDGAs $\GG{P}$ by:\index{GP@$\GG{P}$}
  \begin{equation*}
    \GG{P}(V) \coloneqq \bigl( P^{\otimes V} \otimes \enV(V) / (\iota_{v}(x) \cdot \omega_{vv'} = \iota_{v'}(x) \cdot \omega_{vv'}), \; d(\omega_{vv'}) = (\iota_{v} \cdot \iota_{v'})(\Delta_{P}) \bigr)
  \end{equation*}
  with the obvious actions of the symmetric groups.
\end{definition}

This definition also makes sense when $\partial M = \varnothing$, and it yields the symmetric collection of CDGAs considered in~\cite{Idrissi2018b}.

When $M$ is a closed manifold, as soon as its Euler characteristic $\chi(M)$ vanishes, then there is a structure of Hopf right $\enV$-comodule on $\GG{P}$~\cite[Proposition~2.1]{Idrissi2018b}, with cocomposition structure maps characterized by (compare with the definition of $\Gra_{n}$ at the beginning of Section~\ref{sec.extens-swiss-cheese}):
\begin{equation}
  \begin{aligned}
    \circ^{\vee}_{T}(\omega_{vv'})
     & = 1 \otimes \omega_{vv'},
     & \text{if } \{v, v'\} \subset T;     \\
    \circ^{\vee}_{T}(\omega_{vv'})
     & = \omega_{[v][v']} \otimes 1,
     & \text{if } \{v,v'\} \not \subset T; \\
    \circ_{T}^{\vee}(\iota_{v}(x))
     & = \iota_{[v]}(x)
     & \text{for } x \in A, \; v \in V;
  \end{aligned}
\end{equation}
where $[v] \in V/T$ is the class of $v$ in the quotient.

\begin{proposition}
  \label{prop.ga-comod}
  If $\partial M \neq \varnothing$, then the symmetric collection of CDGAs $\GG{P}$ forms a right Hopf $\enV$-comodule, with the same formulas.
\end{proposition}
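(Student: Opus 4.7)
The plan is to imitate the argument of \cite[Proposition~2.1]{Idrissi2016}, which treats the closed case under the hypothesis $\chi(M) = 0$, and to observe that the single place where that hypothesis was needed is automatically satisfied here thanks to Equation~\eqref{eq.mu-delta-zero}.

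First I would check that the prescribed formulas extend uniquely to well-defined graded algebra morphisms $\circ_T^\vee \colon \GG{P}(V) \to \GG{P}(V/T) \otimes \enV(T)$. Since $\GG{P}(V)$ is generated as a CDGA by the $\iota_v(x)$ and $\omega_{vv'}$, this amounts to verifying that the defining relation $\iota_v(x) \cdot \omega_{vv'} = \iota_{v'}(x) \cdot \omega_{vv'}$ is respected. A short case analysis on whether $v, v' \in T$ shows that each such relation is either sent to an instance of the same relation in $\GG{P}(V/T)$ (after replacing the collapsed indices by $*$) or is automatically satisfied because both sides become the same monomial; the $\enV$-tensorand contributes no constraint. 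The corresponding Arnold and symmetry relations in $\enV(T)$ are handled exactly as in the closed case.

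Next I would verify compatibility with differentials, which is the only nontrivial point. Both sides agree tautologically on the generators $\iota_v(x)$ (these are sent to $\iota_{[v]}(x) \otimes 1$ and the differential acts only on $P$). On $\omega_{vv'}$, one must compare $\circ_T^\vee(d\omega_{vv'}) = \circ_T^\vee\bigl((\iota_v\cdot\iota_{v'})(\Delta_P)\bigr)$ with $d\,\circ_T^\vee(\omega_{vv'})$, splitting according to the three cases:
\begin{enumerate}
\item $\{v,v'\}\cap T = \varnothing$: both sides equal $(\iota_v \cdot \iota_{v'})(\Delta_P) \otimes 1$;
\item exactly one of $v, v'$ lies in $T$ (say $v \in T$): both sides equal $(\iota_* \cdot \iota_{v'})(\Delta_P) \otimes 1 = d\omega_{*v'} \otimes 1$;
\item $\{v,v'\} \subset T$: the right-hand side is $d(1 \otimes \omega_{vv'}) = 0$ because $\enV$ carries the zero differential, while the left-hand side equals $\iota_*\bigl(\mu_P(\Delta_P)\bigr) \otimes 1$ since both decorations land at the same vertex $*=[v]=[v']$ in $V/T$.
\end{enumerate}
Case (iii) is precisely the obstruction that in the closed case forced $\mu_P(\Delta_P) = \chi(M)\,\vol_P = 0$. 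In our setting $\partial M \neq \varnothing$, and Equation~\eqref{eq.mu-delta-zero} gives $\mu_P(\Delta_P) = 0$ unconditionally, so the obstruction disappears. This is the main (and really only) obstacle, and it is already overcome by the preparatory work of Section~\ref{sec.diagonal-classes}.

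Finally, to see that the $\circ_T^\vee$ assemble into a Hopf right $\enV$-comodule structure, I would check coassociativity, counitality and symmetric group equivariance directly on the generators $\iota_v(x)$ and $\omega_{vv'}$; each reduces immediately to the corresponding axiom for the cooperad $\enV$ (together with the associativity of the product on $P$), so no further input is required.
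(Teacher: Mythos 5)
Your proposal is correct and follows essentially the same route as the paper's proof: the only genuine issue is compatibility of $\circ_{T}^{\vee}$ with the differential on $\omega_{vv'}$ when $\{v,v'\}\subset T$, where the obstruction $\iota_{*}(\mu_{P}(\Delta_{P}))$ vanishes by Equation~\eqref{eq.mu-delta-zero} since $\partial M \neq \varnothing$. The remaining verifications (well-definedness on the relations, coassociativity, equivariance) are exactly those delegated in the paper to the closed-manifold case of \cite[Proposition~2.1]{Idrissi2016}.
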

\begin{proof}
  Comparing with the proof of~\cite[Proposition~2.1]{Idrissi2018b}, we see that almost all the arguments are the same.
  The only difficulty is to check that the cocomposition is compatible with the differential, which required that the Euler characteristic vanished in the boundaryless case.

  It is immediate to check that $d(\circ_{T}^{\vee}(\iota_{v}(x))) = \circ_{T}^{\vee}(d(\iota_{v}(x))) = \iota_{[v]}(dx)$, thus it suffices to check that the same equality holds on the generators $\omega_{vv'}$.
  If either $v \not\in T$ or $v' \not\in T$, this is again immediate; hence it suffices to check that
  \[ d(\circ_{T}^{\vee}(\omega_{vv'})) = \circ_{T}^{\vee}(d(\omega_{vv'})) \textnormal{ for } v,v' \in T \]

  The LHS of that equation always vanishes.
  On the other hand, the RHS is equal to $\iota_{*}(\mu_{P}(\Delta_{P}))$, where $\mu_{P} : P \otimes P \to P$ is the product.
  But by Equation~\eqref{eq.mu-delta-zero}, $\mu_{P}(\Delta_{P}) = 0$.
\end{proof}

\begin{example}
  \label{exa.model-dn-deux}
  Recall from Example~\ref{exa.model-dn} the model for $(D^{n}, S^{n-1})$, with $P = \R$, and $\Delta_{P} = 0$.
  It follows that in this case, $\GG{P}$ is isomorphic to $\enV$ seen as a Hopf right comodule over itself.
  This is not surprising, given that $\FM_{n}$ is formal as an operad, and hence as a module over itself, and that $\SFM_{D^{n}}(\varnothing, -)$ is weakly equivalent to $\FM_{n}$ as a right $\FM_{n}$-module.
\end{example}

\subsection{Computing the homology}
\label{sec.computing-homology}

We now prove that $\GG{P}$ has the right cohomology, in the spirit of~\cite{LambrechtsStanley2008a} and using the methods of~\cite{CordovaBulensLambrechtsStanley2015a} to deal with manifolds with boundary.
From then on and until the end of this section, we fix some integer $k \geq 0$.
We can work over $\Q$ in this section.

The general idea goes as follows.
If $W$ is a manifold with boundary, and $X \subset W$ is a sub-polyhedron, then by~\cite{CordovaBulensLambrechtsStanley2015a}, it suffices to know a CDGA model of the square of inclusions
\begin{equation}
  \label{eq.square}
  \begin{tikzcd}
    \partial W \ar[hook]{r} & W \\
    \partial_{W}X \coloneqq X \cap \partial W \ar[hook]{r} \ar[hook]{u} & X \ar[hook]{u}
  \end{tikzcd}
\end{equation}
to obtain a complex computing the cohomology of $W - X$.

Therefore, to compute the cohomology of $\Conf_{k}(M)$, we need to find such models for $W = M^{k}$ and\index{Deltak@$\Delta_{(k)}$}
\begin{equation}
  X = \Delta_{(k)} \coloneqq \bigcup_{1 \le i, j \le k} \Delta_{ij},
\end{equation}
where $\Delta_{ij} \coloneqq \{ x \in M^{k} \mid x_{i} = x_{j} \}$.
Since the sub-polyhedron $\Delta_{(k)}$ can be decomposed into the sub-polyhedra $\Delta_{ij}$, we can use the techniques of~\cite{LambrechtsStanley2008a} to further simplify the description of the dg-module model as a ``total cofiber'' indexed by graphs, which will be isomorphic to $\GG{A}$.

Let us now give the details.
Let $E = \{ (i,j) \mid 1 \leq i < j \leq k \}$ be a set of pairs, and let $\Gamma$ be the poset of subsets of $E$ ordered by reverse inclusion.
We can see an element $\gamma \in \Gamma$ as a graph on $k$ vertices, with an edge between $i$ and $j$ iff $(i,j) \in \gamma$.
In particular $\varnothing \in \Gamma$ is the ``empty'' graph with no edges (but $k$ vertices).
Using this point of view, we can define the ``zeroth homotopy group'' $\pi_{0}(\gamma)$ of a graph $\gamma \in \Gamma$, which is a partition of $\{1, \dots, k\}$.

We obtain a functor $\nabla$\index{$\nabla$} from $\Gamma$ to the category of topological spaces defined by
\begin{equation}
  \gamma \mapsto \nabla(\gamma) \coloneqq \bigcap_{e \in E_{\gamma}} \Delta_{e} \subset M^{k},
\end{equation}
where $\Delta_{(i,j)}$ is simply the small diagonal $\Delta_{ij}$.
Note that $\nabla(\varnothing) = M^{k}$, and that if $\gamma' \supset \gamma$ then there is an inclusion $\nabla(\gamma') \subset \nabla(\gamma)$.
The space $\nabla(\gamma)$ is homeomorphic to the product $M^{\pi_{0}(\gamma)}$, and under these homeomorphisms, the inclusion $\nabla(\gamma') \subset \nabla(\gamma)$ is the cofibration induced by iterations of the diagonal map $M \to M \times M$.
We thus obtain that:
\begin{equation}
  \label{eq.delta-colim}
  \Delta_{(k)} = \bigcup_{1 \leq i < j \leq k} \Delta_{ij} = \colim_{\gamma \in \Gamma} \nabla(\gamma) = \colim_{\gamma \in \Gamma} M^{\pi_{0}(\gamma)}.
\end{equation}

\begin{lemma}
  The space $\Delta_{(k)}$ is the homotopy colimit of $\gamma \mapsto M^{\pi_0(\gamma)}$.
\end{lemma}
\begin{proof}
  We are going to check that the functor $\nabla = M^{\pi_0(-)}$ is cofibrant in the (Reedy) category (see e.g., \cite[Section~5]{Hovey1999} for details).
  Let $\gamma \in \Gamma$ be some graph.
  The latching space $L_\gamma \nabla$ is defined as:
  \begin{equation}
    L_\gamma \nabla \coloneqq \colim_{\gamma' \supset \gamma, \; \gamma' \neq \gamma} M^{\pi_0(\gamma)},
  \end{equation}
  and we have to check that the canonical map $L_\gamma \nabla \to \nabla(\gamma)$ (induced by the inclusions $\nabla(\gamma') \subset \nabla(\gamma)$ for $\gamma' \supset \gamma$) is a cofibration.
  We can either have $L_\gamma \nabla = \nabla(\gamma)$ if $\pi_0(\gamma)$ is just a point (i.e., all the vertices are connected in $\gamma$), and the map is just the identity, which is a cofibration.
  Otherwise, if $\pi_0(\gamma)$ has at least two components, then we have
  \begin{equation}
    L_\gamma \nabla \cong \{ x \in \nabla(\gamma) = M^{\pi_0(\gamma)} \mid \exists [i] \neq [j] \in \pi_0(\gamma) \text{ s.t. } x_i = x_j \}.
  \end{equation}
  In other words, $L_\gamma \nabla$ is the subspace of $M^{\pi_0(\gamma)}$ of configurations where at least two coordinates are equal.
  The inclusion is clearly a cofibration (for example, it is a union of submanifolds that intersect transversally).
\end{proof}

We will first aim to build a CDGA model for the square~\eqref{eq.square} (with $W = M^{k}$ and $X = \Delta_{(k)}$) out of the diagram~\eqref{eq.nice-diagram}.
The previous description of $\Delta_{(k)}$ as a homotopy colimit tells us that a model for $\Delta_{(k)}$ is given by the homotopy limit $\holim_{\gamma \in \Gamma^{\mathrm{op}}} B^{\otimes \pi_{0}(\gamma)}$, where the maps in the diagram are induced by iterations the multiplication $\mu_{B}$ of $B$.
Since $\mu_{B}$ is surjective and the limit is directed, this homotopy limit is actually equivalent to the classical limit.
The inclusion $\Delta_{(k)} \subset M^{k}$ is modeled by the canonical map from $B^{\otimes k} = B^{\otimes \pi_{0}(\varnothing)}$ to the limit, obtained by patching together the various maps $B^{\otimes k} \to B^{\otimes \pi_{0}(\gamma)}$.

It remains to find a model for $\partial_{W} X = \Delta_{(k)} \cap \partial(M^{k})$ and models for the inclusion maps.
The morphism $B \to B_{\partial}$ is surjective, hence $B_{\partial}$ is isomorphic to $B / K$ where $K \coloneqq \ker (B \to B_{\partial})$.
We get:

\begin{lemma}
  \label{lem.square}
  For all $i \geq 0$, the left-hand side square is a CDGA model for the right-hand side square, where the horizontal maps are the diagonal maps:
  \[ \begin{tikzcd}[cramped]
      B \ar[two heads, d] & B^{\otimes i} \ar[two heads, d] \ar[l, "\mu_{B}^{(i)}" swap] \\
      B / K & B^{\otimes i} / K^{\otimes i} \ar[l, "\mu_{B}^{(i)}" swap]
    \end{tikzcd}
    \text{ is a model for }
    \begin{tikzcd}[cramped]
      M \ar[r, hook, "\delta"] & M^{i} \\
      \partial M \ar[hook, u] \ar[r, hook, "\delta"] & \partial \bigl( M^{i} \bigr) \ar[hook, u]
    \end{tikzcd} \]
\end{lemma}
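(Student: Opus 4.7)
The plan is to show that each corner is a model and that the maps match. The upper row is classical: $B$ is a model for $\OmPA^*(M)$ by construction of the PLD model (Equation~\eqref{eq.nice-diagram}), $B^{\otimes i}$ is a model for $\OmPA^*(M^i)$ by the K\"unneth theorem (which applies since we work over a field), and the iterated multiplication $\mu_B^{(i)}$ is the canonical CDGA model of the diagonal $\delta : M \hookrightarrow M^i$, since $\delta^*$ on exterior products corresponds to wedge product.

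For the lower row, first observe that $K = \ker \lambda$ is a CDGA model for $\OmPA^*(M, \partial M) \coloneqq \ker(\OmPA^*(M) \to \OmPA^*(\partial M))$: writing $K_R = \ker \rho$ and applying the five lemma to the long exact sequences of kernels in the zigzag of Equation~\eqref{eq.nice-diagram}, the horizontal maps restrict to quasi-isomorphisms $K \xleftarrow{\sim} K_R \xrightarrow{\sim} \OmPA^*(M, \partial M)$. Since tensor products over a field are exact, $K^{\otimes i}$ is then a model for $\OmPA^*(M, \partial M)^{\otimes i}$. The relative K\"unneth quasi-isomorphism $\OmPA^*(M, \partial M)^{\otimes i} \xrightarrow{\sim} \OmPA^*(M^i, \partial(M^i))$, where $\partial(M^i) = \bigcup_{j=1}^i M^{j-1} \times \partial M \times M^{i-j}$, identifies $K^{\otimes i}$ as a model for $\OmPA^*(M^i, \partial(M^i))$. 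Now the short exact sequence $0 \to K^{\otimes i} \to B^{\otimes i} \to B^{\otimes i}/K^{\otimes i} \to 0$ together with its topological counterpart exhibits $B^{\otimes i}/K^{\otimes i}$ as a model for $\OmPA^*(\partial(M^i))$, with the vertical projection modeling the restriction of forms.

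Because $K \subset B$ is an ideal (as the kernel of a CDGA morphism), $\mu_B^{(i)}$ sends $K^{\otimes i}$ into $K$, so it descends to a well-defined CDGA map $B^{\otimes i}/K^{\otimes i} \to B/K$. This induced map models the restricted diagonal $\partial M \hookrightarrow \partial(M^i)$, since on the level of forms the restriction of $\delta^*$ to boundaries coincides with the exterior-product-to-product formula modulo forms vanishing on $\partial M$. Commutativity of the square of CDGAs is immediate from the definitions and matches the commutativity of the topological square.

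The main technical point is the verification of the relative K\"unneth quasi-isomorphism $\OmPA^*(M, \partial M)^{\otimes i} \xrightarrow{\sim} \OmPA^*(M^i, \partial(M^i))$, which is not available off the shelf in the PA setting. I would obtain it by induction on $i$: assuming the statement for $i$, one uses the Mayer--Vietoris decomposition $\partial(M^{i+1}) = \partial(M^i) \times M \cup M^i \times \partial M$, whose two pieces intersect in $\partial(M^i) \times \partial M$, together with the absolute K\"unneth theorem (which holds over a field) and the five lemma applied to the Mayer--Vietoris long exact sequence, to deduce the case $i+1$. All other steps are formal consequences of the long exact sequences of pairs and exactness of tensor product over a field.
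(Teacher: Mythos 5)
Your proposal is correct, and it ultimately rests on the same geometric input as the paper's proof, namely the decomposition $\partial(M^{i+1}) = \bigl((\partial M) \times M^{i}\bigr) \cup \bigl(M \times \partial(M^{i})\bigr)$ glued along $(\partial M) \times \partial(M^{i})$ and an induction on $i$ — but it is organized dually. The paper runs the induction on the quotient side: it realizes that decomposition as a homotopy pushout of spaces, models it by the homotopy pullback $P$ of $B_{\partial} \otimes B^{\otimes i} \to B_{\partial} \otimes (B^{\otimes i}/K^{\otimes i}) \gets B \otimes (B^{\otimes i}/K^{\otimes i})$, and then checks directly that $B^{\otimes(i+1)} \to P$ is surjective with kernel $K^{\otimes(i+1)}$, so $P \cong B^{\otimes(i+1)}/K^{\otimes(i+1)}$; this yields the whole square of CDGA models in one stroke. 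You instead run the induction on the kernel side, proving a relative Künneth quasi-isomorphism $\OmPA^{*}(M,\partial M)^{\otimes i} \to \OmPA^{*}(M^{i},\partial(M^{i}))$ by Mayer--Vietoris and then passing to quotients. Your route is more elementary (no homotopy-pullback formalism), but one point deserves to be made explicit: being a \emph{model} means exhibiting a zigzag of CDGA morphisms on the quotients, not merely an isomorphism of cohomology, so you should name the intermediate object $R^{\otimes i}/K_{R}^{\otimes i}$ and verify that its two induced maps — to $B^{\otimes i}/K^{\otimes i}$ and, via the cross product of forms vanishing on the boundary, to $\OmPA^{*}(\partial(M^{i}))$ — are quasi-isomorphisms of CDGAs; this is exactly what your five-lemma argument supplies once written out, and with that addition the two proofs are essentially equivalent in content.
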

\begin{proof}
  The idea is the same as in~\cite[Proposition~5.1]{CordovaBulensLambrechtsStanley2015a}.
  We work by induction.

  The case $i = 1$ is obvious (as $B_{\partial} \cong B/K$), and they prove the case $i = 2$.
  Now let us assume that the proposition is true for a given $i \geq 2$.
  There is a diagram, where all the inclusions are either induced by diagonal maps or induced by $\partial M \subset M$:
  \begin{equation}
    \begin{tikzcd}[cramped]
      M \times M^{i} && M \ar[ll, hook] \\
      \partial(M \times M^{i}) \ar[u, hook] & M \times \partial(M^{i}) \ar[l, hook] \\
      (\partial M) \times M^{i} \ar[u, hook] & (\partial M) \times (\partial(M^{i})) \ar[l, hook] \ar[u, hook] \ar[ul, phantom, "\text{\tiny (ho.\ pushout)}"] & \partial M \ar[l, hook] \ar[uu, hook]
    \end{tikzcd}.
  \end{equation}
  The diagram of the proposition is the ``outer'' diagram, and the bottom left square is a (homotopy) pushout.

  Let $P$ be the (homotopy) pullback in CDGAs of
  \[ B_{\partial} \otimes B^{\otimes i} \to B_{\partial} \otimes (B^{\otimes i} / K^{\otimes i}) \gets B \otimes (B^{\otimes i} / K^{\otimes i}). \]
  Then the induction hypothesis and the fact that homotopy pushouts of spaces become homotopy pullbacks of models imply that the following diagram is a CDGA model of the previous one (where the maps are either induced by $\mu_{B} : B^{\otimes 2} \to B$ or $\lambda : B \to B_{\partial}$):
  \begin{equation}
    \begin{tikzcd}[cramped]
      B \otimes B^{\otimes i} && B \ar[ll, leftarrow] \\
      P \ar[u, leftarrow] & B \otimes (B^{\otimes i} / K^{\otimes i}) \ar[l, leftarrow] \\
      B_{\partial} \otimes B^{\otimes i} \ar[u, leftarrow] & B_{\partial} \otimes (B^{\otimes i} / K^{\otimes i}) \ar[l, leftarrow] \ar[u, leftarrow] \ar[ul, phantom, "\text{\tiny (ho.\ pullback)}"] & B_{\partial} \ar[l, leftarrow] \ar[uu, leftarrow]
    \end{tikzcd}
  \end{equation}

  Now, as in the proof~\cite[Lemma~5.3]{CordovaBulensLambrechtsStanley2015a}, it is clear that the natural map $B^{\otimes (i+1)} \to P$ is surjective and that its kernel is $K^{\otimes(i+1)}$, in other words that $P \cong B^{\otimes(i+1)} / K^{\otimes(i+1)}$.
  The proposition then follows immediately.
\end{proof}

The space $\Delta_{(k)} \cap \partial(M^{k})$ admits a description as a colimit similar to the one Equation~\ref{eq.delta-colim}.
Indeed, a point of $M^{k}$ is in the boundary iff one of the coordinates is in the boundary of $M$.
Now if a point is in both $\nabla(\gamma)$ and $\partial(M^{k})$, then at least one of the coordinates (say $x_{i}$) is in the boundary, and thus all the points indexed by some $j$ in the same connected component as $i$ in $\gamma$ is also in the boundary.
We then obtain:
\begin{equation}
  \Delta_{(k)} \cap \partial(M^{k}) = \colim_{\gamma \in \Gamma} \colim_{\varnothing \subsetneq S \subset \pi_{0}(\gamma)} (\partial M)^{S} \times M^{\pi_0(\gamma) - S}.
\end{equation}
For a fixed $\gamma$, the inner colimit is precisely the image of $\partial(M^{\pi_{0}(\gamma)})$ under the diagonal embedding $M^{\pi_{0}(\gamma)} \hookrightarrow M^{k}$.
Combining this with the previous lemma, we then obtain:

\begin{proposition}
  A model for the square~\eqref{eq.square}, with $W = M^{k}$ and $X = \Delta_{(k)}$, is given by:
  \[ \begin{tikzcd}
      B^{\otimes k} \ar[r, two heads, "\alpha_{k}"] \ar[d, "\xi_{k}"] & B^{\otimes k} / K^{\otimes k} \ar[d] \\
      \lim_{\gamma \in \Gamma^{\mathrm{op}}} R^{\otimes \pi_{0}(\gamma)} \ar[r, "\beta_{k}"] & \lim_{\gamma \in \Gamma^{\mathrm{op}}} R^{\otimes \pi_{0}(\gamma)} / K^{\otimes \pi_{0}(\gamma)}
    \end{tikzcd}
    \qed \]
\end{proposition}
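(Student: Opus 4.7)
The plan is to assemble the square model out of the two colimit descriptions already established, applying Lemma~\ref{lem.square} levelwise over the indexing poset $\Gamma$. Concretely, for each $\gamma \in \Gamma$ one has $\nabla(\gamma) \cong M^{\pi_{0}(\gamma)}$ and $\nabla(\gamma)\cap\partial(M^{k}) \cong \partial(M^{\pi_{0}(\gamma)})$ (via the diagonal embedding $M^{\pi_{0}(\gamma)} \hookrightarrow M^{k}$), so Lemma~\ref{lem.square} provides a CDGA model of the square
\[
\begin{tikzcd}[cramped]
\partial M^{\pi_{0}(\gamma)} \ar[hook]{r} & M^{\pi_{0}(\gamma)}
\\
\partial M^{\pi_{0}(\gamma)} \ar[hook]{r} \ar[equal]{u} & M^{\pi_{0}(\gamma)} \ar[equal]{u}
\end{tikzcd}
\]
namely the square $B^{\otimes \pi_{0}(\gamma)} \twoheadrightarrow B^{\otimes \pi_{0}(\gamma)} / K^{\otimes \pi_{0}(\gamma)}$. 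The first step will be to verify that these models are functorial in $\gamma$: an inclusion $\gamma \subseteq \gamma'$ in $\Gamma^{\mathrm{op}}$ (i.e.\ $\gamma' \supseteq \gamma$ as sets of edges) induces an inclusion $\nabla(\gamma') \hookrightarrow \nabla(\gamma)$ that factors as an iterated diagonal $M^{\pi_{0}(\gamma')} \hookrightarrow M^{\pi_{0}(\gamma)}$, modeled by the iterated multiplication $\mu_B^{(\cdot)}: B^{\otimes \pi_{0}(\gamma)} \to B^{\otimes \pi_{0}(\gamma')}$; this preserves $K$, hence descends to the quotients.

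The second step is to recall that the descriptions
\[
\Delta_{(k)} = \operatorname*{hocolim}_{\gamma \in \Gamma} \nabla(\gamma), \qquad \Delta_{(k)} \cap \partial(M^{k}) = \operatorname*{hocolim}_{\gamma \in \Gamma} \bigl(\nabla(\gamma) \cap \partial(M^{k})\bigr)
\]
are genuine homotopy colimits (this is essentially the Lambrechts--Stanley cube argument, using that each of the building inclusions is a cofibration). Since taking a CDGA model turns a homotopy colimit of spaces into a homotopy limit of CDGAs, and since the models built in the previous step form a compatible cube over $\Gamma$, one obtains a CDGA model for the pair $(\Delta_{(k)}, \Delta_{(k)} \cap \partial(M^{k}))$ given by $\lim_{\gamma \in \Gamma^{\mathrm{op}}} B^{\otimes \pi_{0}(\gamma)} \to \lim_{\gamma \in \Gamma^{\mathrm{op}}} B^{\otimes \pi_{0}(\gamma)} / K^{\otimes \pi_{0}(\gamma)}$. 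The inclusions $\Delta_{(k)} \hookrightarrow M^{k}$ and $\Delta_{(k)} \cap \partial M^{k} \hookrightarrow \partial M^{k}$, being modeled by the terminal maps from the constant vertex $\gamma = \varnothing$, are then modeled by the canonical maps $\xi_{k}$ and its quotient $\beta_{k}$ out of $B^{\otimes k} = B^{\otimes \pi_{0}(\varnothing)}$ into the limits, and one has to check the compatibility of these maps with the top square $B^{\otimes k} \twoheadrightarrow B^{\otimes k}/K^{\otimes k}$, which is immediate from functoriality.

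The main technical obstacle will be ensuring that the limits $\lim_{\gamma \in \Gamma^{\mathrm{op}}} B^{\otimes \pi_{0}(\gamma)}$ actually compute the \emph{homotopy} limits, so that the resulting square is a valid CDGA model and not just some limit with the wrong homotopy type. For this one must verify that the cubical diagram over $\Gamma$ is sufficiently fibrant -- concretely that the structure maps $B^{\otimes \pi_0(\gamma)} \to B^{\otimes \pi_0(\gamma')}$ (and their quotients modulo $K^{\otimes \pi_0(\cdot)}$) are surjective, so that the ordinary limit agrees with the homotopy limit. Surjectivity on the $B$ side is clear because iterated multiplication is split by the unit; for the quotients one uses that $K \subset B$ is an ideal mapped into $K^{\otimes \pi_0(\gamma')}$ under the multiplication, so the quotients form a compatible surjective system. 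Once this is established the two halves of the square are assembled by applying the five lemma (or simply by naturality) to conclude the statement.
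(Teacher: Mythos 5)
Your proof is correct and follows essentially the same route as the paper: levelwise application of Lemma~\ref{lem.square} over the cube $\Gamma$, combined with the homotopy colimit descriptions of $\Delta_{(k)}$ and $\Delta_{(k)} \cap \partial(M^{k})$, yields the limit square with the canonical maps out of the vertex $\gamma = \varnothing$ modeling the inclusions. You also make explicit the surjectivity/fibrancy point needed for the strict limit to agree with the homotopy limit, which the paper leaves implicit.
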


The map $\alpha_{k}$ is surjective, thus we have a canonical quasi-isomorphism $\ker \alpha_{k} = K^{\otimes k} \qiso \hoker \alpha_{k}$.
Therefore, by~\cite[Proposition~3.1]{CordovaBulensLambrechtsStanley2015a}:
\begin{corollary}
  \label{cor.cone-cohom}
  The cohomology of the cone
  \[ \cone \bigl( (\hoker \beta_{k})^{\vee}[-nk] \xrightarrow{\bar{\xi}_{k}} (K^{\otimes k})^{\vee}[-nk] \bigr) \]
  of the map induced by $\xi_{k}$ on the kernels is isomorphic, as a graded vector space, to the cohomology of $\Conf_{k}(M) = M^{k} - \Delta_{(k)}$.
  \qed
\end{corollary}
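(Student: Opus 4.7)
The plan is to apply \cite[Proposition~3.1]{CordovaBulensLambrechtsStanley2015a} directly to the CDGA model of the square furnished by the preceding proposition. That result of Cordova Bulens, Lambrechts and Stanley asserts that whenever one has a CDGA model of the square of inclusions $(\partial_W X \hookrightarrow X, \partial W \hookrightarrow W)$ associated to a subpolyhedron $X$ in a compact oriented manifold with boundary $W$, then the rational cohomology of the complement $W - X$ is computed, as a graded vector space, by the cohomology of an explicit cone built from the kernels of the horizontal arrows in that square, shifted by $-\dim W$ and dualized using the Poincaré--Lefschetz duality of $(W, \partial W)$.

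Concretely, I would set $W = M^{k}$ (of formal dimension $nk$) and $X = \Delta_{(k)}$, so that $W - X = \Conf_{k}(M)$. The preceding proposition provides a CDGA model of the associated square in which the top horizontal map is $\alpha_{k} : B^{\otimes k} \twoheadrightarrow B^{\otimes k}/K^{\otimes k}$, the bottom horizontal map is $\beta_{k}$, and the left vertical map is $\xi_{k}$. Since $\alpha_{k}$ is surjective on the nose, its strict kernel $K^{\otimes k}$ coincides up to natural quasi-isomorphism with $\hoker \alpha_{k}$; on the bottom row we must instead take the homotopy kernel $\hoker \beta_{k}$, since $\beta_{k}$ need not be surjective. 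The vertical map $\xi_{k}$ descends to a chain map $\bar{\xi}_{k} : \hoker \beta_{k} \to K^{\otimes k}$ between the kernels.

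Feeding this data into \cite[Proposition~3.1]{CordovaBulensLambrechtsStanley2015a}, the complex computing $H^{*}(\Conf_{k}(M))$ is precisely the cone on the dualized and shifted map
\[
\bar{\xi}_{k}^{\vee}[-nk] : (\hoker \beta_{k})^{\vee}[-nk] \longrightarrow (K^{\otimes k})^{\vee}[-nk],
\]
which is exactly the cone appearing in the statement of the corollary. The shift by $-nk = -\dim W$ and the passage to duals reflect the Poincaré--Lefschetz duality pairing of $(M^{k}, \partial(M^{k}))$, which turns the relative ``ambient'' cohomology into cohomology with compact support on the complement.

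The work is essentially bookkeeping: once the model of the square is in place, one just has to check that the hypotheses of \cite[Proposition~3.1]{CordovaBulensLambrechtsStanley2015a} are satisfied (connectivity and finite-type assumptions, and the surjectivity of $\alpha_{k}$ which is needed to identify $\hoker \alpha_{k}$ with $K^{\otimes k}$). The only nontrivial input has already been done in Lemma~\ref{lem.square} and the proposition that follows it; the identification of the output of the CLS machine with the cone as written is then immediate. Consequently there is no real obstacle here, and the corollary follows as a direct application.
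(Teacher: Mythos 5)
Your proposal matches the paper's own (very short) argument exactly: the corollary is stated as an immediate consequence of the preceding proposition together with \cite[Proposition~3.1]{CordovaBulensLambrechtsStanley2015a}, using the surjectivity of $\alpha_{k}$ to identify $\ker\alpha_{k}=K^{\otimes k}$ with $\hoker\alpha_{k}$ up to quasi-isomorphism. The bookkeeping you describe (dualizing and shifting by $-nk$, taking the cone of the induced map on kernels) is precisely what the paper does, so the proposal is correct and takes the same route.
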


Armed with this corollary, we can now define a cubical diagram $C_{\bullet}$ (see~\cite[Section~7]{LambrechtsStanley2008a}) whose total cofiber computes the cohomology of $\Conf_{k}(M)$ as a graded vector space.
Given $\gamma \in \Gamma$, we define the chain complex
\begin{equation}
  C_{\gamma} \coloneqq (K^{\otimes \pi_{0}(\gamma)})^{\vee}.
\end{equation}
Note in particular that $C_{\varnothing} = (K^{\otimes k})^{\vee}$.
If $\gamma' \supset \gamma$, then the map $C_{\gamma' \supset \gamma} : C_{\gamma'} \to C_{\gamma}$ is induced by the dual of the multiplication of $K$.

Recall that the total cofiber of $C_{\bullet}$ is given by the chain complex~\cite[Definition~7.2]{LambrechtsStanley2008a}:
\begin{equation}
  \TotCof C_{\bullet} \coloneqq \biggl( \bigoplus_{\gamma \in \Gamma} C_{\gamma} \cdot y_{\gamma}, \; D \biggl),
\end{equation}
where $y_{\gamma}$ is some variable of degree $- \# E_{\gamma}$, $\deg(x \cdot y_{\gamma}) = \deg(x) + \deg(y_{\gamma})$, and
\begin{equation}
  D(x \cdot y_{\gamma}) = \pm (dx) \cdot y_{\gamma} + \sum_{e \in E_{\gamma}} \pm (C_{\gamma' \supset \gamma} x) \cdot y_{\gamma - e}.
\end{equation}

\begin{proposition}
  The total cofiber of $C_{\bullet}$ computes the cohomology of $\Conf_{k}(M)$ as a graded vector space, up to suspension by $nk$.
\end{proposition}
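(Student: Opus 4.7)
The plan is to invoke Corollary~\ref{cor.cone-cohom} and identify $\TotCof C_{\bullet}$, up to suspension by $nk$, with the mapping cone
\[
  \cone\bigl( (\hoker \beta_{k})^{\vee}[-nk] \xrightarrow{\bar{\xi}_{k}} (K^{\otimes k})^{\vee}[-nk] \bigr).
\]
The two sides have the same underlying graded vector space up to the shift, so the content is to match the differentials.

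First I would isolate in $\TotCof C_{\bullet}$ the summand corresponding to $\gamma = \varnothing$, namely $C_{\varnothing} \cdot y_{\varnothing} = (K^{\otimes k})^{\vee}$ (in degree $0$). The remaining summand
\[
  C_{\bullet}^{>0} \coloneqq \bigoplus_{\gamma \in \Gamma, \, \gamma \neq \varnothing} C_{\gamma} \cdot y_{\gamma}
\]
is stable under the part of the differential $D$ which does not decrease $\#E_{\gamma}$ from $1$ to $0$. The remaining pieces of $D$ assemble into a chain map $C_{\bullet}^{>0} \to C_{\varnothing}$, and with these decompositions $\TotCof C_{\bullet}$ is literally the mapping cone of this map. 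Hence the whole problem is reduced to exhibiting a quasi-isomorphism
\[
  C_{\bullet}^{>0} \xrightarrow{\sim} (\hoker \beta_{k})^{\vee}[-nk]
\]
compatible with the connecting maps to $(K^{\otimes k})^{\vee}$ (resp.\ $(K^{\otimes k})^{\vee}[-nk]$).

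Next I would analyse the right-hand side. The target $\lim_{\gamma \in \Gamma^{\mathrm{op}}} R^{\otimes \pi_{0}(\gamma)} / K^{\otimes \pi_{0}(\gamma)}$ of $\beta_{k}$ is the totalisation of a punctured cubical diagram indexed by $\Gamma - \{\varnothing\}$ (the term at $\varnothing$ has been chopped off by the very definition of $\beta_{k}$), and its homotopy kernel is by general nonsense the totalisation of the full $\Gamma$-cube with $C_{\varnothing} = K^{\otimes k}$ placed at the initial vertex. Passing to duals and taking advantage of the Poincaré--Lefschetz-type duality $\theta : P \cong K^{\vee}[-n]$ recorded in Section~\ref{sec.definition}, the dualised totalisation of a $\Gamma$-indexed cube of tensor powers of $K$ is computed by the standard bar-cobar model and is exactly $\TotCof C_{\bullet}$, giving the desired identification after accounting for the $[-nk]$ shift (which comes from $k$ uses of the duality isomorphism, one per tensor factor).

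The main obstacle I expect is pure bookkeeping: keeping track of (i) the degree shift $[-nk]$ coming from the $k$-fold application of Poincaré--Lefschetz duality $\theta : P \to K^{\vee}[-n]$ on each coordinate, (ii) the signs introduced by the variables $y_{\gamma}$ of degree $-\#E_{\gamma}$ in the total cofiber versus the Koszul signs in the bar construction for $\hoker \beta_{k}$, and (iii) the compatibility of the face maps $C_{\gamma' \supset \gamma}$ (induced by the dual of $\mu_{K}$) with the coface maps $\mu_{R}$ used to build $\lim_{\gamma \in \Gamma^{\mathrm{op}}} R^{\otimes \pi_{0}(\gamma)}$. None of these are conceptually difficult once the Poincaré--Lefschetz pairing $P \otimes K \to \R[-n]$ is used to transpose the multiplication on $K$ into the comultiplication dual to $\mu_{K}$; it is essentially the manifold-with-boundary analogue of~\cite[Section~7]{LambrechtsStanley2008a}, where the only new ingredient is systematically replacing the Poincaré duality CDGA by the PLD pair $(P, K)$.
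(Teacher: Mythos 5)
Your overall strategy is the same as the paper's. The paper introduces an auxiliary cube $C'_{\bullet}$ agreeing with $C_{\bullet}$ away from $\varnothing$ but with $C'_{\varnothing} = (\hoker \beta_{k})^{\vee}$, shows $\TotCof C'_{\bullet}$ is acyclic (via the fact that everything in sight models the spaces $\nabla(\gamma)$ and their colimit $\Delta_{(k)}$, plus \cite[Proposition~9.1]{LambrechtsStanley2008a}), and then runs the short exact sequence $0 \to C'_{\bullet} \to C_{\bullet} \to C''_{\bullet} \to 0$. Since $\TotCof$ of a full cube is the cone of the map from the homotopy colimit of the punctured cube to the initial vertex, the paper's ``$\TotCof C'_{\bullet}$ is acyclic'' is literally your ``$C_{\bullet}^{>0} \simeq (\hoker \beta_{k})^{\vee}$ compatibly with the maps to $C_{\varnothing}$''; your first two paragraphs are a sound repackaging of the same reduction. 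The only real divergence is how the central quasi-isomorphism is justified: the paper argues geometrically (models of spaces, homotopy invariance of total cofibers), whereas you argue algebraically (commute $\hoker$ with the limit of levelwise surjections, then dualize a homotopy limit into a homotopy colimit). Both work, granted the same implicit input, namely that the strict limit $\lim_{\gamma} R^{\otimes \pi_{0}(\gamma)}$ computes the homotopy limit.

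Two statements in your third paragraph are wrong as written, though they look like slips rather than conceptual gaps. First, $\hoker \beta_{k}$ is not ``the totalisation of the full $\Gamma$-cube with $K^{\otimes k}$ at the initial vertex''; since $\beta_{k}$ is a map of limits over $\Gamma - \{\varnothing\}$ of levelwise surjections with kernels $K^{\otimes \pi_{0}(\gamma)}$, its homotopy kernel is the homotopy \emph{limit of the punctured cube} of these kernels. Second, and more seriously, you conclude that the dualised object ``is exactly $\TotCof C_{\bullet}$''; it must be $C_{\bullet}^{>0}$ (the punctured part), exactly as your own second paragraph requires --- otherwise you would be proving $\TotCof C_{\bullet} \simeq (\hoker \beta_{k})^{\vee}[-nk]$, i.e.\ that $\Conf_{k}(M)$ has the cohomology of the pair $(\Delta_{(k)}, \Delta_{(k)} \cap \partial(M^{k}))$, which is false. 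Finally, the duality $\theta : P \cong K^{\vee}[-n]$ plays no role in this proposition (it enters only in the subsequent comparison with $\GG{P}$): the cube $C_{\bullet}$ is built directly from $(K^{\otimes \pi_{0}(\gamma)})^{\vee}$, so only linear dualisation is needed, and the uniform shift by $nk$ is a global normalisation matching Corollary~\ref{cor.cone-cohom} rather than ``one application of $\theta$ per tensor factor'' (the number of tensor factors $\#\pi_{0}(\gamma)$ varies with $\gamma$).
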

\begin{proof}
  The proof is almost identical to the proof of~\cite[Theorem~9.2]{LambrechtsStanley2008a}.
  First define an auxiliary cubical diagram $C'_{\bullet}$ just like $C_{\bullet}$, except for $C_{\varnothing}$ which we set equal to $(\hoker \beta_{k})^{\vee}$.
  The maps $C'_{\gamma \supset \varnothing}$ are induced by the inclusions $K^{\otimes \pi_{0}(\gamma)} \to B^{\otimes \pi_{0}(\gamma)}$, which go through the definition of $\hoker \beta_{k}$.
  By Poincaré--Lefschetz duality, $(\hoker \beta_{k})^{\vee}$ is a model for $\Delta_{(k)} = \bigcup_{\gamma \in \Gamma} \nabla(\gamma)$, each $C_{\gamma}$ is a model for $\nabla(\gamma)$, and the maps between the $C'_{\gamma}$ are models for the inclusions by Lemma~\ref{lem.square}.
  We thus obtain that $\TotCof C'_{\bullet}$ is acyclic by~\cite[Proposition~9.1]{LambrechtsStanley2008a} and the homotopy invariance of total cofibers.

  The morphism $C'_{\gamma} \to C_{\gamma}$ is given by the identity if $\gamma \neq \varnothing$, and it is given by $\bar{\xi}_{k}^{\vee}$ if $\gamma = \varnothing$.
  This yields a morphism of cubical diagrams $C'_{\bullet} \to C_{\bullet}$.
  Define $C''_{\bullet}$ to be the object-wise mapping cone of $C'_{\bullet} \to C_{\bullet}$, so that there is a short exact sequence
  \begin{equation}
    0 \to C'_{\bullet} \to C_{\bullet} \to C''_{\bullet} \to 0.
  \end{equation}
  For $\gamma \neq \varnothing$, the map $C'_{\gamma} \to C_{\gamma}$ is the identity, hence $C''_{\gamma}$ is acyclic.
  It follows that $\TotCof C''_{\bullet}$ is quasi-isomorphic to the cone of $\bar{\xi}_{k}^{\vee} : C'_{\varnothing} \to C_{\varnothing}$, which computes the cohomology of $\Conf_{k}(M)$ by Corollary~\ref{cor.cone-cohom}.

  There is a long exact sequence between the homologies of the total cofibers $C'_{\bullet}$, $C_{\bullet}$, and $C''_{\bullet}$: total cofibers commute with mapping cones up to homotopy, because both are types of homotopy colimits.
  The proposition then follows from the fact that $H_{*}(\TotCof C'_{\bullet}) = 0$.
\end{proof}

\begin{theorem}
  \label{thm.ga-same-cohom}
  Let $M$ be a simply connected manifold with simply connected boundary and which admits a Poincaré--Lefschetz duality model.
  Let $P$ be the model of $M$ obtained from this PLD model (see Section~\ref{sec.pretty-nice}).
  Then there is an isomorphism of graded vector spaces between $H^{*}(\GG{P}(k); \Q)$ and $H^{*}(\Conf_{k}(M); \Q)$.
\end{theorem}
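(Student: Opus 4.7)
The plan is to adapt the strategy of Lambrechts--Stanley~\cite{LambrechtsStanley2008a} from the closed case, replacing ordinary Poincaré duality with the Poincaré--Lefschetz duality isomorphism $\theta \colon P \xrightarrow{\cong} K^{\vee}[-n]$ from Equation~\eqref{eq.lefschetz}. Concretely, I would identify $\GG{P}(k)$, as a chain complex, with a degree shift of the total cofiber $\TotCof C_{\bullet}$ constructed in Section~\ref{sec.computing-homology}; combined with the preceding proposition that $\TotCof C_{\bullet}$ computes $H^{*}(\Conf_{k}(M))$ up to suspension by $nk$, this yields the desired isomorphism of graded vector spaces.

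The first step is to put every element of $\GG{P}(k)$ in a normal form. Using the equalizer relations $\iota_{v}(x)\omega_{vv'} = \iota_{v'}(x)\omega_{vv'}$ together with the Arnold relations in $\enV(k)$, every element can be written uniquely as $\sum_{\gamma} y_{\gamma}\cdot \omega_{\gamma}$ with $\gamma$ running over a chosen set of representatives for graphs on $\{1,\dots,k\}$, $\omega_{\gamma} = \prod_{(i,j)\in \gamma}\omega_{ij}$, and $y_{\gamma} \in P^{\otimes \pi_{0}(\gamma)}$. Applying $\theta$ componentwise gives $P^{\otimes \pi_{0}(\gamma)} \cong (K^{\otimes \pi_{0}(\gamma)})^{\vee}[-n|\pi_{0}(\gamma)|]$, and thereby produces a graded isomorphism $\GG{P}(k) \cong (\TotCof C_{\bullet})[nk]$ in which the formal variable $\omega_{\gamma}$ corresponds to the total cofiber variable $y_{\gamma}$.

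Next I would verify compatibility of the differentials. The $\GG{P}$-differential acts on $\omega_{\gamma}$ via $d\omega_{ij} = \iota_{i}\iota_{j}(\Delta_{P})$ for each edge, which removes that edge and inserts $\Delta_{P}$ at its endpoints; the $\TotCof$-differential, meanwhile, removes an edge by applying $\mu_{K}^{\vee}$ to the two corresponding components of $\pi_{0}(\gamma)$. Since $\Delta_{P}$ is by definition $\delta(1_{P})$ with $\delta$ the $\theta$-transport of $\mu_{K}^{\vee}$ (Section~\ref{sec.diagonal-classes}), the two differentials agree under the identification of the previous paragraph.

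The principal obstacle is the combinatorial bookkeeping and sign tracking in the normal form argument, particularly for non-forest graphs $\gamma$ (where the Arnold relations must be invoked in the reduction) and for edges whose removal changes $\pi_{0}(\gamma)$, so that one has to match the edge-removal maps $C_{\gamma'\supset\gamma}$ of the cubical diagram with the explicit $\omega$-differential. These difficulties are essentially identical to those dealt with in the closed case~\cite[Section~4]{Idrissi2016}; the only genuinely new ingredient is the use of $K^{\vee}$ rather than $P^{\vee}$ as the Poincaré dual of $P$, which is already encoded in the definition of $\Delta_{P}$ in our setting (together with the crucial identity $\mu_{P}(\Delta_{P}) = 0$ from Equation~\eqref{eq.mu-delta-zero}, which ensures $d^{2} = 0$ in $\GG{P}(k)$).
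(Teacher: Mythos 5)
Your proposal is correct and follows essentially the same route as the paper: the paper's proof also identifies $\GG{P}(k)$ with the shifted total cofiber $\TotCof C_{\bullet}$ via the Poincaré--Lefschetz isomorphism $\theta$, using that $\mu_{K}$ is dual to $x \mapsto (x\otimes 1)\Delta_{P}$, and defers the combinatorial base-matching to the comparison of standard bases of the Lie operad and its dual cooperad as in the closed case. The only discrepancies are cosmetic (the direction of the $nk$-suspension and the precise reference within~\cite{Idrissi2016}).
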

\begin{proof}
  There is in fact an isomorphism of dg-modules
  \begin{equation}
    \GG{P}(k) \cong (\TotCof C_{\bullet})[-nk].
  \end{equation}
  For $k = 1$, this is the Poincaré--Lefschetz isomorphism $\theta : P \to K^{\vee}[-n]$, and more generally for $k \geq 2$ it is induced by this isomorphism.
  We use the crucial fact that the multiplication of $K$ is dual, under this isomorphism, to the map $P \to P^{\otimes 2}$ defined by $x \mapsto (x \otimes 1) \Delta_{P} = (1 \otimes x) \Delta_{P}$ (which is true by definition of $\Delta_{P}$ in our setting).
  The proof is then similar to the proof of~\cite[Lemma~5.2]{Idrissi2018b}, and is carried out by comparing the standard bases of the components of the Lie operad and its dual cooperad.
\end{proof}

In other words, $\GG{P}(k)$ is a dg-module model of $\Conf_{k}(M)$.
Unfortunately, in general if $\partial M \neq \varnothing$ then $\GG{P}(k)$ is \emph{not} an actual model of $\Conf_{k}(M)$: the algebra structure is not the correct one.
This is not surprising: $\GG{P}(k)$ only depends on the homotopy type of $M$, whereas it is known that the homotopy type of $\Conf_{k}(M)$ should depend on the homotopy type of the map $\partial M \to M$.
\subsection{The perturbed model}
\label{sec.perturbed-model}

We define now a ``perturbed'' version of $\GG{P}(V)$, which will be the actual model for $\Conf_{k}(M)$, and we will prove that it is isomorphic to $\GG{P}(V)$ as a dg-module.

Recall the construction of the element $\sigma_{B} \in B \otimes B_{\partial}$ from Section~\ref{sec.a-data}, defined using a Section~of $B \to B_{\partial}$ and such that $d\sigma_{B} = \Delta_{B,B_{\partial}}$.
Let\index{sigmaP@$\sigma_{P}$}
\begin{equation}
  \sigma_{P} = (\pi \otimes 1)(\sigma_{B}) \coloneqq \sum_{i=1}^{N} \sigma'_{i} \otimes \sigma''_{i} \in P \otimes B_{\partial}.
\end{equation}

We define the perturbed model to be:\index{GtP@$\GGt{P}$}
\begin{equation}
  \label{eq.perturbed-model}
  \GGt{P}(V) \coloneqq\bigl( P^{\otimes V} \otimes S(\tilde\omega_{vv'})_{v,v' \in V} / J, d \tilde\omega_{vv'} = (\iota_{v} \cdot \iota_{v'})(\Delta_{P}) \bigr)
\end{equation}
where the ideal of relations is generated by $\tilde\omega_{vv'}^{2} = 0$ and, for all $b \in B$ and all subsets $T \subset \underline{k}$ of cardinality $\#T \geq 2$:
\begin{equation}\label{eq:bigrel}
  \sum_{v \in T} \pm \bigl( \iota_{v}(\pi(b)) \cdot \prod_{v \neq v' \in T} \tilde\omega_{vv'} \bigr) + \sum_{i_{1}, \dots, i_{k}} \pm \varepsilon_{\partial} \bigl( \rho(b) \prod_{v \in T} \sigma''_{i_{v}} \bigr) \prod_{v \in T} \iota_{v}(\sigma'_{i_{v}}).
\end{equation}
Note that these relations are precisely the ones obtained by considering the differential of a graph in $\mGraphs_{A}(k)$ with exactly one internal vertex.

The relations for $\# T = 2$ are perturbations of the symmetry relation of $\GG{P}(V)$, in the sense that it is the sum of the standard relation and terms which have a strictly lower number of generators $\tilde\omega_{vv'}$):
\begin{equation}
  \iota_{1}(b)\tilde\omega_{12} - (-1)^{n} \iota_{2}(b) \tilde\omega_{21} + \sum_{i,j} \pm \varepsilon_{\partial}(\rho(b) \sigma''_{i} \sigma''_{j}) \sigma'_{i} \otimes \sigma'_{j} = 0 \in \GGt{P}(\underline{2}).
\end{equation}
Note in particular that if we take $b=1$, we see that $\tilde\omega_{vv'}$ is not necessarily equal to $\pm \tilde\omega_{v'v}$.
Similarly, the relations for $\# T = 3$ are perturbations of the Arnold relations in $\enV(V)$.

\begin{remark}\label{rmk:T-2-3}
  The relations for $\# T \ge 4$ can actually be obtained from the relations for $\# T \in \{ 2, 3 \}$, which simplifies the presentation of Equation~\eqref{eq.perturbed-model} (but makes the relationship with $\mGraphs_{A}(k)$ less immediate).
  This can be proved ``by hand'' (but tediously) using the properties of the section $\sigma_{P}$ and its relationship with the evaluation $\varepsilon_{\partial}$.
  An alternative easy way to see it is to consider a graph in $\mGraphs_{A}(k)$ with exactly two connected internal vertices, apply the differential twice (thus getting zero) and projecting the result onto $\GGt{P}(k)$.
  Precisely one term of $d^2$ will be the desired relation (the one starting by contraction of the connecting edges) while the other ones can be expressed using smaller relations.
  For example, let $b \in B$ and let $R_{T}(b)$ be the relation of Equation~\eqref{eq:bigrel} for some $T$.
  If we apply this procedure to the following graph (for $b \in B$):
  \begin{center}
    \begin{tikzpicture}
      \node[ext] at (0,0) (e1) {1};
      \node[ext] at (1,0) (e2) {2};
      \node[ext] at (2,0) (e3) {3};
      \node[ext] at (3,0) (e4) {4};
      \node[int, label={$b$}] at (1,1) (i1) {};
      \node[int] at (2,1) (i2) {};
      \draw
      (i1) edge (e1) edge (e2) edge (i2)
      (i2) edge (e3) edge (e4);
    \end{tikzpicture}
    ,
  \end{center}
  we find that the relation $R_{\{1,2,3,4\}}(b)$ is equivalent up to signs to:
  \begin{multline*}
    \iota_{1}(\sigma'_{1}) \iota_{2}(\sigma'_{2}) R_{\{3,4\}}(\sigma'_{3}) \varepsilon_{\partial}(b \sigma''_{1} \sigma''_{2} \sigma''_{3}) + R_{\{1,2\}}(b \sigma'_{3}) \varepsilon_{\partial}(\sigma''_{1} \sigma''_{2} \sigma''_{3}) \iota_{3}(\sigma'_{1}) \iota_{4}(\sigma'_{2}) \\
    + \iota_{1}(b) \tilde{\omega}_{12} R_{\{1,3,4\}}(1) + \iota_{2}(b) \tilde{\omega}_{21} R_{\{2,3,4\}}(1) + R_{\{1,2,3\}}(b) \tilde{\omega}_{34} + R_{\{1,2,4\}}(b) \tilde{\omega}_{43}
  \end{multline*}
\end{remark}

\begin{example}
  Let us consider $M = S^{1} \times [0,1]$ (even though it doesn't satisfy our assumptions about connectivity).
  We can find a PLD model for $M$ where:
  \begin{itemize}
    \item $B_{\partial} = H^{*}(\partial M)$ is four-dimensional, generated by $1$, $t$, $d\varphi$, and $t d\varphi$ with $t$ being idempotent;
    \item $P = H^{*}(M) = H^{*}(S^{1})$ is two-dimensional, generated by $1$ and $d\varphi$;
    \item $K = H^{*}(M, \partial M)$ is two-dimensional, generated by $dt$ and $dt \wedge d\varphi$.
  \end{itemize}

  Then one of the nontrivial relations in $\GGt{P}(2)$ is given by $(d\varphi \otimes 1) \tilde\omega_{21} + (1 \otimes d\varphi) \tilde\omega_{12} + (d\varphi \otimes d\varphi) = 0$.
  To obtain some intuition about this relation, consider that $\Conf_{2}(M) \simeq \Conf_{2}(\R^{2} - \{0\})$ is homotopy equivalent to $\Conf_{3}(\R^{2})$ (by the Fadell--Neuwirth fibration, which has a contractible base in this case).
  The element $d\varphi$ correspond to the two points are rotating around the origin.
  Thus we can identify $\tilde\omega_{12}$ with $\omega_{12} \in H^{*}(\Conf_{3}(\R^{2}))$, $d\varphi \otimes 1$ with $\omega_{13}$, and $1 \otimes d\varphi$ with $\omega_{23}$.
  The perturbed relation in $\GGt{A}(2)$ is then nothing but the usual Arnold relation in $\enV[2](3) = H^{*}(\Conf_{3}(\R^{2}))$.
\end{example}

\begin{proposition}
  There is an isomorphism of dg-modules between $\GG{P}(V)$ and $\GGt{P}(V)$.
\end{proposition}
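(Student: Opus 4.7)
I would prove the proposition by producing an explicit dg-module isomorphism via a filtration and normal-form strategy. The underlying idea is that both $\GG{P}(V)$ and $\GGt{P}(V)$ have the same graded structure, and the perturbation from one to the other is ``small'' in a precise $\tilde\omega$-degree sense.

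First, equip $\GGt{P}(V)$ with the increasing filtration $F^\bullet$ by $\tilde\omega$-degree. This descends to the quotient by $J$ because every defining relation indexed by $T$ has a top piece of $\tilde\omega$-degree $|T|-1$ plus a correction lying entirely in $P^{\otimes V}$ (hence in filtration degree $0$); and it is preserved by the differential because $d\tilde\omega_{vv'} = (\iota_v\iota_{v'})(\Delta_P)$ has $\tilde\omega$-degree zero. Passing to the associated graded $\gr_F \GGt{P}(V)$ kills all correction terms, leaving precisely the unperturbed symmetry and Arnold relations of $\GG{P}(V)$, and thus yielding an isomorphism of graded $P^{\otimes V}$-modules $\gr_F \GGt{P}(V) \cong \GG{P}(V)$.

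Second, use this graded identification to extract a Lambrechts--Stanley-type basis of $\GGt{P}(V)$: the monomials $p \cdot \prod_{(v,v') \in S}\tilde\omega_{vv'}$, with $S$ an admissible oriented forest and $p$ a basis element of $P^{\otimes V}$ having unit decorations off the roots of $S$, form a basis of $\GGt{P}(V)$ of the same cardinality as the Lambrechts--Stanley basis of $\GG{P}(V)$. This gives a tautological $P^{\otimes V}$-linear graded-module bijection
\[ \phi_0 \colon \GG{P}(V) \to \GGt{P}(V), \qquad p \cdot \omega_S \mapsto p \cdot \tilde\omega_S. \]
Upgrading $\phi_0$ to a chain map proceeds by induction on $\tilde\omega$-degree: the defect $\Psi_0 \coloneqq d \circ \phi_0 - \phi_0 \circ d$ strictly decreases $\tilde\omega$-degree, since in both complexes $d$ applied to a normal form produces the same leading expression $\sum_{(v,v') \in S} \pm p \cdot (\iota_v\iota_{v'})(\Delta_P) \cdot \omega_{S - (v,v')}$, and the discrepancy comes only from reducing the result to normal form using the perturbed relations of $\GGt{P}(V)$ rather than the unperturbed ones, contributing terms of strictly lower $\tilde\omega$-degree. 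Iterating, I would construct corrections $\phi_1, \phi_2, \ldots$ of strictly decreasing $\tilde\omega$-degree so that $\phi \coloneqq \phi_0 + \phi_1 + \cdots$ commutes with the differential, each step requiring the equation $[d, \phi_i] = -\Psi_{i-1}$ to be solvable.

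The main obstacle is verifying this solvability, equivalently that each $\Psi_{i-1}$ is a coboundary in $\Hom_{P^{\otimes V}}(\GG{P}(V), \GGt{P}(V))$. I expect this to follow from an explicit algebraic identity that exhibits each $\Psi_{i-1}$ as a manifest coboundary, using the defining property $d\sigma_B = \Delta_{B, B_\partial}$ from Section~\ref{sec.a-data} together with the Poincar\'e--Lefschetz identity $\mu_P(\Delta_P) = 0$ from~\eqref{eq.mu-delta-zero}: the correction terms in the perturbed symmetry and Arnold relations are built from $\sigma_P$ precisely so that their images under $d$ cancel the surplus produced by differentiating $\tilde\omega_{vv'}$ against a would-be forbidden subgraph. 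In other words, the deformation from $\GG{P}(V)$ to $\GGt{P}(V)$ is gauge-trivial at the dg-module level (though emphatically not at the algebra level, as the perturbed product is the one realizing the actual homotopy type of $\Conf_k(M)$), and the explicit primitive provided by $\sigma_P$ lets the induction terminate and yield the desired isomorphism $\phi$.
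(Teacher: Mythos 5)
Your basis construction and the identification of $\gr \GGt{P}(V)$ with $\GG{P}(V)$ via the $\tilde\omega$-filtration match the first half of the paper's proof (spanning by the perturbed Arnold relations, independence by the standard argument for the basis of $\enV(k)$). The divergence, and the gap, is in the chain-map step. The paper runs no correction scheme: it asserts that the substitution $\omega_{ij}\mapsto\tilde\omega_{ij}$ on the chosen basis is \emph{already} a chain map, because the differential of a basis element, rewritten in that basis, has the same explicit combinatorial description in both complexes (remove an edge, split the connected component in two, redistribute $\Delta_{P}$) — in your notation, $\Psi_{0}=0$ and the induction never starts. You instead concede that $\Psi_{0}$ may be nonzero and propose to solve $[d,\phi_{i}]=-\Psi_{i-1}$ inductively, but you only ``expect'' these equations to be solvable. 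That expectation is exactly the content of the proposition and cannot be left open: a filtered complex is in general \emph{not} isomorphic to its associated graded (compare $\K\xrightarrow{\id}\K$ with $\K\xrightarrow{0}\K$), so the vanishing of the obstructions $[\Psi_{i-1}]$ in $H^{*}(\operatorname{Hom}(\GG{P}(V),\GGt{P}(V)))$ is not automatic and must be extracted from the specific identities you name ($d\sigma_{B}=\Delta_{B,B_{\partial}}$, $(\rho\otimes\id)(\Delta_{B})=0$, $\mu_{P}(\Delta_{P})=0$); you do not carry out that computation.

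To close the gap, do what the paper does implicitly: compute $d(p\cdot\tilde\omega_{S})$ on a basis element. Removing an edge $(i,j)$ with $i<j$ from an admissible forest leaves $i$ as the maximal vertex of its new component (in an admissible monomial each vertex is the smaller endpoint of at most one edge, so deleting that edge makes $i$ the unique ``sink'' of its component), hence the leg $\Delta_{P}'$ lands on the correct vertex with no rewriting; only $\Delta_{P}''$ must be transported to the maximum of the other component, and one must check that the corrections produced along the way by the perturbed relations cancel or vanish — this is where the asymmetry of the diagonal data ($(\rho\otimes\id)(\Delta)=0$ versus $(\id\otimes\rho)(\Delta)=d\sigma$) enters. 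Either verify this directly, which establishes $\Psi_{0}=0$ and collapses your induction to the paper's one-step argument, or, if you keep the perturbative scheme, exhibit the primitives $\phi_{i}$ explicitly. As written, neither is done, so the proof is incomplete.
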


\begin{proof}
  Let us fix $V = \{ 1, \dots, k \}$ for some $k \geq 0$.
  Consider the standard basis of $\enV(k)$ given by monomials of the type:
  \begin{equation}
    \omega_{i_{1}j_{1}} \dots \omega_{i_{r}j_{r}},
  \end{equation}
  with $1 \leq i_{1} < \dots < i_{r} \leq r$ and $i_{l} < j_{l}$ for all $l$.
  By choosing some basis $\{ x_{1}, \dots, x_{m} \}$ of $P$, we obtain a basis of $\GG{P}(k)$ by labeling the last element of each connected component of such a monomial by some $x_{i}$.

  We claim that if we replace all the $\omega_{ij}$ by $\tilde\omega_{ij}$ in this basis, then we obtain a basis of $\GGt{P}(V)$.
  Using the perturbed Arnold relations, it's clear that any element of $\GGt{P}(V)$ can be written as a linear combination of these elements.
  Moreover, using the same argument that proves that there is no nontrivial relation between the elements of the standard basis of $\enV(k)$, we can prove that there is non nontrivial relation between the elements of our claimed basis.

  There is thus a linear isomorphism $\GG{P}(V) \to \GGt{P}(V)$ which is defined on the basis by replacing all the $\omega_{ij}$ by $\tilde\omega_{ij}$.
  It's then clear that this map preserves the internal differential of $P$ and the part of the differential which splits an $\omega_{i_{l}j_{l}}$ (which can be written down explicitly in the basis: it merely splits a connected component into two).
\end{proof}

\begin{corollary}
  There is an isomorphism of graded vector spaces $H^{*}(\Conf_{k}(M)) \cong H^{*}(\GGt{P}(k))$.
\end{corollary}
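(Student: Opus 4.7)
The proof is essentially immediate by chaining the two results that precede the corollary. The plan is as follows.

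First, I invoke the Proposition immediately above the corollary, which provides an explicit isomorphism of dg-modules $\GG{P}(V) \cong \GGt{P}(V)$, constructed by mapping the distinguished basis of $\GG{P}(V)$ (monomials $\omega_{i_1 j_1} \cdots \omega_{i_r j_r}$ labeled by basis elements of $P$) to the analogous basis of $\GGt{P}(V)$ obtained by replacing each $\omega_{ij}$ with $\tilde\omega_{ij}$. Since any dg-module isomorphism induces an isomorphism on cohomology, this gives
\[
H^{*}(\GG{P}(k)) \cong H^{*}(\GGt{P}(k)).
\]

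Next, I apply Theorem~\ref{thm.ga-same-cohom}, whose hypotheses are in force throughout this section (namely, $M$ is simply connected with simply connected boundary, and either $\dim M \geq 7$ or $M$ admits a surjective pretty model, so in either case $M$ admits a PLD model). The theorem asserts an isomorphism of graded vector spaces $H^{*}(\GG{P}(k); \Q) \cong H^{*}(\Conf_{k}(M); \Q)$.

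Composing these two isomorphisms yields the stated isomorphism $H^{*}(\Conf_{k}(M)) \cong H^{*}(\GGt{P}(k))$. There is no genuine obstacle here, since the work has been done in the two preceding statements; the corollary merely records their combination. The only subtlety worth noting is that the isomorphism of dg-modules $\GG{P}(V) \cong \GGt{P}(V)$ is \emph{not} a CDGA isomorphism (the perturbation genuinely modifies the Arnold and symmetry relations), so the corollary provides only a \emph{linear} identification of cohomologies, consistent with the fact that $\GG{P}(k)$ is in general not a CDGA model of $\Conf_k(M)$ while $\GGt{P}(k)$ will be (as established later in Theorem~\ref{thm.A}).
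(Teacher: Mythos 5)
Your argument is exactly the paper's: the preceding proposition gives a dg-module isomorphism $\GG{P}(k) \cong \GGt{P}(k)$, hence an isomorphism on cohomology, which is then composed with Theorem~\ref{thm.ga-same-cohom}. Correct, and your remark that the identification is only linear (not multiplicative) accurately reflects the intended content.
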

\begin{proof}
  This follows immediately from the previous proposition and Theorem~\ref{thm.ga-same-cohom}
\end{proof}

\begin{remark}
  It is often the case that $\GG{P}$ and $\GGt{P}$ are actually equal.
  For example, if $M$ is obtained by removing a point from a closed manifold, then $B_{\partial} = H^{*}(S^{n-1}) = S(v)/(v^{2})$, and $\sigma_{P} = 1 \otimes v$.
  Then in all the ``corrective terms'' in the definition of $\GGt{P}(V)$, $v$ appears at least twice, hence the summand vanishes.
\end{remark}

In particular:
\begin{example}
  \label{exa.model-dn-trois}
  If $M = D^{n}$ and we use the PLD model of Example~\ref{exa.model-dn}, then $\GGt{P} = \GG{P}$ (see Example~\ref{exa.model-dn-deux}).
\end{example}

\subsection{The connection between $\SGraphs_{A,A_{\partial}}$ and $\mGraphs_{A}$}
\label{sec.conn-betw-sgraphs}

We now compare our two graphical models, before going back to the comparison with our small model.
We first deal with the ``local'' case, i.e., $\SGraphs_{n}$ and $\Graphs_{n}$.

Let us apply the constructions of Section~\ref{sec:locMC2} to a trivial bundle over a point: we then obtain the cooperad $\SGraphs_{n}$ of Section~\ref{sec.extens-swiss-cheese}.
The Maurer--Cartan element $z+z^{\partial} \in \GC_{n}^{\vee} \ltimes \SGC_{n}^{\vee}$ is then equal to $\mu + c$, where $\mu$ is the graph with two vertices from Equation~\eqref{eq.mu}, and $c$ is given by the Kontsevich coefficients of Equation~\eqref{eq.konts-coeff}.
By the arguments of Section~\ref{sec:locMC2}, $c$ is gauge equivalent to $z^{\partial}_{0} + 0 \cdot z^{\partial}_{1}$ (the Euler class of $\Ha_{n}$ vanishes).
For convenience, we will use the notation\index{c0@$c_{0}$}
\begin{equation}
  c_{0} \coloneqq z^{\partial}_{0},
\end{equation}
and we assume in this section that $c_{0}$ is used to construct $\SGraphs_{n}$ instead of $c$.
As usual, since the two elements are gauge equivalent, this produces a quasi-isomorphic Hopf cooperad (see Section~\ref{sec.functoriality-models}).

By considering graphs with no terrestrial external vertices, we obtain a $\Graphs_{n}$-comodule $\SGraphs_{n}(\varnothing, -)$.
This comodule is not isomorphic to $\Graphs_{n}$ seen as a comodule over itself, because its graphs can contain internal terrestrial vertices.
However, $\SFM_{n}(\varnothing, \{*\})$ is a point, and there is a weak equivalence of $\FM_{n}$-modules given by:
\begin{equation}
  \FM_{n}(U) \cong \SFM_{n}(\varnothing, \{*\}) \times \FM_{n}(U) \xrightarrow{\circ_{U}} \SFM_{n}(\varnothing, U).
\end{equation}

This is modeled by the following proposition.
Let us define the map\index{nu@$\nu$}
\begin{equation}
  \nu : \SGraphs_{n}(\varnothing, V) \to \Graphs_{n}(V)
\end{equation}
as follows.
Given some $\Gamma \in \SGraphs_{n}(\varnothing, V)$, if $\Gamma$ only has univalent terrestrial vertices, then $\nu(\Gamma)$ is the graph with these univalent vertices and their incident edges removed.
Otherwise, if $\Gamma$ has terrestrial vertices of valence greater than one, then $\nu(\Gamma) = 0$.
Note that $\Gamma$ cannot have isolated terrestrial vertices, as they are all internal.
See Figure~\ref{fig:sketch-nu} for a sketch of the map $\nu$.

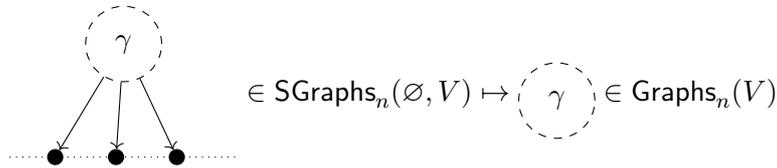
\begin{figure}[htbp]
  \centering
  \[
    \begin{tikzpicture}[baseline=.75cm]
      \draw[dotted] (0,0) -- (3,0);
      \node[draw, dashed, circle, minimum size=1cm] (g) at (1.5,1.5) {$\gamma$};
      \node[int] (i1) at (.6,0) {};
      \node[int] (i2) at (1.4,0) {};
      \node[int] (i3) at (2.2,0) {};
      \draw[->] (g) edge (i1) edge (i2) edge (i3);
    \end{tikzpicture}
    \in \SGraphs_{n}(\varnothing, V)
    \mapsto
    \tikz[baseline]{ \node[draw, dashed, circle, minimum size=1cm] {$\gamma$}; }
    \in \Graphs_{n}(V)
  \]
  \caption{Sketch of the map $\nu$}
  \label{fig:sketch-nu}
\end{figure}

\begin{proposition}
  \label{prop.qiso-sc}
  The maps $\nu : \SGraphs_{n}(\varnothing, V) \to \Graphs_{n}(V)$ define a quasi-isomorphism of Hopf right $\Graphs_{n}$-modules.
\end{proposition}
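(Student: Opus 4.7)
\smallskip

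The plan is to verify in order: (i) that $\nu$ is well-defined on graded modules, (ii) that it respects the CDGA structure and the $\Graphs_n$-coaction, (iii) that it is a chain map, and (iv) that it is a quasi-isomorphism. Item (i) is immediate from the definition. For (ii), the product in $\SGraphs_n(\varnothing, V)$ is gluing at aerial external vertices, which does not interact with terrestrial vertices at all, so it descends through $\nu$; the coaction of $\Graphs_n$ collapses subgraphs containing a prescribed subset of aerial external vertices, and univalent terrestrial vertices inside the collapsed subgraph either disappear into the collapsed vertex or remain univalent on the outside, so one checks directly that the coaction commutes with $\nu$.

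For (iii), the differential on $\SGraphs_n$ has three summands as described in Section~\ref{sec.extens-swiss-cheese}: (a) contraction of an aerial edge with at least one internal endpoint; (b) contraction of a subgraph $\Gamma'$ with coefficient $c_0(\Gamma')$, producing a terrestrial vertex; (c) forgetting of internal vertices outside a subgraph $\Gamma'$, with coefficient $c_0(\Gamma/\Gamma')$. The key observation is that $c_0 = z_0^{\partial}$ is supported on star-shaped graphs with a single aerial center and terrestrial leaves. Applying $\nu$ to the output of (a) recovers the edge-contraction differential of $\Graphs_n$, whereas (b) and (c) either produce a terrestrial vertex of valence $\geq 2$ (killed by $\nu$) or an edge with terrestrial source (defined to be zero in $\SGraphs_n$), or must cancel pairwise in light of the Maurer--Cartan equation for $c_0$. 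The one delicate case is when $\Gamma'$ is a star subgraph attached to the univalent terrestrial dead-ends of $\Gamma$ via its leaves; here a careful bookkeeping of orientations is needed to establish that (b) and (c) cancel in such configurations after applying $\nu$.

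Item (iv), the quasi-isomorphism property, is the main obstacle and I would handle it by comparing both sides to $\OmPA^*(\FM_n(V))$. The canonical inclusion $\FM_n(V) \hookrightarrow \SFM_n(\varnothing, V)$, obtained by inserting a configuration of $V$ points in $\R^n$ into the interior of a halfspace at a chosen boundary point, is a homotopy equivalence. Using the fact that the propagator $\overline{\vol}^h_{n-1}$ on $\SFM_n(\underline{1}, \underline{1})$ used in the construction of $\omega$ for $\SGraphs_n$ restricts, when pulled back to this inclusion, to the standard propagator $\vol_{n-1}$ of $\FM_n$, one shows that the diagram
\begin{equation*}
\begin{tikzcd}
\SGraphs_n(\varnothing, V) \ar{r}{\omega'}[swap]{\sim} \ar{d}[swap]{\nu} & \OmPA^*(\SFM_n(\varnothing, V)) \ar{d}{\sim} \\
\Graphs_n(V) \ar{r}{\omega}[swap]{\sim} & \OmPA^*(\FM_n(V))
\end{tikzcd}
\end{equation*}
commutes up to a chain homotopy obtained by inserting a ``damping'' form near the boundary. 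Since three of the four edges are quasi-isomorphisms (by Willwacher~\cite{Willwacher2015}, Kontsevich--Lambrechts--Volić~\cite{LambrechtsVolic2014}, and the homotopy equivalence respectively), so is $\nu$ by the two-out-of-three property.

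Should the chain-homotopy comparison prove too onerous, a direct alternative is to filter $\SGraphs_n(\varnothing, V)$ by the number of terrestrial vertices and run the associated spectral sequence; the part of the differential decreasing the terrestrial count comes from the $c_0$-contraction summand producing an edge with terrestrial source (vanishing) or a multivalent terrestrial vertex, and one identifies the $E^1$-page with $\Graphs_n(V)$ plus an acyclic complement, whence the quasi-isomorphism. The hardest part in either approach is handling the star-shaped $c_0$-contributions, which genuinely require the cancellation argument from step (iii).
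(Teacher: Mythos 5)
Your route is genuinely different from the paper's, and as written it leaves its two hardest steps unproven. The paper's proof is a two-line factorization: $\nu$ is the composite of the comodule structure map $\circ_{V}^{\vee} : \SGraphs_{n}(\varnothing, V) \to \SGraphs_{n}(\varnothing, \{*\}) \otimes \Graphs_{n}(V)$ followed by evaluation of the first factor against $c_{0}$. The first map is a chain map and comodule morphism for free (it is a structure map), and it is a quasi-isomorphism because it models the weak equivalence $\SFM_{n}(\varnothing,\{*\}) \times \FM_{n}(V) \to \SFM_{n}(\varnothing,V)$; the second is a quasi-isomorphism because $\SGraphs_{n}(\varnothing,\{*\})$ models a point and $c_{0}$ is a nontrivial cocycle on it. This observation dissolves exactly the two points you flag as ``the hardest part'': the cancellation of the star-shaped $c_{0}$-contributions in your step (iii) and the homotopy-commutativity of your comparison square in step (iv) are never needed, because the chain-map and quasi-isomorphism properties are inherited from structure maps that are already known to have them.

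Two concrete issues with your version as it stands. First, in step (iii) and again in step (iv) you assert but do not establish the key cancellations (``must cancel pairwise in light of the Maurer--Cartan equation,'' ``commutes up to a chain homotopy obtained by inserting a damping form''); these are real arguments, not bookkeeping, and without them the proof is incomplete. (Also note the top map of your square should be $\omega$, not $\omega'$, which is only defined before twisting.) Second, your fallback spectral sequence does not run: the filtration by the number of terrestrial vertices is not compatible with the differential, because the $n=0$ star in $c_{0} = z_{0}^{\partial}$ contributes a summand that converts a single aerial internal vertex into a terrestrial one, raising the terrestrial count, while the $n\ge 2$ stars lower it. The workable filtration here, as in the paper's proof of the global analogue for $\nu_{A}$, is by $\#\text{edges} - \#\text{vertices}$.
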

\begin{proof}
  For a given $V$, the map is the composite of two quasi-isomorphisms of CDGAs (hence it is itself a CDGA map and a quasi-isomorphism):
  \begin{itemize}
    \item The comodule structure map
          \begin{equation}
            \circ_{V}^{\vee} : \SGraphs_{n}(\varnothing, V) \to \SGraphs_{n}(\varnothing, \{*\}) \otimes \Graphs_{n}(V),
          \end{equation}
          which is a quasi-isomorphism because it models the weak equivalence $\FM_{n}(\varnothing, \{*\}) \times \FM_{n}(V) \to \SFM_{n}(\varnothing, V)$;
    \item The map $c_{0} : \SGraphs_{n}(\varnothing, \{*\}) \to \R$.
          This is a quasi-isomorphism because $\SGraphs_{n}(\varnothing, \{*\})$ is a model for $\SFM_{n}(\varnothing, \{*\})$, which is a point, and $c_{0}$ is a nontrivial cocycle.
  \end{itemize}

  The fact that $\SGraphs_{n}$ is a relative operad over $\Graphs_{n}$ also shows that this is a morphism of $\Graphs_{n}$-comodules.
\end{proof}

We now get back to the global case.
We still assume that the Maurer--Cartan element $z+z^{\partial}+Z \in \mathfrak{g}_{A,A_{\partial}}$ (see Equation~\eqref{eq.g-a-a-del}) used to define $\SGraphs_{A,A_{\partial}}$ is equal to the simpler one $\mu + z^{\partial}_{0} + E z^{\partial}_{1} + Z$, in order to have a simpler description for the map described below.
We also assume that the Maurer--Cartan element $w + W \in \mGC^{\vee}_{A_{\partial}} \ltimes \aGC^{\vee}_{A}$ used to define $\mGraphs_{A}$ (see Section~\ref{sec:oGCMC}) is also equal to the simple one $w_{0} + W_{0}$.

For each finite set of external vertices $V$, there is a map\index{nuA@$\nu_{A}$}
\begin{equation}
  \nu_{A} : \SGraphs_{A, A_{\partial}}(\varnothing, V) \to \mGraphs_{A}(V)
\end{equation}
defined as follows.
Given some graph $\Gamma \in \SGraphs_{A, A_{\partial}}(\varnothing, V)$, we remove all the edges from an aerial vertex to a terrestrial vertex and we multiply their endpoints by $\sigma_{A} \in A_{\partial} \otimes A$.
We then obtain a graph where all the terrestrial vertices are zero-valent, and we identify those with the real number given by the integral over $\partial M$ of their labels.
In this way, we obtain an element of $\mGraphs_{A}(V)$, and it is not hard to check that this produces a morphism of CDGAs.

\begin{proposition}
  The maps $\nu_{A} : \SGraphs_{A, A_{\partial}}(\varnothing, -) \qiso \mGraphs_{A}$ define a quasi-iso\-mor\-phism of Hopf right $\Graphs^{A}_{n}$-comodules.
\end{proposition}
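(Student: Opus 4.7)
The proof will follow the same pattern as Proposition \ref{prop.qiso-sc} in the local case, adapted to the global setting. The plan has three parts: verify that $\nu_A$ is a morphism of CDGAs, check compatibility with the $\Graphs_n^A$-coaction, and establish the quasi-isomorphism property by comparing with forms on the topological configuration spaces.

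For the first part, the nontrivial verification is that $\nu_A$ commutes with the differentials. The edge-splitting piece applied to an aerial-to-terrestrial edge on the $\SGraphs_{A,A_\partial}$ side yields the mixed diagonal $\Delta_{A,A_\partial}$; this matches the Stokes-type identity $d\sigma_A = \Delta_{A,A_\partial}$ that appears on the $\mGraphs_A$-side after $\nu_A$ replaces the edge by $\sigma_A$ and integrates the terrestrial endpoint over $\partial M$. Similarly, contractions of terrestrial subgraphs weighted by the Maurer--Cartan element $z^\partial_0 + E z^\partial_1$ correspond, under $\nu_A$, to contractions weighted by $w_0$ on the $\mGraphs_A$-side, because $z^\partial_0$ is the star graph whose image under $\nu_A$ (each terrestrial leg being replaced by $\sigma_A$ and integrated over $\partial M$) reproduces $w_0$ by a direct computation using $(\mathrm{id}\otimes \varepsilon_{B_\partial})(\sigma_A) = $ the image in $A$ of the diagonal class of $\partial M$.

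Compatibility with the $\Graphs_n^A$-coaction is essentially tautological: in both cases the coaction collapses subgraphs of aerial external vertices, and $\nu_A$ modifies only terrestrial vertices and aerial-to-terrestrial edges, which are disjoint from the subgraphs being collapsed.

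For the quasi-isomorphism property, I would argue via the topological models. Both $\SFM_M(\varnothing, V)$ and $\mFM_M(V)$ contain $\Conf_V(M^\circ)$ as a dense open subset onto which they deformation retract, so they are homotopy equivalent; more precisely, one can construct a natural continuous map $p : \mFM_M(V) \to \SFM_M(\varnothing, V)$ which is the identity on the interior and which collapses the $\aFM_{\partial M}$-type boundary strata of $\mFM_M$ to the corresponding $\SFM_n^{\partial M}$-type strata of $\SFM_M(\varnothing,-)$ by forgetting the scale of infinitesimal boundary configurations. This $p$ is a weak equivalence. Combining Theorems \ref{thm.main-qiso} and \ref{thm:B}, the two graphical models receive quasi-isomorphisms $\omega$ to the PA-forms on the two compactifications. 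The main obstacle is checking that the square formed with $\nu_A$ and $p^*$ commutes up to chain homotopy; this reduces to comparing the Feynman-integral definitions of the two $\omega$ maps, using that the propagators on $\mFM_M$ and on $\SFM_M(\varnothing,-)$ have been chosen with matching boundary values $\sigma_A$ on the ``point goes to boundary'' strata (Propositions \ref{prop.propagator-sfmm} and~\ref{prop.propagator-afm}), so that the $\sigma_A$-substitution defining $\nu_A$ precisely implements the pullback along $p$. The quasi-isomorphism property then follows by the 2-out-of-3 property, and the result at the Hopf comodule level follows from the compatibility of all these constructions with the $\Graphs_n^A$-coactions established earlier.
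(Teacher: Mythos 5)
Your parts one and two (that $\nu_{A}$ is a map of CDGAs, a chain map, and compatible with the $\Graphs_{n}^{A}$-coaction) are in the right spirit, though even there you only match the $z^{\partial}_{0}$-piece of the Maurer--Cartan element against $w_{0}$; you never address why the \emph{global} partition functions $Z \in \SGC_{A,A_{\partial}}^{\vee}$ and $W \in \mGC_{A}^{\vee}$, which evaluate internal components on the two sides, correspond under the $\sigma_{A}$-substitution. Without that, $\nu_{A}$ is not even obviously well defined on the reduced complexes. The more serious problem is part three. The paper proves the quasi-isomorphism purely algebraically: filter both sides by $\#\text{edges} - \#\text{vertices}$, observe that on the $\EE^{0}$ page only aerial edge contractions and the $z^{\partial}_{0}$-contraction survive, split into connected components, and reduce to the local statement $\nu : \SGraphs_{n}(\varnothing, V) \to \Graphs_{n}(V)$ of Proposition~\ref{prop.qiso-sc} tensored with $A$. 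Your route instead goes through the topological realizations and a $2$-out-of-$3$ argument, and the entire difficulty of the statement is concentrated in the two steps you merely assert: (i) the existence of a (semi-algebraic) weak equivalence $p : \mFM_{M}(V) \to \SFM_{M}(\varnothing, V)$ --- note that these are genuinely different compactifications, with incompatible stratifications (compare Tables~\ref{tab.strata-ofm-m} and~\ref{tab.strata-sfm-m}), so $p$ cannot simply ``forget the scale''; and (ii) the homotopy-commutativity of the square with $\nu_{A}$ and $p^{*}$. For (ii) the two $\omega$-maps are defined by fiber integrals against \emph{different} propagators with different prescribed boundary restrictions, so ``the $\sigma_{A}$-substitution implements $p^{*}$'' is exactly the Stokes-type computation that would constitute the proof, and it is not carried out.

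In short: the strategy is not absurd, but as written it replaces the theorem by two unproven claims of comparable difficulty, whereas the paper's spectral-sequence reduction to the local case bypasses the topological comparison entirely. If you want to salvage your approach, you must construct $p$ explicitly on strata and prove the compatibility of the two Feynman-rule maps; otherwise, adopt the filtration argument.
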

\begin{proof}
  We use an argument similar to the one of~\cite[Section~4.2]{Idrissi2018b}.
  Filter both complexes by the number of edges minus the number of vertices.
  The only part of the differential which remains on the $\EE^{0}$ page is the contraction of aerial edges and the contraction of a subgraph into a terrestrial vertex induced by $z^{\partial}_{0}$, which is nonzero on trees only (see Proposition~\ref{prop:locvanishing2}), while the summand induced by $E z^{\partial}_{1}$ vanishes because it strictly decreases the filtration.

  It follows that both complexes split as a direct sum in terms of connected components, just like in~\cite[Lemma~4.21]{Idrissi2018b}.
  We can now reuse the same ``trick'' we used for the proof of~\cite[Lemma~4.28]{Idrissi2018b}, to show that in cohomology, there is only one label for each connected component.
  We thus reduce to the morphism $\nu : \SGraphs_{n}(\varnothing, V) \to \Graphs_{n}(V)$ from Proposition~\ref{prop.qiso-sc} tensored with the identity of $A$ for each connected component.
  Since that morphism is a quasi-isomorphism, we obtain that the induced morphism on the $\EE^{1}$ page is an isomorphism.
  It follows by standard spectral sequence arguments (the filtration is bounded below for a fixed $V$) that the morphism of the proposition is a quasi-isomorphism.
\end{proof}

\subsection{Comparison with the perturbed model and proof of Theorem~\ref{thm.A}}
\label{sec.comp-with-pert}

We consider the diagonal data $(B \xrightarrow{\lambda} B_{\partial}, \Delta_{B}, \sigma_{B})$ obtained from the Poincaré--Lefschetz duality model of $M$ (see Section~\ref{sec.pretty-nice}).
In particular we let $P$ be the quotient of $B$ by the orphans, which is also a model for $M$.
We would now like to connect $\mGraphs_{B}(V)$ with $\GGt{P}(V)$.

Recall that we can assume that the Maurer--Cartan element $w+W$ used to define $\mGraphs_{B}$ is the simple one $w_{0} + W_{0}$, see Section~\ref{sec:oGCMC}.
Note that since we are in the ``model-theoretic'' case, the pieces $w_{\tree}$, $W_{\tree}$ vanish as they encode higher Massey products -- they only appear in the ``canonical combinatorial'' case.

It is not necessarily the case that the two morphisms $\varepsilon, \varepsilon' : \operatorname{cone}(\rho) \to \R[-n+1]$, defined respectively by the composites $(\int_{M} g(-), \int_{\partial M} g_{\partial}(-))$ and $(\varepsilon_{B}f, \varepsilon_{B_{\partial}}f_{\partial})$, are equal.
Nevertheless, up to rescaling $\varepsilon$ and $\varepsilon_{\partial}$ (which induces an automorphism of $\mGraphs_{B}$), they induce the same map up to quasi-isomorphism, because $H^{n-1}(\cone(\rho))$ is one-dimensional.
Therefore we can further assume (using a gauge equivalence, see Section~\ref{sec.functoriality-models}) that the Maurer--Cartan element is compatible with the Poincaré--Lefschetz duality of $(B,B_{\partial})$.

Although the pair $(P, B_{\partial})$ is not an example of diagonal data (there is no direct map $P \to B_{\partial}$ in general, much less a surjective one), we can still mod out the graphs in $\mGraphs_{B}$ which contains vertices labeled by an element of $\ker(B \to P)$, which we check forms a dg-Hopf comodule bi-ideal.
We then obtain a graph complex $\mGraphs_{P}$, similar to $\mGraphs_{B}$ except that the vertices are labeled by elements of $P$.
The descriptions of the differential and the comodule structure are similar (they are induced by the structure maps on $\mGraphs_{B}$ and they are compatible with the quotient).
Moreover, the quotient map $\mGraphs_{B} \to \mGraphs_{P}$ is a quasi-isomorphism: we can filter by the total number of vertices both complexes, and thus obtain an isomorphism on the $E^{1}$ pages of the associated spectral sequences, following from the fact that $B \to P$ is a quasi-isomorphism.
This filtration is bounded in each degree, hence the initial map is a quasi-isomorphism by standard spectral sequences arguments.

We can now define a morphism $\Graphs_{P} \to \GGt{P}$ which maps all graphs with internal vertices to zero, and which sends an edge $e_{vv'}$ between external vertices to $\tilde{\omega}_{vv'}$.
\begin{proposition}
  \label{qiso-graphs-ga}
  This defines a quasi-isomorphism of right Hopf comodules:
  \[ (\mGraphs_{P}, \Graphs_{n}) \to (\GGt{P}, \enV). \]
\end{proposition}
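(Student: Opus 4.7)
The plan is to adapt the strategy used in \cite{Idrissi2016} for configuration spaces of closed manifolds, accounting for the extra boundary data encoded by $w_0$, $W_0$, and $\sigma_P$. First I would check that the prescribed assignment is well-defined, i.e.\ compatible with the ideal of relations defining $\GGt{P}$. The perturbed symmetry and perturbed Arnold relations correspond precisely to the differentials of graphs in $\mGraphs_P$ with a single internal vertex: contracting a bivalent internal vertex via the edge-contraction part of the differential produces the ``naive'' symmetric/Arnold combination of $\tilde\omega_{vv'}$'s, while sending that internal vertex to the boundary via the $w_0$-twist, together with cutting incident edges via $\sigma_P$, produces exactly the corrective terms involving sums of the form $\varepsilon_\partial(\rho(b)\prod \sigma''_i)\prod \sigma'_i$ appearing in the ideal of $\GGt{P}$.

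Second I would check compatibility with differentials and with the comodule structures. The internal differential on $P$ matches on both sides; the edge differential in $\mGraphs_P$ produces $\Delta_P$ at the endpoints, which matches $d\tilde\omega_{vv'}=\iota_v\iota_{v'}(\Delta_P)$; and the $w_0$-twisting part of the differential in $\mGraphs_P$ exactly produces the boundary corrections that are built into the ideal of $\GGt{P}$. Compatibility with the right coaction over $\Graphs_n \to \enV$ follows since collapsing subgraphs at external vertices kills any subgraph with internal vertices on both sides, and on subgraphs without internal vertices the two coactions agree by construction (both use only edges and $\Com$-type multiplications).

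Third, and most substantially, I would prove the quasi-isomorphism claim. The strategy is to filter both sides so that on the $E^0$ page the proof reduces to the closed-manifold case treated in \cite{Idrissi2016}. Concretely, I would filter $\mGraphs_P(V)$ by ``defect'' equal to the number of internal vertices plus the number of edges going to boundary-type corrections, and analogously filter $\GGt{P}(V)$ by the number of $\sigma_P$-terms. On the associated graded, the $w_0/W_0$-twisting parts of the differential and the $\sigma_P$-corrections in the Arnold relations both drop out, so we are comparing (an unperturbed version of) $\mGraphs_P$ with $\GG{P}$. This is precisely the quasi-isomorphism established in \cite{Idrissi2016} for closed manifolds, applied now to the model $P$. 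The filtrations are bounded in each degree for fixed $V$, so the comparison on $E^0$ lifts to the full map.

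The hard part will be organizing the filtration so that \emph{all} the boundary-type corrections (those coming from the $w_0+W_0$ twist of $\mGraphs_P$, those coming from edges cut via $\sigma_P$, and the perturbative terms in the ideal of $\GGt{P}$) drop simultaneously on $E^0$, leaving an isomorphism between the ``classical pieces'' on both sides that can be identified with the closed-manifold comparison. A secondary obstacle is verifying that the map on $\mGraphs_P$ is well-defined after passing from $\mGraphs_B$ (since $\ker(B\to P)$ must be shown to be annihilated by the prescription), but this follows formally from the fact that the $\GGt{P}$-side only uses $P$ and $B_\partial$-pairings mediated by $\sigma_P=(\pi\otimes 1)(\sigma_B)$, on which elements of $\ker(B\to P)$ act trivially by construction.
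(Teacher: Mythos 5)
Your proposal follows essentially the same route as the paper: the paper's proof simply filters both sides by $\#\,\text{edges}-\#\,\text{vertices}$, observes that on the associated graded the perturbed relations of $\GGt{P}$ collapse to the usual relations of $\GG{P}$ while the boundary corrections in the differential drop out, and then invokes the closed-manifold argument of~\cite[Proposition~4.13]{Idrissi2016}. Your ``defect'' filtration is a slightly vaguer phrasing of the same reduction (the clean well-defined choice is the edge-minus-vertex count used throughout the paper), and the preliminary well-definedness and compatibility checks you list are exactly the content already built into the construction of the map and the presentation of the ideal of $\GGt{P}$.
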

\begin{proof}
  We can mimic the proof of~\cite[Proposition~4.13]{Idrissi2018b} in a straightforward way.
  Indeed, as soon as we filter by $\# \text{edges} - \# \text{vertices}$, the perturbed relations of $\GGt{P}$ become the usual relations of $\GG{P}$, and the rest of the argument is identical.
\end{proof}

Then the first case of Theorem~\ref{thm.A} follows:
\begin{theorem}
  \label{thm.model-ga}
  Let $M$ be a simply connected, smooth, compact manifold with boundary of dimension at least $7$.
  Let $P$ be the CDGA model built from any Poincaré--Lefschetz model.
  Then the symmetric collection of CDGAs $\GGt{P}$ is quasi-isomorphic to $\OmPA^{*}(\SFM_{M}(\varnothing,-))$.
  If moreover $M$ is framed, then the Hopf right comodule $(\GGt{P}, \enV)$ is quasi-isomorphic to $(\OmPA^{*}(\SFM_{M}(\varnothing, -)), \OmPA^{*}(\FM_{n}))$.
  \qed
\end{theorem}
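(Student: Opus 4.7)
The plan is to string together the quasi-isomorphisms established earlier in the section into a zigzag connecting $\GGt{P}$ to $\OmPA^{*}(\SFM_{M}(\varnothing,-))$, and to check that each link is compatible with the relevant Hopf comodule structure. Starting from the PLD model $(B \xrightarrow{\lambda} B_{\partial})$ of $(M, \partial M)$, we recover the diagonal data $(B, B_{\partial}, \Delta_{B}, \sigma_{B})$ from Example~\ref{exa.diag-data}, together with the associated Maurer--Cartan elements in $\mathfrak{g}_{B,B_{\partial}}$ (resp.\ in $\mGC_{B}^{\vee} \ltimes \aGC_{B_{\partial}}^{\vee}$) built from the propagators of Section~\ref{sec.propagator}.

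The core of the argument is the following zigzag, all of whose arrows are quasi-isomorphisms of CDGAs in each arity, equivariant under $\Sigma_{V}$:
\[
  \OmPA^{*}(\SFM_{M}(\varnothing, V))
  \xleftarrow{\omega}
  \SGraphs_{B,B_{\partial}}(\varnothing, V)
  \xrightarrow{\nu_{B}}
  \mGraphs_{B}(V)
  \twoheadrightarrow
  \mGraphs_{P}(V)
  \longrightarrow
  \GGt{P}(V).
\]
The first arrow is Theorem~\ref{thm.main-qiso} restricted to graphs with no terrestrial external vertices; the second is the colored-to-ordinary map of Section~\ref{sec.conn-betw-sgraphs} (after a gauge change reducing the Maurer--Cartan data to the canonical form $\mu + z_{0}^{\partial} + E\, z_{1}^{\partial} + Z$ and $w_{0} + W_{0}$); the third is the quotient by the dg-ideal of graphs carrying a vertex decorated by an orphan in $\ker(B \to P)$, which is a quasi-isomorphism by a filtration by the total number of vertices and the fact that $B \to P$ is a quasi-isomorphism; the fourth is Proposition~\ref{qiso-graphs-ga}. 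Since $\SFM_{M}(\varnothing, V)$ is homotopy equivalent to $\Conf_{V}(M)$, this establishes the first assertion. All functoriality steps in the choice of model, propagator and Maurer--Cartan element are handled uniformly by the arguments of Section~\ref{sec.functoriality-models}.

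For the framed statement, we need to promote the zigzag to one of Hopf right comodules, with the cooperad on the right being $\enV$ on one end and $\OmPA^{*}(\FM_{n})$ on the other. When $M$ is framed, the fiberwise little disks operad trivializes as $\FM_{n}^{M} \simeq M \times \FM_{n}$, so that $\Graphs_{n}^{B} = B \otimes \Graphs_{n}$; accordingly each step of the above zigzag is a morphism of Hopf comodules over (successively) $\Graphs_{n}^{B}$, $\Graphs_{n}^{B}$, $\Graphs_{n}^{P}$, and finally $\Graphs_{n}$ at the level of $\mGraphs_{P}$ and $\GGt{P}$. We then splice in the Kontsevich formality zigzag $\OmPA^{*}(\FM_{n}) \xleftarrow{\sim} \Graphs_{n} \xrightarrow{\sim} \enV$ of Hopf cooperads and use the base-change argument from Section~\ref{sec.functoriality-models} on the comodule side: two gauge-equivalent local Maurer--Cartan elements yield quasi-isomorphic twisted comodules, and the universal vanishing result of Proposition~\ref{prop:locvanishing2} ensures that the local Maurer--Cartan element used in $\SGraphs_{n}^{B_{\partial}}$ sits in the correct gauge class.

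The main technical obstacle will be checking compatibility with the cooperad base change along each arrow. Theorem~\ref{thm.main-qiso} provides compatibility with $\Graphs_{n}^{B}$-coactions, but the comodule structure on $\mGraphs_{B}$ (Section~\ref{sec:agraphs-mgraphs}) is initially defined only over $\Graphs_{n}^{B}$ as well, and the map $\nu_{B}$ changes the coefficient CDGA only in degree zero (through $c_{0}$), so one must verify that the operadic insertion rules for $\SGraphs_{B,B_{\partial}}(\varnothing, -)$ (contraction of aerial subgraphs) match, after applying $\nu_{B}$, with those of $\mGraphs_{B}$. This is a bookkeeping calculation mirroring the proof of Proposition~\ref{prop.qiso-sc}. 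Once assembled, the same filtration by number of edges minus number of vertices used in that proof shows the full zigzag is a quasi-isomorphism of Hopf comodules, which completes the proof.
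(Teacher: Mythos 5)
Your proposal is correct and follows essentially the same route as the paper, which proves this theorem precisely by assembling the zigzag $\OmPA^{*}(\SFM_{M}(\varnothing,-)) \xleftarrow{\omega} \SGraphs_{B,B_{\partial}}(\varnothing,-) \xrightarrow{\nu_{B}} \mGraphs_{B} \to \mGraphs_{P} \to \GGt{P}$ from Theorem~\ref{thm.main-qiso}, the proposition of Section~\ref{sec.conn-betw-sgraphs}, the quotient by orphan-decorated graphs, and Proposition~\ref{qiso-graphs-ga}, with the framed case handled by splicing in the Kontsevich formality zigzag exactly as you describe. The gauge-equivalence and functoriality caveats you flag are the same ones the paper addresses in Sections~\ref{sec.functoriality-models} and~\ref{sec.comp-with-pert}.
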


Let us finally deal with simply connected manifolds with simply connected boundary of dimension $\dim M \in \{4,5,6\}$.
It is known that such a manifold is formal (see e.g.~\cite[Proposition~2.99]{FelixOpreaTanre2008}), thus as a model for $M$ we may simply take $H^{*}(M)$.
Note however that while $\partial M$ is formal too, the inclusion $\partial M \to M$ is not formal in general (take e.g., $M = S^{2} \times S^{2} \setminus D^{4}$), and the restriction map $H^{*}(M) \to H^{*}(\partial M)$ is not surjective anyway.

Consider the model $\mGraphs_{A}$ of $\Conf_{\bullet}(M)$ built in Section~\ref{sec:secondmodel}, where $A$ is some cofibrant model of $M$ part of some diagonal data.
As $M$ is formal, there exists a quasi-isomorphism $A \qiso H^{*}(M)$, necessarily surjective.
We define $\GGt{H^{*}(M)}(r)$ to be the quotient:
\begin{equation}
  \GGt{H^{*}(M)}(r) \coloneqq \mGraphs_{A}(r) / I,
\end{equation}
where $I$ is the CDGA ideal generated by graphs with internals vertices and graphs containing a label in the kernel of $A \to H^{*}(M)$.
This CDGA admits a description similar to Equation~\ref{eq.perturbed-model}: $\GGt{H^{*}(M)}(r)$ can be seen as the subcomplex of $\mGraphs_{A}(r)$ of graphs with no internal vertices, modded out by differentials of graphs with exactly one internal vertices (as the differential decreases the number of internal vertices by at most one).
Note that $\GGt{H^{*}(M)}$ forms an $\enV$-comodule, and that the quotient map
\begin{equation}
  (\mGraphs_{A}, \Graphs_{n}) \to (\GGt{H^{*}(M)}, \enV)
\end{equation}
defines a morphism of comodules.

\begin{theorem}
  \label{thm.ext-formal}
  Let $M$ be a simply connected manifold with simply connected boundary, with $\dim M \in \{4,5,6\}$.
  Let $\GGt{H^{*}(M)}$ be the collection of CDGAs defined above.
  Then each $\GGt{H^{*}(M)}(r)$ is a CDGA model of $\Conf_{r}(M)$.
  If moreover $M$ is framed, then the Hopf right comodule $(\GGt{H^{*}(M)}, \enV)$ is a quasi-isomorphic to $(\OmPA^{*}(\SFM_{M}(\varnothing, -)), \OmPA^{*}(\FM_{n}))$.
\end{theorem}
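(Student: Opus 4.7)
The plan is to reduce this result to Theorem~\ref{thm:B} by combining formality with the functoriality of the graphical model $\mGraphs_A$ in the underlying diagonal data, and then to prove that the quotient projection to $\GGt{H^{*}(M)}$ is a quasi-isomorphism via a spectral sequence. First I would fix a cofibrant CDGA $A$ together with diagonal data $(A \xrightarrow{\rho} A_{\partial}, \Delta_A, \sigma_A)$ mapping into $M$ as in Section~\ref{sec.a-data}, and a surjective quasi-isomorphism $q \colon A \to H^{*}(M)$, which exists because every simply connected manifold of dimension $\leq 6$ is formal. Theorem~\ref{thm:B} then gives a zigzag $\mGraphs_A \simeq \OmPA^{*}(\mFM_M)$, and $\OmPA^{*}(\mFM_M) \simeq \OmPA^{*}(\SFM_M(\varnothing, -))$ since the two compactifications are weakly equivalent as the underlying spaces have the same homotopy type as $\Conf_\bullet(M)$.

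Next I would use the discussion of Section~\ref{sec:oGCMC}, together with Corollary~\ref{cor:vanish-mgc} (which applies since $\dim M \geq 4$ and $H^{1}(M) = 0$), to replace the governing Maurer--Cartan element $w + W \in \aGC_{A_{\partial}}^{\vee} \ltimes \mGC_A^{\vee}$ by a gauge equivalent representative that has only tree-level components (apart from $w_0 + W_0$). The resulting model is quasi-isomorphic to the original by the functoriality arguments of Section~\ref{sec.functoriality-models}, so we may assume $\mGraphs_A$ is built with this simplified MC element. Pushing forward along $q$ then produces a quasi-isomorphism $\mGraphs_A \qiso \mGraphs_{H^{*}(M)}$, where the right-hand side denotes the graph complex built on vertices labeled by $H^{*}(M)$ with the transferred MC element.

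The key step is to show that the CDGA quotient map
\[ \pi \colon \mGraphs_{H^{*}(M)}(r) \twoheadrightarrow \GGt{H^{*}(M)}(r), \]
which kills all graphs containing at least one internal vertex, is a quasi-isomorphism. I would filter both sides by $\#\text{edges} - \#\text{vertices}$, as in the proof of Proposition~\ref{qiso-graphs-ga}, so that on the $E^0$ page only the edge-contraction piece of the differential survives (since edge splitting, the MC twist, and the coefficient evaluations all strictly decrease this quantity). A standard acyclic-carrier/contractibility argument on graphs with at least one internal vertex, combined with the description of $\GGt{H^{*}(M)}$ as graphs without internal vertices modulo differentials of graphs with exactly one internal vertex, will identify $H^{*}(\mGraphs_{H^{*}(M)})$ with $H^{*}(\GGt{H^{*}(M)})$. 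The filtration is bounded below in each cohomological degree for fixed external arity, yielding convergence.

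Finally, for the $\enV$-comodule structure when $M$ is framed, I would combine the quasi-isomorphism $\nu_A \colon \SGraphs_{A,A_{\partial}}(\varnothing, -) \to \mGraphs_A$ from Section~\ref{sec.conn-betw-sgraphs} with Theorem~\ref{thm.main-qiso} applied to $\SGraphs_{A,A_{\partial}}$, and with Kontsevich's formality zigzag between $\Graphs_n$ and $\OmPA^{*}(\FM_n)$, to transfer the right comodule structure across all these quasi-isomorphisms. The hard part will be step three: without a Poincaré--Lefschetz duality model, one cannot mimic the explicit basis computation of Proposition~\ref{qiso-graphs-ga} directly, and must instead rely on the vanishing/contractibility arguments powered by Corollary~\ref{cor:vanish-mgc} to ensure that graphs with internal vertices carry no extra cohomology beyond the quotient relations already present in $\GGt{H^{*}(M)}$.
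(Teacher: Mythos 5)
Your proposal is correct and follows essentially the same route as the paper, whose own proof is a one-line deferral to the methods of the $\dim M \geq 7$ case: use formality to get a surjective quasi-isomorphism $A \to H^{*}(M)$, invoke the functoriality of $\mGraphs_{A}$ and the vanishing results for the Maurer--Cartan element, and show the quotient killing internal vertices and kernel labels is a quasi-isomorphism via the $\#\text{edges}-\#\text{vertices}$ filtration as in Proposition~\ref{qiso-graphs-ga}. The only cosmetic difference is that the paper performs the two quotients (kernel labels and internal vertices) in a single step by one ideal $I$, whereas you factor them; this changes nothing.
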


\begin{proof}
  Given that $(\mGraphs_{A}, \Graphs_{n})$ is known to be a model by Section~\ref{sec:secondmodel}, this is a simple matter of checking that the ideal $I$ is acyclic, using the same methods developed above for $\dim M \geq 7$.
\end{proof}

\begin{example}
  We can apply this to $M = D^{n}$, using the PLD model from Example~\ref{exa.model-dn}.
  Recall that in this case, $P = \R$, $\Delta_{A} = 0$, $\sigma_{A} = 0$, and $\GGt{P}$ is isomorphic to $\enV$ as a Hopf right comodule over itself (see Example~\ref{exa.model-dn-trois}).
  We then ``recover'' the already known fact that $\FM_{D^{n}}$ is (Hopf) formal as a right $\FM_{n}$-module -- though that ``proof'' is of course circular.

  The Hopf right $\Graphs_{n}$-comodule $\mGraphs_{P}$ is isomorphic to $\Graphs_{n}$ seen as a comodule over itself.
  The augmentation $\varepsilon_{B} : B \to \R[-n+1]$ yields a Maurer--Cartan element $\ze$ in the abelian $\hoLie_{n}$-algebra $\mGC_{P}^{\vee}$ (see Remark~\ref{rmk.maurer-cartan}), given in the dual basis by the graph with a single vertex labeled by $\vol_{n}$.
  The twisted $\hoLie_{n}$-algebra $\mGC_{P}^{\vee,\ze}$ is then isomorphic to a shift of $\GC^{\vee}_{n}$ from Section~\ref{sec.graph-complexes} -- the twisting ensures that dead ends are not contractible.
\end{example}

\appendix

\section{Cohomology of some graph complexes}
\label{sec:appxA}

In this section, we obtain bounds on the cohomology of the various graph complexes which appear in this paper.

\subsection{Directed and undirected graph complexes}\label{sec:dGC}

We call a \emph{stub} a univalent vertex with incoming edge:
\begin{equation}
  \begin{tikzpicture}
    \coordinate (v) at (0,0);
    \node[int] (w) at (1,0) {};
    \draw[-latex] (v) -- (w);
  \end{tikzpicture}
\end{equation}
Let us consider three versions of graph complexes $\GC_n$, $\dGC_n$, $\dGC_n'$ of undirected graphs, directed graphs, and directed graphs modulo graphs with stubs.
We work here with connected graphs, but allow all valences of vertices.

We recall:
\begin{lemma}[{\cite[Appendix K]{Willwacher2014}}]
  The map $\GC_n\to \dGC_n$ sending a graph to the sum of all graphs obtained by adding directions is a quasi-isomorphism of complexes.
\end{lemma}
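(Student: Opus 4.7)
The plan is to exhibit the map in the lemma as the inclusion of a quasi-isomorphic subcomplex obtained by edge-reversal symmetrization. For each edge $e$ of a directed graph, let $\tau_e$ denote the involution that reverses the orientation of $e$, weighted by the sign dictated by the degree $n-1$ of an edge generator. The first step is to verify that each $\tau_e$ commutes with the edge-contraction differential on $\dGC_n$; this is a local sign check around $e$ and $e'$ in the contraction of an auxiliary edge $e'$, with the self-case $e' = e$ requiring slightly more care.

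With that in hand, since we are in characteristic zero, the idempotent
\[
  P_\Gamma = \frac{1}{2^{\#E(\Gamma)}} \sum_{\sigma \in (\mathbb{Z}/2)^{\#E(\Gamma)}} \sigma\cdot\Gamma
\]
gives a chain-level splitting $\dGC_n = \dGC_n^+ \oplus \dGC_n^-$ into the invariant and non-invariant parts under all $\tau_e$'s. A direct check shows that the symmetrization map from the lemma factors as an isomorphism $\GC_n \xrightarrow{\cong} \dGC_n^+$ (up to overall scaling by $2^{\#E}$) followed by the inclusion $\dGC_n^+ \hookrightarrow \dGC_n$. Consequently, proving the lemma reduces to proving that $\dGC_n^-$ is acyclic.

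For the acyclicity of $\dGC_n^-$, the plan is to filter by loop order (first Betti number), which the differential preserves, so that within each fixed loop order the complex becomes finite-dimensional in each cohomological degree. The main obstacle will be constructing on the associated graded a compatible contracting homotopy for the non-invariant part: the natural candidate is to ``resolve'' an edge $e$ on which $\Gamma$ is anti-invariant by a bivalent-vertex insertion, but this operation interacts non-trivially with edge contractions that can merge or remove the affected edge. A cleaner route, which I would pursue in parallel, is to compare both $\GC_n$ and $\dGC_n$ to an auxiliary third complex of graphs equipped with an extra marking per edge (say, a formal variable of the appropriate parity attached to each edge), for which the natural $(\mathbb{Z}/2)^{\#E}$-representation theory manifestly splits the anti-invariant piece as a tensor factor and makes its acyclicity transparent. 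Either route ultimately reduces the problem to elementary representation theory of $(\mathbb{Z}/2)^{\#E}$ over $\mathbb{Q}$, which handles the vanishing of $H^*(\dGC_n^-)$.
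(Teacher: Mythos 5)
Your first steps set up the right decomposition, but the formulation via a $(\Z/2)^{\#E}$-action ``commuting with the differential'' is not quite well-posed: the differential changes the edge set, so $\tau_e$ does not act on the target of the term contracting $e$ itself. The correct statement, which rescues your splitting, is that the contraction of an antisymmetric edge $e_{uv}-(-1)^n e_{vu}$ \emph{vanishes} (both orientations contract to the same graph with relative sign $(-1)^n$), so the number of antisymmetric (``marked'') edges is preserved by the differential and $\dGC_n$ splits as a direct sum over that number, with the zero-marked-edge summand isomorphic to $\GC_n$. Up to this reformulation, your reduction of the lemma to the acyclicity of the complement $\dGC_n^-$ is the standard one.

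The genuine gap is in the last step. The acyclicity of $\dGC_n^-$ does \emph{not} reduce to ``elementary representation theory of $(\Z/2)^{\#E}$ over $\Q$'': semisimplicity only produces the direct-sum splitting you already used; it says nothing about the cohomology of the non-invariant summand, which still carries the nontrivial contraction differential on its unmarked edges. That acyclicity is the entire content of the lemma, and your two proposed routes either run into the obstruction you yourself identify (the candidate homotopy does not commute with contractions near the marked edge) or beg the question. What is actually needed is a homological argument: in the cited source (Willwacher, Appendix~K) one treats a marked edge as a special non-contractible feature and kills the marked summands by a filtration (e.g.\ by the number of vertices) together with an explicit contracting homotopy on the associated graded; the proof of Lemma~\ref{lem:dGC} in this paper's appendix, with its splitting into linear/loop/trivalent pieces and its filtration by non-antenna vertices, shows the kind of analysis required. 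As written, your proposal establishes the splitting but not the vanishing of $H^*(\dGC_n^-)$, so the lemma is not yet proved.
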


One may also show the following.
\begin{lemma}\label{lem:dGC}
  The quotient map $\dGC_n\to \dGC_n'$ is a quasi-isomorphism of complexes up to one class of degree zero represented by a zero-valent vertex, i.e.,
  \begin{equation}
    H(\dGC_n)\oplus \K \cong H(\dGC_n') .
  \end{equation}
\end{lemma}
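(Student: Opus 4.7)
My plan is to establish the isomorphism $H(\dGC_n) \oplus \K \cong H(\dGC_n')$ first by identifying the extra degree-zero class explicitly, and then by a spectral sequence argument which shows there are no other discrepancies. The extra class is represented by the single-vertex graph $v_0$: in $\dGC_n$ it is trivially a cocycle, and it is in fact a coboundary as $v_0 = \pm d P_1$, where $P_1$ denotes the graph with two vertices joined by one directed edge. However, $P_1$ has its target vertex as a stub (univalent with incoming edge), so $P_1$ vanishes in $\dGC_n'$. Since any preimage of $v_0$ under $d$ must have exactly two vertices and a single edge, and is therefore necessarily a stub graph, $[v_0] \in H^0(\dGC_n')$ becomes a nontrivial class which does not lift to $H^0(\dGC_n)$.

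To show this is the only discrepancy, I would employ a spectral sequence argument in the spirit of Willwacher's proof that $\GC_n \to \dGC_n$ is a quasi-isomorphism~\cite[Appendix K]{Willwacher2014}. Concretely, I would filter $\dGC_n$ by the number of non-stub vertices, so that on the $E^0$ page the differential retains only the edge contractions that do not decrease the non-stub vertex count—i.e., contractions of stub edges, which absorb the stub into its adjacent non-stub vertex. For each fixed stubless ``core'' graph $\Gamma^\circ$, the $E^0$-subcomplex consisting of all graphs obtained by attaching stubs to the vertices of $\Gamma^\circ$ factorizes as a tensor product over the vertices $v \in \Gamma^\circ$ of local ``stub-attachment'' complexes $K_v$. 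Each $K_v$ is a Koszul-type complex where each stub at $v$ contributes an odd-degree generator whose boundary is the operation of absorbing the stub; its cohomology is one-dimensional, concentrated at the ``no stubs at $v$'' configuration. Hence $H(E^0) \cong \dGC_n^{\text{no stubs}}$ together with the isolated single-vertex class. The induced differential on $E^1$ is precisely the projection of the edge-contraction differential onto stubless graphs, which matches the differential on $\dGC_n'$; the spectral sequence then collapses to give the asserted splitting.

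The main technical obstacle will be the careful execution of the Koszul argument in the second step: one must verify that the stub-attachment complex at a fixed vertex is indeed acyclic except at the no-stub position (which follows from standard Koszul resolutions once one observes that all stubs at a given vertex are interchangeable and contribute a single Koszul generator each), and then that the identification on $E^1$ genuinely intertwines the contraction differential of $\dGC_n$ modulo stubs with the quotient differential of $\dGC_n'$. Degenerate low-vertex cases and the conventional treatment of tadpoles can be disposed of by direct inspection and do not affect the final count.
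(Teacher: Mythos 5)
Your first paragraph is fine: the single zero‑valent vertex is indeed the extra class, and this matches where the paper locates it (it spans the cohomology of the ``linear'' part of $\dGC_n'$). The gap is in the spectral sequence argument, and it is exactly the case the paper has to work hardest on. The decomposition ``stubless core $\Gamma^{\circ}$ with stubs attached to its vertices, with $E^{0}$ factoring as a tensor product of local Koszul complexes'' does not exist, because removing a stub can create a new stub: if $w$ is bivalent with an incoming edge $u\to w$ and the stub edge $w\to s$, then absorbing $s$ turns $w$ itself into a stub. Concretely, take the directed string $v_{1}\to v_{2}\to v_{3}$ (only $v_{3}$ is a stub, so your filtration degree is $2$). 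Contracting $v_{1}\to v_{2}$ drops the non‑stub count to $1$, and contracting the stub edge $v_{2}\to v_{3}$ \emph{also} drops it to $1$ (the merged vertex is univalent with incoming edge). So your $E^{0}$ differential vanishes on this graph, it survives to $H(E^{0})$, and $H(E^{0})$ is therefore not the span of stubless graphs; nor is this graph of the form ``stubless core plus stubs at its vertices'' (iterated stub removal eats the whole string). There is also an internal inconsistency in the bookkeeping: if your Koszul computation were correct at an isolated core vertex, the two‑term complex $\K P_{1}\to\K v_{0}$ would be acyclic, $v_{0}$ would die in $H(E^{0})$, and the spectral sequence would output $H(\dGC_n)\cong H(\dGC_n')$ with no extra $\K$ — contradicting your first paragraph. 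The correction term has to come out of a separate analysis of the degenerate (at most bivalent) graphs, which your argument never isolates.

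The underlying phenomenon is that the differential transports univalence along chains of bivalent vertices (this is true whichever of the two dual conventions, edge contraction or vertex splitting, you adopt; the paper's proof uses the splitting convention, where antennas grow rather than shrink). The paper therefore splits both complexes into the purely linear part, the $1$‑loop all‑bivalent part, and the part with a trivalent vertex, and filters by the number of vertices \emph{not lying on an antenna} — a maximal string of consecutive like‑directed edges ending in a univalent vertex — rather than by counting terminal stubs. Your instinct (filter so that the leading differential only sees the univalent decorations, then show the resulting local complexes are acyclic) is the right one, but the correct local unit is the whole antenna, not the single stub, and the purely linear graphs must be treated by hand; that is where both the acyclicity of $\dGC_n$'s string part and the surviving class $\K v_{0}$ in $\dGC_n'$ come from.
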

\begin{proof}
  The proof uses essentially the same techniques as the proof of~\cite[Proposition 3.4]{Willwacher2014}.
  Concretely, let us abbreviate $U=\dGC_n$ and $V=\dGC_n'$. These complexes decompose into a direct sum of subcomplexes
  \begin{align}
    U & = U_{lin} \oplus U_{loop} \oplus U_3
      &
    V & = V_{lin} \oplus V_{loop} \oplus V_3,
  \end{align}
  which consist of loop order zero graphs with at most bivalent vertices, loop order 1 graphs with only bivalent vertices and graphs with at least one trivalent vertex.
  The quotient map $U\to V$ clearly maps each subcomplex of $U$ to its counterpart in $V$.
  Also note that $U_{loop}=V_{loop}$.
  The spaces $U_{lin}$ and $V_{lin}$ are spanned by linear, i.e., strings of consecutive edges.
  We next claim that $H(U_{lin})=0$.
  To shows this, first write
  \begin{equation}
    U_{lin} = (U_{lin}')^{\Z_2},
  \end{equation}
  where $U_{lin}'$ is the space of linear graphs with labelled endpoints, and the $\Z_2$-action swaps the endpoints.
  Since we are working over a field of characteristic 0, It suffices to show that $H(U_{lin}')=0$.
  We say that the antenna at the first endpoint are all consecutive edges of like direction starting from that endpoint.
  For example, the antenna in the following graph has length 2.
  \begin{equation}
    \begin{tikzpicture}[scale=1.3]
      \node[int] (v1) at (0,0) {};
      \node[int] (v2) at (0.5,0) {};
      \node[int] (v3) at (1,0) {};
      \node[int] (v4) at (1.5,0) {};
      \node[int] (v5) at (2,0) {};
      \draw[-latex] (v1) edge (v2) (v2) edge (v3) (v4) edge (v5) edge (v3);
      \draw [
        thick,
        decoration={
            brace,
            mirror,
            raise=0.1cm
          },
        decorate
      ] (v1.south west) -- (v3.south east)
      node [pos=0.5,anchor=north,yshift=-0.2cm] {\small antenna};
    \end{tikzpicture}
  \end{equation}
  We filter by the number of non-antenna vertices and consider the associated spectral sequence.
  The associated graded complex splits into two subcomplexes of graphs such that (i) the antenna is the whole graph or (ii) the antenna is not the whole graph. In the subcomplex of such full-antenna graphs the differential makes the graph one longer if it has an odd number of vertices, and maps it to zero otherwise. The resulting complex is acyclic. In the other subcomplex the differential maps an antenna of odd length to a one longer antenna, and the graph to zero otherwise. The complex is also acyclic.
  Using that our filtration is complete and bonded above we are hence done and can conclude that $H(U_{lin})=0$.

  For $V_{lin}$ we proceed in exactly the same way. Just note that in the graphs the edges at the endpoints must now all be inwards oriented, so that now the subcomplex of full-antenna graphs is one-dimensional, spanned by the graph
  \begin{equation}
    \begin{tikzpicture}
      \node[int] at (0,0) {};
    \end{tikzpicture}\, .
  \end{equation}
  Hence the subcomplex of full-antenna graphs has now 1-dimensional cohomology, so that at the end $H(V)=\K$.

  We next claim that the quotient map
  \begin{equation}
    U_3\to V_3
  \end{equation}
  is a quasi-isomorphism.
  We call an outwards antenna a string of consecutive outwards pointing edges starting at a valence one vertex.
  \begin{equation}
    \begin{tikzpicture}[scale=.7]
      \node[int] (v1) at (0,0) {};
      \node[int] (v2) at (1,0) {};
      \node[int] (v3) at (2,0) {};
      \node[int] (v4) at (3,0) {};
      \node[int] (v5) at (4,0) {};
      \node at (-1,0) {$\cdots$};
      \draw (v1) edge (-.7,0) edge (-.7,.5) edge (-.7,-.5);
      \draw[-latex] (v4) edge (v5) (v3) edge (v4) edge (v2) (v1) edge (v2);
      \draw [
        thick,
        decoration={
            brace,
            mirror,
            raise=0.1cm
          },
        decorate
      ] (v3.south) -- (v5.south)
      node [pos=0.5,anchor=north,yshift=-0.2cm] {\small outwards antenna};
    \end{tikzpicture}
  \end{equation}
  Of course, in $V_3$ all graphs that have such antennas are declared zero.
  We filter $U_3$ (and trivially $V_3$) by the number of vertices not in outwards antennas and consider the associated spectral sequence. The first differential makes some outwards antenna by one longer.
  By a slight variation of the argument from before (see also the proof of~\cite[Proposition 3.4]{Willwacher2014}) one shows that the subcomplex of graphs with inwards antennas is acyclic. Hence one concludes that the map $U_3\to V_3$ is indeed a quasi-isomorphism as desired.
\end{proof}

\subsection{Decorated graph complex}\label{sec:decoGC}
Consider a finite dimensional graded vector space $V$.
Consider a graph complex $\GC_{n,V}$ of undirected graphs, defined just as $\GC_n$, but with graphs whose vertices can be optionally decorated by zero, one or more elements of $V$ (formally, an element of $S(V)$).
\begin{equation}
  \begin{tikzpicture}[baseline=-.65ex]
    \node[int] (v1) at (0,0) {};
    \node[int, label=0:{$\scriptstyle v_1v_2$}] (v2) at (0:1) {};
    \node[int, label=90:{$\scriptstyle v_3$}] (v3) at (120:1) {};
    \node[int] (v4) at (-120:1) {};
    \draw (v1) edge (v2) edge (v3) edge (v4) (v4) edge (v2) edge (v3) (v2) edge (v3);
  \end{tikzpicture}
  \quad
  \text{with $v_j\in V$}
\end{equation}
The differential splits vertices, and distributes the decorations in all possible ways on the two vertices created (i.e., it uses the cocommutative coproduct on $S(V)$).

\begin{remark}
  This complex is not the same as the complex of $V$-hairy graphs which occur in the rational homotopy theory of string links~\cite{SonghafouoTsopmeneTurchin2015}.
  The difference is that here the differential may create vertices with one decoration and one incident edge -- in other words, ``hairs'' -- whereas in the complex from the theory of string links, the hairs and their colors is fixed.
\end{remark}

We have:
\begin{proposition}\label{prop:decoGC}
  The cohomology of $\GC_V$ satisfies:
  \begin{equation}
    H(\GC_{n,V}) = H(\GC_n) \oplus V,
  \end{equation}
  where the first summand is represented by graphs without any decoration, and the second by a graph consisting of a single $V$-decorated vertex.
\end{proposition}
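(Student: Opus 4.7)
The approach is to decompose the complex according to the total ``decoration weight'' and then analyze the nontrivial pieces via a spectral sequence. The differential on $\GC_{n,V}$ uses the coproduct on $S(V)$, which preserves the natural grading by total decoration degree $k$ (the sum, over all vertices, of the $S(V)$-polynomial degree of the decoration); so one obtains a direct-sum decomposition $\GC_{n,V}=\bigoplus_{k\geq 0}\GC_{n,V}^{(k)}$ of chain complexes. For $k=0$ there is no decoration and one tautologically recovers $\GC_n$, producing the first summand of the statement.

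For the remaining pieces $k\geq 1$, the plan is to introduce a spectral sequence via the descending filtration $F^p$ by the number of \emph{decorated} vertices (those carrying an element of $\bar S(V)=\bigoplus_{j\geq 1}S^jV$). Decomposing the differential as $d=d_0+d_1$ according to whether the coproduct used on the decoration is the unit piece ($1\otimes s$ or $s\otimes 1$) or the reduced coproduct, $d_0$ preserves the decorated-vertex count while $d_1$ strictly increases it by one. The filtration is bounded since $F^{k+1}=0$, so the spectral sequence converges. On the $E_0$-page only $d_0$ acts, keeping decorations rigidly attached to the vertices they sit on; this reduces the computation to the cohomology of a graph complex of connected graphs with $p$ distinguishable ``marked'' vertices (under the splitting differential), tensored with a $\Sigma_p$-invariant piece of $\bar S(V)^{\otimes p}$ of total degree $k$.

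The next step is to exploit the cofreeness of $S(V)$ as a coaugmented cocommutative coalgebra (cogenerated by $V$) in order to control the higher pages of the spectral sequence. The $d_1$-differential corresponds, on the marked-graph complex, to merging decorations via the reduced coproduct; its higher iterates assemble into a cobar-type complex on $\bar S(V)$, whose cohomology is concentrated in the cogenerators $V$ by standard cofreeness arguments. Combined with the $E_0$-analysis, this yields cohomology equal to $V$ in the case $k=1$ (from the single-vertex, single-decoration graphs, which are manifestly cocycles) and vanishing for $k\geq 2$.

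The main obstacle I anticipate is establishing the vanishing of the cohomology of the marked graph complex in multiplicities $p\geq 2$ on the $E_0$-page: since marked vertices can still be split (with the marking carried along), this is not a standard $H(\GC_n)$-type computation. I would tackle it by an explicit contracting homotopy, in the spirit of the antenna-contraction arguments in the proof of Lemma~\ref{lem:dGC}: fix a linear ordering on the marked vertices, and for each graph define the homotopy by collapsing the last edge of a geodesic path between the two lowest-indexed marked vertices, with cross-terms cancelling via the usual half-Laplacian combinatorics on graphs.
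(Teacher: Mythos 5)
Your first step — splitting $\GC_{n,V}$ by the number of decorations and identifying the undecorated piece with $\GC_n$ — is exactly how the paper begins. After that, however, your plan has an internal inconsistency that makes it unworkable as stated. You filter the $k$-decoration piece by the number $p$ of decorated vertices and claim both (i) that the $E_0$-cohomology of the marked graph complex vanishes for $p\geq 2$, and (ii) that the higher differentials assemble into a cobar-type complex on $\bar S(V)$ whose cohomology is $V$. These cannot both be operative: if the $p\geq 2$ columns died on the $E_1$-page, the spectral sequence would degenerate with only the $p=1$ column left, and that column contains the single vertex decorated by $S^k(V)$ for every $k\geq 1$; there would then be no mechanism left to kill $S^{k}(V)$ for $k\geq 2$, and the answer would come out wrong. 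The actual cancellation of the $k\geq 2$ classes must happen \emph{across} columns, via the piece of the differential that distributes a decoration $v_1v_2$ onto two vertices — so the $p\geq 2$ columns must survive to $E_1$, and your flagged ``main obstacle'' (vanishing at $p\geq 2$) is the wrong target. Moreover, the homotopy you propose to attack it (collapsing ``the last edge of a geodesic path between the two lowest-indexed marked vertices'') is not well defined — geodesics are neither unique nor stable under the vertex-splitting differential — and it says nothing about the $p=1$ column, which is where the surviving class $V$ has to be produced.

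The paper's proof avoids all of this by a more local argument: working in characteristic $0$ it reduces to the multilinear component, where the $k$ decorations are identifiable elements $e_1,\dots,e_k$ each used once, singles out the vertex carrying $e_1$, and decomposes the complex by the valence of that vertex into three pieces $U_1\oplus U_2\oplus U_r$ in the style of Lambrechts--Voli\'c. The piece of the differential $f\colon U_2\to U_r$ is surjective with explicitly described kernel, and a further filtration (by the number of vertices not lying on the ``string'' starting at the $e_1$-vertex) shows the kernel is acyclic, leaving only $U_1$, i.e.\ the single $V$-decorated vertex. If you want to salvage your approach, you would need to compute the full $E_1$-page (not just claim its vanishing) and then genuinely run the cofreeness argument on the resulting $d_1$-complex; the paper's route is considerably shorter.
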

\begin{proof}
  Clearly $\GC_{n,V}$ splits into subcomplexes according to the number of decorations by $V$.
  The piece with zero such decorations is isomorphic to $\GC_n$, and gives rise to the piece $H(\GC_n)$ in the statement of the proposition.
  We claim that the remaining piece of graphs with at least one decoration has cohomology $V$, meaning in particular that all subcomplexes with more than one decoration are acyclic.
  To this end it in fact suffices to consider the case that graphs are decorated by $k\geq 1$ identifiable decorations, say $e_1,\dots, e_n$. We say vertex $1$ is the one decorated by $e_1$.
  Then one may copy a trick of Lambrechts--Volić~\cite{LambrechtsVolic2014} and decompose our complex as
  \begin{equation}
    \begin{tikzcd}
      U_1 \ar[r, phantom, "\oplus"] & U_2 \ar[r, bend left, "f"] \ar[loop above] \ar[r, phantom, "\oplus"] & U_{r} \ar[loop right]
    \end{tikzcd}\, ,
  \end{equation}
  where $U_1$ is spanned by graphs where vertex one has valence 1 (i.e., nothing else than $e_1$ is connected to it, which means that by connectedness the graph has one vertex, no edge and $k=1$), $U_2$ is spanned by graphs where vertex $1$ has only one edge incident to it, and $U_{r}$ is spanned by the remainder of graphs. The arrows indicate the pieces of the differential between the subspaces.
  By picking a bounded filtration we arrive at a spectral sequence whose first differential is $f$.
  The map $f$ is surjective, the kernel is spanned by graphs such that the vertex at which the unique edge at vertex one ends is incident to exactly one edge (and no decoration) or two edges and no decoration.
  One easily check that that complex becomes acyclic on the next page, e.g., by filtering on the number of vertices not in a ``string'' starting at 1.
  We hence see that $U_1$ spans the cohomology as claimed.
\end{proof}

We will also consider the version of the above graph complex allowing non-connected graphs $\fGC_{n,V}$.
Clearly, this is just a free shifted symmetric algebra on the connected part:
\begin{corollary}\label{cor:decofGC}
  We have the following isomorphism:
  \begin{equation}
    H(\fGC_{n,V}) \cong S^+( H(\GC_n)[-n]\oplus V[-n])[n].
  \end{equation}
\end{corollary}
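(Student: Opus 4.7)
The corollary is essentially immediate from Proposition~\ref{prop:decoGC} together with the observation that $\fGC_{n,V}$ is, as a graded algebra, the (shifted) free symmetric algebra on its subspace $\GC_{n,V}$ of connected graphs. The plan is therefore to make precise this identification of CDGAs and then apply Künneth.

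The first step is to identify the algebra structure. Disjoint union of graphs equips $\fGC_{n,V}$ with a graded commutative product, with unit given by the empty graph. Since a graph with $k$ edges and $l$ vertices has degree $k(n-1)-ln+n$ (using the conventions of Section~\ref{sec.graph-complexes}), and the degrees of decorations add on top of this, the $+n$ contribution per connected component forces the standard shift: one obtains an isomorphism of graded commutative algebras
\[
\fGC_{n,V} \cong S^+\!\bigl(\GC_{n,V}[n]\bigr)[-n]
\]
(with $S^+$ denoting the augmentation ideal, so the empty graph is excluded from the \emph{reduced} complex), exactly as for the undecorated case alluded to in Section~\ref{sec.graph-complexes}.

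The second step is to check that the differential $d$ on $\fGC_{n,V}$ acts as a derivation with respect to this product. The differential splits a single vertex into two vertices joined by a new edge, redistributing the incident edges and the $S(V)$-decoration in all possible ways. This is a purely local operation, so it acts on each connected component independently and therefore preserves the number of connected components. Consequently $d$ coincides on the $k$-fold symmetric product $S^k(\GC_{n,V}[n])$ with the graded tensor differential on $\GC_{n,V}[n]^{\otimes k}$ (modulo signs from the symmetric group action), i.e.\ $(\fGC_{n,V},d)$ is a free commutative dg-algebra on $(\GC_{n,V}[n],d)$.

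The final step is to pass to cohomology. Working over a field of characteristic zero, the Künneth formula and the exactness of symmetric powers give
\[
H^*(\fGC_{n,V}) \cong S^+\!\bigl(H^*(\GC_{n,V})[n]\bigr)[-n].
\]
Substituting the computation $H^*(\GC_{n,V}) \cong H^*(\GC_n) \oplus V$ from Proposition~\ref{prop:decoGC} yields the stated formula. There is no real obstacle here beyond bookkeeping the degree shifts consistently; the only mildly subtle point is verifying that the differential genuinely preserves the connected-component count, which holds because creating a new edge between the two halves of a split vertex keeps them connected.
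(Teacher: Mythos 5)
Your proof is correct and follows essentially the same route as the paper, which simply observes that the differential of $\fGC_{n,V}$ preserves connected components (unlike the complex of the next subsection, where the $\nabla$-term does not), so that $\fGC_{n,V}$ is the free shifted symmetric algebra on $\GC_{n,V}$ and Künneth together with Proposition~\ref{prop:decoGC} gives the result. The only cosmetic point is that your final formula $S^+\bigl(H^*(\GC_{n,V})[n]\bigr)[-n]$ and the corollary's $S^+\bigl(\cdots[-n]\bigr)[n]$ use opposite placements of the suspension; your version matches the convention $\fGC_n = S(\GC_n[n])[-n]$ used in Section~\ref{sec.graph-complexes}, so this is a sign-convention discrepancy internal to the paper rather than a gap in your argument.
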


\subsection{Decorated \texorpdfstring{$M$}{M}-relative graph complex}
Let again $V$ be some vector space of additional decorations.
We want to consider a graph complex similar to $\GC_M$ ($M$ closed), but with additional decorations by $V$.

To this end, we assume we picked some finite dimensional dg vector space $A$ with a degree $n$ non-degenerate pairing.
We suppose furthermore that $A = \K \oplus \overline{A}$ has a ``unit'' (see \cite{CamposWillwacher2016} for details).
We may build the graph complex $\GC_{A}$,  whose vertices may be $A$-decorated, and whose differential will have a piece that denoted by $\nabla$ connecting two vertices, using the pairing on $A$.

Moreover, let us suppose that $Z$ is a Maurer--Cartan element of $\GC_A$.
If we twist the Lie algebra $\GC_A$ by $Z$, the differential of a graph will have a new piece ``gluing'' (the summands of) $Z$ to the graph along decorations via the pairing in $A$.
In the discussion that follows one can assume that $\GC_A$ is twisted by a non-trivial Maurer--Cartan element.

Define the graph complex $\GC_{A,V}$ to be composed of $A$-decorated graphs as before, but additionally vertices may again be decorated by elements of $V$.
Bear in mind that decorations in $A$ and $V$ are not on the same footing. For instance, there is a piece in the differential creating a $V$-decorated stub, but non creating an $A$-decorated stub.

Note that $\GC_{A}$ is a dg Lie subalgebra of $\GC_{A,V}$ trivially.
Furthermore, $A\otimes V$ is a subcomplex, understood as graphs with one vertex.
In the discussion that follows one can also assume $\GC_{A,V}$ to be twisted by $Z\in \GC_A\subset \GC_{A,V}$.
We have the following:
\begin{proposition}\label{prop:GCAV}
  \begin{equation}
    H(\GC_{A,V}) = H(\GC_{A}) \oplus H(A) \otimes V.
  \end{equation}
\end{proposition}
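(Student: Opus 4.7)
The proof mimics the structure of Proposition~\ref{prop:decoGC}, with the essential new ingredient being the interaction between $V$-decorations and the $A$-decorated graph complex structure. I first observe that the differential on $\GC_{A,V}$ preserves the total number of $V$-decorations of a graph: $V$-elements are distributed as primitive generators under the cocommutative coproduct governing vertex-splitting; the $\nabla$-piece acts only on $A$-decorations via the pairing of $A$; the internal differential $d_V$ preserves the count; and the twist by $Z \in \GC_A \subset \GC_{A,V}$ cannot introduce new $V$-decorations. Hence $\GC_{A,V}$ splits as a direct sum $\bigoplus_{k \geq 0} \GC_{A,V}^{(k)}$ of subcomplexes indexed by the number $k$ of $V$-decorations, with $\GC_{A,V}^{(0)} = \GC_A$ contributing the first summand $H(\GC_A)$.

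It then remains to prove $H(\GC_{A,V}^{(1)}) \cong H(A) \otimes V$ and that $\GC_{A,V}^{(k)}$ is acyclic for $k \geq 2$. Working over characteristic zero, I pass to the auxiliary complex $\GC_{A,V}^{[k]}$ in which the $V$-decorations carry distinguishable labels $e_1, \ldots, e_k$, so that $\GC_{A,V}^{(k)}$ is recovered as $S_k$-invariants. Writing vertex $1$ for the vertex carrying $e_1$, I decompose
\[ \GC_{A,V}^{[k]} = U_1 \oplus U_2 \oplus U_{\geq 2} \]
according to whether vertex $1$ has $0$, $1$, or $\geq 2$ incident edges (not counting decorations).

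For $k \geq 2$, connectedness forces $U_1 = 0$. A spectral sequence whose leading differential is the contraction of the unique edge at vertex $1$, mapping $U_2$ surjectively onto $U_{\geq 2}$ with kernel killed on the next page, then proves acyclicity exactly as in the argument for Proposition~\ref{prop:decoGC}. For $k = 1$ the same spectral sequence reduces the cohomology to that of $U_1$, which consists of single-vertex graphs carrying $e_1$ and a symmetric product of $\bar A$-decorations; on $U_1$ alone only $d_A$ survives, giving naively $S(H(\bar A)) \otimes V$. To cut this down to $H(A) \otimes V = (\K \oplus H(\bar A)) \otimes V$, I use the higher pages of the spectral sequence: the twist by the leading diagonal term $Z_0 = \sum_i \alpha_i \otimes \alpha_i^*$ (living in $\GC_A$) sends single-vertex elements of $U_1$ into $U_2$, and on the subsequent page the edge-contraction couples two $\bar A$-decorations on the resulting single vertex through the non-degenerate pairing of $A$. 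After imposing a compatible secondary filtration, for instance by the number of $\bar A$-decorations on vertex $1$, these higher differentials annihilate $S^{\geq 2}(\bar A) \otimes V$, leaving exactly $H(A) \otimes V$.

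The main technical difficulty is the final reduction from $S(H(\bar A)) \otimes V$ to $H(A) \otimes V$ on the single-vertex subcomplex $U_1$. One must choose the filtration so that the composition of the $Z$-twist with edge-contraction converges to the correct pairing-induced cancellation, and verify that higher-order terms of $Z$ beyond the diagonal piece $Z_0$ do not obstruct this cancellation. This subtlety reflects the intrinsic point that the Maurer--Cartan twist on $\GC_A$ encodes the pairing structure of $A$ itself, not merely of its symmetric algebra, and it is precisely through this mechanism that the single-vertex cohomology collapses from $S(H(\bar A))$ down to $\K \oplus H(\bar A) = H(A)$.
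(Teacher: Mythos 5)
Your skeleton---splitting by the number of $V$-decorations, identifying the decoration-free piece with $\GC_A$, passing to labelled decorations $e_1,\dots,e_k$, and then splitting according to the valence of the vertex carrying $e_1$---is the same as the paper's. But the heart of the proposition, namely that the one-decoration piece contributes exactly $H(A)\otimes V$ and that the pieces with $k\geq 2$ decorations are acyclic, is not established, and the route you propose would not work as written. The paper's first and essential move, which you omit, is to filter by loop order and then by the number of vertices: on the associated graded, the pairing term $\nabla$ and everything but the one-vertex piece of the Maurer--Cartan element $Z$ disappear, $A$ may be replaced by $H(A)$, and the surviving differential creates exactly one vertex. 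Only after this reduction is the splitting $U_1\oplus U_2\oplus U_r$ invoked, and at that point $U_1$ is by construction a single vertex carrying $e_1$ and a single class in $H(A)$, i.e.\ it already equals $H(A)\otimes V$; no further collapse is required.

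Your proposal instead keeps $\nabla$ and the $Z$-twist alive as higher spectral-sequence differentials and uses them to cut a purported $S(H(\overline{A}))\otimes V$ down to $H(A)\otimes V$. That step is only asserted, not proved, and it assumes that $Z$ has a specific diagonal leading term $\sum_i \alpha_i\otimes\alpha_i^*$; the proposition is stated and used for an \emph{arbitrary} Maurer--Cartan element $Z\in\GC_A$, so no such form is available, and computing these higher differentials is exactly the work that the paper's choice of filtrations is designed to avoid. In addition, your claim that connectedness forces $U_1=0$ for $k\geq 2$ fails for your own definition of $U_1$ (zero incident edges): the single vertex carrying all of $e_1,\dots,e_k$ lies in $U_1$ and must still be shown to die; the convention in the proof of Proposition~\ref{prop:decoGC} counts decorations towards the valence precisely to exclude it. So both the vanishing for $k\geq 2$ and the identification of the $k=1$ cohomology have genuine gaps.
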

\begin{proof}
  The complex $\GC_{A,V}$ again splits into subcomplexes according to the number of decorations in $V$.
  The subcomplex with zero decorations is isomorphic to $\GC_{A}$ and hence produces the first term in the statement of the Proposition.
  To compute the cohomology of the remaining complex, we use the complete filtration by the number of loops.
  The first differential in the associated spectral sequence will not see the part joining two decorations into an edge.
  It will also not see the non-tree part of $Z$.

  Filtering again by the number of vertices and taking another spectral sequence we can next replace $A$ by $H(A)$ and restrict to the piece of the differential creating exactly one vertex, by splitting an edge, or joining the one-vertex piece of $Z$.
  We can now run an argument similar to that of the proof of Proposition \ref{prop:decoGC}.
  First we may assume that the decorations in $V$ are, say $e_1, \dots, e_k$, each occurring exactly once.
  We call the vertex decorated by $e_1$ the first vertex and decompose our complex as
  \begin{equation}
    \begin{tikzcd}
      U_1 \ar[r, phantom, "\oplus"] & U_2 \ar[bend left, r, "f"] \ar[loop above] \ar[r, phantom, "\oplus"] & U_{r} \ar[loop right]
    \end{tikzcd}\, ,
  \end{equation}
  where $U_1$ consists of graphs with a single vertex decorated by $e_1$ and an element of $H(A)$ (including 1),
  $U_2$ consists of graphs for which the first vertex has valence 2 and precisely one incident edge and $u_r$ consists of all remaining graphs.
  Again the map $f$ is surjective, a one sided inverse being given by contracting the edge. The kernel consists of graphs for which the first vertex has precisely one incident edge, and the vertex it connects to has valence at most 2, with no $V$-decorations.
  Again this complex can easily seen to be acyclic.
\end{proof}

\newcommand{\hfGC}{\widehat{\fGC}}

We also consider a non-connected version $\fGC_{A,V}$ of the graph complex $\GC_{A,V}$, i.e., we now allow also disconnected diagrams.
Note that (-in contrast to the previous section-) this complex is not merely the symmetric product of the connected graph complex, because the term $\nabla$ of the differential can connect two different connected components.

For technical reasons we will also temporarily consider a version $\fGC_{A,V}'=\K\oplus \fGC_{A,V}[-n]$, where we shift the degree and add an element representing the empty graph, so that, as graded vector spaces
\begin{equation}
  \fGC_{A,V}' = S(\GC_{A,V}).
\end{equation}
Note also that this notation allows us to write the differential on $\fGC_{A,V}'$ as
\beq{equ:dformulagcp}
\delta = e^{-Z} d_0 e^Z,
\eeq
where
\beq{equ:d0pieces}
d_0 = d_{split}+\nabla \quad \text{ with $\nabla=\sum_i \frac {\p}{\p a_i} \frac{\p }{\p a_i^*} $ } + d_A
\eeq
is the untwisted differential.
We claim that we have a map of complexes
\begin{equation}
  F : S(V\otimes A)\otimes \fGC_{A}' \to \fGC_{A,V}'.
\end{equation}
To define this map $F$ we consider elements $v\otimes a\in V\otimes A$ as derivations of $\fGC_{A,V}'$ which remove one decoration dual to $a$ and replace it with a decoration $v$, formally
\beq{equ:appder}
v \frac{\p }{\p a^*}.
\eeq
These derivations all commute, hence we can naturally associate to an element $f\in S(V\otimes A)$ an operator $D_f$ by composing the derivations. We then define the map $F$ above as
\beq{equ:Fdef}
F(f\otimes x) \coloneqq e^{-Z} D_f e^Z x.
\eeq
We check that this map commutes with the differentials, using \eqref{equ:dformulagcp}:
\begin{align*}
  \delta F(f\otimes x) & = \delta e^{-Z} D_f e^Z x
  = e^{-Z} d_0  D_f e^Z x
  \\
                       & = e^{-Z} (  D_f d_0 + D_{d_A f} ) e^Z x
  =
  e^{-Z}  D_f e^Z  e^{-Z} d_0e^Z x
  +
  e^{-Z} D_{d_A f}  e^Z x
  \\
                       & = F(f\otimes \delta x + d_Af \otimes x).
\end{align*}
Here we also used that all operators \eqref{equ:appder}, and hence also $D_f$, commute with the operations $d_{split}$ and $\nabla$ in \eqref{equ:d0pieces}.

We note that by the quantity
\begin{equation}
  \# \text{edges} - \# \text{vertices}
\end{equation}
we obtain a descending complete filtration on $\fGC_{A,V}'$.
We call this filtration the EC (Euler characteristic) filtration.
We note that the differential on the associated graded does not ``see'' the pieces $\nabla$ and the non-tree parts of $Z$.
Below we shall need the following result:
\begin{lemma}\label{lem:Fgr}
  The map \eqref{equ:Fdef} induces a quasi-isomorphism on the associated graded complexes with respect to the EC filtration.
\end{lemma}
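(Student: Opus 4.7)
The plan is to reduce the statement to the cohomological computation of Proposition~\ref{prop:GCAV} by introducing a secondary filtration. First, I would add on both sides a secondary filtration by the total number of $V$-decorations. This filtration is preserved by the differentials, because the pieces $d_A$, $d_{\mathrm{split}}$, $\nabla$, and the twist by $Z$ only permute $V$-decorations among vertices — crucially, $Z\in\GC_A$ carries no $V$-decorations. It is preserved by $F$ as well, since $D_f$ creates exactly $\deg f$ new $V$-decorations and the $e^{\pm Z}$ factors introduce only $V$-free components. Passing to the associated graded for this secondary filtration combined with the EC filtration, both the differentials and the map $F$ split into pieces of fixed $V$-degree, which I can analyze sector by sector.

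Next, I would identify $F^{\mathrm{gr}}$ explicitly. Expanding
\[ F(f\otimes x) = e^{-Z} D_f e^Z x = \sum D_{f_0}(x)\cdot \prod_{j\geq 1} D_{f_j}(Z), \]
where the sum runs over partitions $f = f_0\cdot f_1\cdots f_p$ with $\deg f_j\geq 1$ for $j\geq 1$ (the $e^{-Z}$ factor cancels the unsubstituted $Z$-copies produced by $e^Z x$), I would observe that each surviving substituted $Z$-component whose underlying connected diagram has loop-order $\ell$ contributes $\ell-1$ to $E - V$. Weighting the $S(V\otimes A)$-factor appropriately so that $F$ preserves the combined filtration, only substituted tree-$Z$-components survive on the associated graded of the EC filtration — matching the general principle that the non-tree parts of $Z$ are invisible there. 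This yields an explicit formula for $F^{\mathrm{gr}}$ in terms of $Z_{\mathrm{tree}}$ alone. The cross-component $\nabla$-terms in the differentials also vanish on the associated graded (each increases $E-V$), so both $\fGC'_{A,V}$ and $\fGC'_A$ become symmetric algebras on their connected parts, and a Künneth-type argument combined with Proposition~\ref{prop:GCAV} (applied to the connected twisted graph complex) computes the cohomology of each side. A direct matching of the resulting factors, using the identification $V\otimes H(A)\cong H(A)\otimes V$, will identify $F^{\mathrm{gr}}$ with an isomorphism of graded vector spaces.

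The hardest part will be the combinatorial bookkeeping in the second step: correctly weighting the $S(V\otimes A)$-factor so that $F$ respects the combined filtration, and tracking the signs and symmetry factors that arise when $V$-substitutions are distributed between $x$ and the surviving substituted $Z$-components. Once the induced map on the associated graded is written down in the right form, the quasi-isomorphism follows by a standard spectral-sequence argument from the explicit cohomology computation.
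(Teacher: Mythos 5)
Your route is the paper's: the printed proof is a one-line reduction to Proposition~\ref{prop:GCAV}, and your plan (split by the number of $V$-decorations, identify the associated graded of $F$, and feed the connected pieces into that proposition) is the natural expansion of it. One correction before the main point: once you give a generator of $S(V\otimes A)$ EC-weight $-1$ --- which is forced if $F$ is to be filtered, since a singleton substitution into a tree component of $Z$ shifts $\#E-\#V$ by $\ell-1=-1$ --- the leading term of $F(f\otimes x)$ consists \emph{only} of the all-singleton partitions with $f_0=1$ landing in tree components of $Z$; substitutions into $x$ itself are strictly subleading, since they cost a generator without producing the $-1$. So $\gr F(f\otimes x)=\pm\, x\cdot\prod_j D_{f_j}(Z_{\tree})$, and the bookkeeping you anticipate about distributing substitutions between $x$ and the $Z$-components does not actually arise on the associated graded.

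The genuine gap is in your last step. Computing the cohomology of both associated gradeds and observing that each is abstractly $S(V\otimes H(A))\otimes H(\gr\fGC_{A}')$ does not prove that $\gr F$ is a quasi-isomorphism; you must show that the classes $\gr F(v\otimes a\otimes 1)=[D_{v\otimes a}(Z_{\tree})]$ form a basis of the summand $H(A)\otimes V$ of $H(\gr\GC_{A,V})$ coming from Proposition~\ref{prop:GCAV}, whose representatives are one-vertex graphs. This is precisely where the nondegenerate one-vertex (``diagonal'') piece of the Maurer--Cartan element enters: substituting $v\otimes a$ into it produces $\pm$ the one-vertex representative decorated by $v$ and (the dual of) $a$, and a further filtration by the number of vertices disposes of the higher-vertex tree corrections. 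Without such a piece the statement is simply false: for $Z=0$ one has $\gr F=0$ in positive $V$-degree (or, with the weight-$0$ convention, $F(v\otimes a\otimes 1)=D_{v\otimes a}(1)=0$), while the target cohomology still contains $H(A)\otimes V$. So the ``direct matching of factors'' cannot be purely formal; you must either invoke the explicit form of $Z$ at this point, or run the spectral sequences of Proposition~\ref{prop:GCAV} on the mapping cone of $\gr F$ while tracking the map through each page.
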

\begin{proof}
  It follows from the same argument used in the proof of Proposition \ref{prop:GCAV} above.
\end{proof}

\begin{remark}
  It does not readily follow from the lemma that $F$ is a quasi-isomorphism, as the EC filtration is not exhaustive.
  However, the map \eqref{equ:Fdef} cannot increase the number of connected components.
  It is therefore defined on the non-complete (wrt.\ the number of connected components) analogs of the complexes, even though on these complexes the expression $e^Z$ is not defined.
  Lemma \ref{lem:Fgr} does imply that the map $F$ is a quasi-isomorphism on the non-complete complexes.
\end{remark}

\subsection{The cohomology of $\KGC_{n}$}
\label{sec.cohom-kgc-n}

Let $n \geq 3$.
Recall the graph complex $\KGC_{n} \coloneqq \SGC_{n}^{\vee}$ from Equation~\eqref{eq.kgc} and the Maurer--Cartan element $z_0^\p \in \MC(\KGC_{n})$ from Proposition~\ref{prop:locvanishing2}.
We want to compute here the cohomology of
\begin{equation}
  \KGC_{n}^{z_0^\p} \coloneqq (\KGC_n, \delta +[z_0^\p,-])
\end{equation}

First, note that one has a right action
\[
  \bullet :  (\KGC_n, \delta) \otimes (\GC_n,\delta) \to (\KGC_n,\delta)
\]
by Lie algebra derivations, through the insertion of a graph at aerial vertices (cf. also~\cite{Willwacher2016}).
It follows that for the (or in fact any) MC element $z_0^\p\in (\KGC_n, \delta)$ we obtain a map of complexes
\beq{equ:KGCtotcx}
z_0^\p \bullet - : \GC_n[-1] \to (\KGC_n,\delta+[z_0^\p,-])
\eeq
Concretely, to see that this map respects the differentials, one just acts with $\bullet \gamma$ (for some $\gamma\in \GC_n$) on the MC equation and uses the derivation property to obtain:
\begin{equation}
  0 = (\delta z_0^\p +[z_0^\p,z_0^\p]/2) \bullet \gamma
  =
  \delta (z_0^\p\bullet \gamma) + z_0^\p\bullet \delta \gamma
  +
  [z_0^\p, z_0^\p\bullet \gamma].
\end{equation}

Next, we define a map of complexes
\begin{equation}
  \pi : (\KGC_n,\delta+[z_0^\p,-])  \to (\GC_{n-1},\delta)
\end{equation}
as follows:
\begin{itemize}
  \item We say that a wedge is an aerial vertex connected to two terrestrial vertices and no other vertices:
        \begin{equation}
          \begin{tikzpicture}
            \draw (-1,0) -- (1,0);
            \node[int] (v) at (0,.5) {};
            \node[int] (v1) at (-.5,0) {};
            \node[int] (v2) at (.5,0) {};
            \draw[-latex] (v) edge (v1) edge (v2);
          \end{tikzpicture}
        \end{equation}
  \item If $\Gamma\in \KGC_n$ contains any aerial vertices which are not wedges, then we set $\pi(\Gamma)=0$.
  \item If $\Gamma\in \KGC_n$ contains only wedges, then we set $\pi(\Gamma)$ to the graph in $\GC_{n-1}$ with the vertices the same as the terrestrial vertices of $\Gamma$, and with every wedge in $\Gamma$ replaced by an edge in $\pi(\Gamma)$.
\end{itemize}

The map $\pi$ respects the differentials: the only terms in the differential that are not mapped to zero by $\pi$ stem from the bracket with the wedge piece in $z_0^\p$, and those terms precisely reproduce the differential on $\GC_{n-1}$.

Finally, for technical reasons, let us also consider the quasi-isomorphic subcomplex
\begin{equation}
  \GC_n^{2e} \xhookrightarrow{\sim} \GC_n
\end{equation}
spanned by graphs with at least two edges.
Concretely, the subcomplex $\GC_n^{2e}\subset \GC_n$ has codimension 2, with the two ``missing'' dimensions spanned by the graphs
$
  \begin{tikzpicture}[baseline=-.65ex]
    \node[int] at (0,0) {};
  \end{tikzpicture}$
and
$
  \begin{tikzpicture}[baseline=-.65ex]
    \node[int] (v) at (0,0) {};
    \node[int] (w) at (0.5,0) {};
    \draw (v) edge (w);
  \end{tikzpicture}
$.

The subcomplex is chosen so that the composition of maps
\beq{equ:KGCcpx}
\GC_n^{2e}[-1] \xrightarrow{z_0^\p\bullet -}  \KGC_n \xrightarrow{\pi} \GC_{n-1}^{\geq 1}
\eeq
is zero, as is easily verified.

The main result is now the following:
\begin{proposition}\label{prop:totacyclic}
  The total complex of \eqref{equ:KGCcpx} is acyclic.
\end{proposition}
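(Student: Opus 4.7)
My plan is to reduce the acyclicity of the total complex of \eqref{equ:KGCcpx} to showing that $z_0^\partial \bullet -$ is a quasi-isomorphism onto $\ker\pi$, and then to establish this via a filtration argument on both sides.

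First, I would verify that $\pi$ is surjective: given $\gamma \in \GC_{n-1}^{\geq 1}$, subdividing each edge of $\gamma$ by a single aerial vertex produces a wedge-only graph $\tilde\gamma \in \KGC_n$ with $\pi(\tilde\gamma) = \gamma$. Since the composition in \eqref{equ:KGCcpx} vanishes, acyclicity of the total complex is equivalent to the induced map
\[
z_0^\partial \bullet - : \GC_n^{2e}[-1] \longrightarrow \ker \pi
\]
being a quasi-isomorphism, where $\ker \pi \subset \KGC_n$ consists of graphs possessing at least one aerial vertex that is not a wedge.

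Next, I would filter both sides by a quantity that isolates an ``aerial skeleton'' from its wedge/stub decorations. On $\ker\pi$, one natural choice is to filter by the number of aerial vertices that \emph{are} wedges. The bracket $[z_0^\partial,-]$ has several contributions, one for each arity in the fan-sum making up $z_0^\partial$: the wedge piece ($n=2$) increases the wedge count by one, while the stub ($n=1$) and higher-fan pieces ($n\geq 3$) modify the non-wedge aerial structure. On the associated graded, only the wedge-attachment piece of $[z_0^\partial,-]$ survives alongside the intrinsic $\delta$ of $\KGC_n$, and a typical graph in $\ker\pi$ decomposes as an ``aerial skeleton'' $\gamma \in \GC_n$ with each aerial vertex carrying a collection of attached terrestrial feet via wedges. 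This matches, at the associated-graded level, the image of $z_0^\partial \bullet \gamma$.

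The main obstacle will be accounting precisely for the non-wedge aerial vertex that must exist in any element of $\ker\pi$: the map $z_0^\partial \bullet -$ creates such vertices only by placing a fan of $z_0^\partial$ at a vertex of $\gamma \in \GC_n^{2e}$, whereas $\ker\pi$ contains many graphs where a non-wedge aerial vertex arises otherwise. To match the two sides, I would further filter by the valence of a distinguished ``first'' non-wedge aerial vertex and apply a Lambrechts--Volić-type trick as in Proposition \ref{prop:decoGC} and Lemma \ref{lem:dGC}: decompose into pieces according to the behavior at this distinguished vertex, and show that all but the pieces matching $z_0^\partial \bullet \GC_n^{2e}$ assemble into an acyclic complex, a contracting homotopy being provided by contracting or extending a suitable incident antenna. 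The restriction to $\GC_n^{2e}$ (graphs with $\geq 2$ edges) precisely excludes the exceptional low-edge graphs that would otherwise require separate treatment, paralleling the role of $\GC_n$ versus its wedge-like quotients in Lemma \ref{lem:dGC}.
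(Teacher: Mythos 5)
Your opening reduction is sound: $\pi$ is surjective (your edge-subdivision argument works, with degrees and symmetries matching), so acyclicity of the total complex of \eqref{equ:KGCcpx} is indeed equivalent to $z_{0}^{\partial}\bullet-\colon \GC_{n}^{2e}[-1]\to\ker\pi$ being a quasi-isomorphism; this is the same computation the paper performs with the mapping cone of $\pi$. The gap is in your filtration. The number of wedges is not monotone under the differential: the vertex-splitting part of $\delta$ applied to a wedge replaces it by two aerial vertices joined by an edge, each of which has an aerial neighbour and hence is not a wedge, so this term strictly \emph{decreases} the wedge count, while the bracket with the two-legged fan in $z_{0}^{\partial}$ can strictly \emph{increase} it. A grading that the differential both raises and lowers carries no spectral sequence, and even setting that aside, your claim that ``only the wedge-attachment piece survives alongside the intrinsic $\delta$'' cannot hold, since an associated graded differential consists of the degree-preserving part only. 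The filtration that works is by the number of \emph{non-wedge} aerial vertices: this is genuinely non-decreasing (every vertex produced by a splitting acquires an aerial neighbour, so cannot be a wedge), and its leading differential is precisely the wedge-creating one.

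Two further points. First, your picture of the associated graded is off: a wedge is by definition an aerial vertex whose \emph{only} neighbours are two terrestrial vertices, so wedges cannot ``attach terrestrial feet'' to the vertices of an aerial skeleton; the image of $z_{0}^{\partial}\bullet\gamma$ consists of graphs with terrestrial \emph{hairs} (univalent terrestrial vertices), not wedges, and the correct first-page computation (via Proposition \ref{prop:decoGC}) identifies the relevant cohomology with such hairy wedgeless graphs. Second, the endgame is more delicate than your antenna-contraction sketch: a further filtration turning hairs into stubs reduces to $\dGC_{n}'$, and Lemma \ref{lem:dGC} then yields $H(\GC_{n})$ \emph{plus two exceptional classes} $U$ and $X$, which survive to a later page and must be shown to cancel against each other there. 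Restricting the source to $\GC_{n}^{2e}$ does not remove these classes -- they live on the $\KGC_{n}$ side of the complex -- so this cancellation is an unavoidable step that your proposal does not anticipate.
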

\begin{proof}
  Let $T$ be the total complex (i.e., the mapping cone) of $\KGC_n\to \GC_{n-1}$.
  We will show equivalently that the induced map
  \begin{equation}
    H(\GC_n)\to H(T)
  \end{equation}
  is an isomorphism.
  We will filter $T$ by the total number of non-wedge aerial vertices and take the associated spectral sequence.
  Here we understand graphs in the piece $\GC_{n-1}$ as containing no such vertices.

  The first differential is composed of the piece of $[W,-]$ creating a wedge, and the map between $\KGC_n$ and $\GC_{n-1}$.
  We note that $\KGC_n$ with the first piece has the shape of a decorated graph complex in the sense of Section~\ref{sec:decoGC}, where we have (possibly disconnected) graphs of terrestrial vertices, the edges being the wedges, and the decorations the incident edges from the rest of the graph.
  Using this identification and replicating the proof of Proposition \ref{prop:decoGC} one sees that the cohomology of $\KGC_n$ with that differential is composed of one piece of shape $\GC_{n-1}$ and another one which is spanned by ``hairy wedgeless graphs'', i.e., graphs with only univalent terrestrial vertices and no wedges.
  Hence we find that $H(\gr T)$ is precisely the space of such hairy wedgeless graphs.
  We filter the next page in the spectral sequence again by the total number of vertices (aerial and terrestrial alike).
  The differential on the associated graded is $[I,-]$, which makes one hair into a stub.
  It is more or less obvious that the cohomology of that piece is the same as hairless graphs, modulo stubs.
  The only caveat is that by removing the graph which is a single wedge from our complex we retain an additional class
  \begin{equation}
    X =
    \begin{tikzpicture}[baseline=1cm]
      \draw(-1,0) -- (1,0);
      \node[int] (v) at (0,0) {};
      \node[int] (w) at (0,1.3) {};
      \node[int] (ww) at (0.8,.5) {};
      \draw[-latex] (w) edge (v) edge (ww);
    \end{tikzpicture}
  \end{equation}
  We hence find (on the next page of our inner spectral sequence) a complex which is isomorphic to $\dGC_n'$, cf. Section~\ref{sec:dGC}.
  Using Lemma \ref{lem:dGC} we hence see that our cohomology on the subsequent page is
  \begin{equation}
    H(\GC_n) \oplus \K U \oplus \K X,
  \end{equation}
  where
  \begin{equation}
    U =
    \begin{tikzpicture}[baseline, scale=.5]
      \draw(-1,0) -- (1,0);
      \node[int] (w) at (0,1) {};
    \end{tikzpicture}
  \end{equation}
  The classes in the piece $H(\GC_n)$ stem from the map $\GC_n\to \KGC_n$, hence have representatives closed in the full complex, and hence all further differentials in our spectral sequences will be zero on them.
  Finally, one easily checks that on the next page of the outer spectral sequence $U$ kills $X$, and hence the proposition follows.
\end{proof}

One can then consider the long exact sequence associated to \eqref{equ:KGCcpx} (see \cite[Lemma 5.6]{Willwacher2014})
\beq{equ:KGClongexact}
\cdots \to H(\GC_n)[-1]\to H(\KGC_n) \to H(\GC_{n-1}) \to H(\GC_n)[-2]\to \cdots
\eeq
Furthermore we see that all maps respect the (complete) grading by loop order, so that we may always consider subcomplexes of fixed loop order $g$.
First consider loop order 1.
Suppose $n$ is even. Then the 1-loop classes in $H(\GC_n)$ live in degrees $1-n+4j$, $j=0,1,2,\dots$.
If $n$ is odd they live in degrees $3-n+4j$, $j=0,1,2,\dots$.
This means that the 1-loop classes in $H(\GC_{n-1})$ live in degrees $4-n+4j$ is $n$ is even and $2-n+4j$ if $n$ is odd.
Hence in either case the live in degrees of opposite parity, so that the map $H(\GC_{n-1}) \to H(\GC_n)[-2]$ must necessarily be trivial in loop order $1$.

\begin{lemma}
  \label{lem:GCdegbounds}
  In loop order $g> 2$ the cohomology of $H(\GC_n)$ is concentrated in degrees
  \begin{equation}
    \{-g (n - 2)  ,\dots,  -g(n-3)-3 \}
  \end{equation}
\end{lemma}
\begin{proof}
  The upper bound is due to the fact the we may consider at least trivalent vertices, so that, in the extreme case one has
  \begin{align*}
    e & = 3v /2 & g & =e-v+1 & d = n(v-1) -(n-1)e,
  \end{align*}
  which one easily solves for the degree $d$ to yield the desired bound (see also~\cite[Section~9.4]{FresseTurchinWillwacher2017}).

  The lower bound comes from the quasi-isomorphic identification of $\GC_n$ with the complex of (Hopf-)biderivations of the homology operad of the little $n$-disks operad, see \cite{Willwacher2014}.
  The biderivation complex there has the given lower degree bound, which hence is also valid for $H(\GC_n)$.

  More concretely, recall from \cite[Section~5]{Willwacher2014} the identification between the homology of $\mathsf{Der}(e_n)=\prod_N \left( e_n(N) \otimes e_n(N) \otimes sgn^{\otimes n}    [(1-N)n] \right)^{\mathbb S_n} $ and $H(GC_n)$.
  Under this identification the genus is given by genus of the graph we obtain by drawing a line with $N$ vertices, a graph representing a Gerstenhaber word of arity $N$ above the line (i.e., multiple Lie trees whose leafs form a subset of the $N$ vertices) and a similar below the line.

  Then, on a graphical representation $\Gamma$ of an element of $\mathsf{Der}(e_n)$ and denoting by $c$  its the number of connected components, $e$ its number of edges, $v$ its number of vertices, and $g$ its genus we obtain

  \begin{align}
    \deg \Gamma & = (e-v)(1-n) -n(1-N)                                                     \\
                & =(e-v-N)(1-n) +N(1-n) -n(1-N)                                            \\
                & =(g-c)(1-n) +N-n                                                         \\
                & = -g(n-2) + \underbrace{(c-1)}_{\geq 0}\underbrace{(n-1)}_{>0} +(N-g-1).
  \end{align}

  The conclusion follows from noting that since the graph would have genus zero after deletion of the $N$ middle vertices, then $N\geq g+1$.
\end{proof}

We also note that in loop order $g=2$ there is cohomology in $\GC_n$ only for $n$ even, and only one class spanned by the theta-graph, in degree $n-3(n-1)=-2n+3$.

Overall we find that the cohomologies of $H(\GC_{n-1})$ and $H(\GC_n)[-2]$ occupy disjoint degree regions, and hence the map between them in the long exact sequence \eqref{equ:KGClongexact} has to be trivial.
Hence we conclude from this long exact sequence the following.
\begin{proposition}\label{prop.kgc-z-0}
  We have that
  \begin{equation}
    H(\KGC_n,\delta + [z_0^\p,-]) \cong H(\GC_n)[-1]\oplus H(\GC_{n-1}).
  \end{equation}
\end{proposition}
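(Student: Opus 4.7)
The plan is to extract the conclusion from the acyclicity result in Proposition~\ref{prop:totacyclic} together with a careful degree-counting argument already sketched in the paragraphs preceding the statement. Concretely, from Proposition~\ref{prop:totacyclic} the total (mapping) complex of
\[
\GC_n^{2e}[-1] \xrightarrow{z_0^\p\bullet -} (\KGC_n,\delta+[z_0^\p,-]) \xrightarrow{\pi} \GC_{n-1}
\]
is acyclic, so its associated long exact sequence
\[
\cdots \to H^{k-1}(\GC_n) \to H^k(\KGC_n) \to H^k(\GC_{n-1}) \xrightarrow{\partial} H^{k+1}(\GC_n) \to \cdots
\]
holds (after the usual identification of $H(\GC_n^{2e})$ with $H(\GC_n)$, which is valid since the two extra generators lie outside the bidegrees that contribute). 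First I would observe that all three complexes are naturally filtered by loop order and that all maps preserve loop order, so it suffices to prove that the connecting homomorphism $\partial$ vanishes in each fixed loop order $g$.

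Next I would split the argument on $g$. For $g=1$, I would use the explicit description of $H^{*}(\GC_n)_{1\text{-loop}}$ as concentrated in degrees of a fixed parity depending on $n$ (recalled in the paragraph just before Lemma~\ref{lem:GCdegbounds}): a short parity check shows that the possible degrees of $H(\GC_{n-1})_{1\text{-loop}}$ and of $H(\GC_n)[-2]_{1\text{-loop}}$ are of opposite parity, forcing $\partial=0$. For $g=2$, I would observe that $H^*(\GC_n)_{2\text{-loop}}$ is spanned by the $\theta$-graph in degree $-2n+3$ only for even $n$, so again a direct inspection rules out any nontrivial map to $H(\GC_n)[-2]_{2\text{-loop}}$. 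For $g\geq 3$, I would invoke Lemma~\ref{lem:GCdegbounds} to see that $H^{*}(\GC_{n-1})_{g\text{-loop}}$ and $H^{*+2}(\GC_n)_{g\text{-loop}}$ are concentrated in disjoint degree windows, so $\partial=0$ by range alone.

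With $\partial=0$ in every loop order, the long exact sequence collapses into short exact sequences
\[
0 \to H^{k-1}(\GC_n) \to H^k(\KGC_n) \to H^k(\GC_{n-1}) \to 0
\]
in each loop order, which we must split compatibly to conclude $H(\KGC_n) \cong H(\GC_n)[-1]\oplus H(\GC_{n-1})$. Splitting in each loop order separately is automatic over the ground field, and the degree-window arguments above in fact preclude any ambiguity in identifying the cohomology as a graded vector space. The main technical obstacle will be the low loop orders: the $g\leq 2$ cases are not covered by Lemma~\ref{lem:GCdegbounds} and must be treated by hand using the explicit shape of the known cohomology classes (the wheels in $g=1$ and the $\theta$-graph in $g=2$). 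Once those are handled, the conclusion of the proposition follows immediately.
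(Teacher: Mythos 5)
Your proposal is correct and follows essentially the same route as the paper: the long exact sequence coming from the acyclicity of the total complex in Proposition~\ref{prop:totacyclic}, combined with the loop-order splitting, the parity argument in loop order $1$, the $\theta$-graph observation in loop order $2$, and the degree bounds of Lemma~\ref{lem:GCdegbounds} for higher loop orders to kill the connecting homomorphism. Nothing essential is missing.
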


\subsection{The cohomology of \texorpdfstring{$\aGC_{A_{\partial}}$}{aGC\_\{A_d\}}}

We next consider the graph complex (dg Lie algebra) $(\aGC^{\vee}_{A_{\partial}},\delta+[w,-])$, where $w=w_0+\cdots$ is the Maurer--Cartan element constructed in Section~\ref{sec:oGCMC}.
We will be mostly interested in $(\aGC_{A_{\partial}}^{\vee},\delta+[w_0,-])$, of which the above complex is a deformation.

First note that we have an obvious map of complexes
\beq{equ:GCtooGC}
\GC^{\vee}_n[1-n]\to \aGC^\vee_{A_{\partial}}
\eeq
by simply mapping a diagram to itself.
We also have a map in the other direction
\begin{equation}
  (\aGC^{\vee}_{A_{\partial}}, \delta+[w_0,-])\to \GC^{\vee}_n[1-n]
\end{equation}
projecting onto diagrams without decoration.
To see that this is indeed a map of complexes, note that the twist piece of the differential does not have a piece adding two edges joined by an undecorated vertex, owed to the fact that $w_0$ does not have pieces with two $\alpha_i$ or $\beta_i$ decorations.
Mind however that the $\omega$-term in $w_0$ can create more than one edge, eventually owed to our conventions regarding decorations by $1$.

This shows that we have a direct sum decomposition
\begin{equation}
  (\aGC^{\vee}_{A_{\partial}}, \delta+[w_0,-]) \cong \GC^{\vee}_{n}[1-n] \oplus X
\end{equation}
where $X$ is the subcomplex spanned by graphs with at least one decoration in $\tilde H(N)$.
This subcomplex $X$ has also been considered previously. It is closely related to the hairy graph complexes with hair decoration in $\tilde H(N)$, considered for example in~\cite[Section~2]{SonghafouoTsopmeneTurchin2016}.

We will not try to make this link more precise, but just state some results.
Let
\begin{equation}
  \aGC_{A_{\partial}}^{\vee,\geq k} \subset \aGC_{A_{\partial}}^{\vee}
\end{equation}
be the subcomplex spanned by graphs all of whose vertices have valence $\geq k$, with $k=2,3$. (We leave it to the reader to verify that this subspace is indeed closed under the differential.)

\begin{proposition}\label{prop:oGCNI}
  The inclusion
  \begin{equation}
    \aGC_{A_{\partial}}^{\vee,\geq 2}\subset (\aGC_{A_{\partial}}, \delta+[w_0,-])
  \end{equation}
  is a quasi-isomorphism. The inclusion
  \begin{equation}
    \aGC_{A_{\partial}}^{\vee,\geq 3}\subset (\aGC_{A_{\partial}}, \delta+[w_0,-])
  \end{equation}
  is a quasi-isomorphism, up to the cohomology of the $1$-loop part of $\GC_n[1-n]$, living in the cohomology of the right-hand side.
\end{proposition}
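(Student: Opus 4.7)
The proof is a valence-reduction argument, modeled on the standard analysis of low-valence vertices in $\GC_n$ (see \cite[Proposition~3.4]{Willwacher2014} and the proof of Lemma \ref{lem:dGC} in Section~\ref{sec:dGC}), but we must keep track of the $\tilde{H}(\partial M)$-decorations and of the twist $[w_0,-]$. The plan is to treat the two statements in order: first that $\aGC^{\vee,\geq 2}_{A_\partial}\hookrightarrow \aGC^{\vee}_{A_\partial}$ is a quasi-isomorphism, and then that $\aGC^{\vee,\geq 3}_{A_\partial}\hookrightarrow \aGC^{\vee,\geq 2}_{A_\partial}$ is a quasi-isomorphism up to the $1$-loop part of $\GC_n[1-n]$.

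For the first inclusion, I would use the splitting $\aGC^{\vee}_{A_\partial}\cong \GC^{\vee}_n[1-n]\oplus X$ discussed just after \eqref{equ:GCtooGC}, where $X$ consists of graphs with at least one $\tilde{H}(\partial M)$-decoration. On the $\GC^{\vee}_n$ summand the result is precisely \cite[Proposition~3.4]{Willwacher2014}. On $X$, I would filter by the number of vertices of valence $\geq 2$ (equivalently, by the number of non-antenna vertices) and consider the associated spectral sequence. The only piece of the differential surviving on the $E^0$-page contracts an edge adjacent to a univalent vertex (possibly decorated) and the piece of $[w_0,-]$ that attaches a new univalent vertex. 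An antenna-pairing argument essentially identical to the proof of Lemma \ref{lem:dGC} shows that the subcomplex of graphs with at least one univalent vertex is acyclic on this page; the only delicate point is the interaction of the edge-contraction piece with the decoration-creation piece of $[w_0,-]$, which pair up thanks to the fact that $w_0=\sum_i \alpha_i\beta_i$ represents the diagonal in $A_\partial\otimes A_\partial$ and the pairing on $A_\partial$ is non-degenerate.

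For the second inclusion, after having reduced to $\aGC^{\vee,\geq 2}_{A_\partial}$, I would run a second spectral sequence, filtering by the number of vertices of valence $\geq 3$. On the $E^0$-page bivalent vertices are isolated, and as in Lemma \ref{lem:dGC} the analysis splits along (i) graphs containing a chain of bivalent vertices connecting two $\geq 3$-valent vertices, (ii) closed cycles consisting entirely of bivalent vertices, and (iii) bivalent vertices carrying a $\tilde{H}(\partial M)$-decoration. Cases (i) and (iii) are acyclic by the same parity argument used for antennas (the decorated case again uses the duality $w_0$ as in Stage 1). Case (ii) contributes exactly the circles in $\GC^{\vee,\leq 2}_n\subset \GC^{\vee}_n[1-n]$, i.e.\ the $1$-loop classes.

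The main obstacle is the first stage: the subspace $\aGC^{\vee,\geq 2}_{A_\partial}$ is not literally stable under $[w_0,-]$ (since $w_0$ has a single vertex and $[w_0,-]$ attaches a decorated univalent vertex by an edge), so one cannot naively apply the antenna-pairing argument in the full twisted complex. The right way around this is to choose the filtration so that, on the $E^0$-page, the $[w_0,-]$ summand either drops in filtration (and is therefore ignored) or pairs precisely with the edge-contraction differential; verifying that the pairing indeed gives a chain homotopy requires exploiting that $w_0$ represents the diagonal class, and this is the only non-formal computation in the proof.
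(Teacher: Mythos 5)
Your proposal is correct and follows essentially the same route as the paper: the published proof is exactly the filtration by the number of non-antenna vertices (together with the antenna-pairing argument of Lemma~\ref{lem:dGC} and \cite[Proposition~3.4]{Willwacher2014}) for the first inclusion, and the filtration by the number of non-bivalent vertices for the second, with the surviving closed cycles of bivalent vertices accounting for the $1$-loop part of $\GC_n[1-n]$. Your extra care about the interaction of $[w_0,-]$ with the filtration is exactly the point the paper leaves to the reader, and your resolution via the non-degeneracy of the diagonal $w_0$ is the intended one.
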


\begin{proof}
  For the first statement one takes a spectral sequence from the filtration on the number of ``non-antenna vertices'', similar to the proof of~\cite{Willwacher2016}, or Lemma \ref{lem:dGC} above.
  For the second statement one similarly filters on the number of non-bivalent vertices.
\end{proof}

Finally we want to note that the complex $(\aGC_{A_{\partial}}^{\vee},\delta+[w,-])$ is naturally filtered by loop number.
A simple counting argument shows:
\begin{lemma}\label{lem:oGCdegcounting}
  Graphs in $\aGC_{A_{\partial}}^{\vee,\geq 3}$ of loop number $l$ have degree
  \begin{equation}
    \leq -(l-1)(n-3).
  \end{equation}
\end{lemma}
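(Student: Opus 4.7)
My plan is to prove the lemma by a straightforward degree-counting argument, exploiting the trivalence assumption together with the standard Euler-characteristic relation for graphs.

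First I would fix notation: let $\Gamma \in \aGC_{A_{\partial}}^{\vee,\geq 3}$ have $V$ vertices, $E$ edges, and let $d \geq 0$ denote the total degree of its decorations (nonnegative because decorations lie in $\tilde H(\partial M)$, which is concentrated in positive cohomological degrees). Recall from Section~\ref{sec:agraphs-mgraphs} that edges carry degree $n-1$ and internal vertices carry degree $-n$ on the primal side, so the degree of the corresponding dual generator in $\aGC_{A_{\partial}}^{\vee}$ is
\[
  \deg \Gamma = nV - (n-1)E - d.
\]
Since $d \geq 0$, this is bounded above by $nV - (n-1)E$.

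Next I would translate the combinatorial constraints into inequalities on $V$ and $E$. The trivalence hypothesis gives $2E \geq 3V$, and the loop-order identity $l = E - V + 1$ lets me eliminate $V$: substituting $V = E - l + 1$ into $2E \geq 3V$ yields $E \leq 3(l-1)$. Therefore
\[
  \deg \Gamma \leq nV - (n-1)E = n(E - l + 1) - (n-1)E = E - n(l-1) \leq 3(l-1) - n(l-1) = -(l-1)(n-3),
\]
which is exactly the claimed bound. The extremal case is a trivalent graph with no decorations and $E = 3(l-1)$, $V = 2(l-1)$.

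The argument is entirely elementary and I do not expect any obstacle; the only mild subtlety is keeping track of the sign flip when passing from $\aGC_{A_{\partial}}$ to its dual, and observing that decorations can only decrease the degree on the dual side, so they may be dropped from the estimate.
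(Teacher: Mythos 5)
Your overall strategy (a direct Euler-characteristic degree count) is sound and, if repaired, is actually slicker than the paper's argument, which instead reduces to an extremal graph by local substitutions that do not decrease the degree. However, as written there is a genuine gap: the inequality $2E\geq 3V$ does not follow from membership in $\aGC_{A_{\partial}}^{\vee,\geq 3}$, because in this complex the valence of a vertex counts its decorations in $\tilde H(\partial M)$ as well as its incident edge-ends. This is visible in the paper's own proof, which must treat a single vertex carrying three decorations (so $V=1$, $E=0$, $l=0$) and the one-loop graph consisting of a single vertex with a tadpole and one decoration (so $V=E=1$, $l=1$, degree $0$). The latter saturates the claimed bound while violating your intermediate inequality $E\leq 3(l-1)=0$; for the former, $E\leq 3(l-1)$ would read $0\leq -3$. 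So your chain of inequalities breaks down precisely on the graphs where decorations are needed to reach valence three.

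The fix stays within your framework. Let $D$ be the number of decorations; the handshake count for the valence condition is $2E+D\geq 3V$, and since every decoration lies in $\tilde H(\partial M)$ it has degree at least one, so $d\geq D\geq 3V-2E$ (and trivially $d\geq 0\geq 3V-2E$ when the right-hand side is nonpositive). Hence
\begin{equation*}
  \deg\Gamma \;=\; nV-(n-1)E-d \;\leq\; nV-(n-1)E-(3V-2E) \;=\; (n-3)(V-E) \;=\; -(l-1)(n-3),
\end{equation*}
which is the claimed bound in all cases. With this correction your proof is complete and arguably cleaner than the paper's; note also that the analogous Lemma~\ref{lem:oGCdegcounting2} for $\mGC_{A}^{\vee,\geq 3}$ needs the hypothesis $H^{1}(M)=0$ exactly because there the degree-one decorations would make the inequality $d\geq D$ too weak to absorb the extra unit of degree, so keeping track of where the decoration degrees enter is not merely cosmetic.
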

\begin{proof}
  For the following argument we ignore symmetries of graphs.
  Suppose we are given a graph of loop order $l$ of maximal degree.
  We may assume that all vertices are exactly trivalent, for otherwise splitting higher valent vertices would produce a graph of higher degree.
  In case there is a vertex with 3 decorations in $\tilde H(N)$, this is the whole graph, and $l=0$.
  The degree of such a graph is at most
  \begin{equation}
    n (\text{\# vertices}) - (n-1)(\text{\# edges})-(\text{degree of decorations})
    \leq n-0-3=n-3,
  \end{equation}
  in accordance with the formula.
  Next suppose there is a vertex with exactly 2 decorations.  Then performing the substitution
  \begin{equation}
    \begin{tikzpicture}[baseline=-.65ex]
      \node[int] (v) at (0,0) {};
      \node[int,label={$\scriptstyle ab$}] (w) at (0,0.5) {};
      \draw (v) edge (w) edge +(-.5,-.5)  edge +(.5,-.5);
    \end{tikzpicture}
    \mapsto
    \begin{tikzpicture}[baseline=-.65ex]
      \node[int,label={$\scriptstyle a$}] (v) at (0,0) {};
      \draw (v) edge +(-.5,-.5)  edge +(.5,-.5);
    \end{tikzpicture}
  \end{equation}
  at most increases the degree. We may hence assume that there are no vertices with exactly two decorations.
  Next suppose we have a vertex with exactly one decoration.
  Then the substitution
  \begin{equation}
    \begin{tikzpicture}[baseline=-.65ex]
      \node[int,label=$a$] (v) at (0,0) {};
      \draw (v) edge +(-.5,0) edge +(.5,0);
    \end{tikzpicture}
    \mapsto
    \begin{tikzpicture}[baseline=-.65ex]
      \draw (-.5,0) -- (.5,0);
    \end{tikzpicture}
  \end{equation}
  can at most increase the degree. Note that this substitution can be performed iff the two edges incident at the vertex on the left are distinct, i.e., if the graph is not equal to the following
  \begin{equation}
    \begin{tikzpicture}[baseline=-.65ex]
      \node[int,label=$a$] (v) at (0,0) {};
      \draw (v) edge[loop below] (v);
    \end{tikzpicture}\, .
  \end{equation}
  In this latter case $l=1$ and the graph has degree at most $n-(n-1)-1=0$, in accordance with the claim in the lemma.
  For higher $l\geq 2$ it hence suffices to consider trivalent graphs without any decorations. Such graphs have  degree
  \begin{equation}
    n(2l-2)-(n-1)(3l-3) = -(l-1)(n-3).
    \qedhere
  \end{equation}
\end{proof}

Note also that the loop order 1 part of $\GC_n[1-n]$ is concentrated in non-positive degrees.

Hence we obtain:
\begin{corollary}\label{cor:vanish-agc}
  If $\dim N\geq 2$, then the graded vector space $\mF^1\aGC_{A_{\partial}}^{\vee}$ does not contain elements of positive degrees.
  \qed
\end{corollary}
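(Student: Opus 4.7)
The plan is a direct application of the degree bound in Lemma~\ref{lem:oGCdegcounting}, after a preliminary reduction via Proposition~\ref{prop:oGCNI} to the trivalent-or-higher subcomplex.

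First I would invoke Proposition~\ref{prop:oGCNI}: the inclusion $\aGC_{A_\partial}^{\vee,\geq 3}\hookrightarrow \aGC_{A_\partial}^{\vee}$ is a quasi-isomorphism up to the $1$-loop piece of $\GC_n^{\vee}[1-n]$, which enters through the splitting $(\aGC_{A_\partial}^{\vee},\delta+[w_0,-])\cong \GC_n^{\vee}[1-n]\oplus X$ recorded just before that proposition. Both the splitting projection and the inclusion respect the loop-order filtration $\mF^{\bullet}$, so the assertion reduces to two separate checks: on $\mF^1\aGC_{A_\partial}^{\vee,\geq 3}$, and on the $1$-loop summand of $\GC_n^{\vee}[1-n]$ sitting inside $\aGC_{A_\partial}^{\vee}$ via the inclusion \eqref{equ:GCtooGC}.

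For the first piece, Lemma~\ref{lem:oGCdegcounting} gives immediately that a graph of loop order $l\geq 1$ lives in degree at most $-(l-1)(n-3)$. The hypothesis $\dim N\geq 2$ forces $n\geq 3$, so $(n-3)\geq 0$, and this upper bound is non-positive for every $l\geq 1$. Thus $\mF^1\aGC_{A_\partial}^{\vee,\geq 3}$ contains no elements of positive degree at the chain level, which is precisely the content of the corollary for this piece -- no passage to cohomology is needed, in contrast to my previous attempt. This is the sense in which the corollary follows ``directly from the degree-counting'': on the trivalent model the statement is literally a statement about the graded vector space.

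The main obstacle is handling the $1$-loop correction: one must verify that the $1$-loop part of $\GC_n^{\vee}[1-n]$ does not contribute any positive-degree elements after the shift. I would use the explicit chain-level formula $k(n-1)-ln+n$ on $\GC_n$ with $k=l$ (so that a $1$-loop graph with $l$ vertices has chain degree $n-l$), track it carefully through dualization and the shift $[1-n]$, and combine with the parity constraints on the wheel classes spelled out in the paragraph immediately preceding Lemma~\ref{lem:GCdegbounds}, exploiting again that $n\geq 3$. Combining the two checks then yields the corollary.
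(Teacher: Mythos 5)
Your proof follows the paper's own (largely implicit) argument: the corollary is deduced from Lemma~\ref{lem:oGCdegcounting} (using $\dim N\geq 2\Leftrightarrow n\geq 3$ so that $-(l-1)(n-3)\leq 0$) together with the remark, stated immediately before the corollary, that the loop-order-one part of $\GC_n[1-n]$ is concentrated in non-positive degrees, the reduction to the trivalent subcomplex via Proposition~\ref{prop:oGCNI} being exactly the implicit first step. Your deferred check (3) on the one-loop correction is precisely that remark, which the paper likewise asserts without carrying out the computation, so your proposal matches the paper's proof in both structure and level of detail.
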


\subsection{The cohomology of \texorpdfstring{$\mGC_A$}{mGC\_A}}
We may discuss similarly the cohomology of the complex $\mGC_A$, or rather extend the discussion of the complex $\aGC_{A_\partial}$ from the preceding Section~to the semi-direct product dg Lie algebra
\begin{equation}
  \aGC_{A_{\partial}} \ltimes \mGC_A.
\end{equation}
Again we have the descending complete filtration $\mF^p$ by loop order. The associated graded space can again be related to (a twisted version of) the hairy graph complexes occurring for example in~\cite{SonghafouoTsopmeneTurchin2015}.
We have a Maurer--Cartan element $w+W=w_0+W_0 +(\cdots)$, cf. \eqref{equ:oGCMC1}, \eqref{equ:oGCMC2}.

We note that in $(\mGC_A, \delta + [w_0+W_0,-])$ we have classes represented by the loop graphs
\begin{equation}
  L_k
  =
  \begin{tikzpicture}[baseline=-.65ex]
    \node[int] (v1) at (0:1) {};
    \node[int] (v2) at (60:1) {};
    \node[int] (v3) at (120:1) {};
    \node[int] (v4) at (180:1) {};
    \node[int] (v5) at (240:1) {};
    \node (v6) at (300:1) {$\cdots$};
    \draw (v1) edge (v2) edge (v6) (v3) edge (v2) edge (v4) (v5) edge (v6) edge (v4);
  \end{tikzpicture}
  \quad \text{($k$ vertices)}
\end{equation}
with
\beq{equ:krange}
k =
\begin{cases}
  1,5,9,13,\dots & \text{for $n$ even} \\
  3,7,11,\dots   & \text{for $n$ odd}
\end{cases}\, .
\eeq
The $L_k$ have cohomological degree $k+1$.
It is easy to check that the $L_k$ are closed, reminding the reader that here we consider only the differential arising by twisting with the leading term $w_0+W_0$ of the MC element $w+W$.
\begin{lemma}\label{lem:oGCMloops}
  The $L_k$, for $k$ as in \eqref{equ:krange}, represent non-trivial cohomology classes in $(\mGC_A, \delta + [w_0+W_0,-])$.
\end{lemma}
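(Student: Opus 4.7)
The plan is to filter $\mGC_A$ by loop order and exploit the resulting convergent spectral sequence. Each polygon graph $L_k$ has $k$ edges and $k$ vertices, hence genus one, and so lives in the loop-order-one stratum of the filtration. On the associated graded, every piece of the differential that strictly changes the loop order drops out. In particular, the edge-cutting piece of $\delta$ (replacing an edge by $\Delta_A$, which strictly decreases the loop order) disappears, and among the pieces of the twist $[w_0+W_0,-]$, those that would pair a decoration of $L_k$ with a decoration of $w_0$ or $W_0$ vanish tautologically because $L_k$ carries no vertex decorations. What remains acting on $L_k$ is just the edge-contraction piece of $\delta$.

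On this reduced differential, $\delta L_k$ is a signed sum of copies of $L_{k-1}$ indexed by the $k$ edges of $L_k$. The standard Kontsevich parity/orientation analysis, identical to the one used to determine the one-loop part of $H(\GC_n)$ in \cite{Willwacher2014}, shows that this sum vanishes precisely when $k$ lies in the residue classes recorded in \eqref{equ:krange}. Thus each such $L_k$ is closed on the associated graded.

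For non-triviality, I would identify the subcomplex of $\mathrm{gr}\,\mGC_A$ spanned by undecorated, one-loop polygon graphs with the corresponding loop-order-one subcomplex of the classical graph complex $\GC_n$. The non-triviality of the polygon classes $L_k$ inside the one-loop part of $H(\GC_n)$, for $k$ satisfying exactly the parity conditions in \eqref{equ:krange}, is classical; see \cite{Willwacher2014}. Therefore the $L_k$ survive to $E_1$ of our spectral sequence. Since the loop-order filtration is complete, exhaustive, and bounded below in each cohomological degree (the loop order is non-negative and graphs of bounded degree and bounded loop order form a finite-dimensional space), standard convergence theorems apply, and any class non-trivial on $E_1$ remains non-trivial in the limit.

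The main obstacle is a careful bookkeeping of signs and a verification that every piece of the twist $[w_0+W_0,-]$ acting on an undecorated polygon either vanishes (because there is no decoration to pair with) or strictly lowers the loop order (so that it is killed on the associated graded). Once this check is made, the argument reduces cleanly to the classical identification of the one-loop part of $H(\GC_n)$, which supplies the non-triviality needed.
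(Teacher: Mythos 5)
Your closedness argument and your choice of filtration match the paper's, but your non-triviality argument has a genuine gap. Surviving to $E_1$ of a convergent spectral sequence does \emph{not} imply surviving to $E_\infty$: a class in filtration degree $1$ (loop order one) can still be killed by the differential $d_1$ coming from filtration degree $0$, i.e.\ by the loop-raising part of the differential applied to tree-level (loop order zero) classes. Your appeal to ``standard convergence theorems'' to conclude that a class non-trivial on $E_1$ remains non-trivial in the limit is simply false, and the missing check is exactly where the real content of the lemma lies. Note also that the twisted differential $\delta+[w_0+W_0,-]$ genuinely raises loop order (the piece $\nabla$ joining two decorations into a new edge, and the brackets with the decorated one-vertex graphs $w_0$, $W_0$), so this $d_1$ is not zero for trivial reasons.

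The paper closes this gap by a decoration count. One first observes that the associated graded differential preserves not only the loop order but also the number of decorations in $\tilde H(M)$; this is what legitimately isolates the undecorated one-loop polygons as a summand of $E_0$ (a point you use implicitly when you ``identify'' them with the one-loop part of $\GC_n$, but do not justify). One then computes the loop-order-zero cohomology: it is spanned by trivalent Lie trees whose leaves are decorated by $\tilde H(M)$, and such a tree with $v$ vertices carries $v+2$ decorations. The differential to loop order one adds a single edge and removes only one or two decorations, so every class in its image retains at least one decoration. Since $L_k$ is undecorated, it cannot be hit by $d_1$, and only then does it follow that it survives. Without this step (or some substitute for it) your proof does not establish non-triviality. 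A minor further point: for $k$ outside the residue classes of \eqref{equ:krange} the graph $L_k$ vanishes identically by its symmetry, rather than failing to be closed as your phrasing suggests.
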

\begin{proof}
  We filter the complex by loop order and consider the associated spectral sequence.
  We note that the associated graded differential leaves invariant not only the loop number, but also the number of decorations in $\tilde H(M)$.

  The cohomology in the associated graded in loop order 0 is a free cyclic Lie algebra generated by $\tilde H(M)$, represented by trivalent diagrams with leaves decorated by $\tilde H(M)$, modulo IHX relations.
  In particular all such diagrams with $v$ vertices must have $v+2$ decorations in $\tilde H(M)$.
  The differential on the next page (increasing the loop order by one) adds one edge, removing one or two decorations.
  So it is clear that only diagrams with at least one decoration (even per vertex) can be produced in this way. In particular all the $L_k$ must survive in the spectral sequence, and hence give rise to non-trivial cohomology classes.
\end{proof}

We will again consider bivalent and trivalent versions, in which graphs are required to contain only vertices of valence at least $k$, $k=2$ or $3$:
\begin{equation}
  \aGC_{A_{\partial}}^{\vee,\geq k} \ltimes \mGC_A^{\geq k}
  \subset
  (\aGC^{\vee}_{A_{\partial}} \ltimes \mGC_A^{\vee}, \delta + [w_0+W_0,-]).
\end{equation}
Note in particular that we consider the right-hand side as twisted with the leading piece of the MC element, which does not live in our subcomplexes.
\begin{proposition}\label{prop:incl-agc-mgc}
  The inclusion
  \begin{equation}
    \aGC_{A_{\partial}}^{\vee,\geq 2} \ltimes \mGC_A^{\vee,\geq 2}
    \subset
    (\aGC_{A_{\partial}}^{\vee} \ltimes \mGC_A^{\vee}, \delta + [w_0+W_0,-]).
  \end{equation}
  is a quasi-isomorphism.
  The inclusion
  \begin{equation}
    \aGC_{A_{\partial}}^{\vee,\geq 3} \ltimes \mGC_A^{\vee,\geq 3}
    \subset
    (\aGC_{A_{\partial}}^{\vee} \ltimes \mGC^{\vee}_A, \delta + [w_0+W_0,-]).
  \end{equation}
  is a quasi-isomorphism up to the 1-loop classes living in $H(\aGC_{A_{\partial}})$ as in Proposition \ref{prop:oGCNI}, and the 1-loop classes in $H(\mGC_A)$ as in Lemma \ref{lem:oGCMloops}.
  Both statements continue to hold for the associated graded complexes under the filtration by loop order.
\end{proposition}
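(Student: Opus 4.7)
The plan is to extend the proof of Proposition~\ref{prop:oGCNI} to the semidirect product setting, via a short exact sequence / five-lemma argument that reduces us to an analogous statement for $\mGC_A^{\vee}$ alone.

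First I would exploit the fact that $\mGC_A^{\vee}$ is a dg Lie ideal in $\aGC_{A_{\partial}}^{\vee}\ltimes\mGC_A^{\vee}$, giving, for each $k \in \{2,3\}$, a commutative diagram of short exact sequences of chain complexes
\[
\begin{tikzcd}
0 \rar & \mGC_A^{\vee,\geq k} \rar \dar & \aGC_{A_{\partial}}^{\vee,\geq k}\ltimes \mGC_A^{\vee,\geq k} \rar \dar & \aGC_{A_{\partial}}^{\vee,\geq k}\rar \dar & 0 \\
0 \rar & \mGC_A^{\vee} \rar & \aGC_{A_{\partial}}^{\vee}\ltimes \mGC_A^{\vee} \rar & \aGC_{A_{\partial}}^{\vee}\rar & 0
\end{tikzcd}
\]
all differentials being those twisted by $w_{0}+W_{0}$. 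Proposition~\ref{prop:oGCNI} controls the rightmost vertical map (QI for $k=2$, QI up to the 1-loop classes from $\GC_n[1-n]$ for $k=3$). Applying the five-lemma to the long exact sequences in cohomology, the whole proposition reduces to proving that the inclusion $\mGC_A^{\vee,\geq k}\hookrightarrow \mGC_A^{\vee}$ is a quasi-isomorphism ($k=2$), respectively a quasi-isomorphism up to the 1-loop classes $L_k$ from Lemma~\ref{lem:oGCMloops} ($k=3$).

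Next I would pass to the associated graded with respect to the complete descending filtration by loop order; this is allowed because the inclusions and the desired cohomological output are compatible with this filtration (and in fact the proposition's last sentence is exactly the graded statement). On the associated graded, the twist piece $[W_0,-]$ can only add a stub decorated by $\gamma_i\gamma_i^*$, and $[w_0,-]$ behaves analogously at the boundary; in particular the differential on the associated graded does not increase loop number, so bivalent-vertex manipulations become clean. I would then mimic the two filtration arguments used in the proof of Proposition~\ref{prop:oGCNI} (which in turn follow the antenna/bivalent-string cancellation of Lemma~\ref{lem:dGC} and~\cite{Willwacher2014}): for $k=2$ filter by the number of non-antenna vertices and show acyclicity of the antenna subcomplex by the standard pairing of odd-length antennas with even-length ones; for $k=3$ filter by the number of non-bivalent vertices and identify the surviving classes.

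The identification of the surviving classes in the $k=3$ case will be the main obstacle. The subcomplex of graphs consisting only of bivalent aerial vertices, equipped with the differential induced by edge contraction and the twist $[W_0,-]$, is what produces the exceptional cohomology. By a parity/degree count in the spirit of Lemma~\ref{lem:oGCMloops} this subcomplex has exactly one class for each admissible $k$ in the range~\eqref{equ:krange}, represented by the loop graph $L_k$; one must be careful that the twist by $W_0=\sum \gamma_i\gamma_i^*$ does not kill these classes, and conversely that no further surviving classes appear from bivalent strings with decorations by $\gamma_i$, $\alpha_i$, $\beta_i$ attached. Checking this requires a small additional spectral sequence argument filtering on the number of non-unital decorations along the bivalent pieces, analogous to the one used for $\aGC_{A_\partial}^\vee$ in the appendix. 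Once this is done, the five-lemma closes the argument and proves both statements, as well as their associated-graded versions (the latter being, in fact, what the argument directly proves).
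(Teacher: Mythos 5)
Your proposal is correct in substance, and its technical core --- first passing to the associated graded of the loop-order filtration (which kills the piece of the differential joining two $\gamma_j$-decorations into a new edge), then running the antenna filtration for $k=2$ and the non-bivalent-vertex filtration for $k=3$ --- is exactly what the paper does. The paper's proof consists of just those two steps applied directly to the whole semidirect product, citing Proposition~\ref{prop:oGCNI}; it does not pass through your short exact sequence $0 \to \mGC_A^{\vee,\geq k} \to \aGC_{A_{\partial}}^{\vee,\geq k}\ltimes\mGC_A^{\vee,\geq k} \to \aGC_{A_{\partial}}^{\vee,\geq k} \to 0$. Your reduction is legitimate (the ideal property does make $\mGC_A^{\vee}$ a subcomplex for the twisted differential, compatibly with the valence restrictions), and it buys a cleaner separation of the two families of exceptional classes appearing in the statement. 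One word of caution: for $k=3$ the outer vertical maps in your diagram are only quasi-isomorphisms up to specified cokernels, so the five lemma does not apply verbatim; you must chase the two long exact sequences and verify that the connecting homomorphisms do not turn the exceptional $1$-loop classes of $H(\aGC_{A_{\partial}}^{\vee})$ into kernel classes of the middle map. This is the one place where the paper's direct filtration of the full semidirect product is more economical, since it identifies the surviving classes (the $L_k$ of Lemma~\ref{lem:oGCMloops} and the $1$-loop part of $\GC_n[1-n]$) in a single pass rather than by reassembling them from the two factors.
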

\begin{proof}
  We first filter both sides by loop order and consider the associated spectral sequence.
  This removes the term from the differential that creates an edge by joining two $\gamma_j$-decorations.
  Afterwards the proof proceeds as that of Proposition \ref{prop:oGCNI}.
\end{proof}

We also have an analog of Lemma \ref{lem:oGCdegcounting}.

\begin{lemma}\label{lem:oGCdegcounting2}
  If $H^1(M)=0$ then graphs in $\mGC_{A}^{\vee,\geq 3}$ of loop order $l$ have degree
  \begin{equation}
    \leq -(l-1)(n-3)+1.
  \end{equation}
\end{lemma}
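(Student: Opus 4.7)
The plan is to adapt the proof of Lemma \ref{lem:oGCdegcounting} essentially verbatim, keeping careful track of how the degree shifts by one compared with the purely ``boundary'' case. First I would check that the hypothesis $H^1(M)=0$ (together with $M$ connected) guarantees the degree lower bound on decorations. Indeed $\tilde H(M)$ is concentrated in degrees $\geq 2$, and the long exact sequence of the pair $(M,\partial M)$ gives $H^0(M,\partial M)=0$ and an injection $H^1(M,\partial M)\hookrightarrow H^1(M)=0$ whenever $\partial M$ is connected, so that in the ``connected boundary'' case all decorations in $A = S(\tilde H(M)\oplus H(M,\partial M))$ already have degree $\geq 2$; if $\partial M$ is disconnected, $H^1(M,\partial M)$ may contribute decorations of degree exactly $1$, so in general one only gets decorations of degree $\geq 1$. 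This latter bound is what one uses below.

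Next, as in Lemma \ref{lem:oGCdegcounting}, the degree of a graph $\Gamma\in \mGC_A^{\vee,\geq 3}$ with $v$ vertices, $e$ edges, and decorations of total degree $D$ equals $nv-(n-1)e-D$. I would argue that, when maximizing over $\Gamma$, one may assume every vertex has total valence (edges incident $+$ decorations) equal to $3$: splitting a higher-valent vertex into two vertices of valence $\geq 3$ joined by a new edge strictly increases the degree by $+1$, so an extremal graph admits no such splitting. I then perform exactly the three substitutions of Lemma \ref{lem:oGCdegcounting}:
\begin{itemize}
\item a vertex carrying three decorations forces the whole graph to be a single vertex, giving $l=0$ and degree $\leq n-3$;
\item a vertex with two decorations and one edge may be merged with its neighbor, changing the degree by $-n+(n-1)+\deg(b)\geq 0$;
\item a vertex with one decoration and two distinct edges may be collapsed into a single edge, changing the degree by $\geq 0$.
\end{itemize}
The only obstruction to the third substitution is a single vertex with one self-loop and one decoration, which has $l=1$ and degree at most $n-(n-1)-1 = 0$.

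Once these reductions are performed, the remaining extremal configurations are trivalent undecorated graphs, for which $v=2(l-1)$ and $e=3(l-1)$ give degree $n(2l-2)-(n-1)(3l-3) = -(l-1)(n-3)$. Comparing the three exceptional cases ($l=0$ with degree $\leq n-3$, $l=1$ with degree $\leq 0$, and $l\geq 2$ with degree $\leq -(l-1)(n-3)$) against the target bound $-(l-1)(n-3)+1$, the inequality holds in all cases, with one unit of slack absorbing both the possible degree-$1$ decorations from $H^1(M,\partial M)$ and the single-vertex self-loop case.

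The main obstacle will be confirming that the substitution arguments go through unchanged in the presence of the additional decoration space $H(M,\partial M)\subset A$, which was not present in the proof of Lemma \ref{lem:oGCdegcounting}. Concretely, one must check that each substitution is still well-defined in $\mGC_A^{\vee,\geq 3}$ and lands inside the subcomplex of $\geq 3$-valent graphs after the reduction; since both types of decorations behave identically with respect to valence counting and contribute positively to $D$, this is routine but should be verified carefully case by case.
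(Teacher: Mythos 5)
Your overall strategy (rerun the substitution argument of Lemma~\ref{lem:oGCdegcounting}) is the right one, and it is what the paper does. However, you have misidentified where the ``$+1$'' in the bound comes from, and this is the entire content of the lemma. The paper's point is that the degree formula itself on $\mGC_A^{\vee}$ carries an extra global shift of $+1$ compared to $\aGC_{A_\partial}^{\vee}$: a graph with $v$ vertices, $e$ edges and decorations of total degree $D$ has degree $nv-(n-1)e-D+1$, not $nv-(n-1)e-D$. You can see that this shift is forced by the surrounding text: the Maurer--Cartan element $W_0$ is a single vertex decorated by the degree-$n$ class $\sum_i\gamma_i\gamma_i^*$, and for it to sit in degree $1$ one needs $n\cdot 1-(n-1)\cdot 0-n+1=1$; likewise the loop classes $L_k$ (with $k$ vertices, $k$ edges, no decorations) are asserted to have degree $k+1$, not $k$. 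With the correct formula, your three substitutions and the final trivalent count go through verbatim and each output is exactly one higher than what you wrote, yielding $-(l-1)(n-3)+1$ on the nose for $l\ge 2$, with \emph{no} slack.

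Concretely, your write-up asserts that undecorated trivalent graphs of loop order $l\ge 2$ have degree $-(l-1)(n-3)$ and that the stated bound is therefore not sharp, the extra unit being absorbed by degree-one decorations from $H^1(M,\partial M)$ and by the tadpole case. Both halves of that explanation are off: the bound is attained (by undecorated trivalent graphs), and the degree-one decorations play no special role here beyond the estimate $\deg\ge 1$ already used in Lemma~\ref{lem:oGCdegcounting}. (Your observation that $H^1(M,\partial M)$ can be nonzero when $\partial M$ is disconnected is correct but immaterial to the counting.) Since the lemma is purely a degree-counting statement, getting the normalization wrong is a substantive gap even though your final inequality coincides with the correct one; as written, your argument would ``prove'' the strictly stronger bound $-(l-1)(n-3)$ for $l\ge 2$, which is false.
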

\begin{proof}
  The proof is identical to the one of Lemma \ref{lem:oGCdegcounting}, with the caveat that the formula for the degree of a graph has an extra ``$+1$'' in the present case, thus accounting for the different degree bound in this case.
\end{proof}

\begin{corollary}\label{cor:vanish-mgc}
  If $n=\dim M\geq 4$ and if $H^1(M)=0$, then the graded vector space $\mF^1 \mGC_{A}^{\vee,\geq 3}$ is concentrated in non-positive degrees.
\end{corollary}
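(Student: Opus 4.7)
The plan is to proceed by cases on the loop order, leveraging the degree bound of Lemma~\ref{lem:oGCdegcounting2} where it suffices, and performing a finer count for loop order one.

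First, for graphs in $\mathcal F^1 \mGC_A^{\vee,\geq 3}$ of loop order $l \geq 2$, I would simply invoke Lemma~\ref{lem:oGCdegcounting2}, which yields the degree bound $\leq -(l-1)(n-3) + 1$. Under the hypotheses $n \geq 4$ and $l \geq 2$ this evaluates to at most $-(n-3) + 1 = 4 - n \leq 0$, giving the desired conclusion in this range.

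The loop order $l = 1$ case requires a sharpening, since Lemma~\ref{lem:oGCdegcounting2}'s bound only yields degree $\leq 1$ there. I would argue directly by Euler-characteristic counting: a connected graph with $l = 1$ satisfies $E = V$. Since every vertex has valence $\geq 3$ (counting both incident edge-ends and decorations), summing valences gives $2E + D \geq 3V$, where $D$ is the total number of decorations. Combined with $E = V$ this forces $D \geq V$. Now, decorations lie in $\tilde H(M)$, and since $H^1(M) = 0$ every decoration has degree $\geq 2$ (using that $M$ is simply connected so $\tilde H(M) = H^{\geq 2}(M)$ at the level of the relevant decorations; one may arrange this also for a general CDGA model by minimality). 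The degree of such a graph is
\[
  n V - (n-1) E - \sum |\text{decorations}| \;=\; V - \sum |\text{decorations}| \;\leq\; V - 2 D \;\leq\; V - 2 V \;=\; -V \;\leq\; -1,
\]
where the last inequality uses $V \geq 1$ (the empty graph being excluded). Hence all $l = 1$ graphs sit in strictly negative degrees.

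The hard part, if any, is ensuring that the decoration-degree lower bound of $2$ is legitimate in the setting under consideration. In the combinatorial case $A = S(\tilde H^\ast(M) \oplus H^\ast(M,\partial M))$ this follows from $H^1(M) = 0$ together with the long exact sequence, since $H^1(M,\partial M)$ sits in degree $1$ and must be handled carefully, but decorations by such classes only lower the degree further (they contribute additional positive decoration-degree). In the PLD-model case one can assume $B$ is chosen with $B^1 = 0$ by standard minimality arguments, reducing to the same count. Combining the two case analyses completes the proof that $\mathcal F^1 \mGC_A^{\vee,\geq 3}$ is concentrated in non-positive degrees.
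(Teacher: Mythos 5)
Your proof follows essentially the same route as the paper's: Lemma~\ref{lem:oGCdegcounting2} handles loop order $l\geq 2$ directly, and the hypothesis $H^1(M)=0$ is used to eliminate the potentially positive-degree graphs in loop order $1$. Two remarks on the one-loop count. First, your degree formula drops the extra ``$+1$'' that distinguishes $\mGC_A^{\vee}$ from $\aGC_{A_{\partial}}^{\vee}$ --- this shift is exactly the content of Lemma~\ref{lem:oGCdegcounting2} and is confirmed by the loop classes $L_k$ having degree $k+1$. With the correct formula your estimate reads
\[
nV-(n-1)E-\textstyle\sum|\mathrm{dec}|+1 \;=\; V-\sum|\mathrm{dec}|+1 \;\leq\; V-2V+1 \;=\; 1-V\;\leq\;0,
\]
so the one-loop graphs sit in non-positive rather than strictly negative degrees; the corollary's conclusion is unaffected, but your claimed bound of $-1$ is off by one. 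Second, the parenthetical dismissal of degree-one decorations coming from $H^1(M,\partial M)$ is not an argument: such a decoration contributes only $1$ to $\sum|\mathrm{dec}|$, which would leave the bound at $+1$ and break the proof. What one actually needs is that every decoration has degree $\geq 2$, i.e.\ that degree-one decorations are controlled by $H^1(M)$ alone (this is also what the paper's proof implicitly asserts when it says the extremal degree-one graphs ``have all decorations in $H^1(M)$''); in the relevant version of $\mGC_A$ the decorations lie in $\tilde H^{*}(M)$ (respectively in $\bar A$ for a model with $A^1=0$), so this is legitimate, but it deserves the careful statement rather than the hand-wave.
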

\begin{proof}
  From Lemma \ref{lem:oGCdegcounting2} it follows immediately that $\mF^1 \mGC_{A}^{\vee,\geq 3}$ is concentrated in non-positive degrees, even without the condition on $H^1(M)$.
  In loop order one, the proof of Lemma \ref{lem:oGCdegcounting} shows that the graphs of the highest possible degree $1$ have all decorations in $H^1(M)$. Hence if $H^1(M)=0$, then these graphs are no longer present in the complex and the highest possible degree is (at most) 0.
\end{proof}

\begin{remark}\label{rmk:big-picture}
  The discussions in the present and the previous subsections provide a quite satisfying picture of the cohomology and the MC spaces of the graph complex $\aGC_{A_{\partial}}^{\vee} \ltimes \mGC_A^{\vee}$.
  Essentially, what we have shown is the following.
  The graph complex is quasi-isomorphic to a similar graph complex built from a diagram whose vertices are all at least trivalent.
  Under suitable conditions, this new graph complex does not have elements of positive degrees in positive loop orders.
  It follows that its Maurer--Cartan space is completely controlled by its tree part, which is easier to compute: as evidenced by Corollary~\ref{cor:ZtreedM}, it simply encodes the real homotopy type of the underlying manifold and its boundary.
  Furthermore, there is a spectral spectral sequence exhibiting those complexes as deformations of hairy graph complexes.
  These hairy graph complexes have been used and computed in various settings (see e.g.~\cite{SonghafouoTsopmeneTurchin2015}) and we can therefore use these results to obtain information on $\aGC_{A_{\partial}}^{\vee} \ltimes \mGC_{A}^{\vee}$.
\end{remark}

\subsection{The cohomology of \texorpdfstring{$\KGC_M$}{KGC\_M}}
Let us next consider the complex $\KGC_M \coloneqq \SGC_{A,A_{\partial}}^{\vee}$, or rather the semi-direct product $A \otimes \KGC_n \ltimes \KGC_M$.
We have a Maurer--Cartan element $Z+z^{\p}=Z^{\tree}+z_0^\p+z_{1}^{\partial}+\cdots$ as constructed in Section~\ref{sec.model-right-hopf}.

We will derive degree bounds on the cohomology of the complex $\KGC_M$ by considering various spectral sequences.
First, note that -- as with all complexes in this paper -- we have a descending complete filtration by loop order.
We consider the associated spectral sequence, whose first differential does not create loops.
Note that this ``removes'' the pieces of the differential coming from the higher loop part of the MC elements $Z$ and $z_0$, and also the differential $\nabla$ replacing two decorations by a new edge, either in the bulk or on the boundary.

Secondly, let us filter $\KGC_M$ again by the number of aerial vertices.
The differential on the associated graded has two terms.
The first term connects the tree piece of $Z$ without aerial vertices to our graph:
\begin{equation}
  \begin{tikzpicture}[baseline=.5cm]
    \draw[dotted] (-1,0) -- (2,0);
    \node[int, label={above left}:{$x_{1}$}] (t1) {};
    \node[int, label={above right}:{$x_{2}$}, right=of t1] (t2) {};
    \node[int, label={$y_{1}$}, above right=of t1] (a1) {};
    \node[int, label={$y_{2}$}, right=1cm of a1] (a2) {};
    \draw[->]
    (a1) edge (t1) edge (t2) edge (a2)
    (a2) edge (t1);
  \end{tikzpicture}
  \mapsto
  \sum_{(Z^{\tree})}
  \;
  \begin{tikzpicture}[baseline=.5cm]
    \draw[dotted] (-1,0) -- (5,0);
    \node[int, label={above left}:{$x_{1}$}] (t1) {};
    \node[int, label={above right}:{$x_{2}$}, right=of t1] (t2) {};
    \node[int, label={$y_{1}$}, above right=of t1] (a1) {};
    \node[int, label={$y_{2}$}, right=1cm of a1] (a2) {};
    \draw[->]
    (a1) edge (t1) edge (t2) edge (a2)
    (a2) edge (t1);
    \draw[dashed] (3,0) -- ++(0,1.5) -- ++(1.5,0);
    \node[int, right=1.5cm of a2] (i) {};
    \draw[<-]
    (a2) -- (i)
    (i) edge +(0:.6) edge +(30:.6) edge +(-30:.6);
    \node[below=.5cm of i] {$\gamma \in Z^{\tree}$};
  \end{tikzpicture}
  .
\end{equation}

The second term splits boundary vertices, creating one boundary edge, or joins two boundary decorations by $\tilde H(\p M)$.
We filter again by twice the number of boundary edges, plus three times the number of bulk edges.
On the associated graded complex we see only the edge splitting piece of the differential.
By Proposition~\ref{prop:GCAV} the cohomology of this complex can be identified with the space of graphs all of whose external vertices have exactly one incident edge and at most one decoration by $\tilde H(M)$, plus a space of ``boundary'' graphs without aerial vertices, quasi-isomorphic to $\GC_{\p M}$.
On the next page we see the following piece of the differential, which a decoration by  $d\beta_j$ upstairs and links it to a boundary vertex.
\begin{equation}
  \begin{tikzpicture}[baseline=-.65ex]
    \draw (0,0)--(1,0);
    \node[int,label=-90:{$\scriptstyle d\beta_j$}] (v) at (.5,.7) {};
    \draw (v) edge +(-.5,.5) edge +(0,.5)  edge +(.5,.5);
  \end{tikzpicture}
  \mapsto
  \begin{tikzpicture}[baseline=-.65ex]
    \draw (0,0)--(1,0);
    \node[int] (v) at (.5,.7) {};
    \node[int,label=-90:{$\scriptstyle \beta_j$}] (w) at (.5,0) {};
    \draw (v) edge +(-.5,.5) edge +(0,.5)  edge +(.5,.5);
    \draw[-latex] (v) to (w);
  \end{tikzpicture}
\end{equation}
The cohomology with respect to this differential can be identified with the space of hairy graphs, whose hairs are not decorated by $\beta_j$'s, and whose aerial vertices are not decorated by $d\beta_j$'s.
At this stage, our innermost spectral sequence abuts.

The differential on the following page of the next outer spectral sequence increases the number of aerial vertices by one.
To compute the cohomology of this second page we filter our complex yet again by the total number of vertices (aerial + terrestrial).
The differential on the first page of the resulting spectral sequence removes a terrestrial vertex, making it an aerial vertex. There is only one such piece in the differential contributed by the bracket with the leading term of $z_0^\p$, namely
\begin{equation}
  \begin{tikzpicture}[baseline=-.65ex]
    \draw (0,0)--(1,0);
    \node[int] at (.5,.7) {};
  \end{tikzpicture}
  .
\end{equation}
The bracket with this term lifts a terrestrial vertex decorated by $\alpha_j$ (or nothing) to an aerial vertex:
\begin{equation}
  \begin{tikzpicture}[baseline=-.65ex]
    \draw (0,0)--(1,0);
    \node[int,label=-90:{$\scriptstyle \alpha_j$}] (v) at (.5,0) {};
    \coordinate (w) at (.5,1.3);
    \draw[-latex]  (w) to (v);
  \end{tikzpicture}
  \mapsto
  \begin{tikzpicture}[baseline=-.65ex]
    \draw (0,0)--(1,0);
    \node[int,label=-90:{$\scriptstyle \alpha_j$}] (v) at (.5,.7) {};
    \coordinate (w) at (.5,1.3);
    \draw[-latex]  (w) to (v);
  \end{tikzpicture}\, .
\end{equation}
(It should be noted that there are further terms that do not create vertices, however they raise the loop order, and hence are not seen here.)
The cohomology of that differential can be identified with non-hairy graphs, modulo $\alpha_j$-decorated stubs, i.e., modulo vertices of the form
\begin{equation}
  \begin{tikzpicture}
    \draw (-.5,0)--(.5,0);
    \node[int,label=0:{$\scriptstyle \alpha_j$}] (v) at (.5,.7) {};
    \draw[-latex] (0,1) to (v);
  \end{tikzpicture}\, .
\end{equation}
including undecorated such vertices (i.e., decorations by $\alpha_1=1$).

The differential on the next page creates one aerial vertex and increases the number of vertices by one.
There are two such terms: One arises from the splitting of vertices, and one arises from the part of $z_0^\p$ containing only one vertex, which is of aerial.
Also recall that at this stage our complex consists of directed graphs, with only aerial vertices, which may be decorated by $\alpha_j$, $\gamma_j$ or $\tilde \gamma_j$, modulo $\alpha$-stubs.

To compute further we filter by the number of vertices of valence $\geq 3$.
The first differential on the spectral sequence will create one at most bivalent vertex.
As in the proof of Proposition \ref{prop:ozinvariance}, one can see that the cohomology of this differential can be identified with a sum of two spaces: (i) 1-loop graphs of undirected bivalent vertices and (ii) undirected graphs all of whose vertices are of valence $\geq 3$, with decorations in $\tilde H(M)$.
We hence can use the degree counting argument of Lemma \ref{lem:GCdegbounds} and below to conclude the following.

\begin{proposition}\label{prop:HKGCM}
  Suppose that $\dim M\geq 5$ and $H^1(M)=H^1(\p M)=0$. Then the cohomology of the higher loop part
  \begin{equation}
    \mF^1 \KGC_M
  \end{equation}
  is concentrated in non-positive degrees.
  \qed
\end{proposition}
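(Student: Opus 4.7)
The plan is to execute the cascade of spectral sequences already set up in the paragraphs immediately preceding the statement, and then close the argument with a degree-counting estimate. The whole proof reduces to showing that, after a sequence of filtrations, the complex $\mF^1 \KGC_M$ is quasi-isomorphic to (a direct summand of) a known graph complex for which degree bounds are established in Lemma~\ref{lem:GCdegbounds}. The only real work is verifying that each successive filtration is complete and bounded below (so the spectral sequence converges to the cohomology we are interested in) and that under our connectivity and dimensional hypotheses the remaining graphs indeed live in the expected degree range.

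Concretely, I would proceed as follows. First, filter $\mF^1 \KGC_M$ by loop order; since we have fixed loop order $\geq 1$, this is complete and each slice is finite. On the associated graded, the pieces of the differential coming from the higher-loop parts of $Z$ and $z^{\partial}_0$, as well as the ``$\nabla$''-piece that pairs two decorations into an edge, disappear. Then filter by the number of aerial vertices, then by $2\cdot(\#\text{boundary edges})+3\cdot(\#\text{bulk edges})$, then by total number of vertices, and finally by the number of vertices of valence $\geq 3$. Each step is taken exactly as in the discussion right before the proposition: successive pages identify the cohomology with, in turn, hairy graphs with prescribed decoration conditions (via Proposition~\ref{prop:GCAV}), then non-hairy graphs modulo $\alpha_j$-decorated stubs, and then eventually two types of graphs -- (i)~$1$-loop bivalent graphs and (ii)~undirected at-least-trivalent graphs with decorations in $\tilde H(M)$.

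At this point I invoke the degree count of Lemma~\ref{lem:GCdegbounds}. For case~(ii), an at-least-trivalent graph of loop order $l \geq 2$ has degree at most $-(l-1)(n-3)$, which is $\leq 0$ as soon as $n = \dim M \geq 4$. For case~(i), the $1$-loop part of $\GC_n$ lives in non-positive degrees, as noted directly after Lemma~\ref{lem:GCdegbounds}. The presence of $\tilde H(M)$-decorations could in principle raise the degree; the worst offenders are decorations in $\tilde H^1(M)$, but these are excluded by our hypothesis $H^1(M) = 0$, and the additional assumption $H^1(\partial M) = 0$ is used in the intermediate step that kills $\beta_j$-decorated stubs and ensures we do not pick up unwanted $1$-cochains on the boundary. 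Putting $\dim M \geq 5$ gives the slack needed for loop orders $l \geq 2$ to remain in strictly negative degrees, while $l = 1$ is handled by the explicit non-positive degree bound.

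The main obstacle, and the step that requires genuine care rather than bookkeeping, is verifying that the cohomology identifications on the intermediate pages really do land in the at-least-trivalent subcomplex decorated by $\tilde H(M)$ only, so that Lemma~\ref{lem:GCdegbounds} applies cleanly. In particular one must check that the piece of the differential coming from the leading term of $z^{\partial}_0$ together with the splitting part kills precisely the graphs with bivalent aerial vertices or with $d\beta_j$/$\beta_j$-decorations (this is where $H^1(\partial M) = 0$ enters, to rule out the potentially positive-degree $\beta_j$-hairs). Once these cancellations are verified, the degree bound is a direct arithmetic inequality in $n$ and $l$, and the proposition follows.
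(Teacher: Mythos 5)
Your proposal follows exactly the paper's own argument: the proof of this proposition is precisely the cascade of filtrations (by loop order, number of aerial vertices, weighted edge count, total vertex count, and valence) developed in the paragraphs preceding the statement, terminating in the degree count of Lemma~\ref{lem:GCdegbounds}, and you reproduce each step, including the correct roles of $H^1(M)=0$ and $H^1(\partial M)=0$. No gaps; this is the intended proof.
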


\printindex

\printbibliography

\end{document}